\numberwithin{equation}{section}
\newtheorem{theorem} {Theorem} [section]
\newtheorem{proposition}[theorem]{Proposition}
\newtheorem{corollary}  [theorem]     {Corollary}
\newtheorem{lemma}  [theorem]     {Lemma}
\newtheorem{question}  [theorem]     {Question}
\newtheorem{remark}  [theorem]     {Remark}
\newtheorem{claim}  [theorem]     {Claim}
\theoremstyle{definition}
\newtheorem{definition}  [theorem]     {Definition}
\renewcommand*\backref[1]{}
\renewcommand*\backrefalt[4]{ \ifcase #1 \or (cited on page #2) \else (cited on pages #2) \fi}
\begin{document}

\title{On Hermitian manifolds with Bismut-Strominger parallel torsion}

\begin{abstract}
In this article, we study Hermitian manifolds whose Bismut-Strominger connection has parallel torsion tensor, which will be called {\em Bismut torsion parallel manifolds,} or {\em BTP} manifolds for short. We obtain a necessary and sufficient condition characterizing this class in terms of the curvature tensor. In particular, Bismut flat or Bismut K\"ahler-like manifolds are {\em BTP} manifolds, known by our earlier results. We also obtain structural results for non-balanced {\em BTP} manifolds, and classification theorems for non-balanced {\em BTP} threefolds and balanced {\em BTP} threefolds.
\end{abstract}

\author{Quanting Zhao}
\address{Quanting Zhao. School of Mathematics and Statistics, and Hubei Key Laboratory of Mathematical Sciences, Central China Normal University, P.O. Box 71010, Wuhan 430079, P. R. China.} \email{zhaoquanting@126.com;zhaoquanting@mail.ccnu.edu.cn}
\thanks{Zhao is partially supported by NSFC with the grant No.11801205 and 12171180. Zheng is partially supported by NSFC grant 12071050 and 12141101, a Chongqing grant cstc2021ycjh-bgzxm0139, and the 111 Project D21024.}

\author{Fangyang Zheng}
\address{Fangyang Zheng. School of Mathematical Sciences, Chongqing Normal University, Chongqing 401331, China}
\email{20190045@cqnu.edu.cn} \thanks{}


\subjclass[2010]{ 53C55 (Primary), 53C05 (Secondary)}
\keywords{Bismut torsion parallel, K\"ahler-like, Bismut connection, admissible frame, degenerate torsion,
Chern flat, Fano manifold, Vaisman manifold}

\maketitle

\tableofcontents

\section{Introduction}

Given a smooth manifold equipped with a connection, the parallelness of torsion often forms strong geometric restrictions. For instance, by the  result of Kamber and Tondeur in \cite{KT}, if a manifold admits a complete connection that is flat and having parallel torsion, then its universal cover must be a Lie group (equipped with a flat left invariant connection). The converse is certainly true. Also, by the classic result of Ambrose and Singer \cite{AS}, if a complete Riemannian manifold admits a metric connection with parallel torsion and curvature, then the universal cover is a homogeneous Riemannian manifold (and vice versa). The complex version of this is also true, proved by Sekigawa in \cite{Sekigawa}, namely, if a complete Hermitian manifold admits a {\em Hermitian} connection (that is, a connection compatible with both the metric and the almost complex structure) which has parallel torsion and curvature, then the universal cover is a homogeneous Hermitian manifold (and vice versa).

On a given Hermitian manifold, Bismut connection (also known as Strominger connection, see \cite{Bismut,Strominger}) is the unique Hermitian connection with totally skew-symmetric torsion. It serves as a bridge between Levi-Civita connection, which is the unique metric connection that is torsion free,  and Chern connection, the unique metric connection compatible with the complex structure. When the metric is K\"ahler, these three canonical connections coincide, but when the metric is not K\"ahler, they are mutually distinct, leaving us with three different kinds of geometry.

In this article, we will focus on Bismut connection, and we want to know when Bismut connection will have parallel torsion. For convenience, we will call the class of Hermitian manifolds whose Bismut connection has parallel torsion simply as {\em Bismut torsion parallel} manifolds, or {\em BTP} manifolds in short.

Let $(M^n,g)$ be a Hermitian manifold. Denote by $\nabla^b$ the Bismut connection and by $T^b$, $R^b$ its torsion and curvature tensor, respectively. Let us introduce the notation
\begin{equation*}
Q_{X\overline{Y}Z\overline{W}} = R^b_{X\overline{Y}Z\overline{W}} - R^b_{Z\overline{Y}X\overline{W}}, \ \ \ \mathrm{Ric}(Q)_{X\overline{Y}} = \sum_{i=1}^n \big( R^b_{X\overline{Y}e_i\overline{e_i}} - R^b_{e_i\overline{Y}X\overline{e_i}} \big)
\end{equation*}
where $X$, $Y$, $Z$, $W$ are type $(1,0)$ tangent vectors on $M$ and $\{ e_1, \ldots , e_n\}$ is any local unitary frame. Denote by $\omega$ the K\"ahler form of $g$. Recall that Gauduchon's torsion $1$-form is the global $(1,0)$-form $\eta$ on $M$ defined by $\partial (\omega^{n-1}) = - 2\eta \wedge \omega^{n-1}$. Let $\chi$ be the type $(1,0)$ vector field on $M$ dual to $\eta$, namely, $\langle Y, \overline{\chi}\rangle = \eta (Y)$ for any $Y$ where $g = \langle , \rangle $ is the (Riemannian) metric, extended bi-linearly over ${\mathbb C}$.

With these notations specified, we can now state the first theorem of this article, which says that the {\em BTP} condition $\nabla^b T^b=0$ is equivalent to some conditions involving only $R^b$:

\begin{theorem} \label{theorem1.1}
Let $(M^n,g)$ be a Hermitian manifold. The BTP condition $\nabla^bT^b=0$ is equivalent to the combination of the following
\begin{eqnarray}
&& R^b_{XYZ\overline{W}}=0 \label{eq:1.1} \\
&& R^b_{X\overline{Y}Z\overline{W}}=R^b_{Z\overline{W}X\overline{Y}} \label{eq:1.2} \\
&& \nabla^b \mathrm{Ric}(Q)=0 \label{eq:1.3}\\
&& \mathrm{Ric}(Q)_{\chi \overline{Y}} =0  \label{eq:1.4}
\end{eqnarray}
for any type $(1,0)$ tangent vectors $X$, $Y$, $Z$, $W$.
\end{theorem}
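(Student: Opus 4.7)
The plan is to exploit the first Bianchi identity for the Bismut connection, which, because $T^b$ is totally skew-symmetric, takes the form
\[
\mathfrak S_{X,Y,Z}\, R^b(X,Y)Z \;=\; \mathfrak S_{X,Y,Z}\, (\nabla^b_X T^b)(Y,Z) \;+\; \mathfrak S_{X,Y,Z}\, T^b(T^b(X,Y), Z),
\]
together with the second Bianchi identity
\[
\mathfrak S_{X,Y,Z}\, (\nabla^b_X R^b)(Y,Z) \;=\; \mathfrak S_{X,Y,Z}\, R^b(T^b(X,Y), Z).
\]
A crucial auxiliary input is that $T^b=-d^c\omega$ is a real 3-form of pure type $(2,1)+(1,2)$, so that $T^b(X,Y)\in T^{0,1}M$ whenever $X,Y\in T^{1,0}M$. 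In a local unitary frame this identifies Gauduchon's 1-form with a trace of $T^b$, namely $\eta_i=-\sum_j T^b_{ij\bar j}$ up to sign convention, which makes $\mathrm{Ric}(Q)_{\chi\bar Y}$ a scalar trace quadratic in $T^b$.

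For the forward implication, assume $\nabla^b T^b=0$. The $\nabla^b T^b$ terms drop out of the first Bianchi, turning it into a purely algebraic identity expressing cyclic sums of $R^b$ as quadratic expressions in $T^b$. Evaluating on $X,Y,Z$ of type $(1,0)$ paired with a $(0,1)$ argument, and combining with the antisymmetries of $R^b$ and Hermitian conjugation symmetry, one extracts $R^b_{XYZ\overline W}=0$, which is \eqref{eq:1.1}. Evaluating with two $(1,0)$ and one $(0,1)$ arguments expresses $Q_{X\bar Y Z\bar W}$ as an explicit polynomial in $T^b$, from which \eqref{eq:1.2} follows via a second pass through Bianchi combined with the Hermitian symmetry. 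Since $Q$ is now a polynomial in $T^b$ and $\nabla^b T^b=0=\nabla^b g$, we conclude $\nabla^b Q=0$ and hence \eqref{eq:1.3}. Condition \eqref{eq:1.4} is the delicate scalar identity: writing $\mathrm{Ric}(Q)$ explicitly in terms of $T^b\otimes T^b$ and contracting against $\chi$ via the trace relation for $\eta$, the total skew-symmetry of $T^b$ collapses the resulting triple contraction to zero.

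For the converse, assume \eqref{eq:1.1}--\eqref{eq:1.4}. Read the first Bianchi identity backwards,
\[
\mathfrak S_{X,Y,Z}\,(\nabla^b_X T^b)(Y,Z) \;=\; \mathfrak S_{X,Y,Z}\, R^b(X,Y)Z \;-\; \mathfrak S_{X,Y,Z}\, T^b(T^b(X,Y),Z),
\]
and decompose by type. Condition \eqref{eq:1.1} kills the all-$(1,0)$ component of the curvature cyclic sum, while \eqref{eq:1.2} eliminates the residual antisymmetric part of the mixed-type component, reducing $\nabla^b T^b$ to a combination of trace-type terms carried by $\mathrm{Ric}(Q)$. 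Condition \eqref{eq:1.3} then handles the derivative components, and \eqref{eq:1.4} disposes of the single scalar contribution aligned with $\chi$, together forcing $\nabla^b T^b=0$. The main obstacle, which I would verify with most care, is the bookkeeping around \eqref{eq:1.4} in both directions: it is a single scalar trace identity that, under \eqref{eq:1.1}--\eqref{eq:1.3}, is precisely equivalent to the vanishing of the last non-trivial component of $\nabla^b T^b$, and isolating exactly this one scalar statement—rather than a stronger or weaker tensorial one—is the real content of the theorem.
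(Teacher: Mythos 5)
Your forward direction is essentially the paper's: with $\nabla^bT^b=0$ the first Bianchi identity for $\nabla^b$ becomes algebraic, giving \eqref{eq:1.1} and the identity $Q=-4P$ (quadratic in torsion), whence \eqref{eq:1.2} and \eqref{eq:1.3}; and \eqref{eq:1.4} comes from the fact that $\chi$, being a trace of the parallel torsion, is $\nabla^b$-parallel, so $R^b_{\,\cdot\,\cdot\,\chi\,\cdot}=0$ and \eqref{eq:1.2} moves $\chi$ to the first slot. That half is fine, modulo making your ``collapse by skew-symmetry'' precise.

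The converse, however, has a genuine gap, and it is exactly where the content of the theorem lies. Your plan is to decompose the first Bianchi identity by type and let each hypothesis kill one component of $\nabla^bT^b$. This cannot work as stated. Condition \eqref{eq:1.1} does give $\nabla^b_{1,0}T^b=0$ (this is the paper's Proposition \ref{Tderivative}). But for the $(0,1)$-part, the Bianchi identity only yields $T^j_{ik,\bar\ell}=-\tfrac23P^{j\ell}_{ik}-\tfrac16Q_{i\bar jk\bar\ell}$ once \eqref{eq:1.2} is imposed (the paper's Lemma \ref{swap}); so $\nabla^b_{0,1}T^b=0$ is equivalent to the full tensor identity $Q=-4P$, which has on the order of $n^4$ independent components. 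Conditions \eqref{eq:1.3} and \eqref{eq:1.4} constrain only the trace $\mathrm{Ric}(Q)$ — on the order of $n^2$ scalars plus their derivatives — and no pointwise linear-algebra decomposition can recover the full tensor statement from its trace. There is no ``residual component aligned with $\chi$'' left over after \eqref{eq:1.1}--\eqref{eq:1.3}; what is left over is an entire tensor.

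What the paper actually does in the converse is a second-order, sum-of-squares argument that your proposal does not anticipate. One first uses \eqref{eq:1.4} and the identity $\sum_r\eta_rT^r_{ij}=0$ to show that $|\eta|^2$ is constant; differentiating $\sum_k\eta_{k,\bar\ell}\overline{\eta_k}=0$ once more and commuting covariant derivatives (legitimate because \eqref{eq:1.1} kills the $(2,0)$-curvature in the commutator, Lemma \ref{commt}) produces $\sum_{k,\ell}|\eta_{k,\bar\ell}|^2+\mathrm{Re}\big(\sum_{k,\ell}\eta_{k,\bar\ell\,\ell}\overline{\eta_k}\big)=0$. A battery of contracted identities — obtained from the torsion Bianchi identity, from $\nabla^b\mathrm{Ric}(Q)=0$, and from \eqref{eq:1.4} — relating $|B|^2$, $\mathrm{Re}(\phi B)$, $\mathrm{Re}(\phi A)$, $|\phi+\phi^*|^2$, $\mathrm{Re}(\phi\,\mathrm{Ric}(Q))$ and $|\mathrm{Ric}(Q)|^2$ then forces $\sum_{k,\ell}|\eta_{k,\bar\ell}|^2=0$, i.e.\ $\nabla^b\eta=0$. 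From this one deduces that $|T|^2$, $\phi$ and $B$ are $\nabla^b$-parallel, and a second pass of the same differentiate-and-commute scheme applied to $|T|^2$ yields $\sum_{i,j,k,\ell}|T^j_{ik,\bar\ell}|^2=0$, hence $\nabla^b_{0,1}T^b=0$. If you want to salvage your outline, this chain — constancy of $|\eta|^2$ and $|T|^2$, commutation via \eqref{eq:1.1}, and the vanishing of the resulting sums of squares — is the missing idea you would need to supply.
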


Recall that the {\em  Bismut K\"ahler-like} ({\em BKL}) condition, also known as {\em Strominger K\"ahler-like} ({\em SKL}) condition (cf. \cite{AOUV,FT,FTV,YZZ,ZhaoZ19Str}), for a Hermitian manifold is defined as
\begin{equation*}
 R^b_{XYZ\overline{W}}=0, \ \ \ \  R^b_{X\overline{Y}Z\overline{W}}=R^b_{Z\overline{Y}X\overline{W}}
\end{equation*}
for any type $(1,0)$ tangent vectors $X$, $Y$, $Z$, $W$. The main result of \cite{ZhaoZ19Str} says that
$$ \mbox{{\em BKL}} \ \Longleftrightarrow \ \mbox{{\em BTP}} + \mbox{pluriclosed} .$$
The main technical part of the proof in \cite{ZhaoZ19Str} is to show that {\em BKL} implies {\em BTP}. Note that by our Theorem \ref{theorem1.1} above, this implication is immediate, since the {\em BKL} condition simply means (\ref{eq:1.1}) and $Q=0$. In this case, all four conditions in Theorem \ref{theorem1.1} are obviously satisfied, hence {\em BTP}. So Theorem \ref{theorem1.1} can be regarded as a generalization of the main result of \cite{ZhaoZ19Str}.

We note that in Theorem \ref{theorem1.1}, the conditions \eqref{eq:1.3} and \eqref{eq:1.4} are automatically satisfied if $\mathrm{Ric}(Q)=0$.  Also, under the {\em BTP} assumption, one always has $\nabla^bQ=0$. But there are examples of non-K\"ahler {\em BKL} manifolds with $\nabla^b R^b\neq 0$. In other words, for {\em BTP} manifolds, the Bismut connection $\nabla^b$ is not always an Ambrose-Singer connection.

Recall that a Hermitian manifold is said to be {\em Gauduchon} if $\partial \overline{\partial} (\omega^{n-1})=0$, and is said to be {\em balanced} if its Gauduchon's torsion $1$-form $\eta$ vanishes, or equivalently, if $d(\omega^{n-1})=0$. Also, the metric is said to be {\em strongly Gauduchon} in the sense of Popovici \cite{Pop} if there exists a global $(n,n-2)$-form $\Omega$ such that $\partial \omega^{n-1} = \overline{\partial} \Omega$. Clearly,
$$ \mbox{balanced} \ \ \Longrightarrow \ \ \mbox{strongly\ Gauduchon} \ \ \Longrightarrow \ \ \mbox{Gauduchon}.$$
As a consequence to Theorem \ref{theorem1.1}, we have

\begin{corollary} \label{corollary1.2}
Let $(M^n,g)$ be a Hermitian manifold satisfying $\nabla^bT^b=0$. Then $g$ is Gauduchon, i.e., $\partial \overline{\partial }(\omega^{n-1})=0$. If in addition $M^n$ is compact and $g$ is not balanced, then $g$ is not strongly Gauduchon and the Dolbeault group $H^{0,1}_{\overline{\partial}}(M) \neq 0$. In particular, any compact non-balanced BTP manifold is not a $\partial \overline{\partial}$-manifold and $M$ does not admit any K\"ahler metric.
\end{corollary}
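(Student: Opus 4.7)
The plan is to reduce every conclusion to the parallelness of the Gauduchon $1$-form. By Theorem~\ref{theorem1.1} (or rather by its hypothesis), $\nabla^b T^b=0$, and since $\eta$ arises as a trace of $T^b$ via a local unitary frame, it is also Bismut parallel: $\nabla^b\eta=0$. This already forces $|\eta|^2$ to be constant on the connected manifold $M$. The main identity I will exploit throughout is the formula $d\eta(X,Y)=\eta(T^b(X,Y))$, which follows directly from $\nabla^b\eta=0$ and the definition of torsion.

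For the Gauduchon claim, I will apply $\bar\partial$ to the defining relation $\partial\omega^{n-1}=-2\eta\wedge\omega^{n-1}$, also using its conjugate $\bar\partial\omega^{n-1}=-2\bar\eta\wedge\omega^{n-1}$, to arrive at
\[
\bar\partial\partial\omega^{n-1} \;=\; -2\,\bar\partial\eta\wedge\omega^{n-1}\;-\;4\,\eta\wedge\bar\eta\wedge\omega^{n-1}.
\]
As both terms are top degree, vanishing is equivalent to the pointwise trace identity $\mathrm{tr}_\omega(\bar\partial\eta)=-2|\eta|^2$. I will check this identity by evaluating $\bar\partial\eta(X,\bar Y)=\eta(T^b(X,\bar Y))$ in a local unitary frame, using the total skew-symmetry of $T^b$ together with the fact that $\eta_k$ is itself a trace of $T^b$.

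For the compact non-balanced case, the strategy is to show $[\partial\omega^{n-1}]\neq 0$ in $H^{n,n-1}_{\bar\partial}(M)$; then ``not strongly Gauduchon'' is immediate, and $H^{0,1}_{\bar\partial}(M)\neq 0$ follows from Serre duality $H^{n,n-1}_{\bar\partial}(M)\cong H^{0,1}_{\bar\partial}(M)^{\ast}$. The natural test class is $[\bar\eta]$, for which a direct computation gives
\[
\int_M \partial\omega^{n-1}\wedge\bar\eta \;=\; -2\int_M \eta\wedge\bar\eta\wedge\omega^{n-1} \;=\; -\tfrac{2|\eta|^2}{n}\,\mathrm{Vol}(M) \;\neq\; 0.
\]
The main obstacle is seeing this integral as an honest Serre pairing, i.e.\ showing $\bar\partial\bar\eta=0$, equivalently $\partial\eta=0$. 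To establish this, I will invoke the first Bianchi identity for $\nabla^b$: under $\nabla^b T^b=0$ it collapses to $\sum_{\mathrm{cyc}} R^b(X,Y)Z=\sum_{\mathrm{cyc}} T^b(T^b(X,Y),Z)$, and condition (\ref{eq:1.1}) kills the left-hand side on any triple of $(1,0)$-vectors. The resulting Jacobi-type identity on $T^b|_{T^{1,0}}$, contracted in a suitable index and combined with the definition of $\eta$ as a trace of $T^b$, yields $\partial\eta(X,Y)=\eta(T^b(X,Y))=0$.

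Finally, if $M$ were a $\partial\bar\partial$-manifold, then $\partial\omega^{n-1}$, being $\partial$-exact and (by the Gauduchon property just proved) $\bar\partial$-closed, would be $\partial\bar\partial$-exact and in particular $\bar\partial$-exact, contradicting the failure of strong Gauduchon. Since compact Kähler manifolds always satisfy the $\partial\bar\partial$-lemma, this also rules out any Kähler metric on $M$.
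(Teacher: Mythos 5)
Your proposal is correct and follows essentially the same route as the paper: both rest on $\nabla^b\eta=0$, the Bianchi-derived identity $\sum_r\eta_r T^r_{ij}=0$ giving $\partial\eta=0$, the trace formula for $\overline{\partial}\eta$ giving the Gauduchon property, and the non-vanishing of $\int_M \eta\wedge\overline{\eta}\wedge\omega^{n-1}$ played against a hypothetical primitive $\Omega$ of $\partial\omega^{n-1}$. The only cosmetic difference is that you package the conclusion $H^{0,1}_{\overline{\partial}}(M)\neq 0$ via Serre duality applied to $[\partial\omega^{n-1}]\in H^{n,n-1}_{\overline{\partial}}(M)$, whereas the paper exhibits $[\overline{\eta}]\neq 0$ directly by the dual integral; these are the same pairing.
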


It has been shown in \cite[Theorem 7 and Proposition 4]{ZhaoZ19Str} that compact non-K\"ahler {\em BKL} manifolds, or equivalently non-balanced {\em BKL} manifolds, does not admit any strongly Gauduchon metric and the Dolbeault group $H^{0,1}_{\overline{\partial}}(M)$ is non-trivial. We speculate that the same should be true for compact non-balanced {\em BTP} manifolds as well:

\begin{question} \label{conjecture1.3}
Let $(M^n,g)$ be a compact non-balanced Hermitian manifold satisfying $\nabla^b T^b=0$. Then $M$ does not admit any strongly Gauduchon metric.
\end{question}

By \cite[Theorem 2]{ZhaoZ19Str}, the following equivalence holds for $n=2$:
$$ \mbox{{\em BKL}} \quad  \Longleftrightarrow \ \quad  \mbox{{\em BTP}}  \quad \Longleftrightarrow \quad  \mathrm{Vaisman} $$
where Vaisman means a locally conformally K\"ahler manifold whose Lee form (which is $-2(\eta +\overline{\eta})$) is parallel under the Levi-Civita connection. Compact Vaisman surfaces are fully classified by Belgun \cite{B00} and a structure theorem for Vaisman manifolds is proved by Ornea and Verbitsky \cite{OV}.

In dimension $n\geq 3$, locally conformally K\"ahler manifolds can no longer be (non-K\"ahler) {\em BKL}, shown in \cite{YZZ}, but still can  be {\em BTP}. For instance, it is easy to check that for any $n$ the classic Hopf manifold
$$ M^n=({\mathbb C}^n\setminus \{0\})/ \langle f \rangle , \ \ \ f(z)=\lambda z, \ \ \ |\lambda |>1, \ \ \ \ \omega = \sqrt{-1}\frac{\partial \overline{\partial} |z|^2}{|z|^2} ,$$
is {\em BTP.}\, More generally, all Vaisman manifolds are {\em BTP}. In fact, a locally conformally K\"ahler manifold is {\em BTP} if and only if it is Vaisman, a result due to Andrada and Villacampa \cite[Theorem 3.6]{AndV}. Here we give a slightly more general statement:

\begin{proposition} \label{prop1.4}
A locally conformally K\"ahler manifold $(M^n,g)$ is BTP if and only if it is Vaisman. Furthermore, if two conformal Hermitian manifolds  $(M^n,g)$ and $(M^n,e^{2u}g)$ are both BTP, with $du\neq 0$ in an open dense subset of $M$, then both manifolds are locally conformally K\"ahler (thus Vaisman).
\end{proposition}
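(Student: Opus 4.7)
The plan is to handle the two assertions of Proposition~\ref{prop1.4} separately. The first is the Andrada--Villacampa theorem \cite{AndV}, while the second is new and is where the real work lies.

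For the first assertion, suppose $(M^n,g)$ is locally conformally K\"ahler with (closed) Lee form $\theta$, so $d\omega=\theta\wedge\omega$. Because $J$ preserves $\omega$ and acts diagonally on wedges, the Bismut torsion three-form $c:=-J(d\omega)$ has the special product shape
\[
c=-(J\theta)\wedge\omega.
\]
Since $\nabla^b\omega=0$ and, for $n\geq 2$, the Lefschetz map $\alpha\mapsto\alpha\wedge\omega$ is injective on 1-forms, the BTP condition $\nabla^b c=0$ is equivalent to $\nabla^b(J\theta)=0$, and hence (via $\nabla^b J=0$) to $\nabla^b\theta=0$. Using the standard formula $\nabla^{\mathrm{LC}}=\nabla^b-\tfrac{1}{2}g^{-1}c$ together with the explicit shape of $c$, the correction tensor acting on $\theta$ unfolds into $|\theta|^2\omega$ plus terms killed by further contraction with $\theta$, so $\nabla^b\theta=0$ if and only if $\nabla^{\mathrm{LC}}\theta=0$, which is the Vaisman condition.

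For the second assertion, assume $g$ and $\tilde g=e^{2u}g$ are both BTP, with $du\neq 0$ on an open dense set $U\subset M$. Under the conformal change one has
\[
\tilde\omega=e^{2u}\omega,\qquad \tilde c=e^{2u}\bigl(c+2(Jdu)\wedge\omega\bigr),\qquad \tilde\eta=\eta-(n-1)\partial u,
\]
together with a linear-in-$du$ relation $\tilde\nabla^b=\nabla^b+A_u$ between the two Bismut connections (obtained by combining the conformal transformation of $\nabla^{\mathrm{LC}}$ with the transformation of the torsion three-form). Feeding these formulas into $\tilde\nabla^b\tilde c=0$ and subtracting $\nabla^b c=0$, the plan is to extract a tensorial identity that expresses $\nabla^b((Jdu)\wedge\omega)$ as an algebraic combination of $A_u$, $c$ and $\omega$. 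Because $A_u$ is nonvanishing on $U$, a type-decomposition argument should then force the primitive component of $c$ (the piece not of the form $\beta\wedge\omega$) to vanish on $U$. Once $c=\beta\wedge\omega$ on $U$, then $d\omega=(J\beta)\wedge\omega$ shows $g$ is LCK on $U$, and by continuity on all of $M$; the symmetric argument for $\tilde g$ yields the same conclusion, and the first assertion promotes both to Vaisman.

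The main obstacle is precisely this last step in the second assertion. Part one is essentially bookkeeping, but to squeeze LCK structure out of \emph{two} conformally related parallel-torsion conditions requires carefully tracking how the primitive part of the Bismut torsion transforms under both the conformal rescaling and the two parallel transport conditions, and then using the genericity $du\neq 0$ to invert the conformal correction $A_u$. I expect this to reduce, after some algebra, to a pointwise linear system on the primitive component of $c$ whose only solution is zero.
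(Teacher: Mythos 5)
Your first assertion is handled correctly, and your route (working with the torsion three-form $c=-Jd\omega=-(J\theta)\wedge\omega$, using $\nabla^b\omega=0$ and Lefschetz injectivity to reduce $\nabla^bc=0$ to $\nabla^b\theta=0$, then comparing with $\nabla^{LC}\theta$) is a clean real-form repackaging of the paper's frame computation; in fact the correction term $c(X,Y,\theta^{\#})$ vanishes identically for LCK torsion, which is slightly cleaner than the cancellation of $\eta\wedge\overline{\eta}-|\eta|^2\omega$ against its conjugate that the paper exhibits. This part is fine.

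The second assertion, however, contains a genuine gap, and it is exactly where you say "I expect this to reduce, after some algebra, to a pointwise linear system on the primitive component of $c$ whose only solution is zero." That reduction is the entire content of the proof, and it is not a pointwise linear system on $c$ alone. When you subtract the two parallelism conditions, the $(1,0)$-part produces an identity of the form
\begin{equation*}
u_{i,\ell}\,\delta_{kj}-u_{k,\ell}\,\delta_{ij}
=2u_kT^j_{i\ell}-2u_iT^j_{k\ell}+2u_iu_{\ell}\,\delta_{kj}-2u_ku_{\ell}\,\delta_{ij}-2\delta_{j\ell}\sum_r u_rT^r_{ik},
\end{equation*}
whose left-hand side involves the \emph{Hessian} of $u$, not just $du$. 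One cannot simply "invert $A_u$": the paper must first pass to a unitary frame with $\varphi_n=\partial u/|\partial u|_g$ (so $u_1=\cdots=u_{n-1}=0$, $u_n=\lambda>0$ on the open dense set $U$) and then make index choices ($j\notin\{i,k\}$, then $k=n$, $j\neq\ell$; then $k<n$, $j=\ell$; then $k=\ell=n$, $i=j<n$) that eliminate the Hessian terms and force, one family at a time, $T^j_{ik}=\frac{1}{n-1}(\eta_k\delta_{ij}-\eta_i\delta_{kj})$. Your proposal does not identify this mechanism, so the step that kills the primitive part of $c$ is missing rather than merely routine. A second, smaller gap: even after $c=\beta\wedge\omega$, "hence LCK" requires the Lee form to be closed. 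For $n\geq 3$ this follows from $d^2\omega=0$ and Lefschetz injectivity on $2$-forms, but your argument as written does not say so, and for $n=2$ the pure-trace condition is vacuous (every Hermitian surface satisfies $d\omega=\theta\wedge\omega$), so nothing is gained; the paper instead derives $d(\eta+\overline{\eta})=0$ in all dimensions from the BTP identities $\partial\eta=0$ and $\overline{\partial}\eta=-2\sum_{i,j}\overline{\phi^i_j}\,\varphi_i\wedge\overline{\varphi}_j$ together with the Hermitian symmetry $\phi^j_i=\overline{\phi^i_j}$ forced by the pure-trace torsion.
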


In other words, for each conformal class of Hermitian metrics, there is at most one {\em BTP} metric (up to constant multiples) unless the metric is locally conformally K\"ahler. When the manifold is compact, this is clearly true since {\em BTP} metrics are Gauduchon.

Next we consider {\em BTP} manifolds amongst the class of complex nilmanifolds with nilpotent $J$. Complex nilmanifolds form an important class of Hermitian manifolds, and are often used to test and illustrate theory in non-K\"ahler geometry. See \cite{CFU}, \cite{GZZ}, \cite{LZ}, \cite{Sal}, \cite{Uga}, \cite{VYZ}, \cite{LS} as a sample of examples. Following the discussions  of \cite{ZhaoZ19Nil}, we have

\begin{proposition} \label{prop1.5}
Let $(M^n,g)$ be a complex nilmanifold, namely, a compact Hermitian manifold whose universal cover is a nilpotent Lie group $G$ equipped with a left-invariant complex structure $J$ and a compatible left-invariant metric $g$. Assume that $J$ is nilpotent in the sense of \cite{CFGU}. Then $g$ is BTP if and only if there exists a unitary left-invariant coframe $\varphi$ on $G$ and an integer $1\leq r\leq n$ such that
\begin{equation*}
\left\{ \begin{array}{ll} d\varphi_i=0, \ \ \ \ \ \  \ \ \ \ \ \  \ \ \ \ \ \ \ \ \   \forall \ 1\leq i\leq r;\  \\ d\varphi_{\alpha} = \sum_{i=1}^r Y_{\alpha i}\,\varphi_i \wedge \overline{\varphi}_i, \ \ \forall \ r<\alpha \leq n.   \end{array} \right.
\end{equation*}
Note that the metric $g$ will be balanced when and only when $\sum_{i=1}^r Y_{\alpha i}=0$ for each $\alpha$.
\end{proposition}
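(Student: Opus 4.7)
Since $G$ carries a left-invariant complex structure and metric, the Bismut connection $\nabla^b$ and the torsion $T^b$ are left-invariant, so the BTP condition $\nabla^bT^b=0$ is an algebraic identity on the Lie algebra $\mathfrak g$ of $G$. The plan is to apply Theorem \ref{theorem1.1} and exploit the filtration supplied by the nilpotency of $J$ in the sense of \cite{CFGU}, in the spirit of \cite{ZhaoZ19Nil}. After applying Gram-Schmidt to a standard nilpotent coframe on $\mathfrak g$, one obtains a unitary left-invariant $(1,0)$-coframe $\{\varphi_1,\ldots,\varphi_n\}$ compatible with the filtration, in which
$$d\varphi_i=-\sum_{j<k<i}C^i_{jk}\,\varphi_j\wedge\varphi_k-\sum_{j,k<i}D^i_{jk}\,\varphi_j\wedge\overline{\varphi}_k.$$

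The forward direction proceeds by induction up the filtration. The main step is to use \eqref{eq:1.1} and \eqref{eq:1.2} to force $C^i_{jk}=0$ for all admissible $i,j,k$ and $D^i_{jk}=0$ whenever $j\neq k$. Concretely, I would compute $R^b_{XY Z\overline{W}}$ in terms of the structure constants using the standard formula $\nabla^b=\nabla^c+\tfrac12(T^c-(T^c)^*)$ relating the Bismut and Chern connections, evaluate at the bottom filtration layer where this expression collapses to a quadratic in the lowest-level $C$'s and $D$'s, and apply \eqref{eq:1.1} to kill them; the triangular property of the filtration then ensures that at higher levels the residual contributions to $R^b$ involve only constants already known to vanish, so the induction propagates. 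After this step each $d\varphi_i$ takes the form $\sum_{j<i}D^i_{jj}\,\varphi_j\wedge\overline{\varphi}_j$.

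Setting $r=\max\{k:d\varphi_i=0\text{ for all }i\leq k\}$, I would then use \eqref{eq:1.3}-\eqref{eq:1.4} to rule out $D^\alpha_{jj}\neq0$ for $j>r$: the parallel Ricci-type tensor $\mathrm{Ric}(Q)$ controls the diagonal structure constants at higher levels, and the condition $\mathrm{Ric}(Q)_{\chi\overline{Y}}=0$ couples $\chi$ (which encodes the full trace $\sum_i D^\alpha_{ii}$) back to these constants in a way that forces the higher diagonal terms to vanish. This yields the normal form of the proposition with $Y_{\alpha i}=D^\alpha_{ii}$. The converse is a direct verification: for the given structure equations $T^b$ is a constant-coefficient combination of $\varphi_\alpha\wedge\varphi_i\wedge\overline{\varphi}_i$-type terms and all four conditions of Theorem \ref{theorem1.1} are immediate from inspection. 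Finally, in a unitary coframe Gauduchon's one-form satisfies $\eta_\alpha= -\sum_i T^b(e_i,\overline{e}_i,\overline{e}_\alpha^\sharp)$, which under the normal form reduces to a constant multiple of $\sum_{i=1}^rY_{\alpha i}$, so the balanced condition $\eta\equiv0$ is equivalent to $\sum_{i=1}^rY_{\alpha i}=0$ for each $\alpha>r$.

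The main obstacle is the inductive vanishing argument for the $C^i_{jk}$ and the off-diagonal $D^i_{jk}$: one must carefully track which structure constants enter $R^b_{XYZ\overline{W}}$ at each filtration level and check that \eqref{eq:1.1} is strong enough to launch the induction from the bottom. Once this algebraic bookkeeping is in place, the rest of the argument is essentially mechanical.
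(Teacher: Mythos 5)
The paper does not prove the forward direction from scratch: it is a citation to Lemmas 1 and 2 of \cite{ZhaoZ19Nil} (and the discussion following them), which work directly with the algebraic identity $\nabla^bT^c=0$ on the left-invariant structure constants to conclude that $G$ is $2$-step, $J$ is abelian, and the normal form holds. Your route through the curvature characterization of Theorem \ref{theorem1.1} is a legitimate alternative in principle, and your converse and your identification of the balanced condition with $\sum_{i=1}^r Y_{\alpha i}=0$ are fine. But the forward direction as you sketch it has a genuine gap precisely at the step you defer as ``algebraic bookkeeping.''

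You assert that \eqref{eq:1.1}--\eqref{eq:1.2} force $D^i_{jk}=0$ for $j\neq k$ in the coframe obtained by Gram--Schmidt from a filtration-adapted coframe. This cannot be correct as stated: if the normal form holds in some unitary coframe, any constant unitary rotation of $\{\varphi_1,\dots,\varphi_r\}$ that does not preserve the eigenspace decompositions of the diagonal matrices $\mathrm{diag}(Y_{\alpha 1},\dots,Y_{\alpha r})$ produces another filtration-compatible unitary left-invariant coframe in which the matrices $D^{\alpha}=(D^{\alpha}_{jk})$ acquire off-diagonal entries, while the metric is of course still BTP. The statement to be proved is existential, and its actual content is that BTP forces the a priori non-Hermitian family $\{D^{\alpha}\}$ to be simultaneously unitarily diagonalizable, i.e.\ normal and generating a commutative $\ast$-algebra; your plan never produces the required rotation, and ``the induction propagates'' does not supply it. A second, lesser problem: your proposed use of \eqref{eq:1.3}--\eqref{eq:1.4} to kill the diagonal coefficients $D^{\alpha}_{jj}$ with $j>r$ cannot be the right mechanism, since in the balanced case $\chi=0$ makes \eqref{eq:1.4} vacuous while the normal form must still hold. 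In fact, once the lower levels are in diagonal form this step needs no curvature input at all: expanding $0=d^2\varphi_{\alpha}$ and reading off the coefficient of $\varphi_j\wedge\varphi_k\wedge\overline{\varphi}_k$ gives $D^{\alpha}_{jj}\,\overline{D^{j}_{kk}}=0$, and $j>r$ means some $D^{j}_{kk}\neq 0$, whence $D^{\alpha}_{jj}=0$. So the skeleton is salvageable, but the two steps you flag as mechanical are exactly where the proof lives.
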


In particular, $G$  (when not abelian) is a 2-step nilpotent group and $J$ is abelian. Let us take the $n=3$ case as an example. In this case, either $r=3$, where $G$ is abelian and $g$ is K\"ahler and flat, or $r=2$ and there exists a unitary coframe $\varphi$ such that
$$ d\varphi_1=d\varphi_2=0, \ \ d\varphi_3 = a\varphi_1\wedge \overline{\varphi}_1 + b \varphi_2\wedge \overline{\varphi}_2, $$
where $a$, $b$ are constants. By a unitary change if necessary, we may assume that $a>0$. Under such a coframe, the metric $g$ is balanced if and only if $b=-a$, while $g$ is {\em BKL} if and only if $b\in \sqrt{-1}{\mathbb R}$ is pure imaginary. For any other choice of $b$ values, $g$ is a non-{\em BKL}, non-balanced {\em BTP} metric. We will denote the following example by $N^3$,
\begin{equation}     \label{N_3}
d\varphi_1=d\varphi_2=0, \ \ \ \ \ d\varphi_3 = \varphi_1 \wedge \overline{\varphi}_1 - \varphi_2 \wedge \overline{\varphi}_2,
\end{equation}
which will appear in the study of Lie-Hermitian threefolds that are balanced {\em BTP}, in \S \ref{LH}.

The examples above show that when $n\geq 3$, there are more {\em BTP} manifolds than {\em BKL} manifolds, and there are more non-balanced {\em BTP} manifolds than balanced ones.  For complex nilmanifolds with $J$ not nilpotent, when $n=3$ one can show  by a lengthy computation that they cannot be {\em BTP}, but for $n\geq 4$, we do not know if they can be {\em BTP} or not. More generally, it would be a very interesting question on how to classify all {\em BTP} Lie-Hermitian manifolds (see for example \cite{VYZ} for a discussion on Lie-Hermitian manifolds).

For $n\geq 3$, one can naturally divide the discussion of (non-K\"ahler) {\em BTP} manifolds into two cases: non-balanced ones and balanced ones. All (non-K\"ahler) {\em BKL} manifolds and all (non-K\"ahler) Vaisman manifolds belong to the first subset,
where the $\nabla^b$-parallel vector field $\chi$ (dual to $\eta$) will play an important role. In \S \ref{NBBTP} we will discuss some general properties and structural results of such spaces following the ideas of \cite{YZZ}.

\begin{proposition} \label{prop1.6}
Let $(M^n,g)$ be a non-balanced BTP manifold with $n\geq 3$. Then the following holds:
\begin{enumerate}
\item\label{hlnm} Locally there always exist admissible frames, and the holonomy group of the Bismut connection is contained in $U(n-1)$.

\item The metric $g$ satisfies the LP condition if and only if it has degenerate torsion. In particular, for $n=3$, non-balanced BTP threefolds coincide with GCE threefolds of Belgun.
\end{enumerate}
\end{proposition}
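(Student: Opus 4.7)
\textbf{Proof plan for Proposition \ref{prop1.6}.}

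\emph{Part (1).} The plan is to promote the parallelness of $T^b$ to parallelness of $\chi$ and then exploit that. Since $\nabla^b$ is a Hermitian connection, it commutes with every canonical contraction built from $T^b$ and the metric. The totally skew Bismut torsion $T^b$ is (up to a $J$-twist) nothing but $d\omega$, so the Gauduchon $1$-form $\eta$, which satisfies $\partial\omega^{n-1}=-2\eta\wedge\omega^{n-1}$, is realized as a unitary trace of $T^b$. From $\nabla^b T^b=0$ we therefore obtain $\nabla^b\eta=0$, and hence $\nabla^b\chi=0$ since $\chi$ is the metric dual of $\eta$. Because $g$ is not balanced, $\chi\not\equiv 0$; because $\nabla^b$ is metric, $|\chi|_g$ is constant, so $\chi$ is nowhere zero on each connected component. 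Setting $e_n:=\chi/|\chi|_g$ and completing to a local unitary frame gives an admissible frame in the sense of \S \ref{NBBTP}. Finally, parallel transport by $\nabla^b$ fixes the vector $\chi(p)$ at each base point $p$, so the holonomy at $p$ lies in the stabilizer of $\chi(p)$ inside $U\bigl(T^{1,0}_pM\bigr)\cong U(n)$, which is $U(n-1)$.

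\emph{Part (2).} Using the admissible frame constructed in (1), the Bismut torsion decomposes into components $T^b_{ij\bar{k}}$, which are all constant along $\nabla^b$-parallel transports. The distinguished direction $e_n$ singles out the ``mixed'' components $T^b_{ij\bar n}$ and $T^b_{in\bar n}$, and condition \eqref{eq:1.4} of Theorem \ref{theorem1.1}, namely $\mathrm{Ric}(Q)_{\chi\bar Y}=0$, becomes a linear constraint relating these components. I would expand both the LP condition and the degenerate torsion condition in this frame, then show by direct comparison that, under \eqref{eq:1.1}--\eqref{eq:1.4}, one is algebraically equivalent to the other: one direction shows degenerate torsion forces the LP identity, while the other uses \eqref{eq:1.4} to deduce degeneracy from LP. For $n=3$ the number of independent torsion components collapses to a handful, the pair of equivalent conditions reduces to the vanishing of a single scalar quantity, and a direct translation matches this with Belgun's defining property of GCE threefolds.

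The main obstacle is in part (2), where the equivalence between the LP condition and degenerate torsion has to be extracted by component-wise bookkeeping in the admissible frame, with \eqref{eq:1.4} serving as the crucial algebraic bridge; the identification with Belgun's GCE class for $n=3$ then amounts to translating between two different languages (Hermitian admissible-frame style versus the conformally flavored definition of Belgun). Part (1), by contrast, is essentially a direct corollary of parallelness of $\chi$ together with the fact that metric connections preserve norms.
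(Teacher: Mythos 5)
Your part (1) is essentially on the right track for the vector field $\chi$ and for the holonomy: parallelness of $\eta$ (hence of $\chi$) follows from $\nabla^bT^b=0$, constancy of $|\chi|$ makes it nowhere zero, and the holonomy sits in the stabilizer of $\chi(p)$, i.e.\ $U(n-1)$ --- this matches the paper, which phrases the same fact via the block-diagonality of $\Theta^b$ in a frame with $\theta^b_{n\ast}=\theta^b_{\ast n}=0$. However, ``completing $e_n=\chi/|\chi|$ to a local unitary frame'' does \emph{not} yet produce an admissible frame: Definition \ref{adm} also requires the matrix $\phi^j_i=\sum_r\overline{\eta}_rT^j_{ir}$ to be diagonal (with globally constant eigenvalues, by Corollary \ref{Tparallel_add}). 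To get that you must first prove $\phi$ is a normal matrix, which the paper extracts by setting $k=\ell=n$ in \eqref{dbT_refined} together with the symmetry $R^b_{i\bar jk\bar\ell}=R^b_{k\bar\ell i\bar j}$ and $R^b_{\ast\ast\chi\ast}=0$, yielding $\phi\phi^{\ast}=\phi^{\ast}\phi$; only then can one unitarily rotate $e_1,\dots,e_{n-1}$ (keeping $e_n$ fixed) to diagonalize $\phi$. This normality step is missing from your argument and is not automatic.

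In part (2) the proposed mechanism is off. Condition \eqref{eq:1.4} is not the ``algebraic bridge'': it is already subsumed in the BTP hypothesis and plays no distinguished role in this equivalence. What actually drives it is the explicit computation in an admissible frame, using $\partial\eta=0$ and the formula for $\overline{\partial}\eta$ from Proposition \ref{prop3.6} together with the diagonality of $\phi$: one finds $\eta\wedge\partial\overline{\eta}=-2\lambda^2\sum_{i<n}T^i_{in}\,\varphi_i\wedge\varphi_n\wedge\overline{\varphi}_i$, while $-\sqrt{-1}\,\partial\omega=2\sum_{i<k,\,j<n}T^j_{ik}\,\varphi_i\wedge\varphi_k\wedge\overline{\varphi}_j$; the proportionality $\partial\omega=c\,\eta\wedge\partial\overline{\eta}$ then visibly forces exactly $T^j_{ik}=0$ for all $i,k<n$, which is the degenerate torsion condition, and conversely. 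More seriously, the $n=3$ claim is not a matter of ``translating between languages'': you must prove that every non-balanced BTP threefold \emph{automatically} has degenerate torsion, so that the LP condition is always satisfied and the class coincides with GCE. That step uses $T^1_{23}=T^2_{13}=0$ (diagonality of $\phi$), $T^3_{ij}=0$ (from $\sum_r\eta_rT^r_{ij}=0$, equation \eqref{eq:5.1}), and then $T^1_{12}=\eta_2-T^3_{32}=0$ and $T^2_{12}=-\eta_1+T^3_{31}=0$ since $\eta_1=\eta_2=0$ in an admissible frame. Without this your ``in particular'' clause remains unproved.
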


For the special case when $(M^n,g)$ is Vaisman, the conclusion \eqref{hlnm} is due to Andrada and Villacampa \cite{AndV}. Recall that a local unitary frame $e$ on a non-balanced {\em BTP} manifold is said to be {\em admissible,} if $\chi =\lambda e_n$ and $\phi^j_{i}= \lambda a_i \delta_{ij}$ for any $i$, $j$, where $\lambda >0$ and $a_i$ are globally defined constants, as in Definition \ref{adm}. Here the tensor $\phi$ is defined by $\phi_i^j = \sum_{k} \overline{\eta}_k T_{ik}^j$, where $\eta_k$ and $T_{ik}^j$ are components of $\eta$ and the Chern torsion $T$ under local unitary frame. The metric $g$ is said to have {\em degenerate torsion} if under any admissible frame $e$ it holds $T^{\ast}_{ik}=0$ for any $i<k<n$, as given in Definition \ref{degtor_def}. The {\em Lee potential} ({\em LP}) condition of Belgun \cite{B12} means that a Hermitian manifold satisfying the condition
\[\partial \eta =0 , \quad \partial \omega = c \,\eta \wedge\partial \overline{\eta},\]
where $c\neq 0$ is a constant, and a Hermitian manifold is {\em generalized Calabi-Eckmann} ({\em GCE}\,) if it is both {\em BTP} and {\em LP}.

The main result of \S \ref{NBBTP} is the following classification theorem of non-balanced {\em BTP} threefolds.
\begin{theorem}\label{3DNBBTP}
Let $(M^3,g)$ be a non-balanced BTP threefold. If we denote by $\{ \lambda_1, \lambda_2,0\}$ the three eigenvalues of the tensor $\phi$, it yields that
\begin{enumerate}
\item\label{twSSK} when $\lambda_1\overline{\lambda_2}+\overline{\lambda_1}\lambda_2 \neq 0$ and $\lambda_1\overline{\lambda_2}- \overline{\lambda_1}\lambda_2 \neq 0$, $(M^3,g)$ is locally twisted Sasakian product $N_1 \times_{\kappa} N_2$,
\item\label{BKL} when $\lambda_1\overline{\lambda_2}+\overline{\lambda_1}\lambda_2 = 0$, $(M^3,g)$ is BKL,
\item\label{gV} when $\lambda_1\overline{\lambda_2}+\overline{\lambda_1}\lambda_2 \neq 0$ and $\lambda_1\overline{\lambda_2}-\overline{\lambda_1}\lambda_2 = 0$, $(M^3,g)$ is generalized Vaisman. In particular, $(M^3,g)$ is Vaisman when $\lambda_1=\lambda_2$.
\end{enumerate}
\end{theorem}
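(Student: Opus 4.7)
The strategy is to work entirely within a local admissible frame, whose existence is guaranteed by Proposition \ref{prop1.6}. In such a frame $e=(e_1,e_2,e_3)$ we have $\chi=\lambda e_3$ and $\phi$ is diagonal with entries $(\lambda a_1,\lambda a_2,0)$; the vanishing of the last entry comes from $T^k_{nn}=0$ and identifies $\lambda_1=\lambda a_1$ and $\lambda_2=\lambda a_2$. The two sign combinations appearing in the statement then become
\[
\lambda_1\overline{\lambda_2}\pm\overline{\lambda_1}\lambda_2=\lambda^2\bigl(a_1\overline{a_2}\pm\overline{a_1}a_2\bigr),
\]
so the entire trichotomy reduces to whether the complex number $a_1\overline{a_2}$ is neither real nor purely imaginary (case (\ref{twSSK})), purely imaginary (case (\ref{BKL})), or nonzero real (case (\ref{gV})).

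Next, I would write out the Chern torsion and structure equations in this frame. By Proposition \ref{prop1.6}(2), the non-balanced BTP condition forces the degenerate torsion relation $T^{\ast}_{12}=0$, so the only surviving Chern torsion components are those producing $\phi$ through $T^j_{i3}$. Imposing $\nabla^bT^b=0$ together with $d^2\varphi_i=0$ reduces the derivatives of $\lambda,a_1,a_2$ to a closed system, and combined with the curvature identities of Theorem \ref{theorem1.1} forces the two eigenlines $\mathrm{span}\{e_1\}$ and $\mathrm{span}\{e_2\}$ of $\phi$, as well as $\mathbb{C}\chi$, to be $\nabla^b$-parallel holomorphic distributions.

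For case (\ref{BKL}), substituting $\mathrm{Re}(a_1\overline{a_2})=0$ into the explicit form of $Q$ produced by Theorem \ref{theorem1.1} collapses $Q$ to zero identically, which together with \eqref{eq:1.1} is precisely the BKL condition. For case (\ref{gV}), $\mathrm{Im}(a_1\overline{a_2})=0$ together with $\nabla^b\chi=0$ makes $\eta+\overline\eta$ a closed real one-form whose dual becomes Levi-Civita parallel after a conformal rescaling built from $\lambda$, producing the (generalized) Vaisman structure and specializing to ordinary Vaisman when $a_1=a_2$. The main obstacle is case (\ref{twSSK}): here I would use the two $\nabla^b$-parallel eigenlines of $\phi$ together with $\mathbb{C}\chi$ to build transverse holomorphic foliations, whose leaves (by $\nabla^bT^b=0$ and the genericity of $a_1\overline{a_2}$) carry Sasakian structures sharing the Reeb direction along $\chi$; gluing them along $\chi$ with a twisting function $\kappa$ determined by $\lambda$ and the ratio $a_1/a_2$ yields the desired twisted Sasakian product $N_1\times_\kappa N_2$. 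Verifying that $\kappa$ assembles into globally well-defined data on the leaf spaces, and that each factor satisfies all the Sasakian axioms, is the most technical step of the argument.
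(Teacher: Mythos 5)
Your reduction of the trichotomy to the sign/vanishing pattern of $a_1\overline{a_2}$ (with $\lambda_i=\lambda a_i$, $a_1+a_2=1$ after normalization) is correct, and your sketches of cases (\ref{BKL}) and (\ref{gV}) are essentially the paper's argument: in an admissible frame the degenerate torsion leaves only $T^1_{13}$ and $T^2_{23}$, one checks $P^{j\ell}_{ik}=0$ (equivalently $Q=0$) in case (\ref{BKL}), and in case (\ref{gV}) the reality of both eigenvalues of $\phi$ (which requires using $a+b=\lambda\in\mathbb{R}$, not just $\mathrm{Im}(a\overline b)=0$) gives $\overline{\partial}\eta+\partial\overline{\eta}=2\sum(\phi_i^j-\overline{\phi_j^i})\varphi_i\wedge\overline{\varphi}_j=0$, hence a closed Lee form; note that generalized Vaisman requires nothing more than this, so your extra clause about the dual becoming Levi-Civita parallel after a conformal rescaling is both unsupported and unnecessary (it is only established when $\lambda_1=\lambda_2$).

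The genuine gap is in case (\ref{twSSK}), and it is conceptual rather than merely technical. You describe two Sasakian structures ``sharing the Reeb direction along $\chi$,'' but in the twisted Sasakian product the two Reeb fields are \emph{linearly independent}: the paper sets $\xi_1=\frac{-\sqrt{-1}}{\sqrt2\,|a|}(a\overline{e}_3-\overline a e_3)$ and $\xi_2=\frac{-\sqrt{-1}}{\sqrt2\,|b|}(b\overline{e}_3-\overline b e_3)$, and these two real unit fields span the real $2$-plane underlying $\mathbb{C}\chi$ precisely because $a\overline b-\overline a b\neq 0$; if they shared a direction the product $N_1\times N_2$ could not be six real dimensional, and the twisting parameter $\kappa=x+y\sqrt{-1}$ with $x=\frac{-(a\overline b+\overline a b)\sqrt{-1}}{a\overline b-\overline a b}$, $y=\frac{2|a||b|\sqrt{-1}}{a\overline b-\overline a b}$ would be undefined. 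Your proposal never explains where the hypothesis $\lambda_1\overline{\lambda_2}-\overline{\lambda_1}\lambda_2\neq 0$ enters, and with the ``shared Reeb'' picture it cannot enter, so the construction of the two factors and of $\kappa$ does not go through as described. A secondary inaccuracy: the eigenlines $\mathbb{C}e_1$, $\mathbb{C}e_2$ of $\phi$ are $\nabla^b$-parallel but not holomorphic subbundles (one has $\langle\nabla^c_{\overline{e}_1}e_1,\overline{e}_3\rangle=2\overline{T^1_{31}}\neq0$); the relevant parallel objects are the real distributions $\mathbb{R}\xi_i\oplus\mathbb{C}e_i$, whose integral leaves are the Sasakian threefolds $N_i$, matched against the model structure equations of Lemma \ref{twisted}.
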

The {\em twisted Sasakian product} here is actually the modified Hermitian structure on the Sasakian product, introduced by Belgun \cite{B12}, which we will see in Definition \ref{SSK} and \ref{SSKproduct}. A Hermitian manifold $(M^n,g)$ is said to be {\em generalized Vaisman} if it is {\em BTP} and the Lee form, defined as $-2(\eta+\overline{\eta})$ in this paper, is $d$-closed. It follows from Proposition \ref{prop1.4} that a Vaisman manifold is necessarily generalized Vaisman.

The balanced {\em BTP} manifolds, on the other hand, form a rather restrictive but non-empty subset. In the following, we will discuss the classification problem for such manifolds in dimension 3, starting from a technical lemma obtained in \cite{ZhouZ} for special frames on balanced threefolds.  Recall that the $B$ tensor of a Hermitian manifold $(M^n,g)$ is defined by $B_{i\overline{j}}=\sum_{r,s} T^j_{rs}\overline{T^i_{rs}}$ under any local unitary frame, where $T^j_{ik}$ are components of the Chern torsion under the frame.

\begin{proposition}\label{Btype}
Let $(M^3,g)$ be a non-K\"ahler balanced BTP threefold. Then, for any $p\in M$, there exists a local unitary frame near $p$ such that
\[B = \begin{bmatrix}  c & 0  & 0 \\ 0 & 0 &  0  \\ 0 & 0 & 0 \end{bmatrix}\!\!,\
\begin{bmatrix}  c & 0  & 0 \\ 0 & c &  0  \\ 0 & 0 & 0 \end{bmatrix}\  \text{or} \
\begin{bmatrix}  c & 0  & 0 \\ 0 & c &  0  \\ 0 & 0 & c \end{bmatrix}\!\!,\]
for some constant $c>0$.
\end{proposition}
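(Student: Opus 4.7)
The plan is to combine the Takagi-type normal form for the Chern torsion of non-K\"ahler balanced threefolds (the technical lemma from \cite{ZhouZ} alluded to immediately before the statement) with the curvature identities of Theorem~\ref{theorem1.1}. Pointwise, $B$ is a Hermitian and positive semi-definite $3\times 3$ matrix, so it is automatically unitarily diagonalizable as $\mathrm{diag}(b_1,b_2,b_3)$ with $b_i \ge 0$. The real content of the statement is therefore twofold: at each point the non-zero eigenvalues of $B$ must coincide, and the common value $c$ is constant.

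I would first apply the lemma from \cite{ZhouZ} to produce a local unitary frame adapted to the balanced structure in which the only non-vanishing Chern torsion components are $T^{1}_{23}$, $T^{2}_{31}$, $T^{3}_{12}$, with non-negative moduli $a_1,a_2,a_3$; this uses the balanced condition $\eta=0$ in an essential way, via an Autonne--Takagi factorization of the symmetric matrix dual to $T^c$ under the Hodge star on $\Lambda^{2,0}$. In such a frame, $B$ is automatically diagonal with $B_{i\bar i}=2a_i^2$, and everything reduces to showing that the non-zero $a_i$ agree.

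To extract this equality I would feed the BTP hypothesis into \eqref{eq:1.1}--\eqref{eq:1.2}: the first Bianchi-type vanishing $R^b_{XYZ\overline W}=0$ together with the exchange symmetry $R^b_{X\overline YZ\overline W}=R^b_{Z\overline WX\overline Y}$. Expanding $R^b$ in the Takagi frame via the standard algebraic comparison with $R^c$, the torsion-quadratic correction terms produced by these two symmetries yield, after specialization of indices, relations of the shape $a_i a_j(a_i^2-a_j^2)=0$ for $i\ne j$, which is exactly what is needed. The local constancy of the common value $c$ then follows from $\nabla^b T^b=0$: in the balanced case this forces $\nabla^b B=0$, since the $(2,1)$-part of $T^b$ recovers the full Chern torsion by metric pairing, and the eigenvalues of a $\nabla^b$-parallel tensor are locally constant.

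The main obstacle I anticipate is the algebraic extraction step. Neither of the BTP curvature symmetries alone suffices, and one has to use the explicit formula for $R^b-R^c$ in terms of torsion to convert them into clean equations in the $a_i$. Carefully managing the off-diagonal connection coefficients in the Takagi frame, and using the additional first-order relation $\nabla^b Q=0$ that Theorem~\ref{theorem1.1} delivers under BTP, should provide enough constraints to separate the genuinely algebraic identities on $(a_1,a_2,a_3)$ from first-order ones, and thereby complete the proof.
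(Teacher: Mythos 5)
Your setup agrees with the paper's: both start from the special frame of \cite{ZhouZ} in which the only torsion components are $a_1=T^1_{23}\ge a_2=T^2_{31}\ge a_3=T^3_{12}\ge 0$, so that $B=2\,\mathrm{diag}(a_1^2,a_2^2,a_3^2)$, and both get constancy of the eigenvalues from $\nabla^bB=0$. The gap is in your central step. You propose to obtain the relations $a_ia_j(a_i^2-a_j^2)=0$ as \emph{algebraic} consequences of \eqref{eq:1.1}--\eqref{eq:1.2} by expanding the torsion-quadratic correction terms in $R^b-R^c$. But in the special frame those quadratic identities are satisfied identically for \emph{arbitrary} $(a_1,a_2,a_3)$: the cyclic sum in \eqref{permutation} is totally antisymmetric in its three lower indices, so in dimension $3$ it only needs to be checked for $(i,j,k)=(1,2,3)$, where it reduces to $T^3_{12}T^\ell_{33}+T^1_{23}T^\ell_{11}+T^2_{31}T^\ell_{22}=0$ term by term; likewise the symmetry \eqref{eq:1.2} translates via Lemma \ref{swap} into statements about $T^j_{ik,\bar\ell}$, which vanish anyway under BTP. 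So no relation among the $a_i$ can be extracted this way, and the step "after specialization of indices" has nothing to specialize.

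What actually forces the eigenvalue coincidences is differential, not algebraic. Since the $a_i$ are constants and $\nabla^bT=0$, the identity $dT^j_{ik}=\sum_r(T^j_{rk}\theta^b_{ir}+T^j_{ir}\theta^b_{kr}-T^r_{ik}\theta^b_{rj})=0$ pins down the shape of $\theta^b$ (equations \eqref{eq:6.5}--\eqref{eq:6.6}); feeding this into the structure equation and imposing $d^2\varphi_3=0$ then produces the genuine obstructions, e.g.\ $(a_2^2-a_1^2)\,\varphi_3\wedge(\varphi_2\overline\varphi_2-\varphi_1\overline\varphi_1)=0$ when $a_3=0$ and $4a_3^2\,\varphi_3\wedge(\varphi_1\overline\varphi_1+\varphi_2\overline\varphi_2)=0$ when $a_1=a_2>a_3$. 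The remaining subcase $a_1>a_2>a_3>0$ is not killed by any such relation at all: there \eqref{eq:6.5} forces $\theta^b=0$, i.e.\ Bismut flatness, which is incompatible with being balanced and non-K\"ahler --- a separate structural input your proposal does not anticipate. To repair your argument you would need to replace the purely algebraic extraction by these first-order constraints on $\theta^b$ together with the integrability conditions $d^2\varphi_i=0$ (equivalently, the differentiated Bianchi identities), plus the Bismut-flat exclusion for the all-positive-distinct case.
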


Such a local unitary frame will be called \emph{a special frame} in \S \ref{BS3D}. We will carry out the classification of non-K\"ahler balanced {\em BTP} threefolds according to the rank of the tensor $B$.

\begin{definition} \label{mddtype}
A balanced {\em BTP} threefold $(M^3,g)$ is said to be of {\em middle type,} if $\mathrm{rank}\,B=2 $, or equivalently, for any $p\in M$, under a special frame near $p$
$$ B =  \begin{bmatrix} c & 0  & 0 \\ 0 & c &  0  \\ 0 & 0 & 0 \end{bmatrix}    $$
for some constant $c>0$.
\end{definition}

We would like to illustrate two non-middle type examples. First let us consider the complex Lie group $SO(3, {\mathbb C})$ consisting of all $3\times 3$ complex matrices $A$ such that $\det (A)=1$ and $\,^t\!A A=I$. Let $\varphi$ be a left-invariant coframe satisfying
\begin{equation}
d\varphi_1 =  \varphi_2 \wedge \varphi_3, \ \ \ d\varphi_2 =  \varphi_3 \wedge \varphi_1, \ \ \ d\varphi_3 =  \varphi_1 \wedge \varphi_2 . \label{eq:2}
\end{equation}
Let $g$ be the Hermitian metric with $\varphi$ being a unitary coframe. Then it is easy to check that $g$ is balanced {\em BTP} and its $B$ tensor takes the form
\[B = \frac{1}{2}\!\!\begin{bmatrix}  1 & 0  & 0 \\ 0 & 1 &  0  \\ 0 & 0 & 1 \end{bmatrix}\!\!.\]
So any compact quotient of $SO(3, {\mathbb C})$ is a non-K\"ahler, balanced {\em BTP} threefold of non-middle type. Note that this example has trivial canonical line bundle and is Chern flat. We remark that for this Hermitian threefold, the holonomy group of $\nabla^b$ is reduced to $SO(3)\subseteq U(3)$, but not contained in $U(2)\times 1$  like in the non-balanced {\em BTP} case.

There is another example of balanced {\em BTP} threefold which is not of the middle type: the {\em Wallach threefold} $(X,g)$. As a complex manifold, $X$ is the flag threefold $X={\mathbb P}(T_{{\mathbb P}^2} )$. Let $\tilde{\omega}$ be the K\"ahler-Einstein metric on $X$ with $\mbox{Ric}(\tilde{\omega})=2\tilde{\omega}$. Then the K\"ahler form of $g$ is given by $\omega = \tilde{\omega} - \sigma$, where $\sigma$ is the $(1,1)$-form on $X$ given by a holomorphic section $\xi \in H^0(X, \Omega_X\!\otimes\!L) \cong {\mathbb C}$ with $\parallel \! \xi \! \parallel^2_{\tilde{\omega}}=\frac{1}{2}$. Here $\Omega_X$ is the holomorphic cotangent bundle, and $L$ is the ample line bundle such that $L^{\otimes 2}$ is the anti-canonical line bundle of $X$.

We will see in \S \ref{WCH3D} that $(X,g)$ is non-K\"ahler, balanced {\em BTP}. We will also show that its Levi-Civita (Riemannian) connection has constant Ricci curvature $6$ and has non-negative sectional curvature. 
Its $B$ tensor takes the form \[ B=\frac{1}{2}\!\! \begin{bmatrix} 1 & 0  & 0 \\ 0 & 0 &  0  \\ 0 & 0 & 0 \end{bmatrix}\!\!,\] so $(X,g)$ is not of middle type. This example is Fano, namely, its anti-canonical line bundle is ample. The Chern connection of $g$ has nonnegative bisectional curvature and positive holomorphic sectional curvature.

As observed by Wallach in \cite{Wallach}, all the homogeneous metrics with positive sectional curvature on $X$ are Hermitian. This set of metrics (after scaling) forms a peculiar plane region (see Figure 1 in \cite{BM}), and our metric $g$ lies in the boundary of this region.

We have the following classification theorem of balanced {\em BTP} threefolds, which is the second main theorem of this article. The proofs of the three cases below will be given in \S \ref{BS3D}, \S \ref{FANO} and \S \ref{str}.
\begin{theorem} \label{classification}
Let $(M^3,g)$ be a compact non-K\"ahler, balanced BTP threefold. After scaling the metric by a constant multiple, it holds that,
\begin{enumerate}
\item when $\mbox{rank}\,B=1$, $M$ is holomorphically isometric to the Wallach threefold.
\item when $\mbox{rank}\,B=2$, namely the middle type, then there is an unbranched cover $\pi : \hat{M} \rightarrow M$ of degree at most $2$ on which there is a bi-Hermitian structure $(\hat{M},g,J,I)$, where $(\hat{M},g,J)$ is balanced BTP while $(\hat{M},g,I)$ is Vaisman, with $IJ=JI$ and the Bismut connections of the two Hermitian structures $(\hat{M},g,J)$ and $(\hat{M},g,I)$ coincide. In particular, $\hat{M}$ is a smooth fiber bundle over $S^1$ with fiber being a Sasakian 5-manifold. Furthermore, a compact non-K\"ahler, balanced BTP threefold of middle type admits no pluriclosed metric.
\item when $\mbox{rank}\,B=3$, $M$ is holomorphically isometric to a compact quotient of $SO(3, {\mathbb C})$ with global unitary coframe satisfying (\ref{eq:2}).
\end{enumerate}
\end{theorem}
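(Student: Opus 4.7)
The plan is to split the proof along the three ranks of $B$ provided by Proposition \ref{Btype}, using in each case a special frame near an arbitrary point $p\in M$. Since $g$ is balanced BTP, Theorem \ref{theorem1.1} gives $R^b_{XYZ\overline{W}}=0$, the symmetry $R^b_{X\overline{Y}Z\overline{W}}=R^b_{Z\overline{W}X\overline{Y}}$, plus $\nabla^b\mathrm{Ric}(Q)=0$; moreover the balanced hypothesis $\eta=0$ kills the fourth identity trivially and makes $\nabla^b T^b=0$ available throughout. The common first step is to combine $\nabla^b T^b=0$ with the balanced condition to show that in the special frame of Proposition \ref{Btype} the nonzero components of the Chern torsion $T^j_{ik}$ are locally constant, and the Bismut curvature is parallel as well. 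This is the translation mechanism that will let me identify each case with its model.

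\textbf{Rank $3$ case.} Here $B=c\,I$, and the balanced condition $\sum_j T^j_{ji}=0$ together with the $\nabla^b T^b=0$ relations forces $T^j_{ik}$ to be totally skew in all three indices. Hence the tensor $T^j_{ik}$ is a constant multiple of an $\epsilon$-symbol in the special frame. I would then compute $R^b$ from the Bismut structure equations and check that it vanishes, so $(M^3,g)$ is Bismut flat. By the Kamber--Tondeur/Wang-type theorem for Bismut flat Hermitian manifolds (invoked already for $\nabla^b$ flat with parallel torsion), the universal cover is a Lie group with bi-invariant metric, and the structure constants given by the $\epsilon$-tensor are exactly those of $\mathfrak{so}(3,{\mathbb C})$. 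Matching conventions yields (\ref{eq:2}), which gives the identification with a compact quotient of $SO(3,{\mathbb C})$.

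\textbf{Rank $1$ case (Wallach).} Here $B=\mathrm{diag}(c,0,0)$ in the special frame. The BTP identity $\nabla^b T^b=0$ together with the shape of $B$ forces the only nontrivial Chern torsion components to be $T^1_{23}$ and $T^1_{32}$ (up to symmetry), of common modulus determined by $c$. I would then exploit $\nabla^b\mathrm{Ric}(Q)=0$ and the Bianchi identity for $\nabla^b$ to show that $R^b$ is determined by $c$ and is parallel, so $\nabla^b R^b=0$. By Sekigawa's theorem (quoted in the introduction), the universal cover is a homogeneous Hermitian manifold. I would match the curvature invariants and torsion norms against those of the Wallach threefold $(X,\tilde\omega-\sigma)$ described in the excerpt; since the Wallach example is known to realize exactly this set of invariants and since the Hermitian homogeneous model with these invariants is unique up to isometry, one concludes that $M$ is holomorphically isometric to $X$ after the stated rescaling.

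\textbf{Rank $2$ (middle) case.} With $B=\mathrm{diag}(c,c,0)$ in the special frame, the kernel of $B$ is a well-defined real rank-$2$ distribution on $M$; call it $\mathcal V$, spanned by $e_3,\overline{e_3}$ in any special frame. Using $\nabla^b T^b=0$ and the compatibility of special frames, $\mathcal V$ is $\nabla^b$-parallel and integrable. On $\mathcal V$, I would define a second almost complex structure $I$ by keeping $J$ on $\mathcal V^\perp$ and reversing its sign on $\mathcal V$ (equivalently by rotating $e_3$ by a unit quaternion), so $IJ=JI$. Parallelness of $\mathcal V$ and of $T^b$ under $\nabla^b$ implies that $\nabla^b I=0$, hence $I$ is integrable, $g$ is $I$-Hermitian, and the Bismut connections of $(g,J)$ and $(g,I)$ coincide. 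The possible obstruction to $I$ being globally defined is a $\pm 1$ ambiguity on $\mathcal V$, which is handled by passing to an unbranched double cover $\pi\colon\hat M\to M$. For $(\hat M,g,I)$ the torsion and Lee form computations reduce, via the special frame, to those of a Vaisman structure with Lee vector in $\mathcal V$, and the Lee form is $\nabla$-parallel; hence $(\hat M,g,I)$ is Vaisman. The fiber bundle description over $S^1$ with Sasakian $5$-manifold fiber then follows from the Ornea--Verbitsky structure theorem cited in the introduction. Finally, to rule out a pluriclosed metric in this case, I would argue by contradiction: a pluriclosed balanced BTP threefold is Kähler by Corollary \ref{corollary1.2} (combining $\partial\bar\partial\omega^2=0$ with pluriclosed and Theorem \ref{theorem1.1}), contradicting non-Kähler and $\mathrm{rank}\,B=2$.

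The hardest step is the rank $1$ case: showing rigidity against the Wallach model, since the abstract output of Sekigawa/Ambrose--Singer only gives a homogeneous model, and one must verify that the invariants of that model match exactly those of $(X,\tilde\omega-\sigma)$ rather than some other homogeneous Hermitian threefold. Rank $2$ is next-hardest because of the need to globalize $I$ on a double cover and to verify the Vaisman identities directly; rank $3$ is the most straightforward once $T^j_{ik}$ is identified with $\epsilon_{ijk}$.
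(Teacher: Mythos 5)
Your overall architecture (split by $\mathrm{rank}\,B$ via Proposition \ref{Btype}, work in a special frame, show the torsion components are constants) matches the paper, but each of the three cases contains a step that fails. In the rank $3$ case you claim $R^b=0$; this cannot be right, because a balanced Bismut-flat manifold is necessarily K\"ahler (the paper uses exactly this fact to kill the subcase $a_1>a_2>a_3>0$ in the proof of Proposition \ref{Btype}), and indeed here $\Theta^b$ is skew-symmetric with holonomy $SO(3)$, not trivial. What is actually true is \emph{Chern} flatness: one shows $\theta=\theta^b-2\gamma$ is skew-symmetric, feeds this into the first Bianchi identity to get $^t\Theta\wedge\varphi=0$ (Chern K\"ahler-like), deduces $R^c=0$ from the skew-symmetry of $\Theta$, and then invokes Boothby's theorem to land on $SO(3,{\mathbb C})$. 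In the rank $1$ case your appeal to Sekigawa requires $\nabla^bR^b=0$, which is not established and is not a consequence of BTP (the paper explicitly warns that $\nabla^b$ need not be an Ambrose--Singer connection); and even granting homogeneity, "matching curvature invariants" is not a uniqueness proof. The paper's argument here is irreplaceably algebro-geometric: $\mathrm{Ric}(\omega)=2\tilde\omega$ produces a K\"ahler--Einstein metric, $M$ is Fano of even index with $-K_M=2L$, the exact sequence $0\to E\to T^{1,0}M\to L\to 0$ forces $c_1c_2=2c_3+\tfrac14c_1^3$, which together with $c_1c_2=24$ and the Iskovskikh--Fujita classification eliminates every del Pezzo threefold except ${\mathbb P}(T_{{\mathbb P}^2})$, and ${\mathbb P}^3$ is excluded by showing no section of $\Omega(2)$ has constant norm. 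None of this is recoverable from your outline.

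In the rank $2$ case the complex structure you propose ($I=J$ on ${\mathcal V}^{\perp}$, $I=-J$ on ${\mathcal V}$) is not the one in the theorem: its $(1,0)$-coframe is $\{\varphi_1,\varphi_2,\overline{\varphi}_3\}$, and since $d(\varphi_{1\overline{1}}+\varphi_{2\overline{2}})=0$ while $d\varphi_{3\overline{3}}=(\varphi_{2\overline{1}}-\varphi_{1\overline{2}})\wedge(\overline{\varphi}_3-\varphi_3)$, its fundamental form does not satisfy $d\omega_I=\theta\wedge\omega_I$ for any $1$-form $\theta$, so it is not even locally conformally K\"ahler. The correct $I$ agrees with $J$ only on ${\mathbb R}\xi\oplus{\mathbb R}\xi'$ and is defined on $V=\xi^{\perp}\cap{\mathcal V}^{\perp}$ by $I(X)=\sqrt{2}\,\nabla_X\xi$ (sending $\varepsilon_3\mapsto\varepsilon_1$, $\varepsilon_4\mapsto\varepsilon_2$), which is genuinely different from $\pm J$ there; integrability and the Vaisman identities must then be verified directly from the structure equations, since "$\nabla^bI=0$ implies integrable" is not valid for a connection with torsion. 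Finally, the pluriclosed assertion concerns \emph{arbitrary} Hermitian metrics on $(M,J)$, not $g$ itself; your Corollary \ref{corollary1.2}/BKL argument only shows $g$ is not pluriclosed. The paper instead integrates $d(d\varphi_3\wedge\overline{\varphi}_3\wedge\tilde{\omega})+d(\varphi_3\wedge\overline{\varphi}_3\wedge\partial\tilde{\omega})$ over $M$ for a putative pluriclosed $\tilde{\omega}$, using that $d\varphi_3\wedge d\overline{\varphi}_3=2\varphi_{1\overline{1}2\overline{2}}$ is a global $(2,2)$-form, to force $\int_M\tilde{g}_{3\overline{3}}\,\omega^3=0$, a contradiction.
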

In \S \ref{LH}, we will study balanced {\em BTP} threefolds of middle type in the category of Lie-Hermitian manifolds and prove the following theorem
\begin{theorem}\label{LH_BBTP}
Let $(M^3,g)$ be a non-K\"ahler, balanced BTP Lie-Hermitian threefold of middle type, then after a scaling of the metric by a constant multiple, it is one of the members in $A_{a,b}$, $B_{u,v,w}^{\pm}$, $C_{u,v}$ or $D_{u,\rho}^{\pm}$.
\end{theorem}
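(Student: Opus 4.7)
The plan is to work with a left-invariant unitary coframe $\{\varphi_1,\varphi_2,\varphi_3\}$ on the underlying Lie group $G$. By Proposition~\ref{Btype}, since $(M^3,g)$ is of middle type we may choose this frame to be a special frame in which $B = \mathrm{diag}(c,c,0)$, and scaling the metric by a constant lets us normalize $c = 1$. In the left-invariant setting the structure equations take the shape
$$d\varphi_i \;=\; \sum_{j<k} C^{i}_{jk}\,\varphi_j\wedge\varphi_k \;+\; \sum_{j,k} D^{i}_{j\bar{k}}\,\varphi_j\wedge\overline{\varphi}_k,$$
with constant coefficients. The balanced hypothesis forces $\eta = 0$, which combined with the middle-type normalization of $B$ produces a first batch of algebraic vanishings among the $C^i_{jk}$ and $D^i_{j\bar k}$.

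Next I would impose the BTP condition $\nabla^b T^b = 0$ directly in this frame. Since every relevant tensor is left-invariant, $\nabla^b T^b$ is itself left-invariant and its vanishing reduces to a finite system of polynomial equations in the $C$'s and $D$'s. The Jacobi identity $d^2 = 0$ contributes further quadratic relations. I would also invoke the structural results of Theorem~\ref{classification}\eqref{mddtype}(2), in particular the compatible Vaisman structure $I$ on a double cover of $M$ commuting with $J$ and sharing the same Bismut connection; this tightly restricts the shape of the $(1,1)$-part of the torsion and identifies a distinguished $\nabla^b$-parallel direction that must be visible among the structure constants.

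After these reductions I would exploit the residual unitary gauge freedom — rotations in $U(2)\times U(1)$ preserving the normal form of $B$ — to put the remaining $D^{i}_{j\bar k}$ block into a canonical shape. A case analysis on the rank and eigenvalue pattern of the surviving torsion block, together with discrete sign choices coming from square roots in the normal-form reduction, should produce exactly four parametric families matching $A_{a,b}$, $B^{\pm}_{u,v,w}$, $C_{u,v}$, and $D^{\pm}_{u,\rho}$. One closes the argument by verifying directly that each of these four families satisfies balanced BTP of middle type, ensuring that no branch is spurious.

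The main obstacle is the combinatorial bookkeeping: the Jacobi identity nonlinearly couples the $C$-coefficients with the $D$-coefficients, and one must carefully track which unitary normalizations remain available at each stage of the case split in order to see that the four stated families are mutually distinct and genuinely exhaust all solutions. The discrete signs $\pm$ appearing in the $B$ and $D$ families are the most delicate point, since they correspond to inequivalent structures that cannot be absorbed into the continuous $U(2)\times U(1)$ normalizations; one has to check that $B^{+}_{u,v,w}$ and $B^{-}_{u,v,w}$ (respectively $D^{+}_{u,\rho}$ and $D^{-}_{u,\rho}$) are not equivalent as Lie-Hermitian manifolds, which amounts to identifying a discrete invariant — for instance, the sign of a suitable contraction of $\nabla^b T^b$ against itself — that distinguishes them.
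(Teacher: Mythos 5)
Your overall strategy --- pass to a left-invariant special frame via Proposition \ref{Btype}, reduce the balanced, BTP and $d^2=0$ conditions to a polynomial system in constant structure coefficients, and then case-split --- is the same route the paper takes in \S\ref{LH}. But two points in your plan would not survive contact with the computation. First, you set up the problem with a full set of unknown structure constants $C^i_{jk}$, $D^i_{j\bar k}$ and propose to case-split on ``the rank and eigenvalue pattern of the surviving torsion block.'' In a special frame of a middle-type balanced BTP threefold there is no surviving torsion block: the Chern torsion is already completely pinned down (only $T^1_{23}=T^2_{31}$ are nonzero, and equal to the normalized constant $\tfrac12$), and the entire left-invariant structure is encoded in the two connection $1$-forms $\alpha$ and $\beta$ from \S\ref{str}, i.e.\ in six complex constants $a_i,b_i$. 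The constraints are not generic Jacobi relations but the specific identities (\ref{eq:9.10})--(\ref{eq:9.14}) obtained by matching $(d\alpha)^{2,0}$, $(d\beta)^{2,0}$, $(d\alpha)^{1,1}$, $(d\beta)^{1,1}$ against (\ref{eq:9.5}) and (\ref{eq:9.7}); and the case split that produces $A$, $B^{\pm}$, $C$, $D^{\pm}$ is governed by whether $a_3^2+b_3^2$ and $b_1^2+b_2^2$ vanish (isotropy of coefficient pairs), not by any rank or eigenvalue pattern of the torsion. Without this reduction your polynomial system has far more unknowns than the problem actually has, and it is not clear the bookkeeping closes.

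Second, your proposed discrete invariant for separating $B^{+}$ from $B^{-}$ (and $D^{+}$ from $D^{-}$), namely ``the sign of a suitable contraction of $\nabla^bT^b$ against itself,'' is vacuous: $\nabla^bT^b=0$ identically on any BTP manifold, so every such contraction is zero. In fact the theorem only asserts exhaustiveness --- every middle-type balanced BTP Lie-Hermitian threefold lands in one of the four families --- so no inequivalence argument is needed; the signs $\varepsilon=\pm1$ simply label the two branches of the relations $a_3=\varepsilon\sqrt{-1}\,b_3$ and $b_2=\varepsilon\sqrt{-1}\,b_1$ arising in the case analysis. Your closing step, verifying that each family is indeed balanced BTP of middle type, is correct and is also how the paper finishes.
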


Here the constant parameters $a,b\in {\mathbb R}$, $u,v,w,\rho \in {\mathbb C}$ with $|\rho |=1$, and for $B^{\varepsilon}_{u,v,w}$ the parameters also satisfies $w-\overline{w}= \varepsilon i (|u-v|^2 - |u+v|^2)$, where $\varepsilon =\pm 1$. See \S \ref{LH} for the explicit descriptions of these Lie-Hermitian threefolds.

Finally, we remark that there are examples of complete non-compact Hermitian surfaces with zero Chern curvature but non-parallel Chern torsion, so the curvature characterization of torsion parallelness for Bismut connection does not hold for Chern connection or other Hermitian connections in general. Of course it would be a much harder problem if one adds the compactness assumption on the manifolds, then things get more delicate and we plan to investigate it in future projects.

The paper is organized as follows. In \S \ref{PRE}, we will set up notations and also collect some known results that will be used later. In \S \ref{TC} we will discuss the properties of {\em BTP} manifolds, and in \S \ref{BTP}, we will prove Theorem \ref{theorem1.1} stated above. In \S \ref{NBBTP}, we will discuss the properties of non-balanced {\em BTP} manifolds, following ideas of \cite{YZZ} where non-K\"ahler {\em BKL} manifolds were investigated, and will prove Proposition \ref{prop1.6}. In \S \ref{BS3D}, we will discuss the types of the $B$ tensor of a balanced {\em BTP} threefold and prove Proposition \ref{Btype}, where we will also show that the $\mathrm{rank}\,B=3$ case of Theorem \ref{classification} leads to the compact quotients of $SO(3,{\mathbb C})$. In \S \ref{FANO}, we will prove that the $\mathrm{rank}\,B=1$ case of Theorem \ref{classification} leads to a Fano threefold of even index, so as a complex manifold it is either ${\mathbb P}^3$ or a {\em  del Pezzo threefold.} All cases will be ruled out by topological or algebro-geometric argument, except the flag threefold case: the Wallach threefold $(X,g)$. In \S \ref{WCH3D}, we will carry out the detailed computation which shows that the Wallach threefold is indeed balanced {\em BTP}, with nonnegative bisectional curvature and positive holomorphic sectional curvature. We will also show that its Levi-Civita connection has non-negative sectional curvature and constant Ricci curvature $6$. In \S \ref{mddtype3D}, we will prove the case of $\mbox{rank}\,B=2$ of Theorem \ref{classification} and investigate balanced {\em BTP} Lie-Hermitian threefolds of middle type, as well as discussing some generalization in higher dimensions.

\vspace{0.3cm}

\section{Preliminaries}\label{PRE}

Let $(M^n,g)$ be a Hermitian manifold of complex dimension $n$. We will set up notations following  \cite{YZ18Cur,YZ18Gau,ZhaoZ19Str, Zheng, Zheng1}. Let $g = \langle , \rangle $ be the metric, extended bi-linearly over ${\mathbb C}$. The bundle of complex tangent vector fields of type $(1,0)$, namely, complex vector fields of the form $v- \sqrt{-1}Jv$, where $v$ is a real vector field on $M$, is denoted by $T^{1,0}M$. Let $\{ e_1, \ldots , e_n\}$ be a local frame of $T^{1,0}M$ in a neighborhood in $M$. Write $e=\ ^t\!(e_1, \ldots , e_n) $
 as a column vector. Denote by $\varphi = \ ^t\!(\varphi_1, \ldots ,
  \varphi_n)$ the column vector of local $(1,0)$-forms which is the coframe dual to $e$.

Denote by $\nabla$, $\nabla^c$, $\nabla^b$ the Levi-Civita, Chern, and Bismut connection, respectively. Denote by $T^c=T$, $R^c$ the torsion and curvature of $\nabla^c$, and by $T^b$, $R^b$ the torsion and curvature of $\nabla^b$. Under the frame $e$, denote the components of $T^c$ as
\begin{equation}
T^c(e_i, \overline{e}_j)=0, \ \ \ T^c(e_i, e_j) = 2 \sum_{k=1}^n T^k_{ij} e_k.  \label{eq:2.1}
\end{equation}

For Chern connection $\nabla^c$, let us denote by $\theta$,  $\Theta$ the matrices of connection and
curvature, respectively, and by $\tau$ the column vector of the
torsion $2$-forms, all under the local frame $e$. Then the structure
equations and Bianchi identities are
\begin{eqnarray}
d \varphi & = & - \ ^t\!\theta \wedge \varphi + \tau,  \label{formula 1}\\
d  \theta & = & \theta \wedge \theta + \Theta. \\
d \tau & = & - \ ^t\!\theta \wedge \tau + \ ^t\!\Theta \wedge \varphi, \label{formula 3} \\
d  \Theta & = & \theta \wedge \Theta - \Theta \wedge \theta.
\end{eqnarray}
The entries of $\Theta$ are all $(1,1)$ forms, while the entries of the column vector $\tau $ are all $(2,0)$ forms, under any frame $e$.
Similar symbols such as $\theta^b,\Theta^b$ and $\tau^b$ are applied to Bismut connection. The components of $\tau$ are just $T_{ij}^k$ defined in (\ref{eq:2.1}):
\[ \tau_k = \sum_{i,j=1}^n T_{ij}^k \varphi_i\wedge \varphi_j \ = \sum_{1\leq i<j\leq n} 2 \ T_{ij}^k \varphi_i\wedge \varphi_j,\]
where $T_{ij}^k=-T_{ji}^k$. Under the frame $e$, express the Levi-Civita (Riemannian) connection $\nabla$ as
$$ \nabla e = \theta_1 e + \overline{\theta_2 }\overline{e} ,
\ \ \ \nabla \overline{e} = \theta_2 e + \overline{\theta_1
}\overline{e} ,$$
thus the matrices of connection and curvature for $\nabla $ become:
$$ \hat{\theta } = \left[ \begin{array}{ll} \theta_1 & \overline{\theta_2 } \\ \theta_2 & \overline{\theta_1 }  \end{array} \right] , \ \  \  \hat{\Theta } = \left[ \begin{array}{ll} \Theta_1 & \overline{\Theta}_2  \\ \Theta_2 & \overline{\Theta}_1   \end{array} \right], $$
 where
\begin{eqnarray*}
\Theta_1 & = & d\theta_1 -\theta_1 \wedge \theta_1 -\overline{\theta_2} \wedge \theta_2, \\
\Theta_2 & = & d\theta_2 - \theta_2 \wedge \theta_1 - \overline{\theta_1 } \wedge \theta_2,  \label{formula 7}\\
d\varphi & = & - \ ^t\! \theta_1 \wedge \varphi - \ ^t\! \theta_2
\wedge \overline{\varphi } .
\end{eqnarray*}
When $e$ is unitary, both $\theta_2 $ and $\Theta_2$ are skew-symmetric, while $\theta$, $\theta_1$, $\theta^b$, or $\Theta$, $\Theta_1$, $\Theta^b$ are all skew-Hermitian. Consider the $(2,1)$ tensor $\gamma =\frac{1}{2}(\nabla^b -\nabla^c)$ introduced in \cite{YZ18Cur}. Its representation under the frame $e$ is the matrix of $1$-forms, which by abuse of notation we will also denote by $\gamma$, is given by
\[\gamma = \theta_1 - \theta .\]
Denote the decomposition of $\gamma$ into $(1,0)$ and $(0,1)$ parts by $\gamma = \gamma ' + \gamma ''$. As in \cite[Lemma 2]{WYZ}, it yields that
\[ \theta^b = \theta + 2\gamma = \theta_1 +\gamma .\]
As observed in \cite{YZ18Cur}, when $e$ is unitary, $\gamma $ and $\theta_2$ take the following simple forms
\begin{equation}
(\theta_2)_{ij} = \sum_{k=1}^n \overline{T^k_{ij}} \varphi_k, \ \ \ \ \gamma_{ij} = \sum_{k=1}^n ( T_{ik}^j \varphi_k - \overline{T^i_{jk}} \overline{\varphi}_k ), \label{eq:2.6}
\end{equation}
while for general frames the above formula will have the matrix $(g_{i\overline{j}})=(\langle e_i,\overline{e}_j \rangle )$ and its inverse involved, which is less convenient. This is why we often choose unitary frames to work with. By (\ref{eq:2.6}), we get the expression of the components of the torsion $T^b$ under any unitary frame $e$:
\begin{equation}
T^b(e_i,e_j) = -2 \sum_{k=1}^n T^k_{ij}e_k, \ \ \ T^b(e_i, \overline{e}_j) = 2\sum_{k=1}^n \big( T^j_{ik}\overline{e}_k - \overline{T^i_{jk}} e_k \big) . \label{eq:2.7}
\end{equation}
Note that in the second equation above we assumed that the frame $e$ is unitary, otherwise the matrix $g$ needs to appear in the formula. From this, we deduce that
\begin{equation}
\nabla^bT^b=0 \ \Longleftrightarrow \nabla^bT^c =0. \ \label{eq:2.8}
\end{equation}
This is observed  in \cite[the proof of Theorem 3]{ZhaoZ19Str}. More generally, if we denote by $\nabla^t = (1-t)\nabla^c + t \nabla^b$ the $t$-Gauduchon connection, where $t$ is any real constant, then its torsion tensor is $T^t(x,y)=T^c(x,y)+ 2t(\gamma_xy -\gamma_yx)$, whose components under $e$ are again given by $T^k_{ij}$, so it is easy to see that for any fixed $t\in {\mathbb R}$
\begin{equation}
\nabla^tT^{t_1}=0 \ \Longleftrightarrow \nabla^tT^{t_2} =0 \ \ \ \ \forall \ \ t_1, t_2 \in \mathbb{R}. \ \label{eq:2.9}
\end{equation}
So for a fixed $t$-Gauduchon connection $\nabla^t$, if the torsion of one Gauduchon connection is parallel with respect to $\nabla^t$, then the torsion of any other Gauduchon connection will be parallel under $\nabla^t$ as well.

Next let us discuss the curvature. As usual, the curvature tensor $R^D$ of a linear connection $D$ on a Riemannian manifold $M$ is defined  by
$$ R^D(x,y,z,w) = \langle R^D_{xy}z, \ w \rangle = \langle D_xD_yz-D_yD_xz - D_{[x,y]}z, \ w \rangle, $$
where $x,y,z,w$ are tangent vectors in $M$. We will also write it as $R^D_{xyzw}$ for brevity. It is always skew-symmetric with respect to the first two positions, and is also skew-symmetric with respect to its last two positions if the connection is metric, namely, if $Dg=0$.  The first and second Bianchi identities are respectively
\begin{eqnarray*}
&& {\mathfrak S}\{ R^D_{xy}z - (D_xT^D)(y,z) - T^D(T^D(x,y),z)  \} = 0   \label{eq:B1} \\
&& {\mathfrak S}\{ (D_xR^D)_{yz} + R^D_{T^D(x,y)\, z} \} = 0  \label{eq:B2}
\end{eqnarray*}
where ${\mathfrak S}$ means the sum over all cyclic permutation of $x,y,z$. When the manifold $M$ is complex and the connection $D$ is {\em Hermitian,} namely, satisfies $Dg=0$ and $DJ=0$, where $J$ is the almost complex structure, then $R^D$ satisfies
$$ R^D(x,y,Jz,Jw) = R^D(x,y,z,w) $$
for any tangent vectors $x,y,z,w$, or equivalently,
\begin{equation}
 R^D(x,y,Z,W) = R^D(x,y,\overline{Z},\overline{W}) =0  \label{eq:2.10}
 \end{equation}
for any type $(1,0)$ tangent vectors $Z$, $W$. From now on, we will use $X$, $Y$, $Z$, $W$ to denote type $(1,0)$ vectors. For any Hermitian connection $D$, with the skew-symmetry of first two or last two positions in mind, the above equation (\ref{eq:2.10}) says that the only possibly non-zero components of $R^D$ are $R^D_{XYZ\overline{W}}$, $R^D_{X\overline{Y}Z\overline{W}}$ and their conjugations.

Let us now apply the Bianchi identities to the Chern connection $\nabla$, and use the fact that $T^c(e_i, \overline{e}_j)=0$ and $R^c_{e_ie_j}x =0$ for any $x$ where $e$ is any local frame of type $(1,0)$ vectors, we get the following
\begin{eqnarray}
&&  T^{\ell}_{ij;k} + T^{\ell}_{jk;i}+ T^{\ell}_{ki;j}\, = \,2  \sum_{r=1}^n \big( T^r_{ij}T^{\ell}_{kr} + T^r_{jk}T^{\ell}_{ir}+ T^r_{ki}T^{\ell}_{jr}\big)  \label{eq:B1a} \\
&&  R^c_{k\overline{j}i\overline{\ell}} -  R^c_{k\overline{j}i\overline{\ell}} \, = \, 2 T^{\ell}_{ik;\overline{j}} \label{eq:B1b} \\
&& R^c_{i\overline{j}k\overline{\ell};\, m} - R^c_{m\overline{j}k\overline{\ell}; \,i} \,= \, 2\sum_{r=1}^n T^{r}_{im} R^c_{r\overline{j}k\overline{\ell}}  \label{eq:B2}
\end{eqnarray}
 for any indices $1\leq i,j,k,\ell, m\leq n$. Here the indices after semicolon stand for covariant derivatives with respect to the Chern connection $\nabla^c$. Note that in formula (\ref{eq:B1b}) we assumed that the frame $e$ is unitary, otherwise the right hand side needs to be multiplied on the right by $g$, the matrix of the metric.

\vspace{0.3cm}

\section{Torsion and curvature of Bismut connection}\label{TC}

Since we will primarily be interested in Bismut connection $\nabla^b$, we would like to have formula involving the curvature and covariant differentiation of $\nabla^b$. Using the Bianchi identities for general metric connections, we could get formula for $\nabla^b$ which are similar to the ones for Chern connection mentioned in the previous section. But since we also need relations between $R^c$ and $R^b$ later, let us deduce them from the structure equations. Under a $(1,0)$-frame $e$, the components of the Chern, Bismut and Riemannian curvature tensors are given by
\begin{equation*}
R^c_{i\overline{j}k\overline{\ell}}  =  \sum_{p=1}^n \Theta_{kp}(e_i,
\overline{e}_j)g_{p\overline{\ell}}, \ \ \  \ \ R^b_{abk\overline{\ell}}  =  \sum_{p=1}^n \Theta^b_{kp}(e_a,
e_b)g_{p\overline{\ell}}, \ \ \ \ \ R_{abcd} = \sum_{f=1}^{2n} \hat{\Theta}_{cf}(e_a,e_b)g_{fd},
\end{equation*}
where $i,j,k,\ell,p$ range from $1$ to $n$, while $a,b,c,d,f$ range from $1$ to $2n$ with
$e_{n+i}=\overline{e}_i$, and $g_{ab}=g(e_a,e_b)$. It follows that, for any Hermitian connection $D$, $R^D_{abij} = R^D_{ab\overline{i}\overline{j}} =0$ by the discussion above.

For convenience, we will assume from now on that our local $(1,0)$-frame $e$ is  unitary for the Hermitian metric $g$. We have

\begin{lemma} \label{lemma3.1}
Let $(M^n,g)$ be a Hermitian manifold. It follows that
\begin{eqnarray}
&& R^{b}_{ijk\bar{\ell}} \ = \
2\big( T^{\ell}_{kj,i}-T^{\ell}_{ki,j} \big) +4\sum_r \big( T^r_{ij}T^{\ell}_{rk} + T^r_{jk}T^{\ell}_{ri} + T^r_{ki}T^{\ell}_{rj} \big) ,       \label{curv20}\\
 && R^b_{i\bar{j}k\bar{\ell}}-R^c_{i\bar{j}k\bar{\ell}} \ = \ 2\big( T^{\ell}_{ik,\bar{j}} + \overline{T^{k}_{j\ell,\bar{i}}} \big)  - 4 \sum_r  \big( T^{\ell}_{kr}\overline{T^{i}_{jr}} + T^{j}_{ir}\overline{T^k_{\ell r}}
+ T^r_{ik}\overline{T^r_{j\ell}} - T^{\ell}_{ir}\overline{T^{k}_{jr}} \big),     \label{curv11}\\
&& T^k_{ij,\ell} + T^k_{j\ell,i} + T^k_{\ell i,j} \ = \ 4\sum_r  \big( T^r_{\ell i}T^k_{r j} + T^r_{j\ell}T^k_{ri} + T^r_{ij}T^k_{r \ell} \big) ,      \label{pT} \\
&& T^j_{ik,\bar{\ell}} + \overline{T^i_{j\ell,\bar{k}}} - \overline{T^k_{j\ell,\bar{i}}} \ \, = \ \,   -\,2 \sum_r \big( T^r_{ik}\overline{T^r_{j\ell}} + T^j_{ir} \overline{T^k_{\ell r}} - T^j_{kr}\overline{T^i_{\ell r}}
-T^\ell_{ir} \overline{T^k_{jr}} + T^\ell_{kr}\overline{T^i_{jr}} \big)    +   \label{dbT} \\
&& \hspace{4cm} + \ \frac{1}{2}\,\big( R^{s}_{i\bar{\ell}k\bar{j}}-R^s_{k\bar{\ell}i\bar{j}} \big), \notag \\
&& R^{b}_{ijk\bar{\ell},p} + R^{b}_{pik\bar{\ell},j} +R^{b}_{jpk\bar{\ell},i} \ = \
 -\,2\sum_r \big( R^b_{irk\bar{\ell}}T_{jp}^r + R^b_{jrk\bar{\ell}}T_{pi}^r + R^b_{prk\bar{\ell}}T_{ij}^r \big) , \label{dR_30}  \\
&& R^{b}_{ipk \bar{\ell},\bar{q}} - R^{b}_{i\bar{q}k \bar{\ell}, p} + R^{b}_{p \bar{q} k \bar{\ell}, i} \  = \ 2\sum_r \{  R^b_{r\bar{q}k\bar{\ell}}T^r_{ip}  - R^b_{p\bar{r}k\bar{\ell}}T^q_{ir}   + R^b_{i\bar{r}k\bar{\ell}}T^q_{pr} +
\label{dR_21} \\
&& \hspace{4.8cm} +  \ R^b_{prk\bar{\ell}} \overline{T^i_{qr}}  - R^b_{irk\bar{\ell}}\overline{T^p_{qr}}\} ,\notag \end{eqnarray}
for any $i,j,k,\ell,p,q$, where indices after comma mean covariant derivatives with respect to $\nabla^{b}$.
\end{lemma}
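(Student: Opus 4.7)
The strategy is to derive all six identities from two ingredients: the structure equations combined with the relation $\theta^b = \theta + 2\gamma$ (for the curvature formulas \eqref{curv20} and \eqref{curv11}), and the first and second Bianchi identities for $\nabla^b$ combined with \eqref{eq:2.7} (for the remaining four). We begin by computing
\[
\Theta^b \;=\; d\theta^b - \theta^b\wedge\theta^b \;=\; \Theta + 2\,d\gamma - 2(\theta\wedge\gamma + \gamma\wedge\theta) - 4\,\gamma\wedge\gamma,
\]
decomposing $\gamma = \gamma' + \gamma''$ into its $(1,0)$ and $(0,1)$ parts as given in \eqref{eq:2.6}, and using the first structure equation \eqref{formula 1} to replace $d\varphi$ inside $d\gamma$ by connection and torsion terms. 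Since $\Theta$ is of type $(1,1)$, its $(2,0)$ part vanishes, so extracting the $(2,0)$ component of $\Theta^b$ on $(e_i,e_j)$ produces $R^b_{ijk\bar\ell}$ in terms of Chern-covariant derivatives of $T^\ell_{kj}$ and quadratic torsion products; converting the Chern derivatives into Bismut ones via $\theta^b-\theta = 2\gamma$ yields \eqref{curv20}. The $(1,1)$ component on $(e_i,\bar e_j)$ yields the difference $R^b_{i\bar j k\bar\ell} - R^c_{i\bar j k\bar\ell}$ of \eqref{curv11}, with the four distinct $T\bar T$-patterns coming from the wedge products $\gamma'\wedge\gamma''$, $\gamma''\wedge\gamma'$, $\theta\wedge\gamma$, and $\gamma\wedge\theta$, and the mixed derivatives $T^\ell_{ik,\bar j}$ and $\overline{T^k_{j\ell,\bar i}}$ coming from the $d\gamma$ piece.

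For \eqref{pT} we apply the first Bianchi identity $\mathfrak{S}_{X,Y,Z}\bigl\{R^b(X,Y)Z - T^b(T^b(X,Y),Z) - (\nabla^b_X T^b)(Y,Z)\bigr\} = 0$ with $(X,Y,Z) = (e_i,e_j,e_\ell)$, paired against $\bar e_k$. By \eqref{eq:2.7} the double-torsion term equals $4T^r_{ij}T^k_{r\ell}\,e_k$ and the derivative term equals $-2T^k_{j\ell,i}\,e_k$; substituting \eqref{curv20} for $R^b_{ij\ell\bar k}$ and using the antisymmetry $T^r_{ij} = -T^r_{ji}$, the cyclic sum collapses to \eqref{pT}. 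For \eqref{dbT} the same Bianchi identity is applied to the mixed triple $(e_i,e_k,\bar e_\ell)$ paired against $\bar e_j$; by the Hermiticity \eqref{eq:2.10} only $R^b_{k\bar\ell i\bar j} - R^b_{i\bar\ell k\bar j}$ survives on the curvature side, while the torsion side produces the three derivatives $T^j_{ik,\bar\ell}$, $\overline{T^i_{j\ell,\bar k}}$, $\overline{T^k_{j\ell,\bar i}}$ and a first batch of quadratic $T\bar T$-terms. The $R^s$-terms of \eqref{dbT} are then introduced through the standard Bismut--Levi-Civita curvature-difference formula implied by $\nabla^b = \nabla^s + \frac{1}{2}T^b$ (valid because $T^b$ is totally skew-symmetric), and the further quadratic contributions produced by this conversion combine with the first batch to give exactly the $-2\sum_r$ sum in \eqref{dbT}. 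Finally, \eqref{dR_30} and \eqref{dR_21} are direct consequences of the second Bianchi identity $\mathfrak{S}\{(\nabla^b_X R^b)(Y,Z) + R^b(T^b(X,Y),Z)\} = 0$ evaluated on the curvature slots $(e_k,\bar e_\ell)$: the choice $(X,Y,Z) = (e_i,e_j,e_p)$ gives \eqref{dR_30} after applying the antisymmetry of $R^b$ in its first two arguments, and the choice $(X,Y,Z) = (e_i,e_p,\bar e_q)$ gives the five torsion-curvature contractions of \eqref{dR_21}, which arise from the two distinct mixed-torsion components of $T^b$ in \eqref{eq:2.7}.

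The main obstacle is \eqref{dbT}, the only identity where the Riemannian curvature $R^s$ appears: the conversion between $R^b$ and $R^s$ must be tracked precisely in order to land on the exact coefficient $-2$ in front of the five-term $T\bar T$-sum on the right-hand side. The expansion of the $(1,1)$ part of $\Theta^b$ giving \eqref{curv11} is also lengthy, but once the entries of $\gamma'\wedge\gamma''$ and $\theta\wedge\gamma$ are written out carefully, each of the four distinct conjugation patterns $T^\ell_{kr}\overline{T^i_{jr}}$, $T^j_{ir}\overline{T^k_{\ell r}}$, $T^r_{ik}\overline{T^r_{j\ell}}$, $T^\ell_{ir}\overline{T^k_{jr}}$ appearing in the statement is pinned down unambiguously.
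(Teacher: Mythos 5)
Your overall strategy is viable, and for five of the six identities it would go through: the expansion of $\Theta^b=d\theta^b-\theta^b\wedge\theta^b$ with $\theta^b=\theta+2\gamma$ is exactly how the paper obtains \eqref{curv20} and \eqref{curv11} (the paper streamlines the bookkeeping by normalizing $\theta^b(p)=0$, so that $\theta=-2\gamma$ at $p$ and all the non-tensorial pieces collapse, whereas you carry the general connection terms and convert at the end — more work, same result), and your tensorial use of the first and second Bianchi identities for $\nabla^b$ together with \eqref{eq:2.7} does reproduce \eqref{pT}, \eqref{dR_30} and \eqref{dR_21} with the stated coefficients (the paper instead reads these off from $d\tau=-\,^t\theta\wedge\tau+\,^t\Theta\wedge\varphi$ and $d\Theta^b=0$ in exterior-form language).

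The genuine problem is your treatment of \eqref{dbT}. You read $R^s$ as the Levi-Civita curvature and plan to introduce it via the difference formula coming from $\nabla^b=\nabla+\frac{1}{2}T^b$. But $R^s$ here is the Strominger curvature, i.e.\ $R^s=R^b$ (the paper uses ``Bismut'' and ``Strominger'' interchangeably; compare how \eqref{dbT} is used to derive \eqref{dbT_refined} in Lemma \ref{lemma3.2} and the identity $Q_{i\bar{\ell}k\bar{j}}=6T^j_{ik,\bar{\ell}}+4P^{j\ell}_{ik}$ in the proof of Lemma \ref{swap}, both of which only make sense with the Bismut curvature). In fact, if you carry out your own first step — the first Bianchi identity for $\nabla^b$ on $(e_i,e_k,\overline{e}_\ell)$ paired with $\overline{e}_j$ — the curvature side is $R^b_{k\bar{\ell}i\bar{j}}-R^b_{i\bar{\ell}k\bar{j}}$, the derivative side is $-2\bigl(T^j_{ik,\bar{\ell}}+\overline{T^i_{j\ell,\bar{k}}}-\overline{T^k_{j\ell,\bar{i}}}\bigr)$, and the double-torsion side is already exactly $-4P^{j\ell}_{ik}$; dividing by $-2$ gives \eqref{dbT} verbatim with $R^s=R^b$. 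So no conversion is needed, and performing the conversion you propose would add the (nonzero) quadratic and derivative corrections by which $R$ and $R^b$ differ in their $(1,1)$ parts, after which the right-hand side would no longer equal $-2P^{j\ell}_{ik}+\frac{1}{2}\bigl(R^b_{i\bar{\ell}k\bar{j}}-R^b_{k\bar{\ell}i\bar{j}}\bigr)$. Drop that step and your argument closes.
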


\begin{proof}
Fix a point $p\in M$. We will choose our local unitary frame $e$ near $p$ such that $\theta^b(p)=0$, thus $\theta^c=-2\gamma$ at $p$.
From the structure equation $\, \Theta^b = d \theta^b - \theta^b \wedge \theta^b$,  it yields that
\[\Theta^b = \Theta + 2 (d \gamma + 2 \gamma \wedge \gamma) \]
at $p$. By comparison for the types of the differential forms we get
\begin{eqnarray}
\Theta^b_{2,0}&=& 2(\partial \gamma' + 2 \gamma' \wedge \gamma'), \notag\\
\Theta^b_{1,1}&=& \Theta + 2(\overline{\partial} \gamma' - \partial\overline{^t\!\gamma'}
- 2 \gamma' \wedge \overline{^t\!\gamma'} -2 \overline{^t\!\gamma'} \wedge \gamma'),\notag
\end{eqnarray}
hence \eqref{curv20} and \eqref{curv11} are established. From the first Bianchi identity $\, d \tau = - ^t\!\theta \wedge \tau + ^t\!\!\Theta \wedge \varphi$,
it yields at $p$ that
\begin{eqnarray}
\partial \tau&=& 2 ^t\!\gamma' \wedge \tau, \notag\\
\overline{\partial} \tau&=& -2\overline{\gamma'} \wedge \tau + ^t\!\!\Theta \wedge \varphi,\notag \\
&=& -2\overline{\gamma'} \wedge \tau + ^t\!\!\Theta^b_{1,1} \wedge \varphi
- 2^t\!(\overline{\partial} \gamma' - \partial\overline{^t\!\gamma'}
- 2 \gamma' \wedge \overline{^t\!\gamma'} -2 \overline{^t\!\gamma'} \wedge \gamma') \wedge \varphi, \notag
\end{eqnarray}
which imply \eqref{pT} and \eqref{dbT}. By the second Bianchi identity, $d \Theta^b = \theta^b \wedge \Theta^b - \Theta^b \wedge \theta^b$,  we have $d \Theta^b=0$ at $p$ since  $\theta^b=0$ at $p$. This leads to
\[\partial \Theta^b_{2,0}=0,\quad \overline{\partial} \Theta^b_{2,0}+ \partial \Theta^{b}_{1,1}=0 \]
which yields \eqref{dR_30} and \eqref{dR_21}.
\end{proof}

\begin{lemma} \label{lemma3.2}
Let $(M,g)$ be a Hermitian manifold. It follows that
\begin{eqnarray}
T^{\ell}_{ij,k} &= & -\frac{1}{2}(R^b_{jki \bar{\ell}} + R^b_{kij \bar{\ell}}), \label{pT_refined}\\
\sum_r(T^r_{ij}T^{\ell}_{rk} + T^r_{jk}T^{\ell}_{ri} + T^r_{ki}T^{\ell}_{rj}) &=&
-\frac{1}{4}(R^b_{ijk \bar{\ell}} + R^b_{jki \bar{\ell}} + R^b_{kij \bar{\ell}}), \label{permutation}\\
T^j_{ik,\bar{\ell}} & = & -\frac{2}{3} \sum_r(T^r_{ik}\overline{T^r_{j\ell}} + T^j_{ir} \overline{T^k_{\ell r}}
- T^j_{kr}\overline{T^i_{\ell r}} -T^\ell_{ir} \overline{T^k_{jr}} + T^\ell_{kr}\overline{T^i_{jr}}) \label{dbT_refined}\\
&&+ \frac{1}{3} (R^b_{i \bar{\ell} k \bar{j}} - R^b_{k \bar{\ell} i \bar{j}})
+ \frac{1}{6} (R^b_{i \bar{j} k \bar{\ell}} - R^b_{k \bar{j} i \bar{\ell}}), \notag\\
\eta_{i,j} & = & -\frac{1}{2} \sum_r (R^b_{ijr\bar{r}} + R^b_{jri\bar{r}}), \label{peta}\\
\eta_{i,\bar{j}} & = & -\frac{2}{3}(\sum_{r}\eta_r \overline{T^i_{jr}} + \overline{\eta}_r T^j_{ir} - \sum_{t,r}T^j_{tr}\overline{T^i_{tr}}) \label{dbeta} \\
& & + \frac{1}{3}\sum_{r}(R^b_{r\bar{j}i\bar{r}} - R^b_{i\bar{j}r\bar{r}})
+ \frac{1}{6}\sum_r(R^b_{r\bar{r}i\bar{j}} - R^b_{i\bar{r}r\bar{j}}) \notag.
\end{eqnarray}
for any $i,j,k,\ell$, where indices after comma mean the covariant derivative with respect to $\nabla^{b}$.
\end{lemma}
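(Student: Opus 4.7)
The plan is to derive each of the five formulas from the Bianchi-type identities \eqref{curv20}, \eqref{pT}, \eqref{dbT} of Lemma \ref{lemma3.1} by elementary algebraic manipulation: cyclic permutation, conjugation, and solving small linear systems. The identities \eqref{pT_refined} and \eqref{permutation} will come directly from \eqref{curv20} combined with \eqref{pT}; the main work is \eqref{dbT_refined}, which will come from combining three appropriately permuted instances of \eqref{dbT}; and \eqref{peta}, \eqref{dbeta} then follow by contracting \eqref{pT_refined} and \eqref{dbT_refined} with the trace $\eta_i=\sum_k T^k_{ki}$.

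For \eqref{pT_refined} and \eqref{permutation}, I first observe that the cubic torsion sum
\[
S\,:=\,\sum_r\!\big(T^r_{ij}T^\ell_{rk}+T^r_{jk}T^\ell_{ri}+T^r_{ki}T^\ell_{rj}\big)
\]
appearing in \eqref{curv20} is cyclic-invariant in $(i,j,k)$, while \eqref{pT}, after a relabeling, reads $T^\ell_{ij,k}+T^\ell_{jk,i}+T^\ell_{ki,j}=4S$. Plugging the latter into \eqref{curv20} produces $R^b_{ijk\bar\ell}=2T^\ell_{ij,k}-4S$. Summing its three cyclic shifts in $(i,j,k)$ then yields $R^b_{ijk\bar\ell}+R^b_{jki\bar\ell}+R^b_{kij\bar\ell}=-4S$, which is \eqref{permutation}, while adding only the last two cyclic shifts gives $R^b_{jki\bar\ell}+R^b_{kij\bar\ell}=-2T^\ell_{ij,k}$, which is \eqref{pT_refined}.

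The hard part will be \eqref{dbT_refined}. The strategy is to treat \eqref{dbT} as a single linear equation in the three unknowns $F:=T^j_{ik,\bar\ell}$, $F':=T^\ell_{ik,\bar j}$ and $G:=\overline{T^i_{j\ell,\bar k}}-\overline{T^k_{j\ell,\bar i}}$, then to generate two additional equations by applying \eqref{dbT} at permuted indices. The original equation reads $F+G=A_1$, where $A_1$ denotes its right-hand side. Swapping $j\leftrightarrow\ell$ in \eqref{dbT} and using $T^a_{bc}=-T^a_{cb}$ yields $F'-G=A_2$. Finally, applying \eqref{dbT} under the two substitutions $(j,i,k,\ell)\mapsto(i,j,\ell,k)$ and $(j,i,k,\ell)\mapsto(k,j,\ell,i)$, subtracting the resulting equations, and taking complex conjugates produces $G=\bar A_3-2F+2F'$, where $A_3$ is the corresponding difference of right-hand sides. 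Eliminating $G$ and $F'$ from the three equations leaves $3F=A_1+2A_2+\bar A_3$. The remaining verification is to match this combination with the stated right-hand side of \eqref{dbT_refined}: for the curvature part one uses the reality identity $\overline{R^b_{a\bar b c\bar d}}=R^b_{b\bar a d\bar c}$ to rewrite the four conjugated curvature entries in $\bar A_3$, and for the torsion-quadratic piece $\mathcal N$ of \eqref{dbT} one checks the index symmetries $\mathcal N(\ell,i,k,j)=-\mathcal N(j,i,k,\ell)$ and $\mathcal N(i,j,\ell,k)=-\mathcal N(k,j,\ell,i)=\overline{\mathcal N(j,i,k,\ell)}$, both of which follow at once from $T^a_{bc}=-T^a_{cb}$.

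Once \eqref{pT_refined} and \eqref{dbT_refined} are established, \eqref{peta} and \eqref{dbeta} are obtained by contraction. For \eqref{peta}, setting $(\ell,i,j,k)=(k,k,i,j)$ in \eqref{pT_refined} and summing over $k$ turns the left-hand side into $\eta_{i,j}$ and the right-hand side into $-\tfrac12\sum_r(R^b_{ijr\bar r}+R^b_{jri\bar r})$. For \eqref{dbeta}, the analogous contraction $(j,i,k,\ell)\mapsto(k,k,i,j)$ applied to \eqref{dbT_refined} matches the curvature part of the claim directly; in the torsion-quadratic part, the two sums $\sum_{r,s}T^s_{ri}\overline{T^s_{rj}}$ and $\sum_{r,s}T^s_{ir}\overline{T^s_{jr}}$ coincide by antisymmetry of $T$ and hence cancel, leaving precisely $\sum_r\eta_r\overline{T^i_{jr}}+\sum_r\bar\eta_r T^j_{ir}-\sum_{t,r}T^j_{tr}\overline{T^i_{tr}}$, as required.
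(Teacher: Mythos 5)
Your proposal is correct and follows essentially the same route as the paper: the paper's proof of Lemma \ref{lemma3.2} simply asserts that \eqref{curv20} and \eqref{pT} imply \eqref{pT_refined} and \eqref{permutation}, that \eqref{dbT} implies \eqref{dbT_refined}, and that the last two identities follow by tracing, and your argument supplies exactly the omitted algebra (the cyclic summation for the first two identities, the $3\times 3$ linear system in $F$, $F'$, $G$ built from permuted instances of \eqref{dbT} together with the symmetries of $P^{j\ell}_{ik}$ and the reality identity $\overline{R^b_{a\bar b c\bar d}}=R^b_{b\bar a d\bar c}$ for the third, and the contraction with the cancellation of the two $A$-type sums for the last). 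I verified the linear system is nonsingular and that $3F=A_1+2A_2+\overline{A}_3$ reproduces the stated coefficients $-\tfrac23$, $\tfrac13$, $\tfrac16$, so the details check out.
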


\begin{proof}
Equations \eqref{curv20} and \eqref{pT} imply  \eqref{pT_refined} and \eqref{permutation}, and equation \eqref{dbT} implies \eqref{dbT_refined}. Also, since $\eta_k=\sum_i T^i_{ik}$, \eqref{peta} and \eqref{dbeta} are implied by \eqref{pT_refined} and \eqref{dbT_refined}, respectively.
\end{proof}

\begin{definition}[See also (27) and (34) in \cite{ZhaoZ19Str}] \label{def3.3}
Let us introduce the following notations
\[P_{ik}^{j \ell}:=  \sum_r T^r_{ik}\overline{T^r_{j\ell}} + T^j_{ir} \overline{T^k_{\ell r}}
- T^j_{kr}\overline{T^i_{\ell r}} -T^\ell_{ir} \overline{T^k_{jr}} + T^\ell_{kr}\overline{T^i_{jr}}. \ \]
\[ A_{k\overline{\ell }} := \sum_{r,s} T^r_{sk} \overline{ T^r_{s\ell } } , \quad   \quad \ \  B_{k\overline{\ell }} := \sum_{r,s} T^{\ell }_{rs} \overline{ T^k_{rs } } ,  \quad  \quad  \quad  \quad \quad  \  \]
\[C_{ik} := \sum_{r,s} T^r_{si} T^s_{rk},  \quad  \quad  \quad  \phi^{\ell }_k := \sum_r \overline{\eta}_r T^{\ell }_{kr}, \quad  \quad  \quad  \quad  \quad  \quad    \]
\[Q_{i\bar{j}k\bar{\ell}} := R^b_{i\bar{j}k\bar{\ell}} - R^b_{k \bar{j} i \bar{\ell}},
\quad \mathrm{Ric}(Q)_{i\bar{j}} := \sum_r R^b_{i\bar{j}r\bar{r}} - R^b_{r \bar{j} i \bar{r}}\]
under any unitary frame. It is clear that $C$ is symmetric, $A$, $B$ are Hermitian symmetric, and $P_{ik}^{j \ell}$ satisfies
\[ P^{j\ell}_{\,ik} = - P^{j\ell }_{\,ki} = - P^{\ell j}_{\,ik} = \overline{ P^{ik}_{j\ell } }.\]
\end{definition}

\begin{proposition}\label{plcld}
Let $(M^n,g)$ be a Hermitian manifold, denote by $\omega$ the K\"ahler form of $g$. Then
\[\sqrt{-1}\partial \overline{\partial} \omega =\left\{ \frac{1}{2}(T^{\ell}_{ik,\overline{j}} - T^j_{ik,\overline{\ell}})-P^{j\ell}_{ik} \right\} \varphi_i \wedge \varphi_k \wedge \overline{\varphi}_j \wedge \overline{\varphi}_{\ell}.\]
Hence we have the following equivalence
\[\begin{aligned}
\partial \overline{\partial} \omega=0 \quad & \Longleftrightarrow \quad T^{j}_{ik,\overline{\ell}} - T^{\ell}_{ik,\overline{j}}=-2P^{j\ell}_{ik}\\
& \Longleftrightarrow \quad (R^b_{i \overline{\ell} k \overline{j}}-R^b_{k \overline{\ell} i \overline{j}})
- (R^b_{i \overline{j} k \ell} - R^b_{k \overline{j} i \overline{\ell}}) = -4 P^{j \ell}_{ik}.
\end{aligned}\]
\end{proposition}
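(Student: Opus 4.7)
The plan is to compute $\sqrt{-1}\,\partial\bar\partial\omega$ at an arbitrary point $p$ in a unitary frame adapted to the Bismut connection, and then read off both equivalences by antisymmetrization and by substitution of the torsion-derivative identities already established in Lemmas \ref{lemma3.1}–\ref{lemma3.2}. First I would write down $\partial\omega$ in an arbitrary unitary frame. Starting from $\omega=\sqrt{-1}\sum_i \varphi_i\wedge\overline{\varphi}_i$ and using the Chern structure equation $d\varphi_i=-\sum_j\theta_{ji}\wedge\varphi_j+\tau_i$ together with the skew-Hermitian identity $\theta_{ij}+\overline{\theta_{ji}}=0$, the two $\theta$-contributions to $d\omega$ cancel after relabeling, leaving the intrinsic formula $\partial\omega=\sqrt{-1}\sum_{i,j,k} T^i_{jk}\,\varphi_j\wedge\varphi_k\wedge\overline{\varphi}_i$.

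Next I would apply $\overline{\partial}$ and use $\partial\overline{\partial}\omega=-\overline{\partial}\partial\omega$. At $p$ I would choose the unitary frame so that $\theta^b(p)=0$, exactly as in the proof of Lemma \ref{lemma3.1}; then $\theta=-2\gamma$ at $p$, the Bismut covariant derivatives reduce to the ordinary directional derivatives at $p$, and the $(1,1)$-part of $d\varphi_j$ at $p$ is computed from the $(0,1)$-piece of $\gamma$ via \eqref{eq:2.6}, giving $\overline{\partial}\varphi_j|_p = 2\sum_{k,\ell}\overline{T^{k}_{j\ell}}\,\varphi_k\wedge\overline{\varphi}_\ell$. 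Similarly $\overline{\partial}\,\overline{\varphi}_i|_p=\overline{\tau}_i|_p$. Expanding $\overline{\partial}\partial\omega|_p$ produces one derivative-of-coefficient term of shape $T^i_{jk,\overline{\ell}}\,\overline{\varphi}_\ell\wedge\varphi_j\wedge\varphi_k\wedge\overline{\varphi}_i$, plus five quadratic-in-$T$ terms coming from $\overline{\partial}$ hitting each of $\varphi_j$, $\varphi_k$, and $\overline{\varphi}_i$. After regrouping into the basis 4-form $\varphi_i\wedge\varphi_k\wedge\overline{\varphi}_j\wedge\overline{\varphi}_\ell$ and antisymmetrizing in $(i,k)$ and in $(j,\ell)$, these five quadratic terms should assemble exactly into $P^{j\ell}_{ik}$ as in Definition \ref{def3.3}, and the derivative term into $\tfrac{1}{2}(T^\ell_{ik,\overline{j}}-T^j_{ik,\overline{\ell}})$, yielding the claimed formula.

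The first equivalence then follows at once: because the basis 4-form is antisymmetric in $(i,k)$ and in $(j,\ell)$, and because $P^{j\ell}_{ik}$ is already antisymmetric in both pairs, vanishing of the coefficient is equivalent to $T^{j}_{ik,\overline{\ell}}-T^{\ell}_{ik,\overline{j}}=-2P^{j\ell}_{ik}$. For the second equivalence I would substitute the refined formula \eqref{dbT_refined} for $T^j_{ik,\overline{\ell}}$ into $T^{j}_{ik,\overline{\ell}}-T^{\ell}_{ik,\overline{j}}$; using the symmetries $P^{\ell j}_{ik}=-P^{j\ell}_{ik}=P^{j\ell}_{ki}$ and $P^{jj}_{ik}=0$, the $P$-contributions collapse to $-\tfrac{4}{3}P^{j\ell}_{ik}$ while the curvature contributions combine into $\tfrac{1}{6}\bigl[(R^b_{i\overline{\ell}k\overline{j}}-R^b_{k\overline{\ell}i\overline{j}})-(R^b_{i\overline{j}k\overline{\ell}}-R^b_{k\overline{j}i\overline{\ell}})\bigr]$; equating this to $-2P^{j\ell}_{ik}$ gives exactly the curvature reformulation.

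The main obstacle is the bookkeeping in the middle step: there are five quadratic torsion terms with signs and index positions that must line up with the five summands defining $P^{j\ell}_{ik}$, and one must be careful that the $(0,1)$-part of $\gamma$ contributes with the right sign via $\gamma''_{kj}=-\sum_\ell\overline{T^k_{j\ell}}\,\overline{\varphi}_\ell$. This is however the same combinatorial computation underlying equation \eqref{dbT} of Lemma \ref{lemma3.1}, so no genuinely new input is needed—just a disciplined index audit and a conversion $\partial\overline{\partial}\omega=-\overline{\partial}\partial\omega$ at the end.
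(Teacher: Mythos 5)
Your strategy is sound and reaches the same formula, but by a genuinely different route from the paper's. The paper does not recompute $\overline{\partial}\partial\omega$ from scratch: it starts from the closed-form identity $\sqrt{-1}\,\partial\overline{\partial}\omega = {}^t\tau\,\overline{\tau} + {}^t\varphi\,\Theta\,\overline{\varphi}$ quoted from \cite[Lemma 1]{ZhaoZ19Str}, which expresses everything through the Chern curvature, then eliminates $R^c$ using the Bianchi-type identity $2T^j_{ik;\overline{\ell}} = R^c_{k\overline{\ell}i\overline{j}} - R^c_{i\overline{\ell}k\overline{j}}$ from \cite[Lemma 7]{YZ18Cur} and converts the Chern covariant derivative $T^j_{ik;\overline{\ell}}$ into the Bismut one $T^j_{ik,\overline{\ell}}$ via $\nabla^c-\nabla^b=-2\gamma$ (equation \eqref{eq:deltaTbar}); the five quadratic torsion terms of $P^{j\ell}_{ik}$ arise from that conversion rather than from differentiating the coframe. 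Your direct expansion of $\overline{\partial}\bigl(\sum_p\tau_p\wedge\overline{\varphi}_p\bigr)$ in a frame with $\theta^b(p)=0$ bypasses the Chern curvature and the external references entirely, at the cost of redoing by hand the combinatorics the paper has already packaged into \eqref{dbT} and \eqref{eq:deltaTbar}; both arguments then obtain the second equivalence by the identical substitution of \eqref{dbT_refined}, and your coefficient bookkeeping there ($-\tfrac{4}{3}P^{j\ell}_{ik}$ plus $\tfrac{1}{6}$ of the curvature bracket, forcing the bracket to equal $-4P^{j\ell}_{ik}$) is exactly right. One sign to correct in your promised audit: at a point where $\theta^b(p)=0$ one has $\partial\varphi_i|_p=-\tau_i$, hence $\overline{\partial}\,\overline{\varphi}_i|_p=-\overline{\tau}_i|_p$ rather than $+\overline{\tau}_i|_p$; this minus sign is precisely what makes the $\sum_r T^r_{ik}\overline{T^r_{j\ell}}$ summand of $P^{j\ell}_{ik}$ appear with the correct sign inside $-P^{j\ell}_{ik}$, so with your stated sign that term would come out reversed.
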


\begin{proof}
By \cite[Lemma 1]{ZhaoZ19Str}, we have
\[\begin{aligned}
\sqrt{-1} \partial \overline{\partial} \omega &=  ^t\!\tau  \overline{\tau} + \, ^t\!\varphi \Theta \overline{\varphi}\\
&=\sum_{i,j,k,\ell,p}\left(T^p_{ik}\overline{T^p_{j\ell}}
- \frac{1}{4}(R^c_{i\bar{j}k\bar{\ell}} - R^c_{k\bar{j}i\bar{\ell}}- R^c_{i\bar{\ell}k\bar{j}}+R^c_{k\bar{\ell}i\bar{j}})\right)
\varphi_i \wedge \varphi_k \wedge \overline{\varphi}_j \wedge \overline{\varphi}_{\ell}.
\end{aligned}\]
Note that, by \cite[Lemma 7]{YZ18Cur}, for any $i,j,k,\ell$,
\begin{equation}\label{eq:DR}
2T^j_{ik;\overline{\ell}}  =  R^c_{k\overline{\ell}i\overline{j}} - R^c_{i\overline{\ell}k\overline{j}},
\end{equation}
where the index after the semicolon stands for covariant derivative with respect to the Chern connection $\nabla^c$, and it is easy to obtain,
since $\nabla^{c}-\nabla^b = -2\gamma$,
\begin{eqnarray}
T^j_{ik ; \overline{\ell}} & = & \overline{e}_{\ell} T^j_{ik} - \sum_q \left(T^j_{qk}\,g( \nabla^{c}_{\overline{e}_{\ell} } e_i, \overline{e}_q )
+ T^j_{iq}\,g( \nabla^{c}_{\overline{e}_{\ell} } e_k, \overline{e}_q )
+ T^q_{ik}\, g( \nabla^{c}_{\overline{e}_{\ell} }  \overline{e}_j, e_q) \right) \nonumber \\
& = & T^j_{ik , \overline{\ell}} + 2\sum_q \left( T^j_{qk} \gamma_{iq}(\overline{e}_{\ell}) + T^j_{iq} \gamma_{kq}(\overline{e}_{\ell}) - T^q_{ik} \gamma_{qj}(\overline{e}_{\ell}) \right) \nonumber \\
& = & T^j_{ik , \overline{\ell}} -2 \sum_q \left( T^j_{qk} \overline{T^i_{q\ell }}  + T^j_{iq} \overline{T^k_{q\ell }} - T^q_{ik} \overline{T^q_{j\ell }} \right)\label{eq:deltaTbar},
\end{eqnarray}
where the index after the comma means covariant derivative with respect to $\nabla^b$. Then the equalities \eqref{eq:DR} and \eqref{eq:deltaTbar} imply \[\sqrt{-1}\partial \overline{\partial} \omega =\left\{ \frac{1}{2}(T^{\ell}_{ik,\overline{j}} - T^j_{ik,\overline{\ell}})-P^{j\ell}_{ik} \right\} \varphi_i \wedge \varphi_k \wedge \overline{\varphi}_j \wedge \overline{\varphi}_{\ell}.\]
Therefore the equivalence in the proposition follows from the equality \eqref{dbT_refined}.
\end{proof}

\begin{proposition}\label{Tderivative}
Let $(M^n,g)$ be a Hermitian manifold, with $\omega$ its K\"ahler form.
Then it holds that
\begin{eqnarray}
\nabla^b_{1,0} T^c=0 & \Longleftrightarrow &
R^b_{ijk\bar{\ell}}=0\quad \Longrightarrow \quad \sum_r(T^r_{\ell i}T^k_{r j} + T^r_{j\ell}T^k_{ri} + T^r_{ij}T^k_{r \ell})=0,\label{parallel_10}\\
\nabla^b_{0,1} T^c=0 & \Longleftrightarrow &
R^b_{i \bar{j} k \bar{\ell}} - R^b_{k \bar{j} i \bar{\ell}} = -4P_{ik}^{j \ell} \label{parallel_01},
\end{eqnarray}
where $\nabla^b_{1,0} T^c$ and $\nabla^b_{0,1} T^c$ are respectively the $(1,0)$- and $(0,1)$-components of $\nabla^bT^c$.
\end{proposition}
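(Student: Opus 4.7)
My plan is to read both equivalences directly off the refined Bianchi-type identities \eqref{pT_refined} and \eqref{dbT_refined} of Lemma \ref{lemma3.2}. The condition $\nabla^b_{1,0}T^c=0$ amounts to the vanishing of all components $T^{\ell}_{ij,k}$, and $\nabla^b_{0,1}T^c=0$ amounts to the vanishing of all components $T^{j}_{ik,\bar{\ell}}$, where commas denote covariant differentiation with respect to $\nabla^b$. So each half of the proposition becomes a purely algebraic consequence of those two identities together with the symmetries of $R^b$ and of the quadratic tensor $P^{j\ell}_{ik}$ recorded in Definition \ref{def3.3}.

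For the first equivalence, I would plug $T^{\ell}_{ij,k}=0$ into \eqref{pT_refined} to obtain $R^{b}_{jki\bar{\ell}}+R^{b}_{kij\bar{\ell}}=0$ for all $i,j,k,\ell$. Viewed as a condition on the first three slots with $\bar{\ell}$ held fixed, this says that a cyclic shift of those three indices flips the sign; applying it three times yields
\begin{equation*}
R^{b}_{ijk\bar{\ell}}=-R^{b}_{jki\bar{\ell}}=R^{b}_{kij\bar{\ell}}=-R^{b}_{ijk\bar{\ell}},
\end{equation*}
which forces $R^{b}_{ijk\bar{\ell}}=0$. Conversely, $R^{b}_{ijk\bar{\ell}}=0$ trivially gives $T^{\ell}_{ij,k}=0$ via \eqref{pT_refined}. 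The further implication that $R^{b}_{ijk\bar{\ell}}=0$ forces the cubic torsion identity $\sum_r(T^r_{\ell i}T^k_{rj}+T^r_{j\ell}T^k_{ri}+T^r_{ij}T^k_{r\ell})=0$ then falls out of \eqref{permutation} directly.

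For the second equivalence, I would rewrite \eqref{dbT_refined} in the compact form
\begin{equation*}
6\,T^{j}_{ik,\bar{\ell}}=-4P^{j\ell}_{ik}+2Q_{i\bar{\ell}k\bar{j}}+Q_{i\bar{j}k\bar{\ell}},
\end{equation*}
using the notation of Definition \ref{def3.3}. Assuming $T^{j}_{ik,\bar{\ell}}=0$, the swap $j\leftrightarrow \ell$ combined with the antisymmetry $P^{\ell j}_{ik}=-P^{j\ell}_{ik}$ produces the companion relation
\begin{equation*}
4P^{j\ell}_{ik}+2Q_{i\bar{j}k\bar{\ell}}+Q_{i\bar{\ell}k\bar{j}}=0.
\end{equation*}
The resulting $2\times2$ linear system in the unknowns $Q_{i\bar{j}k\bar{\ell}}$ and $Q_{i\bar{\ell}k\bar{j}}$ has determinant $-3\neq 0$, and its unique solution is $Q_{i\bar{j}k\bar{\ell}}=-4P^{j\ell}_{ik}$ together with $Q_{i\bar{\ell}k\bar{j}}=4P^{j\ell}_{ik}$ (just the $j\leftrightarrow\ell$ swap of the first). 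Conversely, if $Q_{i\bar{j}k\bar{\ell}}=-4P^{j\ell}_{ik}$, the same swap gives $Q_{i\bar{\ell}k\bar{j}}=4P^{j\ell}_{ik}$, and substituting both into the displayed identity yields $6T^{j}_{ik,\bar{\ell}}=-4P^{j\ell}_{ik}+8P^{j\ell}_{ik}-4P^{j\ell}_{ik}=0$.

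No step looks genuinely hard; the only mild pitfall is the cyclic-versus-skew symmetry bookkeeping for $R^{b}$ in part one, and keeping straight the roles of the two antiholomorphic indices $j$ and $\ell$ attached to $P$ and $Q$ in part two. Beyond that, both directions of both equivalences reduce to a few lines of symbol-pushing with Lemma \ref{lemma3.2} and Definition \ref{def3.3}.
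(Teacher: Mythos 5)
Your proof is correct and follows essentially the same route as the paper: both equivalences are read off the identities of Lemmas \ref{lemma3.1} and \ref{lemma3.2} together with the symmetries of $P^{j\ell}_{ik}$. The only substantive difference is in the forward direction of \eqref{parallel_10}: the paper first deduces the cubic torsion identity from the Bianchi identity \eqref{pT} and then gets $R^b_{ijk\bar{\ell}}=0$ from \eqref{curv20}, whereas you obtain $R^b_{ijk\bar{\ell}}=0$ directly from \eqref{pT_refined} via the observation that a cyclic shift of the first three indices flips the sign, and only afterwards recover the cubic identity from \eqref{permutation}. Both orderings are valid and of comparable length; your cyclic-shift argument is a slightly cleaner way to see the vanishing of the $(2,0)$-part of $R^b$, while the paper's order makes explicit that the cubic identity already follows from $\nabla^b_{1,0}T^c=0$ alone. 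Your treatment of \eqref{parallel_01} --- swapping $j\leftrightarrow\ell$ in \eqref{dbT_refined}, using $P^{\ell j}_{ik}=-P^{j\ell}_{ik}$, and solving the resulting $2\times 2$ system in $Q_{i\bar{j}k\bar{\ell}}$ and $Q_{i\bar{\ell}k\bar{j}}$ --- is the paper's computation in all but packaging.
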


\begin{proof}
It is clear that the condition $\nabla^b_{1,0}T^c=0$ (that is, $T_{ij,k}^\ell=0$) implies $\sum_r(T^r_{\ell i}T^k_{r j} + T^r_{j\ell}T^k_{ri} + T^r_{ij}T^k_{r \ell})=0$ by \eqref{pT}. Thus $R^b_{ijk\bar{\ell}}=0$ follows from \eqref{curv20}. Conversely, by \eqref{pT_refined} and \eqref{permutation}, $R^b_{ijk\bar{\ell}}=0$ implies that $T_{ij,k}^\ell=0$ and $\sum_r(T^r_{\ell i}T^k_{r j} + T^r_{j\ell}T^k_{ri} + T^r_{ij}T^k_{r \ell})=0$. Therefore, the conclusion \eqref{parallel_10} is established.

In the mean time, by \eqref{dbT}, we know that the condition $\nabla^b_{0,1}T^c=0$ (that is, $T_{ik,\bar{\ell}}^j=0$) implies that
\[ R^b_{i \bar{\ell} k \bar{j}} - R^b_{k \bar{\ell} i \bar{j}} =4P_{ik}^{j\ell}, \]
which yields
\[ R^b_{i \bar{j} k \bar{\ell}} - R^b_{k \bar{j} i \bar{\ell}} =4P_{ik}^{\ell j}=-4P_{ik}^{j \ell}.\]
Conversely, if $R^b_{i \bar{j} k \bar{\ell}} - R^b_{k \bar{j} i \bar{\ell}} =-4P_{ik}^{j \ell}$, then by the fact that $P_{ik}^{j\ell}=-P_{ik}^{\ell j}$ we get
\[R^b_{i \bar{j} k \bar{\ell}} - R^b_{k \bar{j} i \bar{\ell}} = -(R^b_{i \bar{\ell} k \bar{j}} - R^b_{k \bar{\ell} i \bar{j}}),\]
so by \eqref{dbT_refined} we have
\[T_{ik,\bar{\ell}}^j = -\frac{2}{3}P_{ik}^{j \ell} - \frac{1}{6} (R^b_{i \bar{j} k \bar{\ell}} - R^b_{k \bar{j} i \bar{\ell}}) = -\frac{2}{3}P - \frac{1}{6}(-4P) = 0.\]
This establishes the equivalence of \eqref{parallel_01}.
\end{proof}

\begin{proposition} \label{prop3.6}
Let $(M^n,g)$ be a Hermitian manifold with $\nabla^bT^c=0$. Then  it holds that
\begin{enumerate}
\item\label{R20=0} $R^b_{ijk\bar{\ell}}=0$,
\item\label{R11_12=34} $R^b_{i \bar{j} k \bar{\ell}} = R^b_{k \bar{\ell} i \bar{j}}$,
\item\label{Q_pll} $\nabla^b Q=0$,
\item\label{R=0} $R^b_{xy \chi w}=0$, where $x,y,w$ are any tangent vector and $\chi$ is the associated vector field of Gauduchon's torsion $1$-form $\eta$,
\end{enumerate}
for any $i,j,k,\ell$. Also, under the assumption $\nabla^b T^c=0$,  we have
\begin{eqnarray}
&& \partial \overline{\partial} \omega =0 \quad \Longleftrightarrow \quad   P_{ik}^{j \ell}=0  \quad \Longleftrightarrow \quad
R^b_{i \bar{j} k \bar{\ell}} = R^b_{k \bar{j} i \bar{\ell}}, \label{eq_plcld}\\
&& \partial \eta=0,\quad \overline{\partial} \eta = -2\sum_{i,j,k}\eta_k \overline{T^i_{jk}} \varphi_i \wedge \overline{\varphi}_j,
\end{eqnarray}
In particular, the  metric $g$ is Gauduchon, namely, $\partial \overline{\partial} \omega^{n-1}=0$.
\end{proposition}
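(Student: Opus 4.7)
The plan is to prove the four structural claims (1)--(4) in sequence, then extract the identities in \eqref{eq_plcld} and the Lee-form computations, and finally derive the Gauduchon property. The main tools are Proposition \ref{Tderivative} (which translates parallelness of $T^c$ into curvature properties), Proposition \ref{plcld} (which controls $\partial\overline{\partial}\omega$), and Lemmas \ref{lemma3.1}--\ref{lemma3.2}; throughout I will use the equivalence $\nabla^b T^c = 0 \Leftrightarrow \nabla^b T^b = 0$ from \eqref{eq:2.8}.

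Claim (1) is an immediate restatement of the $(1,0)$-half of Proposition \ref{Tderivative}. For claim (2), the $(0,1)$-half of that same proposition gives
\[
 (\mathrm{A})\colon\quad R^b_{i\bar{j}k\bar{\ell}} - R^b_{k\bar{j}i\bar{\ell}} \;=\; -4\, P^{j\ell}_{ik}.
\]
Taking the complex conjugate of $(\mathrm{A})$, using the skew-Hermitian symmetry $\overline{R^b_{i\bar{j}k\bar{\ell}}} = R^b_{j\bar{i}\ell\bar{k}}$ and $\overline{P^{j\ell}_{ik}} = P^{ik}_{j\ell}$, and then relabeling, yields
\[
 (\mathrm{B})\colon\quad R^b_{i\bar{j}k\bar{\ell}} - R^b_{i\bar{\ell}k\bar{j}} \;=\; -4\, P^{j\ell}_{ik}.
\]
Subtracting $(\mathrm{A})$ from $(\mathrm{B})$ gives $R^b_{k\bar{j}i\bar{\ell}} = R^b_{i\bar{\ell}k\bar{j}}$, which is exactly claim (2) after relabeling. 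The index-bookkeeping needed to pass cleanly from the conjugate of $(\mathrm{A})$ to $(\mathrm{B})$ is the most delicate point of the argument.

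Claim (3) is automatic: by \eqref{parallel_01} one has $Q = -4P$ at the tensor level, and since $P$ is a quadratic polynomial in the components of $T^c$ and $\overline{T^c}$ and $\nabla^b$ is Hermitian (so $\nabla^b T^c = 0$ entails $\nabla^b \overline{T^c} = 0$ as well), it follows that $\nabla^b P = 0$, hence $\nabla^b Q = 0$. For claim (4), contracting $\nabla^b T^c = 0$ in $\eta_k = \sum_i T^i_{ik}$ gives $\nabla^b \eta = 0$, and dualizing via the $\nabla^b$-parallel metric gives $\nabla^b \chi = 0$. Parallelness of $\chi$ forces
\[
 R^b(x,y)\chi \;=\; \nabla^b_x\nabla^b_y\chi - \nabla^b_y\nabla^b_x\chi - \nabla^b_{[x,y]}\chi \;=\; 0
\]
for any tangent $x, y$, so $R^b_{xy\chi w} = \langle R^b(x,y)\chi,\, w\rangle = 0$ for arbitrary $x, y, w$.

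For the differential consequences, Proposition \ref{plcld} asserts that $\partial\overline{\partial}\omega = 0$ is equivalent to $T^j_{ik,\bar{\ell}} - T^\ell_{ik,\bar{j}} = -2\, P^{j\ell}_{ik}$; the hypothesis kills the left-hand side, so $\partial\overline{\partial}\omega = 0 \Leftrightarrow P = 0$, and a further appeal to \eqref{parallel_01} converts this to $R^b_{i\bar{j}k\bar{\ell}} = R^b_{k\bar{j}i\bar{\ell}}$. For the Lee-form identities, $\nabla^b \eta = 0$ gives $d\eta(X,Y) = \eta(T^b(X,Y))$; evaluating on $(e_i, \overline{e}_j)$ with \eqref{eq:2.7} produces the stated formula for $\overline{\partial}\eta$, while evaluating on $(e_i, e_j)$ yields $\partial\eta(e_i, e_j) = -2\sum_r T^r_{ij}\eta_r$. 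To show this is zero, I contract the identity \eqref{pT} (whose left side vanishes under our hypothesis) in $k = \ell$: the two mixed quadratic terms $\sum_{k,r} T^r_{ki}T^k_{rj}$ and $\sum_{k,r} T^r_{jk}T^k_{ri}$ cancel by a dummy-index swap combined with the skew-symmetry of $T^c$, while the third summand equals $-\sum_r T^r_{ij}\eta_r$, forcing $\sum_r T^r_{ij}\eta_r = 0$ and hence $\partial\eta = 0$. Finally, applying $\overline{\partial}$ to the defining relation $\partial\omega^{n-1} = -2\eta \wedge \omega^{n-1}$ gives
\[
 \overline{\partial}\partial\,\omega^{n-1} \;=\; \big(-2\overline{\partial}\eta - 4\,\eta\wedge\overline{\eta}\big)\wedge \omega^{n-1};
\]
the $\omega$-traces of $\overline{\partial}\eta$ and $\eta\wedge\overline{\eta}$ compute to $-2|\eta|^2$ and $|\eta|^2$ respectively, so the total $\omega$-trace of the $(1,1)$-form in parentheses is $-2(-2|\eta|^2) - 4|\eta|^2 = 0$, whence $\partial\overline{\partial}\omega^{n-1} = 0$.
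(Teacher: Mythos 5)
Your proposal is correct and follows essentially the same route as the paper: claims (1) and (3) from Proposition \ref{Tderivative} together with the $\nabla^b$-parallelness of $P$, claim (4) from $\nabla^b\chi=0$, claim (2) from the identity $Q_{i\bar{j}k\bar{\ell}}=-4P^{j\ell}_{ik}$ combined with the conjugation symmetry of $P$, the pluriclosed equivalence from Proposition \ref{plcld}, the vanishing of $\sum_r\eta_rT^r_{ij}$ from the contraction of \eqref{pT}, and the Gauduchon property from a trace computation. The only cosmetic differences are that you derive claim (2) by conjugating and relabeling \eqref{parallel_01} rather than by the paper's telescoping decomposition, and that you obtain $\partial\eta$ and $\overline{\partial}\eta$ from the invariant formula $d\eta(X,Y)=\eta(T^b(X,Y))$ for a $\nabla^b$-parallel form instead of the structure equations in a frame with $\theta^b|_p=0$; both variants are sound.
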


\begin{proof}
Let $(M^n,g)$ be a Hermitian manifold satisfying the condition $\nabla^b T^c=0$, which means $\nabla^b_{1,0} T^c=0$ and $\nabla^b_{0,1} T^c=0$. By \eqref{parallel_10} and \eqref{parallel_01}, we get \eqref{R20=0} and \eqref{Q_pll} immediately, as $P_{ik}^{j\ell}$ is $\nabla^b$-parallel. Since Gauduchon's torsion 1-form $\eta=\sum_k \eta_k \varphi_k = \sum_{i,k}T^i_{ik}\varphi_k$ is the trace of the Chern torsion $T^c$, its associated vector field $\chi=\sum_k \overline{\eta_k} e_k$ is clearly $\nabla^b$-parallel and thus
\[R^b_{xy\chi w} = g\left((\nabla^b_x\nabla^b_y-\nabla^b_y\nabla^b_x-\nabla^b_{[x,y]})\chi,w\right)=0,\]
which yields \eqref{R=0}. As to \eqref{R11_12=34}, from $R^b_{i \bar{j} k \bar{\ell}} - R^b_{k \bar{j} i \bar{\ell}} = -4P_{ik}^{j \ell}$, it yields that
\[\begin{aligned}
R^b_{i \bar{j} k \bar{\ell}} - R^b_{k \bar{\ell} i \bar{j}} &=  R^b_{i \bar{j} k \bar{\ell}} - R^b_{k \bar{j} i \bar{\ell}} + R^b_{k \bar{j} i \bar{\ell}} - R^b_{k \bar{\ell} i \bar{j}} \\
&= -4P_{ik}^{j \ell} + (\overline{R^s_{j \bar{k} \ell \bar{i}} - R^b_{\ell \bar{k} j \bar{i}}})\\
&= -4P_{ik}^{j \ell} - \overline{4P_{j \ell}^{k i}}\\
&= -4P_{ik}^{j \ell} -4P_{ki}^{j \ell} \\
&=0.
\end{aligned}\]
Under the assumption $\nabla^bT^c=0$, the equivalence \eqref{eq_plcld} follows from Proposition \ref{plcld} and \eqref{parallel_01}. For any given point $p\in M$, let $e$ be a local unitary frame near $p$ such that  $\theta^b|_p=0$. The structure equation gives us $d\varphi = - \,^t\!\theta \wedge \varphi + \tau = 2 \,^t\!\gamma \wedge \varphi + \tau$, hence $\partial \varphi = -\tau$ and $\overline{\partial} \varphi = 2\overline{\gamma'}\wedge \varphi$  at $p$. Therefore
\[\begin{aligned}
\partial \eta &= \partial (\sum_i \eta_i \varphi_i)
= -\sum_{i,j}\eta_{i,j}\varphi_i \wedge \varphi_j - \sum_{i,j,k} \eta_k T^k_{ij} \varphi_i \wedge \varphi_j,\\
\overline{\partial} \eta &=\overline{\partial} (\sum_i \eta_i \varphi_i)
= - \sum_{i,j} \eta_{i,\bar{j}} \varphi_i \wedge \overline{\varphi}_j - 2 \sum_{i,j,k} \eta_k \overline{T^i_{jk}} \varphi_i \wedge \overline{\varphi}_j.
\end{aligned}\]
Here the indices after comma mean the covariant derivative with respect to $\nabla^{b}$. It is clear that $\nabla^b \eta=0$ and thus $\eta_{i,j}=0,\eta_{i,\bar{j}}=0$. Also, \eqref{parallel_10} implies that
\[  \sum_r(T^r_{\ell i}T^k_{r j} + T^r_{j\ell}T^k_{ri} + T^r_{ij}T^k_{r \ell})=0. \]
Let $k=\ell$ in the equation above and sum up, which yields that $\sum_r\eta_r T^r_{ij}=0$. Hence,
\[\partial \eta=0,\quad \overline{\partial} \eta = -2\sum_{i,j,k}\eta_k \overline{T^i_{jk}} \varphi_i \wedge \overline{\varphi}_j.\]
From the definition of $\eta$, it follows that $\partial \omega^{n-1} = -2 \eta \wedge \omega^{n-1}$ and thus
\[ - \sqrt{-1} \partial \overline{\partial} \omega^{n-1} = - 2 \sqrt{-1} (\overline{\partial} \eta
+ 2 \eta \wedge \overline{\eta}) \wedge \omega^{n-1} = 2 (\sum_{i} \eta_{i,\bar{i}}) \frac{\omega^n}{n} = 0.\]
This completes the proof of the proposition. \end{proof}

\vspace{0.3cm}

\section{Manifolds with Bismut parallel torsion}\label{BTP}

In this section, we will prove Theorem \ref{theorem1.1}, which characterizes Hermitian manifolds with Bismut parallel torsion in terms of the behavior of the Bismut curvature $R^b$.

First we will give the following two technical lemmata, the latter  has been already shown in the proof of \cite[Theorem 2]{ZhaoZ19Str}, but we include the proof here for readers' convenience.

\begin{lemma}\label{swap}
Let $(M^n,g)$ be a Hermitian manifold. Then the following are equivalent
\begin{enumerate}
\item $R^b_{i \bar{j} k \bar{\ell}} = R^b_{k \bar{\ell} i \bar{j}}$,
\item $Q_{i \bar{j} k \bar{\ell}}=-Q_{i\bar{\ell} k \bar{j}}= \overline{Q_{j\bar{i}\ell \bar{k}}}$,
\item $T^j_{ik ,\overline{\ell }} = - T^{\ell}_{ik ,\overline{j } } = \overline{  T^i_{j\ell  ,\overline{k }} }$,
\end{enumerate}
for any $i,j,k,\ell$.
\end{lemma}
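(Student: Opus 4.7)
The plan is a pure index-bookkeeping argument resting on two ingredients: the standard Hermitian symmetries of $R^b$ (skew in the first pair, skew in the last pair, and the reality $\overline{R^b_{j\bar{i}\ell\bar{k}}}=R^b_{i\bar{j}k\bar{\ell}}$, which holds because $\nabla^b$ is a metric Hermitian connection), together with the explicit identity \eqref{dbT_refined}, which expresses $T^j_{ik,\overline{\ell}}$ as $-\tfrac{2}{3}P^{j\ell}_{ik}$ plus a specific combination of four $R^b$-components. From Definition \ref{def3.3} I also need the antisymmetry $P^{j\ell}_{ik}=-P^{\ell j}_{ik}$ and the conjugation rule $\overline{P^{ik}_{j\ell}}=P^{j\ell}_{ik}$.

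I would first establish (1)$\Leftrightarrow$(2). Assuming (1), I substitute the relabeled instance $R^b_{k\bar{j}i\bar{\ell}}=R^b_{i\bar{\ell}k\bar{j}}$ into the definition of $Q$ to get $Q_{i\bar{j}k\bar{\ell}}=R^b_{i\bar{j}k\bar{\ell}}-R^b_{i\bar{\ell}k\bar{j}}$; the same manipulation applied to $-Q_{i\bar{\ell}k\bar{j}}$, and to $\overline{Q_{j\bar{i}\ell\bar{k}}}$ after invoking reality, produces this same expression, so the three quantities coincide, giving (2). For the converse, I unfold $\overline{Q_{j\bar{i}\ell\bar{k}}}=Q_{i\bar{j}k\bar{\ell}}$ via reality, cancel the common $R^b_{i\bar{j}k\bar{\ell}}$, and read off $R^b_{i\bar{\ell}k\bar{j}}=R^b_{k\bar{j}i\bar{\ell}}$, which is (1) after swapping $j\leftrightarrow\ell$.

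To handle (2)$\Leftrightarrow$(3) I would take two linear combinations of \eqref{dbT_refined} engineered to eliminate the $P$-term. Summing \eqref{dbT_refined} with its copy under $j\leftrightarrow\ell$ kills $P$ by $P^{j\ell}_{ik}+P^{\ell j}_{ik}=0$, producing
\[
T^j_{ik,\overline{\ell}}+T^{\ell}_{ik,\overline{j}}\,=\,\tfrac{1}{2}\bigl(Q_{i\bar{j}k\bar{\ell}}+Q_{i\bar{\ell}k\bar{j}}\bigr),
\]
so the skew-symmetry $T^j_{ik,\overline{\ell}}=-T^{\ell}_{ik,\overline{j}}$ in (3) is equivalent to $Q_{i\bar{j}k\bar{\ell}}=-Q_{i\bar{\ell}k\bar{j}}$ in (2). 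For the conjugation half, I subtract from \eqref{dbT_refined} the conjugate of the same formula under the simultaneous swap $(i,k)\leftrightarrow (j,\ell)$; the $P$-contributions cancel by $\overline{P^{ik}_{j\ell}}=P^{j\ell}_{ik}$, and after rewriting the four resulting $R^b$-terms with reality, the expression telescopes to
\[
T^j_{ik,\overline{\ell}}-\overline{T^i_{j\ell,\overline{k}}}\,=\,\tfrac{1}{2}\bigl(R^b_{i\bar{\ell}k\bar{j}}-R^b_{k\bar{j}i\bar{\ell}}\bigr),
\]
whose vanishing is exactly (1), equivalently the remaining half of (2) by the first step.

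I do not anticipate any genuine obstacle: the calculations are short and mechanical. The only delicate point is to apply the reality identity on $R^b$ to precisely the right indices at each stage, since a misplaced bar or a wrong relabeling would easily produce a spurious sign and wreck the cancellations that drive the argument.
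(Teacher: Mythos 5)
Your proof is correct and follows essentially the same route as the paper's: both arguments rest on the reality and skew symmetries of $R^b$, the symmetries of $P$ and $Q$ from Definition \ref{def3.3}, and the identity \eqref{dbT_refined}, and your two displayed polarization identities and the algebra for $(1)\Leftrightarrow(2)$ all check out. The only organizational difference is that you extract two \emph{unconditional} identities from \eqref{dbT_refined} (one by antisymmetrizing in $j\leftrightarrow\ell$, one by subtracting the conjugated $(i,k)\leftrightarrow(j,\ell)$ copy), which lets you read off all implications at once and avoid the paper's separate appeal to \eqref{dbT} for the direction $(3)\Rightarrow(1)$, where it derives $Q_{i\bar{\ell}k\bar{j}}=6T^j_{ik,\overline{\ell}}+4P^{j\ell}_{ik}$ and then telescopes.
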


\begin{proof}
First let us assume that $R^b_{i \bar{j} k \bar{\ell}} = R^b_{k \bar{\ell} i \bar{j}}$ holds. It follows that
\[\begin{aligned}
Q_{i\bar{j}k\bar{\ell}} &= R^b_{i\bar{j}k\bar{\ell}} - R^b_{k\bar{j}i\bar{\ell}}
= R^b_{k\bar{\ell}i\bar{j}} - R^b_{i\bar{\ell}k\bar{j}} = -Q_{i\bar{\ell}k\bar{j}},\\
\overline{Q_{i\bar{j}k\bar{\ell}}} &= R^b_{j\bar{i}\ell\bar{k}} - R^b_{j\bar{k}\ell\bar{i}}
= R^b_{j\bar{i}\ell\bar{k}} - R^b_{\ell\bar{i}j\bar{k}} = Q_{j\bar{i}\ell\bar{k}}.
\end{aligned}\]
By equation \eqref{dbT_refined}, we get
\[T^j_{ik,\bar{\ell}} = -\frac{2}{3}P_{ik}^{j \ell} - \frac{1}{6}Q_{i\bar{j}k\bar{\ell}},\]
which implies that
\[T^j_{ik ,\overline{\ell }} = - T^{\ell}_{ik ,\overline{j } } = \overline{  T^i_{j\ell  ,\overline{k }} }.\]
Conversely, assuming that  the equality $T^j_{ik ,\overline{\ell }} = - T^{\ell}_{ik ,\overline{j } } = \overline{  T^i_{j\ell  ,\overline{k }} }$ holds. Then by \eqref{dbT} we get
\[Q_{i\bar{\ell}k\bar{j}}=R^{b}_{i\bar{\ell}k\bar{j}}-R^b_{k\bar{\ell}i\bar{j}} =6 T_{ik,\bar{\ell}}^j + 4P^{j\ell}_{ik},\]
which implies that $Q_{i\bar{j}k\bar{\ell}} = -Q_{i\bar{\ell}k\bar{j}}$ and $\overline{Q_{i\bar{j}k\bar{\ell}}}= Q_{j\bar{i}\ell\bar{k}}$. Also,
\[\begin{aligned}
R^b_{i \bar{j} k \bar{\ell}} - R^b_{k \bar{\ell} i \bar{j}} &= R^b_{i \bar{j} k \bar{\ell}} - R^b_{k\bar{j}i\bar{\ell}}
+R^b_{k\bar{j}i\bar{\ell}} - R^b_{k \bar{\ell} i \bar{j}} \\
&= Q_{i\bar{j}k\bar{\ell}} + \overline{R^b_{j\bar{k}\ell\bar{i}} - R^b_{\ell \bar{k} j \bar{i}}}\\
&= Q_{i\bar{j}k\bar{\ell}} + \overline{Q_{j\bar{k}\ell\bar{i}}} \\
&= 0.
\end{aligned}\]
This completes the proof of the lemma. \end{proof}

\begin{lemma}\label{commt}
Let $(M^n,g)$ be a Hermitian manifold. If $R^b_{ijk\bar{\ell}}=0$ for any $i,j,k,\ell$, then the following commutation formulae hold
\begin{enumerate}
\item $\eta_{i, \overline{j}\,\overline{k} } - \eta_{i, \overline{k}\,\overline{j} }  =  2 \sum_r \overline{T^{r}_{kj}} \, \eta_{i, \overline{r}}$,
\item $T^i_{j\ell , \overline{k} \, \overline{p}} - T^i_{j\ell , \overline{p} \, \overline{k} } =  2 \sum_r \overline{T^r_{pk} } T^i_{j\ell , \overline{r}}$,
\end{enumerate}
for any $i,j,k,\ell,p$. Again indices after comma stand for covariant derivatives with respect to $\nabla^b$.
\end{lemma}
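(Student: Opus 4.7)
The plan is to derive both identities from the standard Ricci commutation formula for a metric connection with torsion, namely, for any tensor field $S$ and vector fields $X, Y$,
\[(\nabla^b)^2 S(X,Y;\cdot) - (\nabla^b)^2 S(Y,X;\cdot) \;=\; R^b(X,Y)\!\cdot\! S - \nabla^b_{T^b(X,Y)} S,\]
where $R^b(X,Y)$ acts on tensors as a derivation. I will apply this with $(X,Y) = (\bar e_k,\bar e_j)$ (resp.\ $(\bar e_p,\bar e_k)$) and $S = \eta$ (resp.\ $S = T^c$).

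The key step is to show that the curvature term \emph{vanishes identically} under the hypothesis $R^b_{ijk\bar\ell}=0$. Since $R^b(\bar e_a,\bar e_b)$ is a $\mathbb C$-linear endomorphism acting as a derivation on all tensors, it suffices to verify $R^b(\bar e_a,\bar e_b)e_i = 0$ and $R^b(\bar e_a,\bar e_b)\bar e_i = 0$ for all $i$. Expanding in the unitary frame, the components of $R^b(\bar e_a,\bar e_b)e_i$ are $R^b_{\bar a\bar b i p}$ in the $\bar e_p$-direction and $R^b_{\bar a\bar b i\bar p}$ in the $e_p$-direction. The first is zero by the Hermitian property \eqref{eq:2.10}, since its last two slots are both of type $(1,0)$. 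The second, by complex conjugation and by the skew-symmetry of $R^b$ in its last two slots (a consequence of $\nabla^b g = 0$), equals $-\overline{R^b_{ab p\bar i}}$, which vanishes by the hypothesis. The argument for $R^b(\bar e_a,\bar e_b)\bar e_i$ is entirely analogous: its $\bar e_p$-component is of Hermitian type $R^b_{\bar a\bar b\bar i\bar p}=0$, and its $e_p$-component conjugates to a hypothesis-zero term of the form $R^b_{ab i\bar p}$.

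Once the curvature contribution is ruled out, the torsion term furnishes the right-hand side. By \eqref{eq:2.7} (taking complex conjugate of $T^b(e_i,e_j) = -2\sum_k T^k_{ij}e_k$), one has $T^b(\bar e_k,\bar e_j) = -2\sum_r \overline{T^r_{kj}}\,\bar e_r$, so
\[-\nabla^b_{T^b(\bar e_k,\bar e_j)} S \;=\; 2\sum_r \overline{T^r_{kj}}\,\nabla^b_{\bar e_r} S.\]
Evaluating at $S = \eta$ and pairing with $e_i$ produces identity (1), while evaluating at $S = T^c$ (viewed as a $(1,2)$-tensor) and using $(X,Y) = (\bar e_p,\bar e_k)$ in place of $(\bar e_k,\bar e_j)$ produces identity (2), once the components are read off on $(e_j,e_\ell,\varphi^i)$.

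The only delicate point is the verification that $R^b(\bar e_a,\bar e_b)$ annihilates every basis vector; everything else is mechanical from the Ricci identity. I expect this step to be the main (only) real content of the proof, since it requires combining three separate symmetries of $R^b$ (Hermitian type, metric skew-symmetry in the last two slots, and complex conjugation) to convert every candidate component into the form $R^b_{ijk\bar\ell}$ that is killed by the hypothesis.
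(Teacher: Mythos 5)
Your proof is correct and follows essentially the same route as the paper: the paper computes the commutator of second $\nabla^b$-derivatives directly in a frame with $\theta^b=0$ at a point, identifies the bracket term $[\bar e_p,\bar e_k]=-2\sum_r\overline{T^r_{kp}}\,\bar e_r$ (which is exactly your torsion term $-\nabla^b_{T^b(\bar e_p,\bar e_k)}$), and kills the curvature terms by the same conjugation-plus-skew-symmetry argument that reduces $R^b_{\bar p\bar k\,\cdot\,\bar\cdot}$ to the hypothesis-vanishing $(2,0)$-type components. Your packaging via the abstract Ricci identity, and your direct treatment of $\eta$ rather than tracing identity (2), are only cosmetic differences.
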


\begin{proof}
First let us prove the second equality. From the definition of covariant derivatives,
\[T_{j \ell,\bar{k}}^i= \overline{e}_k T_{j \ell}^i + \sum_r T^i_{\ell r}\,\langle \nabla^b_{\overline{e}_k}e_j,e_r \rangle  - T^i_{j r}\,\langle \nabla^b_{\overline{e}_k} e_{\ell},e_r \rangle  + T^r_{j \ell}\,\langle \nabla^b_{\overline{e}_k}e_r,e_i\rangle .\]
Fix any given point in $M$, choose a local unitary frame $e$ so that the matrix $\theta^b=0$ at the point, then we have
\[T_{j \ell,\bar{k}\,\bar{p}}^i = \overline{e}_p\overline{e}_k T_{j \ell}^i +
\sum_r T^i_{\ell r}\,\langle  \nabla^b_{\overline{e}_p}\nabla^b_{\overline{e}_k}e_j,e_r \rangle  - T^i_{j r}\,\langle \nabla^b_{\overline{e}_p}\nabla^b_{\overline{e}_k} e_{\ell},e_r\rangle   + T^r_{j \ell}\,\langle \nabla^b_{\overline{e}_p}\nabla^b_{\overline{e}_k}e_r,e_i \rangle , \]
which implies that
\[ T^i_{j\ell , \overline{k} \, \overline{p}} - T^i_{j\ell , \overline{p} \, \overline{k} } =
[\overline{e}_p,\overline{e}_k] T_{j \ell}^i + \sum_r T^i_{\ell r}\,R^b_{\bar{p}\bar{k}j\bar{r}} - T^i_{j r}\,R^b_{\bar{p} \bar{k} \ell \bar{r}} + T^r_{j \ell}\,R^b_{\bar{p}\bar{k}r\bar{i}}.\]
At that point, we have
\[ [\overline{e}_p,\overline{e}_k ]= \nabla_{\overline{e}_p}\overline{e}_k-\nabla_{\overline{e}_k}\overline{e}_p = -2\overline{T^r_{kp}} \overline{e}_r,\]
where $\nabla$ is the Riemannian connection. So the second equality in the lemma is established since $R^b_{ijk\bar{\ell}}=0$. By letting $i=j$ in the second equality and sum over, we get the first equality.
\end{proof}

Now we are ready to prove the main result of this article, Theorem \ref{theorem1.1}.

\begin{proof}[{\bf Proof of Theorem \ref{theorem1.1}:}]
By Proposition \ref{prop3.6}, it is clear that $\nabla^b T^c=0$ implies the four conditions in Theorem \ref{theorem1.1}, so we just need to prove the converse. We will follow the strategy of the proof of \cite[Theorem 2]{ZhaoZ19Str}. Note that the first condition is equivalent to $\nabla^b_{1,0}T^c=0$, by \eqref{parallel_10}, which implies \[\sum_r(T^r_{\ell i}T^k_{r j} + T^r_{j\ell}T^k_{ri} + T^r_{ij}T^k_{r \ell})=0,\]
and the fourth condition means
\begin{equation}\label{vR}
\sum_{i,r} \overline{\eta_i} ( R^b_{i\bar{j}r\bar{r}} - R^b_{r \bar{j} i \bar{r}}) =0,
\end{equation}
for any $i,j,\ell$. In the mean time,  $\mathrm{Ric}(Q)$ is Hermitian symmetric due to $R^b_{i\bar{j}k\bar{\ell}}=R^b_{k\bar{\ell}i\bar{j}}$, as
\[\mathrm{Ric}(Q)_{i \bar{j}}= \sum_r R^b_{i \bar{j} r \bar{r}} - R^b_{r \bar{j} i \bar{r}}
= \overline{\sum_r R^b_{j \bar{i} r \bar{r}} - R^b_{j \bar{r} r \bar{i}}}
= \overline{\sum_r R^b_{j \bar{i} r \bar{r}} - R^b_{r \bar{i} j \bar{r}}}
=\overline{\mathrm{Ric}(Q)_{j \bar{i}}}.\]

Firstly, we will show that $|\eta|^2$ is a constant. We have $\eta_{i,j}=0$ since $R^b_{ijk\overline{\ell}}=0$. By \eqref{dbeta} and the symmetry assumption $R^b_{i\bar{j}k\bar{\ell}}=R^b_{k \bar{\ell} i \bar{j}}$, we get
\[|\eta|^2_{,\bar{j}}= \sum_i \eta_{i,\bar{j}}\overline{\eta_i}
=-\frac{2}{3}(\sum_{i,r}\eta_r \overline{T^i_{jr}} + \overline{\eta}_r T^j_{ir} - \sum_{i,t,r}T^j_{tr}\overline{T^i_{tr}})\overline{\eta_i} - \frac{1}{6}\sum_{i,r}(R^b_{i\bar{j}r\bar{r}} - R^b_{r\bar{j}i\bar{r}})\overline{\eta_i}=0\]
for any $j$. Here we used \eqref{vR} and the fact $\sum_r \eta_r T_{ij}^r =0$, which follows from the identity $\sum_r(T^r_{\ell i}T^k_{r j} + T^r_{j\ell}T^k_{ri} + T^r_{ij}T^k_{r \ell})=0$. Similarly $|\eta|^2_{,j}=0$ also holds, thus $|\eta|^2$ is a constant.

Secondly, we will show that $\nabla^b \eta=0$, which is equivalent to $\nabla^b_{0,1}\eta=0$. As $|\eta|^2$ is a constant, it yields, after the covariant derivative in $\overline{\ell}$ is taken, that
\[\sum_k \eta_{k,\bar{\ell}} \,\overline{\eta_k}=0.\]
Take the covariant derivative in $\ell$ and sum up $\ell$, it follows that
\begin{equation}\label{eta_cst}
\sum_{k,\ell} \big( |\eta_{k,\bar{\ell}}|^2 + \eta_{k,\bar{\ell}\,\ell}\,\overline{\eta_{k}}  \big) =0.
\end{equation}
The equality \eqref{dbeta} and $R^b_{i\bar{j}k\bar{\ell}}=R^b_{k\bar{\ell}i\bar{j}}$ imply that
\begin{equation}\label{dbeta_TPLL}
\eta_{i,\bar{j}}  =  -\frac{2}{3}( \overline{\phi_j^i} + \phi_i^j - B_{i\bar{j}})
- \frac{1}{6} \mathrm{Ric}(Q)_{i\bar{j}},
\end{equation}
thus $\eta_{i,\bar{j}}$ is Hermitian symmetric in $i,j$ since $\mathrm{Ric}(Q)_{i\bar{j}}$ is so.
It yields
\[\begin{aligned}
\sum_{k,\ell} |\eta_{k,\bar{\ell}}|^2 &= \frac{4}{9}|\phi+\phi^*-B|^2 + \frac{1}{36} |\mathrm{Ric}(Q)|^2
+\frac{2}{9} \mathrm{Re}\left(\phi \mathrm{Ric}(Q) + \overline{\phi \mathrm{Ric}(Q)} - B \mathrm{Ric}(Q)\right), \\
&= \frac{4}{9}\left(|\phi + \phi^*|^2+|B|^2-4 \mathrm{Re}(\phi B)\right)
+ \frac{1}{36} |\mathrm{Ric}(Q)|^2 + \frac{4}{9} \mathrm{Re} \left( \phi \mathrm{Ric}(Q) \right)- \frac{2}{9} \mathrm{Re} \left( B \mathrm{Ric}(Q) \right),
\end{aligned}\]
where
\[|\phi+\phi^*-B|^2 =\sum_{k,\ell}(\phi_k^\ell+\overline{\phi_\ell^k}-B_{k\bar{\ell}})\overline{(\phi_k^\ell+\overline{\phi_\ell^k}-B_{k\bar{\ell}})},
\quad |\mathrm{Ric}(Q)|^2=\sum_{k,\ell} \mathrm{Ric}(Q)_{k \bar{\ell}} \overline{ \mathrm{Ric}(Q)_{k \bar{\ell}}},\]
\[\phi\mathrm{Ric}(Q)=\sum_{k,\ell} \phi^k_\ell \mathrm{Ric}(Q)_{k \bar{\ell}}, \quad
B\mathrm{Ric}(Q) = \sum_{k,\ell} B_{k\bar{\ell}}\mathrm{Ric}(Q)_{\ell \bar{k}}, \quad \phi B = \sum_{k,\ell} \phi^k_{\ell} \mathrm{Ric}(Q)_{k\bar{\ell}}.\]
Note that
\[|\phi + \phi^*|^2= \sum_{k,\ell}(\phi^\ell_k + \overline{ \phi^k_\ell} ) ( \overline{ \phi^\ell_k + \overline{\phi^k_\ell}} )
= 2|\phi|^2 + 2\mathrm{Re}( \phi \cdot \phi ),\]
where
\[ \phi \cdot \phi = \sum_{k, \ell} \phi_k^\ell \phi_\ell^k.\]
Then it follows from Lemma \ref{commt}, the $\nabla^b$-parallelness of $\mathrm{Ric}(Q)$, \eqref{dbeta_TPLL} and Lemma \ref{swap} that
\[\begin{aligned}
\sum_\ell \eta_{k,\bar{\ell}\, \ell} &= \sum_\ell \left( \overline{\eta_{\ell,\bar{k}}}\right)_{,\ell}=\sum_{\ell} \overline{\eta_{\ell,\bar{k}\,\bar{\ell}}}  = \sum_\ell  \big( \overline{\eta_{\ell, \bar{\ell}\,\bar{k}}
+ 2 \sum_r \overline{T^r_{\ell k}}\eta_{\ell,\bar{r}}} \big)  \\
&= \left(\sum_\ell \eta_{\ell,\bar{\ell}}\right)_{,k} + 2 \sum_{\ell,r} T^r_{\ell k}\eta_{r,\bar{\ell}} \\
&= \left( \frac{2}{3}(|T|^2-2|\eta|^2)-\frac{1}{6} \sum_r \mathrm{Ric}(Q)_{r \bar{r}}\right)_{\!,k}
+ 2 \sum_{\ell,r} T^r_{\ell k}\eta_{r,\bar{\ell}} \\
&= \frac{2}{3} \sum_{i,j,r}T^i_{jr} \overline{T^i_{jr,\bar{k}}} - \frac{4}{3} \sum_{\ell ,r}T^r_{\ell k} \left(\phi_r^{\ell} + \overline{\phi_{\ell}^r} - B_{r \bar{\ell}} + \frac{1}{4} \mathrm{Ric}(Q)_{r \bar{\ell}} \right) \\
&=  \frac{2}{3} \sum_{i,j,r}T^i_{jr} T^j_{ik,\bar{r}} - \frac{4}{3} \sum_{\ell ,r}T^r_{\ell k} \left(\phi_r^{\ell} + \overline{\phi_{\ell}^r} - B_{r \bar{\ell}} + \frac{1}{4} \mathrm{Ric}(Q)_{r \bar{\ell}} \right),
\end{aligned}\]
Hence, it yields that, from \eqref{dbT_refined}, $R^b_{i\bar{j}k\bar{\ell}}=R^b_{k\bar{\ell}i\bar{j}}$, and $\sum_r \eta_r T_{ij}^r =0$,
\[ \begin{aligned}
\sum_{k,\ell}\eta_{k,\bar{\ell}\,\ell} \overline{\eta_k} &=
\frac{2}{3} \sum_{i,j,k,r} \overline{\eta_k} T^i_{jr} T^j_{ik,\bar{r}} - \frac{4}{3} \sum_{k,\ell ,r} \overline{\eta_k} T^r_{\ell k} \left(\phi_r^{\ell} + \overline{\phi_{\ell}^r} - B_{r \bar{\ell}} + \frac{1}{4} \mathrm{Ric}(Q)_{r \bar{\ell}} \right) \\
&= -\frac{4}{9} \sum_{i,j,k,r} \overline{\eta_k} T^i_{jr} \left(\sum_q T^q_{ik}\overline{T^q_{j r}} + T^j_{iq} \overline{T^k_{r q}}
- T^j_{kq}\overline{T^i_{r q}} - T^r_{iq} \overline{T^k_{jq}} + T^r_{kq}\overline{T^i_{jq}}
+ \frac{1}{4} ( R^b_{i\bar{j}k\bar{r}} - R^b_{k \bar{j} i \bar{r}} )\right) \\
&\quad - \frac{4}{3}(\phi \cdot \phi + |\phi|^2 - \phi B + \frac{1}{4}\phi \mathrm{Ric}(Q)) \\
&= -\frac{4}{9}(\phi B - 2\phi A) - \frac{4}{3}(\phi \cdot \phi + |\phi|^2 - \phi B + \frac{1}{4}\phi \mathrm{Ric}(Q))
- \frac{1}{9} \sum_{i,j,k,r} \overline{\eta_k} T^i_{jr} Q_{i\bar{j}k\bar{r}} \\
&= \frac{8}{9}(\phi B + \phi A)- \frac{4}{3}(\phi \cdot \phi + |\phi|^2) - \frac{1}{3} \phi \mathrm{Ric}(Q)
- \frac{1}{9} \sum_{i,j,k,r} \overline{\eta_k} T^i_{jr} Q_{i\bar{j}k\bar{r}} ,
\end{aligned}\]
where
\[\phi A = \sum_{k,\ell}\phi^k_{\ell} A_{k \bar{\ell}}.\]
Hence, the equality \eqref{eta_cst} implies
\[\sum_{k,\ell} |\eta_{k,\bar{\ell}}|^2 + \mathrm{Re}( \eta_{k,\bar{\ell}\,\ell}\overline{\eta_{k}})=0,\] which yields
\begin{equation}\label{eq_1}
\begin{aligned}
|B|^2 - 2\mathrm{Re}(\phi B) + 2 \mathrm{Re}(\phi A) - \frac{1}{2} \mathrm{Re}(B\mathrm{Ric}(Q)) &=
\frac{1}{2}|\phi+\phi^*|^2 - \frac{1}{4} \mathrm{Re}(\phi \mathrm{Ric}(Q))\\
& - \frac{1}{16}|\mathrm{Ric}(Q)|^2 + \frac{1}{4} \mathrm{Re} \left(\sum_{i,j,k,r} \overline{\eta_k} T^i_{jr} Q_{i\bar{j}k\bar{r}}\right).
\end{aligned}
\end{equation}
As $\sum_r \eta_r T^r_{ij}=0$ has already been shown, it follows that, after the covariant derivative in $\bar{\ell}$ is taken,
\[\sum_r \eta_{r,\bar{\ell}}\,T^r_{ij} + \eta_r\,T^r_{ij,\bar{\ell}} =0,\]
which yields
\begin{gather}
\sum_r\left(\phi_r^{\ell} + \overline{\phi_{\ell}^r} - B_{r \bar{\ell}} + \frac{1}{4}\mathrm{Ric}(Q)_{r\bar{\ell}} \right)T^r_{ij} \notag\\
+ \sum_r \eta_r \left(\sum_q (T^q_{ij}\overline{T^q_{r \ell}} + T^r_{iq} \overline{T^j_{\ell q}}
- T^r_{jq}\overline{T^i_{\ell q}} - T^\ell_{iq} \overline{T^j_{rq}} + T^\ell_{jq}\overline{T^i_{rq}})
+ \frac{1}{4} ( R^b_{i\bar{r}j\bar{\ell}} - R^b_{j \bar{r} i \bar{\ell}} )\right)=0 , \notag
\end{gather}
or equivalently,
\begin{equation}\label{tr_etaT}
\sum_r\left((\phi_r^{\ell}-B_{r\bar{\ell}})T^r_{ij} + \frac{1}{4} \mathrm{Ric}(Q)_{r\bar{\ell}} T^r_{ij}
+  \overline{\phi^j_r}T^{\ell}_{ir} - \overline{\phi^i_r}T^{\ell}_{jr} + \frac{1}{4} \eta_r Q_{i\bar{r}j\bar{\ell}}\right)=0.
\end{equation}
Multiply $\overline{\eta_j}$ on both sides of \eqref{tr_etaT} and sum up $j$, it yields
\[ \sum_{r} \left(\phi_r^{\ell}\phi_i^{r} - \phi^r_i B_{r\bar{\ell}}
+ \frac{1}{4} \phi^r_i \mathrm{Ric}(Q)_{r\bar{\ell}} +  \phi^{\ell}_{r}\overline{\phi^i_r}
+ \frac{1}{4} \sum_j \eta_r \overline{\eta_j} Q_{i\bar{r}j\bar{\ell}}\right) =0,\]
where $\sum_r \eta_r T^r_{ij}=0$ is used here. Let $i=\ell$ and sum up $i$, it implies from \eqref{vR} that
\[ \phi B = \phi \cdot \phi + \frac{1}{4} \phi \mathrm{Ric}(Q) + |\phi|^2,\]
and thus
\begin{equation}\label{eq_2}
\mathrm{Re}(\phi B) = \mathrm{Re}(\phi \cdot \phi) + \frac{1}{4} \mathrm{Re}(\phi \mathrm{Ric}(Q)) + |\phi|^2
=\frac{1}{2}|\phi+\phi^*|^2 + \frac{1}{4} \mathrm{Re}(\phi \mathrm{Ric}(Q)).
\end{equation}
Similarly, multiply $\overline{T^{\ell}_{ij}}$ on both sides of \eqref{tr_etaT} and sum up $i,j,\ell$, it yields
\[ \phi B - |B|^2 + \frac{1}{4} B \mathrm{Ric}(Q) + 2 \overline{\phi A}
+ \frac{1}{4}\sum_{i,j,r,\ell} \eta_r \overline{T^{\ell}_{ij}} Q_{i\bar{r}j\bar{\ell}}=0, \]
and thus, by Lemma \ref{swap},
\begin{equation}\label{eq_3}
|B|^2 - \mathrm{Re}(\phi B) - 2 \mathrm{Re}(\overline{\phi A}) - \frac{1}{4} \mathrm{Re}( B \mathrm{Ric}(Q))
=- \frac{1}{4} \mathrm{Re} \left( \sum_{i,j,r,\ell} \overline{\eta_r} T^{\ell}_{ij}Q_{\ell \bar{i} r \bar{j}} \right).
\end{equation}
Note that the equality \eqref{vR} implies that
\[\sum_{i}\overline{\eta_i}\mathrm{Ric}(Q)_{i\bar{j}}=0,\]
for any $j$. Take the covariant derivative in $k$ of the equality above, it yields
\[\sum_i \eta_{i,\bar{k}} \mathrm{Ric}(Q)_{j \bar{i}}=0,\]
where $\nabla^b \mathrm{Ric}(Q)=0$ is used here, and thus
\[ \sum_i \left( \phi_i^k + \overline{\phi_k^i} - B_{i\bar{k}} + \frac{1}{4} \mathrm{Ric}(Q)_{i\bar{k}}\right) \mathrm{Ric}(Q)_{j \bar{i}}=0. \]
Let $j=k$ and sum up $j$, we get
\[ \phi \mathrm{Ric}(Q) + \overline{\phi \mathrm{Ric}(Q)} - B \mathrm{Ric}(Q) + \frac{1}{4}|\mathrm{Ric}(Q)|^2 =0, \]
and thus
\begin{equation}\label{eq_4}
\mathrm{Re}(B\mathrm{Ric}(Q)) = 2 \mathrm{Re}(\phi \mathrm{Ric}(Q))  + \frac{1}{4}|\mathrm{Ric}(Q)|^2 .
\end{equation}
In the meantime, we will also use the equality
\[ \sum_r(T^r_{\ell i}T^k_{r j} + T^r_{j\ell}T^k_{ri} + T^r_{ij}T^k_{r \ell})=0.\]
Multiply $\overline{T^k_{\ell j}}$ on both sides and sum up $j,k,\ell$, we get
\[ \sum_{r,\ell} T^r_{\ell i}A_{r \bar{\ell}} - \sum_{k,r} T^k_{r i}B_{k \bar{r}} +  \sum_{j,r}T^r_{ji}A_{r\bar{j}}=0.\]
Hence
\begin{equation}\label{TBTA}
2 \sum_{r,\ell}T^r_{\ell i}A_{r \bar{\ell}} = \sum_{r,\ell} T^r_{\ell i}B_{r \bar{\ell}}.
\end{equation}
Multiply $\overline{\eta_i}$ on both sides above and sum up $i$, we get
\[2 \phi A = \phi B,\]
and thus
\begin{equation}\label{eq_5}
2 \mathrm{Re} (\phi A) = \mathrm{Re} (\phi B).
\end{equation}
Therefore, the equalities \eqref{eq_1}, \eqref{eq_2}, \eqref{eq_3}, \eqref{eq_4} and \eqref{eq_5} imply that
\[|B|^2 = |\phi + \phi^*|^2 + \mathrm{Re}(\phi \mathrm{Ric}(Q)) + \frac{1}{16}|\mathrm{Ric}(Q)|^2, \quad
\mathrm{Re}(\phi B)= \frac{1}{2}|\phi+\phi^*|^2 + \frac{1}{4} \mathrm{Re}(\phi  \mathrm{Ric}(Q)),\]
\[ \mathrm{Re}(\phi A)= \frac{1}{4}|\phi+\phi^*|^2 + \frac{1}{8} \mathrm{Re}(\phi  \mathrm{Ric}(Q)),
\quad \mathrm{Re}(B\mathrm{Ric}(Q)) = 2 \mathrm{Re}(\phi \mathrm{Ric}(Q))  + \frac{1}{4}|\mathrm{Ric}(Q)|^2, \]
\[\mathrm{Re} \left(\sum_{i,j,k,r} \overline{\eta_k} T^i_{jr} Q_{i\bar{j}k\bar{r}}\right)=0.\]
which yields
\[\sum_{k,\ell}|\eta_{k,\bar{\ell}}|^2 = \frac{4}{9}\left(|\phi + \phi^*|^2+|B|^2-4 \mathrm{Re}(\phi B)\right)
+ \frac{1}{36} |\mathrm{Ric}(Q)|^2 + \frac{4}{9} \mathrm{Re} \left( \phi \mathrm{Ric}(Q) \right)- \frac{2}{9} \mathrm{Re} \left( B \mathrm{Ric}(Q) \right)=0,\]
hence $\nabla^b\eta=0$ is proved.

Thirdly, we will show that $|T|^2$, $\phi_i^j$ and $B_{i \bar{j}}$ are $\nabla^b$-parallel. To this end, the equality \eqref{dbeta_TPLL} implies that
\[0 = \sum_i \eta_{i,\bar{i}}= \frac{2}{3}(|T|^2-2|\eta|^2) - \frac{1}{6} \sum_i \mathrm{Ric}(Q)_{i\bar{i}},\]
and it follows that $|T|^2$ is $\nabla^b$-parallel, since $|\eta|^2$ and $\mathrm{Ric}(Q)$ are. It is clear that $\phi_{i,k}^j=0$ now due to $\nabla^b \eta=0$, while it is easy to see
\[ \phi_{i,\bar{k}}^j = \sum_r \overline{\eta_r} T^j_{ir,\bar{k}} =- \sum_r \overline{\eta_r} T^j_{ri,\bar{k}}
= -\sum_r \overline{\eta_r T^r_{jk,\bar{i}}}=0,\]
where Lemma \ref{swap} and $\sum_r \eta_r T^r_{ij}=0$ are used. Hence $\phi_i^j$ is $\nabla^b$-parallel. Note that the equality \eqref{dbeta_TPLL} implies
\[0 = \eta_{i,\bar{j}} = -\frac{2}{3} \left( \phi_i^j + \overline{\phi_j^i} - B_{i\bar{j}} + \frac{1}{4} \mathrm{Ric}(Q)_{i \bar{j}} \right),\]
which yields that $B_{i \bar{j}}$ is $\nabla^b$-parallel, as $\phi_i^j$ and $\mathrm{Ric}(Q)$ are.

Finally, we are ready to show that $\nabla^b T^c=0$.
It follows from \eqref{TBTA} that
\begin{equation}\label{vTA}
\sum_{i,\ell,r} \left(T^r_{\ell i} A_{r \bar{\ell}}\right)_{,\bar{i}}
= \frac{1}{2}\sum_{i,\ell,r} \left(T^r_{\ell i}B_{r \bar{\ell}}\right)_{,\bar{i}}
= \frac{1}{2}\sum_{i,\ell,r} T^r_{\ell i,\bar{i}}B_{r \bar{\ell}}
= -\frac{1}{2} \sum_{i,\ell,r} T^i_{\ell i,\bar{r}}B_{r \bar{\ell}}
= \frac{1}{2} \sum_{\ell,r} \eta_{\ell,\bar{r}}B_{r \bar{\ell}}=0,
\end{equation}
where Lemma \ref{swap} is used.
Then, since $|T|^2$ is $\nabla^b$-parallel, it follows, after the covariant derivative in $\bar{\ell}$ is taken,
\[0=|T|^2_{,\bar{\ell}}=\sum_{i,j,k}\overline{T^j_{ik}}T^j_{ik,\bar{\ell}} =0.\]
Take the covariant derivative in $\bar{\ell}$  and sum up $\ell$, we get
\[\sum_{i,j,k,\ell} |T^j_{ik,\bar{\ell}}|^2 + \overline{T^j_{ik}}T^j_{ik,\bar{\ell}\,\ell} =0.\]
Note that, from Lemma \ref{swap}, Lemma \ref{commt} and $\nabla^b \eta=0$,
\[\begin{aligned}
\sum_{i,j,k,\ell} \overline{T^j_{ik}}T^j_{ik,\bar{\ell}\,\ell}& =
\sum_{i,j,k,\ell} \overline{T^j_{ik}} \left(\overline{T_{j \ell,\bar{k}}^i}\right)_{,\ell}
= \sum_{i,j,k,\ell} \overline{T^j_{ik} T_{j \ell,\bar{k}\,\bar{\ell}}^i}
= \sum_{i,j,k,\ell} \overline{T^j_{ik}( T_{j \ell,\bar{\ell}\,\bar{k}}^i + 2 \sum_r \overline{T^r_{\ell k}} T^i_{j \ell, \bar{r}} )} \\
&= - \sum_{i,j,k,\ell} \overline{T^j_{ik} T^{\ell}_{j \ell, \bar{i}\,\bar{k}}}
+ 2 \sum_{i,j,k,\ell,r} T^r_{\ell k} \overline{T^j_{ik}} T^j_{ir,\bar{\ell}}
= \sum_{i,j,k} \overline{T^j_{ik}\eta_{j,\bar{i}\,\bar{k}}}
+ 2 \sum_{k,\ell,r} T^r_{\ell k}A_{r\bar{k},\bar{\ell}} \\
&= -2 \sum_{k,\ell,r} \left( T^r_{k \ell}A_{r\bar{k}} \right)_{,\bar{\ell}}
+2 \sum_{k,\ell,r} T^r_{k \ell, \bar{\ell}} A_{r\bar{k}}
= -2 \sum_{k,\ell,r} \left( T^r_{k \ell}A_{r\bar{k}} \right)_{,\bar{\ell}}
-2 \sum_{k,\ell,r} T_{k \ell,\bar{r}}^\ell A_{rk} \\
&= -2 \sum_{k,\ell,r} \left( T^r_{k \ell}A_{r\bar{k}} \right)_{,\bar{\ell}}
+2 \sum_{k,r} \eta_{k,\bar{r}}A_{r\bar{k}}
=-2 \sum_{k,\ell,r} \left( T^r_{k \ell}A_{r\bar{k}} \right)_{,\bar{\ell}} \\
&=0,
\end{aligned}\]
where the last equality is due to \eqref{vTA}. Therefore, $T^j_{ik,\bar{\ell}}=0$ and the proof is completed.
\end{proof}

Next we prove Corollary \ref{corollary1.2}. The proof is the same as the {\em BKL} case in \cite{ZhaoZ19Str}, and we include it here for the sake of completeness.

\begin{proof}[{\bf Proof of Corollary \ref{corollary1.2}: }]
By Proposition \ref{prop3.6}, we know that $\partial \eta = 0$. If $g$ is strongly Gauduchon, that is, there exists a $(n,n-2)$-form $\Omega$ on $M^n$ such that $\partial \omega^{n-1}=\overline{\partial }\Omega$, then by the fact that $\partial \eta =0$ and $\partial \omega^{n-1}=-2\eta \, \omega^{n-1}$, we get
\[ \int_M 2\eta \, \overline{\eta} \,\omega^{n-1} = \int_M \overline{\eta} \,\partial  \omega^{n-1} = \int_M \overline{\eta}\, \overline{\partial }\Omega = \int_M \overline{\partial } \overline{\eta} \,\Omega = 0.\]
Hence $|\eta|^2 = 0$, contradicting to our assumption that $g$ is not balanced. Similarly, if $\overline{\eta} = \overline{\partial }f$ for some smooth function $f$ on $M^n$, then
$$ \int_M \partial \overline{\eta} \wedge \omega^{n-1} = \int_M \partial \overline{\partial }f \wedge \omega^{n-1} =0$$
as $g$ is Gauduchon. However, Proposition \ref{prop3.6} implies $\partial \overline{\eta} = 2 \sum_{i,j} \phi^{j}_{i} \varphi_i \wedge \overline{\varphi}_j$. This leads to the vanishing of $\eta$, hence $g$ is balanced, which is a contradiction. This completes the proof of Corollary \ref{corollary1.2}. \end{proof}

From the proof of Theorem \ref{theorem1.1} above, we know that on a {\em BTP} manifold it holds that
\begin{equation}
\phi + \phi^{\ast} = B -\frac{1}{4}\mbox{Ric}(Q). \label{eq:4.10}
\end{equation}
The right hand side of (\ref{eq:4.10}) is not necessarily non-negative, so the proof of \cite[Theorem 6]{ZhaoZ19Str} does not go through, that is why we cannot prove Conjecture \ref{conjecture1.3} here for non-balanced {\em BTP} manifolds. If $\mbox{Ric}(Q)=0$, or more generally if $4B \geq \mbox{Ric}(Q)$, then Conjecture \ref{conjecture1.3} holds.

Now let us prove Proposition \ref{prop1.4}, where the first part is due to Andrada and Villacampa \cite[Theorem 3.6]{AndV}. For readers convenience we gave a proof here as well.

\begin{proof}[{\bf Proof of Proposition \ref{prop1.4}:}]
Let $(M^n,g)$ be a locally conformally K\"ahler manifold. Then under any local unitary frame $e$, the torsion components are given by
\begin{equation}
 T_{ik}^j = \frac{1}{n-1} \big( \eta_k\delta_{ij} - \eta_i \delta_{kj} \big)  \label{eq:4.11}
 \end{equation}
and the torsion $1$-form $\eta$ is equal to $\eta = (n-1)\partial u$ for some smooth real-valued local function $u$. For any given point $p\in M$, choose the local unitary frame $e$ so that the matrix $\theta^b$ of Bismut connection vanishes at $p$.  Denote by $\varphi$ the local unitary coframe dual to $e$. By \cite{YZ18Cur}, the Levi-Civita connection $\nabla$ has connection matrix
$$ \nabla e = \theta^1 e + \overline{\theta^2} \,\overline{e}, \ \ \ \theta^1=\theta^b -\gamma , \ \ \ \gamma_{ij} = \sum_k \big( T^j_{ik}\varphi_k - \overline{T_{jk}^i } \overline{\varphi_k} \big) , \ \ \ \theta^2_{ij} = \sum_k \overline{T^k_{ij}} \varphi_k.$$
Since $\theta^b|_p=0$, we have at point $p$ that
\begin{eqnarray*}
\nabla \varphi_i & = & - \theta^1_{ji} \otimes \varphi_j - \theta^2_{ji} \otimes \overline{\varphi}_j \ \, = \ \, \gamma_{ji} \otimes \varphi_j - \theta^2_{ji} \otimes \overline{\varphi}_j \\
& = & \sum_{j,k} \big( T^i_{jk} \varphi_k\otimes \varphi_j - \overline{T^j_{ik} } \overline{\varphi}_k \otimes \varphi_j -  \overline{T^k_{ji} } \varphi_k \otimes \overline{\varphi}_j  \big) \\
& = & \frac{1}{n-1}\{ \eta \otimes \varphi_i - \varphi_i \otimes \eta + \varphi_i \otimes \overline{\eta } - \overline{\eta } \otimes \varphi_i + \overline{\eta}_i \sum_k \big( \overline{\varphi}_k \otimes \varphi_k - \varphi_k \otimes \overline{\varphi}_k \big) \} \\
& = & \frac{2}{n-1} \{ \eta \wedge \varphi_i + \varphi_i \wedge \overline{\eta} - \overline{\eta}_i \sum_k  \varphi_k\wedge \overline{\varphi}_k \}
\end{eqnarray*}
Therefore
\begin{eqnarray*}
\nabla \eta & = & \nabla \sum_i \eta_i \varphi_i \  \, = \ \, \sum_{i,j} \big( \eta_{i,j} \varphi_j \otimes \varphi_i + \varphi_{i,\overline{j}} \overline{\varphi}_j \otimes \varphi_i \big) + \sum_i \eta_i \nabla \varphi_i \\
 & = & \sum_{i,j} \big( \eta_{i,j} \,\varphi_j \otimes \varphi_i + \eta_{i,\overline{j}} \,\overline{\varphi}_j \otimes \varphi_i \big) + \frac{2}{n-1} \{ \eta \wedge  \overline{\eta} - \sum_i |\eta_i|^2  \sum_k  \varphi_k\wedge \overline{\varphi}_k \} ,
\end{eqnarray*}
where indices after comma denote covariant derivatives with respect to $\nabla^b$. If $(M^n,g)$ is {\em BTP}, then $\nabla^b\eta=0$, hence $\nabla (\eta + \overline{\eta})=0$ and $g$ is Vaisman. Conversely, when $g$ is Vaisman, which means the manifold is locally conformally K\"ahler with  $\nabla (\eta + \overline{\eta})=0$, then from the above calculation we get
$$ \sum_{i,j} \{  \eta_{i,j} \,\varphi_j \otimes \varphi_i + \eta_{i,\overline{j}} \,\overline{\varphi}_j \otimes \varphi_i +  \overline{\eta_{i,\overline{j}}} \,\varphi_j \otimes \overline{\varphi}_i  + \overline{\eta_{i,j} } \ \overline{\varphi}_j \otimes \overline{\varphi}_i  \}  = 0.$$
Hence $\nabla^b \eta =0$, and $\nabla^b T=0$ by (\ref{eq:4.11}). So we have proved that for a locally conformally K\"ahler manifold, the {\em BTP} condition is equivalent to the Vaisman condition.

Next we consider the uniqueness problem for {\em BTP} metrics within a conformal class. Note that when the manifold is compact, such metrics are clearly unique (up to constant multiples) within each conformal class, if it exists at all, as {\em BTP} metrics are Gauduchon. So the statement in Proposition \ref{prop1.4} is really about non-compact cases.

Suppose $(M^n,g)$ and $(M^n,e^{2u}g)$ are two {\em BTP} manifolds, where $u$ is a smooth real-valued function on $M$ with $du\neq 0$ in an open dense subset $U\subseteq M$. Let $e$ be a local unitary frame for $g$ with dual coframe $\varphi$. Then $\tilde{e}=e^{-u}e$ and $\tilde{\varphi} = e^u\varphi$ are local unitary frame and dual coframe for $\tilde{g}=e^{2u}g$. We have (cf. \cite[the proof of Theorem 3]{YZZ})
\begin{eqnarray*}
e^u\tilde{T}^j_{ik} & = & T^j_{ik} + u_i \delta_{kj} - u_k \delta_{ij} \\
e^u \tilde{\eta}_k & = & \eta_k - (n-1)u_k \\
P_{ik} \ \ & = & 2u_i \varphi_k - \partial u \, \delta_{ik} - 2\overline{u_k} \, \overline{\varphi}_i + \overline{\partial}u \, \delta_{ik}
\end{eqnarray*}
where $P=\tilde{\theta}^b-\theta^b$ and $u_k=e_k(u)$. Since $\nabla^bT=0$ and $\tilde{\nabla}^b \tilde{T} =0$, by the analysis of the $(1,0)$-part, a straight-forward computation leads to
\begin{equation}
u_{i,\ell}\delta_{kj} - u_{k,\ell} \delta_{ij} = 2u_kT^j_{i\ell} - 2u_i T^j_{k\ell} +2u_iu_{\ell} \delta_{kj} - 2u_k u_{\ell} \delta_{ij} - 2\delta_{j\ell} \sum_r u_r T^r_{ik}   \label{eq:4.12}
\end{equation}
for any indices $1\leq i,j,k,\ell \leq n$ under any local unitary frame $e$. As $du\neq 0$ holds on the open dense subset $U$, we may choose $e$ such that $\varphi^n= \frac{\partial u}{|\partial u|_g}$, which implies that $u_1=\cdots =u_{n-1}=0$ and $u_n=|\partial u|_g=\lambda >0$ are established where the local unitary frame $e$ intersects with $U$.

Assume $i<k$. If we take $j\notin \{ i,k\}$ in (\ref{eq:4.12}), we get
\begin{equation}
\delta_{kn} T^j_{i\ell} = \delta_{j\ell} T^n_{ik},   \qquad  i<k, \ j\neq i, k. \label{eq:4.13}
\end{equation}
Take $k=n$ and $j\neq \ell$, we get $T^j_{i\ell}=0$. Take $k<n$ and $j=\ell$, we get $T^n_{ik}=0$. So under this particular unitary frame $e$, we have $T^j_{ik}=0$ whenever all three indices are distinct. Now let us take $n=k$ and $j=\ell$ in (\ref{eq:4.13}), we get $T^j_{ij}=T^n_{in}$, for all $i,j<n$ with $i\neq j$. Therefore $T^{\ell}_{\ell i}$ are equal for $\ell \neq i$, and $T^{\ell}_{\ell i} = \frac{1}{n-1} \eta_i $ whenever $i<n$ and $\ell \neq i$, as $\sum_\ell T^{\ell}_{\ell i}=\eta_i$.

Take $k=\ell =n$ and $i=j<n$ in (\ref{eq:4.12}), which yields that
$$ -u_{n,n}= 2\lambda T^i_{in} - 2\lambda ^2,\qquad i<n.$$
This shows that all $T^i_{in}$ for $i<n$ are equal, hence equal to $\frac{1}{n-1}\eta_n$. Combining the above results, we conclude that under this particular frame $e$, it holds that
\begin{equation}
T^j_{ik} = \frac{1}{n-1} \big( \eta_k \delta_{ij} - \eta_i \delta_{kj}  \big),\qquad 1\leq i,j,k\leq n.   \label{eq:4.14}
\end{equation}
which implies that the above equation holds in the open dense subset $U\subseteq M$, hence in the entire $M$. From this equation, it follows that
$$ \phi_i^j = \sum_r \overline{\eta}_r T^j_{ir} = \frac{1}{n-1}\left( \delta_{ij} \sum_r |\eta_r|^2 - \eta_i \overline{\eta}_j \right).$$
In particular, $\phi_i^j = \overline{\phi^i_j}$ for any $i,j$. By Proposition \ref{prop3.6}, we know that the torsion $1$-form $\eta$ for any {\em BTP} manifold satisfies $\partial \eta =0$ and $\overline{\partial }\eta = -2\sum_{i,j} \overline{\phi^i_j} \varphi_i\wedge \overline{\varphi}_j$. Therefore, it yields that
$$ \overline{\partial }\eta + \partial \overline{\eta} = 2\sum_{i,j} \big( \phi_i^j - \overline{\phi^i_j}) \varphi_i\wedge \overline{\varphi}_j = 0,$$
which amount to $d(\eta + \overline{\eta}) =0$.
It implies that locally there exists smooth, real-valued function $v$ such that $\eta = (n-1)\partial v$. By (\ref{eq:4.14}), the torsion of the metric $e^{2v}g$ will vanish, hence $g$ is locally conformally K\"ahler. This completes the proof of Proposition \ref{prop1.4}
\end{proof}

Next let us prove Proposition \ref{prop1.5}.

\begin{proof}[{\bf Proof of Proposition \ref{prop1.5}:}]
Let $(M^n,g)$ be a complex nilmanifold with nilpotent $J$, namely, its universal cover is a nilpotent Lie group $G$ equipped with a left-invariant complex structure $J$ and a compatible left-invariant metric $g$. Assume that $J$ is nilpotent in the sense of \cite{CFGU} and $\nabla^bT^c=0$. By Lemma 1, Lemma 2 and the discussion right after the proof of Lemma 2 in \cite{ZhaoZ19Nil}, we know that $G$ is of step most 2 and $J$ is abelian (namely, $C=0$), and there exists an integer $1\leq r\leq n$ and a unitary coframe $\varphi$ such that
$$ d\varphi_i =0, \ \ \ \forall \ 1\leq i\leq r; \ \ \ \ \ \ d\varphi_{\alpha} = \sum_{i=1}^r Y_{\alpha i} \varphi_i \wedge \overline{\varphi}_i , \ \ \ \forall \ r<\alpha \leq n. $$
Conversely, for such a complex nilmanifold, it is easy to check that $\nabla^bT^c=0$. This completes the proof of Proposition \ref{prop1.5}. \end{proof}

\vspace{0.3cm}

\section{Non-balanced {\em BTP} manifolds}\label{NBBTP}

When $n\geq 3$, as illustrated by Proposition \ref{prop1.5}, the set of all non-balanced {\em BTP} manifolds in complex dimension $n$ contains the set of all non-K\"ahler {\em BKL} manifolds in complex dimension $n$ as a proper subset.

Given any non-balanced {\em BTP} manifold $(M^n,g)$, the type $(1,0)$ vector field $\chi$  dual to $\eta$ is also $\nabla^b$-parallel, hence is nowhere zero. The following notion of admissible frames introduced by the authors in \cite[Definition 4]{YZZ} to study non-K\"ahler {\em BKL} manifolds can be generalized to non-balanced {\em BTP} manifolds:

\begin{definition}\label{adm}
Let $(M^n,g)$ be a non-balanced {\em BTP} manifold and $\chi$ the vector field dual to Gauduchon's torsion $1$-form $\eta$. A local unitary frame $e$ on $M^n$ is said to be \textbf{admissible}, if $\chi =\lambda e_n$ for some constant $\lambda >0$ and under $e$ the matrix $ \phi = (\phi_i^j)$ is diagonal.
\end{definition}

\begin{corollary}[See also Proposition 3.6 of \cite{B12} and Lemma 3 of \cite{YZZ}]\label{Tparallel_add}
Let $(M^n,g)$ be a non-balanced BTP manifold. Then around any given point there exit an admissible frame $e$ such that $\chi=\lambda e_n$ and $\phi_i^j=\lambda a_i \delta_{ij}$, where $\lambda>0$ and $a_i$ are global constants with $a_n=0$. Therefore, $T^{1,0}M$ admits a $\nabla^b$-parallel, $g$-orthogonal decomposition \[T^{1,0}M=\bigoplus_{i=0}^k H_i,\]
where $H_0$ is generated by $\chi$ and other distributions $\{H_i\}_{i=1}^k$ are the eigenspaces of $\phi$ corresponding to distinct eigenvalues of $\{ \lambda a_1, \ldots , \lambda a_{n-1}\}$.
\end{corollary}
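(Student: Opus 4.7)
The plan is to construct the admissible frame in three stages and then read off the decomposition of $T^{1,0}M$.

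First, I align $e_n$ with $\chi$. By Proposition \ref{prop3.6}, $\nabla^b\eta=0$, so $\nabla^b\chi=0$ and $|\chi|^2=|\eta|^2$ is a positive constant $\lambda^2$ throughout $M$ (the non-balanced hypothesis giving $\lambda>0$). Set $e_n:=\chi/\lambda$, a globally $\nabla^b$-parallel unit type-$(1,0)$ vector field, and complete it to a local unitary $(1,0)$-frame $(e_1,\ldots,e_n)$ near any prescribed point. Then $\eta_k=\lambda\delta_{kn}$ and $\phi_i^j=\lambda T_{in}^j$.

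Next, I kill the $n$-th entries of $\phi$. In the proof of Theorem \ref{theorem1.1} the identity $\sum_r\eta_r T^r_{ij}=0$ was obtained from $\partial\eta=0$ together with the parallelness of $\eta$; in the present frame it reduces to $T^n_{ij}=0$ for all $i,j$. Combined with the skew-symmetry $T^j_{nn}=0$, this yields $\phi^n_i=\phi^j_n=0$, so $\phi$ annihilates $\chi$ and leaves the $\nabla^b$-parallel complement $\chi^\perp=\mathrm{span}\{e_1,\ldots,e_{n-1}\}$ invariant.

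Then I unitarily diagonalize $\phi':=\phi|_{\chi^\perp}$. Because $\nabla^b\phi=0$ and $\nabla^b g=0$, its $g$-adjoint $(\phi')^{\ast}$ and the commutator $[\phi',(\phi')^{\ast}]$ are also $\nabla^b$-parallel. The BTP identities assembled in the proof of Theorem \ref{theorem1.1}---most importantly $\phi+\phi^{\ast}=B-\tfrac14\mathrm{Ric}(Q)$, $2\phi A=\phi B$, and $\phi B=\phi\cdot\phi+|\phi|^2+\tfrac14\phi\,\mathrm{Ric}(Q)$---combined with the vanishings $T^n_{ij}=0$, force $[\phi',(\phi')^{\ast}]=0$, so $\phi'$ is normal and unitarily diagonalizable. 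Choosing unit eigenvectors $(f_1,\ldots,f_{n-1})$ of $\phi'$ in $\chi^\perp$ and replacing $e_i$ by $f_i$ for $i<n$ yields an admissible frame with $\phi_i^j=\lambda a_i\delta_{ij}$. The $a_i$ are global constants because the characteristic polynomial of the $\nabla^b$-parallel endomorphism $\phi$ has $\nabla^b$-parallel (hence locally constant) coefficients, and $a_n=0$ because $\phi(\chi)=0$.

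The parallel orthogonal decomposition is then immediate: set $H_0:=\mathbb{C}\chi$ and let $H_1,\ldots,H_k$ be the eigenbundles $\ker(\phi-\mu_j\,\mathrm{Id})$ for the distinct eigenvalues $\mu_j$ among $\{\lambda a_1,\ldots,\lambda a_{n-1}\}$. Each $H_j$ is the kernel of a $\nabla^b$-parallel endomorphism, hence is $\nabla^b$-parallel, and the family is pairwise $g$-orthogonal by the normality of $\phi'$. The main obstacle is the normality step: the identity $\phi+\phi^{\ast}=B-\tfrac14\mathrm{Ric}(Q)$ only pins down the Hermitian part of $\phi$, so normality on $\chi^\perp$ has to be extracted from the remaining scalar identities by careful bookkeeping---or, alternatively, by establishing pointwise normality at a single point (using the same identities) and then propagating it via $\nabla^b$-parallel transport of $[\phi,\phi^{\ast}]$.
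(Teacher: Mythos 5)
Your frame-construction steps (aligning $e_n$ with $\chi$, killing the $n$-th row and column of $\phi$ via $\sum_r\eta_rT^r_{ij}=0$, and the final parallel orthogonal decomposition) all coincide with the paper's argument. But the crucial step --- normality of $\phi$ on $\chi^{\perp}$ --- is not actually established in your proposal, and the route you sketch for it would not work. The identities you invoke, $2\phi A=\phi B$ and $\phi B=\phi\cdot\phi+|\phi|^2+\tfrac14\phi\,\mathrm{Ric}(Q)$, are \emph{scalar} (trace) identities, and $\phi+\phi^{\ast}=B-\tfrac14\mathrm{Ric}(Q)$ constrains only the Hermitian part of $\phi$; none of these says anything about the commutator $[\phi,\phi^{\ast}]$, which is a traceless Hermitian $(n-1)\times(n-1)$ matrix with many independent entries. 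No amount of bookkeeping with traces can recover an entry-wise matrix identity, and the alternative of ``establish normality at one point, then propagate by $\nabla^b$-parallel transport'' does not help, because the pointwise statement is exactly what is missing.

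The paper obtains normality from a genuinely tensorial identity rather than from traces. Under the BTP hypothesis $T^j_{ik,\bar\ell}=0$, so \eqref{dbT_refined} combined with the symmetry $R^b_{i\bar jk\bar\ell}=R^b_{k\bar\ell i\bar j}$ (via Lemma \ref{swap}, which converts $R^b_{i\bar\ell k\bar j}-R^b_{k\bar\ell i\bar j}$ into $-\big(R^b_{i\bar jk\bar\ell}-R^b_{k\bar ji\bar\ell}\big)$) yields the entry-wise relation $P^{j\ell}_{ik}=-\tfrac14\,Q_{i\bar jk\bar\ell}$ for all indices. Specializing to $k=\ell=n$ and using that every component of $R^b$ with an index equal to $n$ vanishes (from $R^b_{xy\chi w}=0$ in Proposition \ref{prop3.6} together with the symmetry), one gets $Q_{i\bar jn\bar n}=0$, hence $P^{jn}_{in}=0$; after discarding the terms containing $T^n_{\ast\ast}=0$ and multiplying by $\lambda^2$, this is precisely $\sum_r\big(\phi_i^r\overline{\phi_j^r}-\phi_r^j\overline{\phi_r^i}\big)=0$, i.e.\ $\phi\phi^{\ast}=\phi^{\ast}\phi$. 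This is the step your proposal needs and does not supply; once it is in place, the rest of your argument goes through as written.
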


\begin{proof}
Clearly, $|\eta|^2$ is a positive constant, which will be denoted by $\lambda^2$. Thus we may choose the unitary coframe $\varphi$ such that $\eta = \lambda \varphi_n$. This means that $\eta_1=\cdots=\eta_{n-1}=0$ and $\eta_n=\lambda$, thus $\chi=\lambda e_n$. From Proposition \ref{prop3.6}, $R^b_{xy\chi w}=0$ yields $R^b_{i\bar{j}n\bar{\ell}}=0$ for any $i,j,\ell$, and by $R^b_{i\bar{j}k\bar{\ell}}=R^b_{k\bar{\ell}i\bar{j}}$ we get \[R^b_{n\bar{j}k\bar{\ell}}=R^b_{i\bar{n}k\bar{\ell}}=R^b_{i\bar{j}k\bar{n}}=0\]
for any $i,j,k,\ell$. Moreover, the conclusion \eqref{parallel_10} of Proposition \ref{Tderivative} implies
\begin{equation}
\sum_r \eta_r T_{ij}^r =0, \label{eq:5.1}
\end{equation}
which yields  $T^n_{ij}=0$ for any $i$, $j$. Hence $\phi_i^j=\lambda T^j_{in}$, where $\phi_i^n = \phi_n^i =0$ for any $i$.

Since  $R^b_{i\bar{j}k\bar{\ell}}=R^b_{k\bar{\ell}i\bar{j}}$, the equality \eqref{dbT_refined} amounts to
\[0= \sum_r(T^r_{ik}\overline{T^r_{j\ell}} + T^j_{ir} \overline{T^k_{\ell r}}
- T^j_{kr}\overline{T^i_{\ell r}} -T^\ell_{ir} \overline{T^k_{jr}} + T^\ell_{kr}\overline{T^i_{jr}})
 + \frac{1}{4} (R^b_{i \bar{j} k \bar{\ell}} - R^b_{k \bar{j} i \bar{\ell}}). \]
Take $k=\ell =n$ above and multiply $\lambda^2$ on both sides, we get
$$ \sum_r \phi_i^r \overline{ \phi_j^r }- \phi_r^j \overline{ \phi_r^i  }   =0 , \ \ \ \forall \ i,j.$$
That is, the matrix $\phi = (\phi_i^j)$ satisfies $\phi \phi^{\ast} = \phi^{\ast} \phi$ hence is normal. Here $\phi^{\ast}$ stands for the conjugate transpose of $\phi$. So around each given point, by a unitary change of $\{ \varphi_1 , \ldots , \varphi_{n-1}\}$ with $\varphi_n$ fixed, we could always make $\phi$ diagonal. This gives us an admissible local unitary frame.

As $\nabla^b T^c=0$, the tensor $\phi_i^j$ is also $\nabla^b$-parallel, which implies the eigenvalues $\{\lambda a_i\}_{i=1}^n$ of $\phi$ are global constants with $a_n=0$. It is also obvious that the eigenspaces of $\phi$ with respect to all the distinct eigenvalues lead to a $\nabla^b$-parallel and $g$-orthogonal decomposition of $T^{1,0}M$.
\end{proof}

\begin{remark}
It is easy to verify that, for non-balanced BTP manifolds, the distribution $H_0 \oplus \overline{H}_0$ is a foliation since $[\chi,\overline{\chi}]=0$, which is known as canonical foliation in the study of Vaisman manifolds.
\end{remark}

The following notation of degenerate torsion, introduced in \cite[Defintion 5]{YZZ}, could also be applied to non-balanced {\em BTP} manifolds.

\begin{definition}\label{degtor_def}
A non-balanced {\em BTP} manifold $(M^n,g)$ is said to have {\em degenerate torsion},  if under any admissible frame $e$, $T^{\ast }_{ik}=0$ for any $i,k <n$.
\end{definition}

The {\em Lee potential} ({\em LP}) condition was introduced by Belgun \cite{B12} to study {\em generalized Calabi-Eckmann} ({\em GCE}) structures (see also \cite{B00, B03}).

\begin{definition}[\cite{B12}]\label{LP_GCE}
A Hermitian manifold $(M^n,g)$ is {\em LP} if the Gauduchon torsion $1$-form $\eta$ satisfies
\[\partial \eta =0 , \quad \partial \omega = c \,\eta \wedge\partial \overline{\eta},\]
where $c\neq 0$ is a constant. A Hermitian manifold is {\em GCE} if it is {\em LP} and {\em BTP}.
As shown in \cite[Proposition 3.2]{B12}, both the standard Hermitian structure and the modified ones on the Sasakian product are non-K\"ahler {\em GCE} manifolds.
\end{definition}

\begin{proof}[{\bf Proof of Proposition \ref{prop1.6}:}] By Corollary \ref{Tparallel_add}, we know that non-balanced {\em BTP} manifolds always admit local admissible frames. Under such a frame $e$, the vector $e_n=\frac{1}{\lambda}\chi$ is parallel under $\nabla^b$, so the connection matrix $\theta^b$ of $\nabla^b$ satisfies $\theta^b_{n\ast}=\theta^b_{\ast n}=0$. Therefore, the curvature matrix $\Theta^b$ under $e$ will also be block-diagonal:
$$ \Theta^b = \left[ \begin{array}{cc} \ast & 0 \\ 0 & 0 \end{array} \right] $$
where $\ast$ is an $(n\!-\!1)\times (n\!-\!1)$ matrix. As is well-known, the Lie algebra of the holonomy group $\mbox{Hol}(\nabla^b)$ of $\nabla^b$ is generated by conjugation class of $R^b_{xy}=\Theta^b(x,y)$ under $\nabla^b$-parallel transports, all of them operates in the $\nabla^b$-parallel block $e_n^{\perp}$, so $\mbox{Hol}(\nabla^b) \subseteq U(n-1)$. This proves the first part of Proposition \ref{prop1.6}.

For the second part, under an admissible frame $e$, it follows from Proposition \ref{prop3.6} that
\[\begin{aligned}
\eta \wedge \partial \overline{\eta}\  &= \ 2\lambda^2 \varphi_n \wedge \sum_{i,j} T_{i n}^j \varphi_i \wedge \overline{\varphi_j} \\
\ &= \ -2\lambda^2 \sum_{i < n} T_{i n}^i \varphi_i \wedge\varphi_n \wedge \overline{\varphi_i},\\
-\sqrt{-1}\partial \omega &= ^t\!\!\tau \wedge \overline{\varphi} = \sum_{i,j,k} T_{ik}^j \varphi_i \wedge \varphi_k \wedge \overline{\varphi_j} \\
\ &= \ 2\sum_{\begin{subarray}{c}i<k\\j<n\end{subarray}}T^j_{ik}\varphi_i \wedge \varphi_k \wedge \overline{\varphi_j}.
\end{aligned}\]
It is clear that the condition $\partial \omega = c \,\eta \wedge \partial \overline{\eta}$ implies $T^{j }_{ik}=0$ for any $i,k <n$.
Conversely, the torsion degeneracy condition $T^{j }_{ik}=0$ for any $i,k <n$ yields
\[-\sqrt{-1}\partial \omega =2\sum_{\begin{subarray}{c}i<n\\j<n\end{subarray}} T^j_{in}\varphi_i \wedge \varphi_n \wedge \overline{\varphi_j}
=2\sum_{i<n} T^i_{in}\varphi_i \wedge \varphi_n \wedge \overline{\varphi_i} \]
by the definition of admissible frames, thus $\partial \omega = c \,\eta \wedge \partial \overline{\eta}$, which shows that the {\em LP} condition is equivalent to the torsion degeneracy condition. To prove the last statement, namely when $n=3$ the set of non-balanced {\em BTP} threefolds is equal to the set of {\em GCE} threefolds, it suffices to show that any non-balanced {\em BTP} threefold $(M^3,g)$ always has degenerate torsion. Let $e$ be a local admissible frame on $M^3$. By definition, it follows that $\eta_1=\eta_2=0$, $\eta_3=\lambda >0$, and $T^1_{23}=T^2_{13}=0$ since $\phi$ is diagonal. The equality (\ref{eq:5.1}) implies $T^3_{ij}=0$ for any $i,j$. Then it yields that
$$ T^1_{12} = \eta_2- T^3_{32} = 0, \ \ \ T^2_{12} = - T^2_{21} = -\eta_1 + T^3_{31} = 0.$$
That is, $T^{\ast}_{12}=0$, so the manifold has degenerate torsion by definition. This completes the proof of Proposition \ref{prop1.6}.
\end{proof}

\begin{remark}
In light of the proposition above, a BKL manifold with degenerate torsion is exactly a Hermitian manifold which is both BKL and GCE. A complete non-K\"ahler BKL manifold with degenerate torsion will split off a K\"ahler factor, on the universal cover, of complex codimension $2$ or $3$, as in \cite[Theorem 9]{YZZ}.
\end{remark}

In order to get the classification theorem of non-balanced {\em BTP} threefolds, we recall here the following definitions, including \textit{Sasakian} manifolds, the standard Hermitian structure and the modified ones on the Sasakian product.

\begin{definition}\label{SSK}
Let $(N^{2m+1},g,\xi)$ be a \textit{Sasakian} manifold, that is, as defined in \cite{YZZ} for instance, an odd dimensional Riemannian manifold $(N,g)$ equipped with a Killing vector field $\xi$ of unit length, such that:
\begin{enumerate}
\item The tensor field $\frac{1}{c}\nabla \xi$ (where $c>0$ is a constant), which sends a tangent vector $X$ to the tangent vector $\frac{1}{c}\nabla_X\xi$, gives an integrable orthogonal complex structure $I$ on the distribution $H$, where $H$ is the perpendicular complement of $\xi $ in the tangent bundle $TN$.
\item Denote by $\phi $ the $1$-form dual to $\xi$, namely, $\phi (X) = g(X, \xi)$ for any $X$, then $\phi \wedge (d\phi )^m$ is nowhere zero. That is, $\phi$ gives a contact structure on $N$.
\end{enumerate}
\end{definition}

\begin{definition}\label{SSKproduct}
Let $(N_1^{2n_1+1}, g_1, \xi_1)$ and $(N_2^{2n_2+1}, g_2, \xi_2)$ be two Sasakian manifolds. As introduced by Belgun \cite{B12}, on the product Riemannian manifold $M=N_1\times N_2$, of even dimension $2n=2n_1+2n_2+2$, for $\kappa \in \{\kappa \in \mathbb{C} \big| \mathrm{Im} (\kappa) \neq 0\}$, the natural almost complex structure $J_{\kappa}$ is defined by
\begin{gather*}
J_{\kappa}\xi_1 = \mathrm{Re}(\kappa) \xi_1 + \mathrm{Im}(\kappa)\xi_2, \\
J_{\kappa}X_i = J_i X_i, \quad \forall \ X_i \in H_i , \ i=1,2.
\end{gather*}
Define the associated metric $g_{\kappa}$ as
\begin{gather*}
g_{\kappa}(\xi_1,\xi_1)= g_{\kappa}(J_{\kappa}\xi_1,J_{\kappa}\xi_1)=1, \quad g_{\kappa}(\xi_1,J_{\kappa}\xi_1)=0,\\
g_{\kappa}(\xi_i,X)=0,\quad \forall \ X \in H_1 \oplus H_2, \ i=1,2.\\
g_{\kappa}(X_i,Y_i)=g_i(X_i,Y_i), \quad g_{\kappa}(X_1,X_2)=0,\quad \forall \ X_i, Y_i \in H_i,\ i=1,2.
\end{gather*}
It is easy to verify that $g_{\kappa}$ is a $J_{\kappa}$-Hermitian metric, $J_{\kappa}$ is integrable, and when $\kappa=\sqrt{-1}$, the Hermitian manifold defined above is the standard Hermitian structure on the Sasakian product. We will call the Hermitian manifold $(N_1\times N_2, g_{\kappa}, J_{\kappa})$ {\em twisted Sasakian product,} and denote it by $N_1 \times_{\kappa} N_2$.
\end{definition}
\begin{lemma}\label{twisted}
Let $(N_1,g_1,\xi_1),(N_2,g_2,\xi_2)$ be two $3$-dimensional Sasakian manifolds and $\kappa = x + y \sqrt{-1}$ for $x,y \in \mathbb{R},y \neq 0$. Then there exists a natural unitary frame $(e_1,e_2,e_3)$, with dual frame $(\varphi_1,\varphi_2,\varphi_3)$, on the twisted Sasakian product
$N_1 \times_{\kappa} N_2$, such that the Riemannian connection $\nabla$ under $e$ takes the form
\begin{equation}\label{eq_twSSK}
\begin{cases}
\nabla e_1 = c_1\sqrt{-1} \overline{\varphi}_1\, \xi_1 + \sigma_1\, e_1 \\
\nabla e_2 = c_2\sqrt{-1} \overline{\varphi}_2\, \xi_2 + \sigma_2\, e_2 \\
\nabla e_3 = \frac{c_1\sqrt{-1}}{\sqrt{2}}(\varphi_1\, e_1 - \overline{\varphi}_1 \, \overline{e}_1)
+ \frac{c_2 (1-x\sqrt{-1})}{y\sqrt{2}} (\varphi_2\, e_2 - \overline{\varphi}_2 \, \overline{e}_2),
\end{cases}
\end{equation}
where $e_3 = \frac{1}{\sqrt{2}}(\xi_1 - \sqrt{-1}(x\xi_1 + y \xi_2))$, $c_i$ is the defining constant of the Sasakian manifold $N_i$, and $\sigma_i$ is a $1$-form satisfying $\overline{\sigma}_i = - \sigma_i$, for $i=1,2$.
\end{lemma}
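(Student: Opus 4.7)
The plan is to compute the Levi-Civita connection $\nabla = \nabla^{g_\kappa}$ directly from Koszul's identity, exploiting the product structure of $M = N_1 \times N_2$ together with the Sasakian structure on each factor, and then to read off the connection matrix in the frame $(e_1, e_2, e_3)$. First I would fix unit vectors $X_i \in H_i$ and set $e_i = \tfrac{1}{\sqrt{2}}(X_i - \sqrt{-1}\, J_i X_i)$ for $i=1,2$, and define $e_3$ as in the statement. The nontrivial $g_\kappa$-values on the plane $\mathbb{R}\xi_1 \oplus \mathbb{R}\xi_2$, forced by $g_\kappa(J_\kappa \xi_1, J_\kappa \xi_1) = 1$ and $g_\kappa(\xi_1, J_\kappa \xi_1) = 0$, are
\[
g_\kappa(\xi_1, \xi_2) = -\tfrac{x}{y}, \qquad g_\kappa(\xi_2, \xi_2) = \tfrac{1+x^2}{y^2},
\]
and a direct check then confirms $g_\kappa(e_3, \overline{e}_3) = 1$, $g_\kappa(e_3, e_3) = 0$, so that $(e_1, e_2, e_3)$ is $J_\kappa$-unitary.

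Next I would apply Koszul's identity to $\nabla_X \xi_j$ for $X \in H_i$, using that (a) brackets between vector fields on different factors vanish, (b) the Sasakian relation $\nabla^{g_i}_X \xi_i = c_i J_i X$ gives $g_i([X, Y], \xi_i) = -2 c_i g_i(J_i X, Y)$ for $X, Y \in H_i$, and (c) the $g_\kappa$-values above. The outputs are
\[
\nabla_X \xi_1 = c_1 J_1 X, \qquad \nabla_X \xi_2 = -\tfrac{c_1 x}{y} J_1 X \quad (X \in H_1),
\]
\[
\nabla_Y \xi_1 = -\tfrac{c_2 x}{y} J_2 Y, \qquad \nabla_Y \xi_2 = \tfrac{c_2(1+x^2)}{y^2} J_2 Y \quad (Y \in H_2),
\]
together with $\nabla_{\xi_i} \xi_j = 0$ for all $i, j$ (another short Koszul computation, using that $|\xi_i|^2$ in $g_i$ is constant and that $\xi_i$ is Killing on $N_i$). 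Writing $e_3 = \tfrac{1}{\sqrt{2}}\bigl((1 - \sqrt{-1} x)\xi_1 - \sqrt{-1}\, y\, \xi_2\bigr)$ and substituting, the $H_1$-part of $\nabla e_3$ simplifies via $(1 - \sqrt{-1} x)(\sqrt{-1} c_1) - (\sqrt{-1} y)(-\sqrt{-1} c_1 x/y) = \sqrt{-1}\, c_1$ to give $\tfrac{c_1 \sqrt{-1}}{\sqrt{2}}(\varphi_1 e_1 - \overline{\varphi}_1 \overline{e}_1)$, while the $H_2$-part collapses, via the cancellation of the $(1+x^2)/y^2$-factor against the $-\sqrt{-1} y$ coefficient, to $\tfrac{c_2(1-\sqrt{-1}x)}{y\sqrt{2}}(\varphi_2 e_2 - \overline{\varphi}_2 \overline{e}_2)$; the $e_3, \overline{e}_3$-components vanish because $\nabla_{\xi_i} \xi_j = 0$.

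For $\nabla e_1$ and $\nabla e_2$, the essential input is the Sasakian bracket identity $g_i([\overline{e}_i, e_i], \xi_i) = 2 \sqrt{-1}\, c_i$ on each factor. Koszul applied to $\nabla_W e_i$ for each frame direction $W$, together with the $g_\kappa$-pairings on the $\xi$-plane, yields a $2 \times 2$ linear system in the $(\xi_1, \xi_2)$-coefficients of $\nabla_W e_i$; solving it forces the $\xi$-contribution to $\nabla e_1$ to be exactly $c_1 \sqrt{-1}\, \overline{\varphi}_1 \xi_1$ (with no $\xi_2$-term) and that to $\nabla e_2$ to be $c_2 \sqrt{-1}\, \overline{\varphi}_2 \xi_2$ (with no $\xi_1$-term). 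The remaining components of $\nabla e_i$ lie in the complex line spanned by $e_i$: projections onto $e_j, \overline{e}_j$ for $j \neq i$ vanish because cross-factor brackets are zero and $g_\kappa$ is block-diagonal on $H_1 \oplus H_2$, while the $\overline{e}_i$-projection is killed by Koszul combined with $g_\kappa(e_i, e_i) = 0$. The skew-symmetry $\overline{\sigma}_i = -\sigma_i$ then follows from differentiating the constant $g_\kappa(e_i, \overline{e}_i) = 1$.

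The main technical obstacle is the modified normalization $g_\kappa(\xi_2, \xi_2) = (1+x^2)/y^2 \neq 1$: it propagates through Koszul and produces $\nabla_Y \xi_2 = (c_2(1+x^2)/y^2) J_2 Y$ rather than the naive $c_2 J_2 Y$ one might expect from $N_2$ alone, and this asymmetric factor is precisely what combines with the coefficient $-\sqrt{-1}\, y$ of $\xi_2$ in $e_3$ to yield the exact expression $c_2(1 - \sqrt{-1} x)/y$ in the stated formula. Tracking this factor, and verifying the analogous $(1+x^2)$-cancellations in the $2\times 2$ systems that pin down the $\xi$-parts of $\nabla e_1$ and $\nabla e_2$, constitutes essentially the full content of the argument.
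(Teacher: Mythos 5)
Your proposal is correct: I checked the Gram matrix $g_\kappa(\xi_1,\xi_2)=-x/y$, $g_\kappa(\xi_2,\xi_2)=(1+x^2)/y^2$, the resulting unitarity of $e_3$, the four covariant derivatives $\nabla_X\xi_j$, the two cancellations in $\nabla e_3$ (the $H_1$-part giving $\sqrt{-1}c_1$ and the $H_2$-part giving $-c_2(x+\sqrt{-1})/y$, which is $c_2(1-x\sqrt{-1})/y$ times $-\sqrt{-1}J_2$ written in the $e_2,\overline e_2$ basis), and the $2\times2$ systems that force $b=0$ in $\nabla e_1$ and $a=0$ in $\nabla e_2$; all of these come out as you claim. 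Your route is, however, genuinely different from the paper's. The paper never touches the Koszul formula or the Gram matrix of $\{\xi_1,\xi_2\}$: it writes the Sasakian structure equations for frame and coframe on each factor, uses torsion-freeness to convert these into expressions for $d\varphi_1,d\varphi_2,d\varphi_3$ in the twisted unitary coframe, and then reads off the Levi-Civita connection from the unique decomposition $d\varphi=\overline\theta_1\varphi+\theta_2\overline\varphi$ with $\theta_1$ skew-Hermitian and $\theta_2$ skew-symmetric. That approach buys a purely algebraic manipulation of forms in a frame where the metric is the identity matrix, at the cost of invoking the standard characterization of the Riemannian connection of a Hermitian metric via that decomposition; your approach is more elementary and self-contained, at the cost of having to solve the $2\times2$ linear systems coming from the non-orthonormality of $\{\xi_1,\xi_2\}$ under $g_\kappa$ --- which, as you correctly identify, is where the factor $(1+x^2)/y^2$ enters and ultimately produces the coefficient $c_2(1-x\sqrt{-1})/y$ in the statement.
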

\begin{proof}
From Definition \ref{SSK}, for $i=1,2$, we may assume that $(\xi_i,e_i,\overline{e}_i)$ be a frame of the Sasakian manifold $(N_i, g_i, \xi_i)$ such that $e_i$ is a unitary frame of the distribution $\xi_i^{\bot}$ with respect to the orthogonal complex structure $\frac{1}{c_i} \nabla \xi_i$, which yields the following structure equation
\[ \nabla \! \begin{bmatrix} \xi_i \\ e_i \\ \overline{e}_i \end{bmatrix}=
\begin{bmatrix} 0 & c_i \sqrt{-1} \varphi_i & - c_i \sqrt{-1} \overline{\varphi}_i \\
c_i \sqrt{-1} \overline{\varphi}_i & \sigma'_i & 0  \\
-c_i \sqrt{-1} \varphi_i  & 0 & \overline{\sigma}'_i  \\ \end{bmatrix}\!\!
\begin{bmatrix} \xi_i \\ e_i \\ \overline{e}_i \\ \end{bmatrix}\!\!,\]
where $\sigma'_i$ is a $1$-form satisfying $\overline{\sigma}'_i = - \sigma'_i$ and the dual frame of $(\xi_i,e_i,\overline{e}_i)$ is denoted by $(\phi_i,\varphi_i,\overline{\varphi}_i)$. The dual version of the equation above is
\[ \nabla \! \begin{bmatrix} \phi_i \\ \varphi_i \\ \overline{\varphi}_i \end{bmatrix}=
\begin{bmatrix} 0 & - c_i \sqrt{-1} \overline{\varphi}_i & c_i \sqrt{-1} \varphi_i \\
-c_i \sqrt{-1} \varphi_i & \overline{\sigma}'_i & 0 \\
c_i \sqrt{-1} \overline{\varphi}_i & 0 & \sigma'_i  \\ \end{bmatrix}\!\!
\begin{bmatrix} \phi_i \\ \varphi_i \\ \overline{\varphi}_i \\ \end{bmatrix}\!\!.\]
As $\nabla$ is torsion free, it follows that
\[ d \!  \begin{bmatrix} \phi_i \\ \varphi_i \\ \overline{\varphi}_i \end{bmatrix}=
\begin{bmatrix} 0 & - c_i \sqrt{-1} \overline{\varphi}_i & c_i \sqrt{-1} \varphi_i \\
-c_i \sqrt{-1} \varphi_i & \overline{\sigma}'_i & 0 \\
c_i \sqrt{-1} \overline{\varphi}_i & 0 & \sigma'_i  \\ \end{bmatrix}\!\!
\begin{bmatrix} \phi_i \\ \varphi_i \\ \overline{\varphi}_i \\ \end{bmatrix}\!\!.\]

From Definition \ref{SSKproduct}, we may define
\begin{gather*}
e_3 = \frac{1}{\sqrt{2}}(\xi_1 - \sqrt{-1}J_{\kappa}\xi_1 ) = \frac{1}{\sqrt{2}}\big((1-x\sqrt{-1})\xi_1 - y\sqrt{-1} \xi_2 \big),\\
\varphi_3 = \frac{1}{\sqrt{2}}(\phi_1 - \frac{x}{y} \phi_2 + \frac{\sqrt{-1}}{y} \phi_2 ) =  \frac{1}{\sqrt{2}}(\phi_1 - \frac{x-\sqrt{-1}}{y} \phi_2 ).
\end{gather*}
It is easy to verify that $(e_1,e_2,e_3)$ is a unitary frame on the twisted Sasakian product $N_1 \times_{\kappa} N_2$ and  $(\varphi_1,\varphi_2,\varphi_3)$ is its dual coframe. Then it yields that
\[ \begin{cases}
d \varphi_1 = \frac{c_1}{\sqrt{2}}\big( (x + \sqrt{-1}) \varphi_3 - (x-\sqrt{-1}) \overline{\varphi}_3 \big)\varphi_1
+ \overline{\sigma}'_1 \varphi_1, \\
d \varphi_2 = \frac{c_2y}{\sqrt{2}}(\varphi_3 - \overline{\varphi}_3) \varphi_2 + \overline{\sigma}'_2 \varphi_2, \\
d \varphi_3 = \sqrt{2} c_1 \sqrt{-1} \varphi_1 \overline{\varphi}_1 - \frac{\sqrt{2}c_2 (1+x\sqrt{-1})}{y} \varphi_2 \overline{\varphi}_2.
\end{cases} \]
In matrix form we have $d \varphi = \overline{\theta}_1 \varphi + \theta_2 \overline{\varphi}$, where $\theta_1$ is skew Hermitian and $\theta_2$ is skew symmetric, given by
\begin{gather*}
\theta_1= \begin{bmatrix} \sigma_1 & 0 & \frac{c_1\sqrt{-1}}{\sqrt{2}} \overline{\varphi}_1  \\
                            0 & \sigma_2 & \frac{-c_2(1+x\sqrt{-1})}{y\sqrt{2}} \overline{\varphi}_2 \\
                            \frac{c_1\sqrt{-1}}{\sqrt{2}} \varphi_1 & \frac{c_2(1-x\sqrt{-1})}{y\sqrt{2}} \varphi_2 & 0 \\
                            \end{bmatrix}\!\!,\quad
\theta_2 = \begin{bmatrix} 0 & 0 & \frac{-c_1\sqrt{-1}}{\sqrt{2}} \varphi_1 \\
                           0 & 0 & \frac{c_2(1+x\sqrt{-1})}{y\sqrt{2}} \varphi_2 \\
                           \frac{c_1 \sqrt{-1}}{\sqrt{2}} \varphi_1 & \frac{-c_2(1+x\sqrt{-1})}{\sqrt{2}} \varphi_2 & 0 \\ \end{bmatrix}\!\!,\\
\sigma_1 = \sigma_1'-\frac{c_1 x}{\sqrt{2}}(\varphi_3-\overline{\varphi}_3),\qquad
\sigma_2 = \sigma_2' - \frac{c_2}{\sqrt{2}}\big((y-\frac{1-x\sqrt{-1}}{y})\varphi_3 - (y - \frac{1+x\sqrt{-1}}{y}) \overline{\varphi}_3 \big).
\end{gather*}
Note that $\nabla e = \theta_1 e + \overline{\theta}_2 \overline{e}$, $\xi_1 = \frac{e_3 + \overline{e}_3}{\sqrt{2}}$ and $\sqrt{-1}\xi_2 =  \frac{-(1+x\sqrt{-1})}{y\sqrt{2}} e_3 +  \frac{(1-x\sqrt{-1})}{y\sqrt{2}} \overline{e}_3$. Then the equation \eqref{eq_twSSK} in the lemma is established.
\end{proof}

\begin{proof}[{\bf Proof of Theorem \ref{3DNBBTP}:}]
It follows, from Proposition \ref{prop1.6} and its proof, that admissible frames, denoted by $e$, can be established on the non-balanced {\em BTP} threefold, and the connection matrix $\theta^b$ of Bismut connection $\nabla^b$ under an admissible frame $e$  satisfies $\theta^b_{3\ast} = \theta^b_{\ast 3}=0$. Then we may write $\lambda_1 = \lambda a$ and $\lambda_2 = \lambda b$. As the admissible frame is applied and the torsion is degenerate when $n=3$ by Proposition \ref{prop1.6}, it yields that the only possibly nonzero components of the Chern torsion are
\[T_{13}^1=a,\quad T^2_{23}=b, \quad \mbox{with} \ \ a+b=\lambda .\]
In  case \eqref{twSSK},  clearly $a,b\neq 0$ and $a \neq b$. This implies that the connection matrix $\theta^b$ is diagonal under $e$, denoted by
\[\theta^b = \begin{bmatrix} \psi_1 & 0 & 0 \\
                               0    & \psi_2 & 0 \\
                               0  & 0 & 0 \\ \end{bmatrix}\!\!,\]
where $\overline{\psi}_1 = - \psi_1$ and $\overline{\psi}_2 = - \psi_2$.
Since the connection matrix of the Riemannian connection $\nabla$ is given by $\nabla e = \theta_1 e + \overline{\theta}_2 \overline{e}$, where $\theta_1 = \theta^b - \gamma$ and $\gamma,\theta_2$ are determined by the Chern torsion $T$, we have
\begin{equation}\label{generic}
\begin{cases}
\nabla e_1 = \overline{\varphi}_1 \, (a \overline{e}_3 - \overline{a} e_3 ) + \sigma_1 \, e_1 \\
\nabla e_2 = \overline{\varphi}_2\,(b \overline{e}_3 - \overline{b} \overline{e}_3) + \sigma_2 \, e_2 \\
\nabla e_3 = a (\varphi_1\, e_1 - \overline{\varphi}_1 \, \overline{e}_1)
+ b (\varphi_2\, e_2 - \overline{\varphi}_2 \, \overline{e}_2),
\end{cases}
\end{equation}
where $\varphi$ is the dual coframe of $e$, and $\sigma_1 = \psi_1 - a \varphi_3 + \overline{a} \overline{\varphi}_3$, $\sigma_2 = \psi_2 - b \varphi_3 + \overline{b} \overline{\varphi}_3$. Then we may define two global real vector fields of unit length by
\[ \xi_1 = \frac{-\sqrt{-1}}{\sqrt{2}|a|}(a \overline{e}_3 - \overline{a} e_3 ),\quad
\xi_2 = \frac{- \sqrt{-1}}{\sqrt{2}|b|}(b \overline{e}_3 - \overline{b} e_3 ),\]
where it is easy to verify from the condition of the case \eqref{twSSK} that $\mbox{span}_{\mathbb{C}} \{e_3,\overline{e}_3\} = \mbox{span}_{\mathbb{C}} \{\xi_1,\xi_2\}$. It follows that
\[e_3 = \frac{\sqrt{2}|a|b \sqrt{-1}}{a\overline{b}-\overline{a}b} \xi_1
- \frac{\sqrt{2} |b| a \sqrt{-1}}{a\overline{b} - \overline{a} b} \xi_2,\quad
J \xi_1 = \frac{-(a\overline{b}+\overline{a}b)\sqrt{-1}}{a\overline{b}-\overline{a}b} \xi_1
+ \frac{2|a||b|\sqrt{-1}}{a\overline{b}-\overline{a}b} \xi_2. \]
Let us define
\begin{gather*}
x=\frac{-(a\overline{b}+\overline{a}b)\sqrt{-1}}{a\overline{b}-\overline{a}b},\quad
y=\frac{2|a||b|\sqrt{-1}}{a\overline{b}-\overline{a}b}, \quad \kappa = x + y\sqrt{-1}, \\
e'_3 = \frac{1}{\sqrt{2}}(\xi_1 - \sqrt{-1}J\xi_1)= \frac{1}{\sqrt{2}}\big((1-x\sqrt{-1})\xi_1 - y\sqrt{-1} \xi_2 \big).
\end{gather*}
It is easy to see that $x,y$ are real constants and $e'_3=\frac{\sqrt{-1} \overline{a}}{|a|}e_3$.
Then the first two equations of \eqref{generic} enable us to contruct two 3-dimensional Sasakian manifolds $N_1$ and $N_2$ by the following two structure equations respectively,
\[ \nabla \! \begin{bmatrix} \xi_1 \\ e_1 \\ \overline{e}_1 \end{bmatrix}=
\begin{bmatrix} 0 & \sqrt{2}|a| \sqrt{-1} \varphi_1 & - \sqrt{2}|a| \sqrt{-1} \overline{\varphi}_1 \\
\sqrt{2}|a| \sqrt{-1} \overline{\varphi}_1 & \sigma'_1 & 0  \\
-\sqrt{2}|a| \sqrt{-1} \varphi_1  & 0 & \overline{\sigma}'_1  \\ \end{bmatrix}\!\!
\begin{bmatrix} \xi_1 \\ e_1 \\ \overline{e}_1 \\ \end{bmatrix}\!\!,\]
\[ \nabla \! \begin{bmatrix} \xi_2 \\ e_2 \\ \overline{e}_2 \end{bmatrix}=
\begin{bmatrix} 0 & \sqrt{2}|b| \sqrt{-1} \varphi_2 & - \sqrt{2}|b| \sqrt{-1} \overline{\varphi}_2 \\
\sqrt{2}|b| \sqrt{-1} \overline{\varphi}_2 & \sigma'_2 & 0  \\
-\sqrt{2}|b| \sqrt{-1} \varphi_2  & 0 & \overline{\sigma}'_2  \\ \end{bmatrix}\!\!
\begin{bmatrix} \xi_2 \\ e_2 \\ \overline{e}_2 \\ \end{bmatrix}\!\!,\]
where the dual coframe of $(\xi_i,e_i,\overline{e}_i)$ is denoted by $(\phi_i,\varphi_i,\overline{\varphi}_i)$ for $i=1,2$, and we may also define
\begin{gather*}
\varphi'_3 = \frac{1}{\sqrt{2}}(\phi_1 - \frac{x}{y} \phi_2 + \frac{\sqrt{-1}}{y} \phi_2 ) =  \frac{1}{\sqrt{2}}(\phi_1 - \frac{x-\sqrt{-1}}{y} \phi_2 ),\\
\sigma_1'= \sigma_1 + x|a|(\varphi'_3 - \overline{\varphi'_3}),\quad \sigma'_2 = \sigma_2 + |b| \big( (y- \frac{1-x\sqrt{-1}}{y}) \varphi'_3 - (y-\frac{1+x\sqrt{-1}}{y}) \overline{\varphi'_3} \big).
\end{gather*}
Here $\sigma_1, \sigma_2$ are coefficient forms from the equation \eqref{generic}. Then it follows from Lemma \ref{twisted} that the structure equation of $N_1 \times_{\kappa} N_2$ is exactly the one \eqref{generic}, after we note that $c_1 = \sqrt{2}|a|, c_2 = \sqrt{2}|b|, e'_3=\frac{\sqrt{-1} \overline{a}}{|a|}e_3$ here. Therefore we have shown the case \eqref{twSSK}.

In case \eqref{BKL}, either  $a=0$ and $b=\lambda$, or $a=\frac{\lambda}{2}(1+\rho)$, $b=\frac{\lambda}{2}(1-\rho)$ for some $\rho$ satisfying $|\rho|=1$ and $\mathrm{Im}(\rho)>0$. It is easy to verify that $P_{ik}^{j \ell}=0$ under any admissible frame $e$ in this case, thus $(M^3,g)$ is {\em BKL} by Proposition \ref{prop3.6}.

In case \eqref{gV}, both  $a$ and $b$ are real nonzero constants with $b=\lambda-a$. It follows from Proposition \ref{prop3.6} that $d(\eta + \overline{\eta})=0$ and thus $(M^3,g)$ is generalized Vaisman. It is easy to verify that, when $\lambda_1=\lambda_2$, i.e. when $a=b=\frac{\lambda}{2}$, it holds that $d \omega = - (\eta + \overline{\eta}) \omega$, which implies that $(M^3,g)$ is Vaisman by Proposition \ref{prop1.4}. This completes the proof of Theorem \ref{3DNBBTP}.
\end{proof}

\section{Balanced {\em BTP} threefolds}\label{BS3D}

Let $(M^3,g)$ be a compact non-K\"ahler balanced {\em BTP} manifold. We will start with a technical observation in \cite[Proposition 2]{ZhouZ} which says that any balanced threefold always admits a particular type of unitary frame under which the Chern torsion takes a simple form, namely, for any given $p\in M^3$, there exists a unitary frame $e$ near $p$ such that the Chern torsion components satisfy
\begin{equation}
T^1_{1k}=T^2_{2k}=T^3_{3k}=0, \ \ \ \forall \ 1\leq k\leq 3. \label{eq:6.1}
 \end{equation}
Under such a frame, the only possibly non-zero torsion components are
\begin{equation}
 a_1=T^1_{23}, \ \ \ a_2=T^2_{31}, \ \ \ a_3=T^3_{12}. \label{eq:6.2}
 \end{equation}
By a rotation and a permutation of $\{ e_1, e_2, e_3\}$ if needed, we may assume that $|a_1|\geq |a_2|\geq |a_3|$.

First we claim that each $|a_i|$ is a global constant on $M^3$. To see this, recall that tensor $B$ is defined by $B_{i\overline{j}}=\sum_{r,s} T^j_{rs}\overline{T^i_{rs}}$. Under our $e$, we have
$$ B = 2 \begin{bmatrix} |a_1|^2 & 0 & 0 \\ 0 & |a_2|^2 & 0 \\ 0 & 0 & |a_3|^2 \end{bmatrix}\!\!.$$
By the {\em BTP} assumption, we have $\nabla^bB=0$, so the eigenvalues of $B$ are all global constants on $M$, thus we know that each $|a_i|$ is a global constant.

Then one can rotate $e$ to make  $a_i=|a_i|$ for each $i$. To see this, since $|a_i|$ is a constant, we can write $a_i=\rho_i|a_i|$ where $\rho_i$ is a smooth local function with $|\rho_i|=1$. We will simply let $\rho_i=1$ if $a_i=0$.
Replace each $e_i$ by $\tilde{e}_i =(\overline{\rho}_j \overline{\rho}_k)^{\frac{1}{2}}e_i$, where $(ijk)$ is a cyclic permutation of $(123)$. Then under the new frame we will have $a_i=|a_i|$ for each $i$. An appropriate permutation of $\{ e_1, e_2, e_3\}$ yields
\begin{equation}
T^1_{23}\geq T^2_{31} \geq T^3_{12}\geq 0. \label{eq:6.3}
 \end{equation}
We will call a local unitary frame $e$ on $M^3$ a {\em special frame} if both (\ref{eq:6.1}) and (\ref{eq:6.3}) are satisfied.

Let us fix a special frame $e$ in a neighborhood of $p\in M^3$. We have $a_1\geq a_2\geq a_3\geq 0$, with $a_1>0$ since $M^3$ is not K\"ahler. Denote by $\theta^b$ the matrix of the Bismut connection $\nabla^b$ under $e$. Since $\nabla^bT=0$, and all $T^j_{ik}$ are constants, we have
\begin{equation}
0 = dT^j_{ik} =  \sum_r \big( T^j_{rk} \theta^b_{ir} + T^j_{ir} \theta^b_{kr} - T^r_{ik} \theta^b_{rj} \big)  \label{eq:6.4}
\end{equation}
Since the only possibly non-zero components of $T$ are $a_1$, $a_2$ and $a_3$, if we take $i$, $j$, $k$ all distinct in (\ref{eq:6.4}), we get
\begin{equation} \label{eq:6.5}
\left\{
 \begin{array}{c}
 a_1(\theta^b_{22}+\theta^b_{33} - \theta^b_{11} )= 0  \\
 a_2(\theta^b_{11}+\theta^b_{33} - \theta^b_{22} )= 0  \\
a_3( \theta^b_{11}+\theta^b_{22} - \theta^b_{33} ) = 0
\end{array}
\right.
\end{equation}
Similarly, by taking $j=i \neq k$ in (\ref{eq:6.4}), we get
\begin{equation}
a_1\theta^b_{12}+a_2\theta^b_{21} = a_1\theta^b_{13}+a_3\theta^b_{31} = a_2\theta^b_{23}+a_3\theta^b_{32} = 0. \label{eq:6.6}
\end{equation}
Denote by $\varphi$ the unitary coframe dual to the special frame $e$, and by $\theta$, $\tau$ the matrix of connection and column vector of torsion under $e$ for the Chern connection $\nabla^c$. By \cite{YZ18Gau}, we have $\theta = \theta^b-2\gamma$ where
\begin{equation} \label{eq:6.7}
\gamma = \begin{bmatrix} 0 & -\overline{\psi}_3  & \psi_2 \\ \psi_3 & 0 & -\overline{\psi}_1 \\ - \overline{\psi}_2 & \psi_1 &  0 \end{bmatrix}\!\!, \ \ \ \ \ \tau = \begin{bmatrix} 2a_1 \varphi_2\varphi_3 \\ 2a_2 \varphi_3\varphi_1 \\ 2a_3 \varphi_1\varphi_2  \end{bmatrix}\!\!,
\end{equation}
\begin{equation} \label{eq:6.8}
\psi_1 = a_2\varphi_1+a_3\overline{\varphi}_1, \ \ \ \psi_2 = a_3\varphi_2+a_1\overline{\varphi}_2, \ \ \  \psi_3 = a_1\varphi_3+a_2\overline{\varphi}_3.
\end{equation}
We shall divide the classification into the following three cases:
\begin{enumerate}
\item\label{thr_dst} $a_1>a_2>a_3$;
\item\label{thr_eql} $a_1=a_2=a_3$;
\item\label{tw_eql} $a_1=a_2>a_3$ or $a_1>a_2=a_3$,
\end{enumerate}
where the third case contains the {\em middle type} and is the main part of the discussion.

\begin{proof}[\textbf{Proof of Proposition \ref{Btype}}]
\mbox{}
\vspace{0.15cm}

\noindent {\bf Case \ref{thr_dst}:} $a_1>a_2>a_3$.

\vspace{0.15cm}

In this case, $B$ has distinct eigenvalues. Since its eigenspaces are all $\nabla^b$-parallel, we know that the matrix $\theta^b$ is diagonal.

If $a_3>0$, by (\ref{eq:6.5}) we get $\theta^b_{ii}=0$ for each $i$, hence $\theta^b=0$. This means that $M^3$ is Bismut flat. Such a manifold cannot be balanced unless it is K\"ahler, contradicting to our assumption that $M^3$ is balanced and non-K\"ahler, so we must have $a_3=0$, which implies $\psi_1=a_2\varphi_1$ and $\psi_2=a_1\overline{\varphi}_2$. From Equation (\ref{eq:6.5}), we get
\begin{equation*}
 \theta^b_{33}=0, \ \ \  \theta^b_{11}=\theta^b_{22} = \alpha ,
\end{equation*}
and
$$ \theta = \theta^b-2\gamma =  \begin{bmatrix} \alpha  & 2\overline{\psi}_3 & -2a_1\overline{\varphi}_2 \\ -2\psi_3 & \alpha & 2a_2 \overline{\varphi}_1 \\ 2a_1\varphi_2 & -2a_2\varphi_1 & 0 \end{bmatrix}\!\!. $$
Then the structure equation of Chern connection yields
\begin{equation}
 d\varphi  =  -\,^t\!\theta \wedge \varphi + \tau = \begin{bmatrix} -\alpha \varphi_1 + 2\psi_3 \varphi_2 \\ - \alpha \varphi_2 - 2\overline{\psi}_3 \varphi_1  \\ 2a_2 \varphi_2 \overline{\varphi}_1 - 2a_1 \varphi_1 \overline{\varphi}_2   \end{bmatrix} \!\!. \label{eq:6.9}
 \end{equation}
By the first two equations of (\ref{eq:6.9}), we get
\begin{eqnarray*}
 d(\varphi_2 \overline{\varphi}_1 ) &  = & d\varphi_2 \wedge \overline{\varphi}_1 - \varphi_2 \wedge d\overline{\varphi}_1  \\
 & = & (-\alpha \varphi_2 - 2\overline{\psi}_3 \varphi_1 )\,\overline{\varphi}_1 - \varphi_2 (-\overline{\alpha} \,\overline{\varphi}_1 + 2 \overline{\psi}_3 \overline{\varphi}_2 ) \\
 & = & 2 \overline{\psi}_3 \,( \varphi_2 \,\overline{\varphi}_2 - \varphi_1\,\overline{\varphi}_1),
\end{eqnarray*}
where $\alpha + \overline{\alpha} =0$ is used. Complex conjugation of the equality above yields
$$  d(\varphi_1 \overline{\varphi}_2 ) = -  \overline{ d(\varphi_2 \overline{\varphi}_1 )} = 2 \psi_3 \, ( \varphi_2 \,\overline{\varphi}_2 - \varphi_1\,\overline{\varphi}_1) .$$
The exterior differentiation of the third equation of (\ref{eq:6.9}) implies
$$ 0 = d^2\varphi_3 = 2a_2 \,d (\varphi_2 \overline{\varphi}_1) - 2a_1 \,d(\varphi_1 \overline{\varphi}_2) = 4(a_2\overline{\psi}_3 - a_1\psi_3) \wedge (\varphi_2 \,\overline{\varphi}_2 - \varphi_1\,\overline{\varphi}_1) .$$
Note that
$$ a_2\overline{\psi}_3 - a_1\psi_3 = a_2(a_1 \overline{\varphi}_3 + a_2 \varphi_3) - a_1 (a_1 \varphi_3 + a_2 \overline{\varphi}_3 ) = (a_2^2-a_1^2) \varphi_3, $$
which yields a contradiction. This shows that the case of distinct $a_1, a_2$ and $a_3$ cannot occur.
\vspace{0.15cm}

\noindent {\bf Case \ref{thr_eql}:} $a_1=a_2=a_3$.

\vspace{0.15cm}

Let us denote by $a>0$ the common value of those $a_i$ in this case. Equalities (\ref{eq:6.5}) and (\ref{eq:6.6}) now imply that $\theta^b$ is skew-symmetric. Since $\psi_i=a(\varphi_i+\overline{\varphi}_i) = \overline{\psi}_i$ for each $i$, the equality (\ref{eq:6.7}) shows that $\gamma$ is also skew-symmetric. So $\theta= \theta^b-2\gamma$ is skew-symmetric. Then it yields that
\begin{equation} \label{eq:6.10}
 \theta = \begin{bmatrix} 0 & x  & y \\ -x & 0 & z \\ - y & -z &  0 \end{bmatrix}\!\!, \ \ \ \ \tau = 2a \begin{bmatrix} \varphi_2\varphi_3 \\ \varphi_3\varphi_1 \\ \varphi_1\varphi_2 \end{bmatrix}\!\!,
 \end{equation}
where $x$, $y$, $z$ are real $1$-forms. As a result, the Chern curvature matrix $\Theta =d\theta - \theta \wedge \theta $ is also skew-symmetric. The structure equation of Chern connection gives us
\begin{equation*}
 d \varphi = - \,^t\!\theta \wedge \varphi + \tau =
\begin{bmatrix} x \varphi_2 + y \varphi_3 +2a\varphi_2\varphi_3 \\ -x \varphi_1 + z \varphi_3 +2a\varphi_3\varphi_1 \\  -y \varphi_1 -z \varphi_2 +2a\varphi_1\varphi_2  \end{bmatrix}\!\!.
\end{equation*}
It follows that
\begin{eqnarray*}
d (\varphi_2\varphi_3) & = & (x\varphi_3-y\varphi_2)\varphi_1 \\
d (\varphi_3\varphi_1) & = & (x\varphi_3+ z\varphi_1)\varphi_2 \\
d (\varphi_2\varphi_3) & = & (-y\varphi_2+z\varphi_1)\varphi_3.
\end{eqnarray*}
On one hand, if we let $\xi = x\varphi_3 -y\varphi_2 + z\varphi_1$, then the above equations simply says $d\tau = 2a \,\xi \wedge \varphi$. On the other hand, by (\ref{eq:6.10}) we get $\theta \tau = 2a\,\xi \wedge \varphi$. So by the first Bianchi identity
$$ d\tau = \,^t\!\Theta \varphi - \,^t\!\theta \tau, $$
we conclude that $ \,^t\!\Theta \varphi = 0$. This means that the Hermitian threefold $M^3$ is Chern K\"ahler-like, that is, the Chern curvature tensor $R^c$ obeys the K\"ahler symmetry $R^c_{i\overline{j}k\overline{\ell}} = R^c_{k\overline{j}i\overline{\ell}} $ for any $i,j,k,\ell$.

We claim that $R^c=0$. If $\{ i,k\} \cap \{ j,\ell\} \neq \emptyset$, for instance $1$ is contained in the intersection, then by the K\"ahler symmetry, $R^c_{i\overline{j}k\overline{\ell}}$ can be written as $R^c_{a\overline{b}1\overline{1}}$, which has to vanish since $\Theta_{11}=0$ as $\Theta$ is skew-symmetric. When $\{ i,k\} \cap \{ j,\ell\} = \emptyset$, what we need to show are the equalities $R_{i\overline{j}k\overline{j}}=0$ and $R_{i\overline{j}i\overline{j}}=0$ where $i,j,k$ are distinct, as the dimension is $3$. From the skew-symmetric $\Theta$, it yields that
$$ R^c_{i\overline{j}k\overline{j}} = - R^c_{i\overline{j}j\overline{k}} = -R^c_{i\overline{k}j\overline{j}}=0,$$
$$ R^c_{i\overline{j}i\overline{j}} = - R^c_{i\overline{j}j\overline{i}} = -R^c_{i\overline{i}j\overline{j}}=0.$$
So $M^3$ is Chern flat. By \cite{Boothby}, we know that the universal cover of $M$ is a connected, simply-connected complex Lie group $G$, and the lifting metric $\tilde{g}$ of $g$ is left-invariant and compatible with the complex structure of $G$. Let $\{ \varepsilon_1, \varepsilon_2, \varepsilon_3\}$ be a left-invariant unitary frame of $G$. Denote by $T^j_{ik}$ the components of the Chern torsion under the frame $\varepsilon$,
which are all constants. As $M$ is three-dimensional and balanced, by the proof of \cite[Proposition 2]{ZhouZ}, we know that one can make a constant unitary change of $\varepsilon$ so that $T^i_{ij}=0$ for each $1 \leq i \leq 3$. Under the case assumption, the tensor $B$
\[B =  \begin{bmatrix} c & 0 & 0 \\ 0 & c & 0 \\ 0 & 0 & c   \end{bmatrix}\!\!,\]
where $c=2a^2$. Similarly, by a suitable constant rotation $\rho_i\varepsilon_i$, where $|\rho_i|=1$ for each $i$, we may assume that $T^1_{23}=T^2_{31}=T^3_{12}=a>0$ under the left-invariant unitary frame $\varepsilon$ of $G$, hence the Lie algebra of $G$ is ${\mathfrak g}={\mathbb C}\{ X,Y,Z\}$, satisfying
$$ [X,Y]=2aZ, \ \ \ [Y,Z]=2aX, \ \ \ [Z,X]=2aY. $$
Therefore $G$ is isomorphic to $SO(3,{\mathbb C})$. Quotients of $SO(3,{\mathbb C})$ give us the only non-K\"ahler balanced {\em BTP} threefolds that are Chern flat.

Note that since $\theta^b$ is skew-symmetric under a special frame, so is the curvature matrix $\Theta^b$, therefore the holonomy group of $\nabla^b$ is contained in $SO(3)\subseteq U(3)$, but unlike the case of non-balanced {\em BTP} manifolds, it is not contained in $U(n-1)\times 1$ here.

\vspace{0.15cm}

\noindent {\bf Case \ref{tw_eql}:} $a_1=a_2>a_3$ or $a_1>a_2=a_3$ .

\vspace{0.15cm}

In this case $B$ has two distinct eigenvalues. First we will rule out the possibility of $a_3>0$. Assume that $a_3>0$,  namely, either $a_1=a_2>a_3>0$ or $a_1>a_2=a_3>0$, we will derive at a contradiction.

Since the argument for these two situations are exactly analogous, we will just focus on the case $a_1=a_2>a_3>0$. Write $a_1=a_2=a$ for simplicity.  Since the eigenspaces of $B$ are $\nabla^b$-parallel, it follows that $\theta^b_{13}=\theta^b_{23}=0$. By (\ref{eq:6.5}) and (\ref{eq:6.6}), it yields that
$$ \theta^b =  \begin{bmatrix} 0  & \alpha & 0 \\ -\alpha & 0 & 0  \\ 0 & 0 & 0 \end{bmatrix}\!\!, $$
where $\overline{\alpha }= \alpha$. Write $\alpha' =\alpha + 2\psi_3$, where $\psi_3=a(\varphi_3+\overline{\varphi}_3)$ is real, and the structure equation of Chern connection amounts to
\begin{equation} \label{eq:6.11}
 d \varphi = - \,^t\!(\theta^b -2\gamma ) \varphi + \tau =
\begin{bmatrix} \alpha' \varphi_2 + 2a_3 \varphi_3 \overline{\varphi}_2 \\ -\alpha' \varphi_1 - 2a_3 \varphi_3 \overline{\varphi}_1 \\  -2a_3\varphi_1\varphi_2 +2a (\varphi_2 \overline{\varphi}_1 - \varphi_1 \overline{\varphi}_2)  \end{bmatrix}\!\!.
\end{equation}
Then we get, from the first two lines,
\begin{eqnarray*}
d (\varphi_1\varphi_2) & = & d\varphi_1 \wedge \varphi_2 - \varphi_1 \wedge d\varphi_2 \ = \ -2a_3 \varphi_3 (\varphi_1 \overline{\varphi}_1 + \varphi_2 \overline{\varphi}_2),\\
d (\varphi_1\overline{\varphi}_2) & = & d\varphi_1\wedge \overline{\varphi}_2 - \varphi_1\wedge d\overline{\varphi}_2 \ = \ \alpha' ( \varphi_2 \overline{\varphi}_2 - \varphi_1 \overline{\varphi}_1 ).
\end{eqnarray*}
Here the fact $\overline{\alpha'}=\alpha'$ is used. In particular,
$$ d(\varphi_2 \overline{\varphi}_1 ) = - \overline{d(\varphi_1 \overline{\varphi}_2) } = \alpha ' ( \varphi_2 \overline{\varphi}_2 - \varphi_1 \overline{\varphi}_1 ) = d(\varphi_1 \overline{\varphi}_2) . $$
Take the exterior differentiation of the third equation of (\ref{eq:6.11}) and we obtain
$$ 0 = d^2\varphi_3 = -2a_3 d(\varphi_1\varphi_2) + 2a\, d(\varphi_2 \overline{\varphi}_1 - \varphi_1 \overline{\varphi}_2) = 4a_3^2  \varphi_3 (\varphi_1 \overline{\varphi}_1 + \varphi_2 \overline{\varphi}_2),$$
which is a contradiction. This shows that the case $a_3>0$ cannot occur and either $a_1>0=a_2=a_3$ or $a_1=a_2>0=a_3$ yields, which implies
\[B=\begin{bmatrix} c & 0 & 0 \\ 0 & 0 & 0 \\ 0 & 0 & 0   \end{bmatrix}\ \text{or}\
\begin{bmatrix} c & 0 & 0 \\ 0 & c & 0 \\ 0 & 0 & 0   \end{bmatrix}\!\!,\]
where $c=2a_1^2$. This completes the proof of Proposition \ref{Btype}.
\end{proof}

Therefore, we have shown the $\mbox{rank}\,B=3$ case of Theorem \ref{classification}. We will deal with the case $\mathrm{rank}\,B=1$ in \S \ref{FANO}, which leads to a Fano threefold and eventually end up with the Wallach threefold $(X,g)$. It will be computed in details in \S \ref{WCH3D}. The case $\mathrm{rank}\,B=2$ is the middle type, which will be discussed in \S \ref{mddtype3D}. This will complete the proof of Theorem \ref{classification}.

\vspace{0.3cm}

\section{The Fano case}\label{FANO}

In this section, we deal with the case $\mathrm{rank}\,B=1$. Eventually, we will show that this leads us to a unique example, the Wallach threefold, up to a scaling of the metric by a constant.

Let $(M^3,g)$ be a compact, non-K\"ahler, balanced {\em BTP} threefold with $B$ tensor
\[ B= \begin{bmatrix} 2a_1^2 & 0 & 0 \\ 0 & 0 & 0 \\ 0 & 0 & 0 \end{bmatrix}\!\!,\]
under a special frame $e$, where the only non-zero component of the Chern torsion tensor is $a_1=T^1_{23} >0$. We may assume for simplicity that $a_1=\frac{1}{2}$, after a scaling of the metric $g$ by a suitable constant.

From (\ref{eq:6.5}) and (\ref{eq:6.6}) we get $\theta^b_{12}=\theta^b_{13}=0$ and $\theta^b_{11}=\theta^b_{22} +\theta^b_{33}$. Then it follows that
\begin{equation*}
\theta^b = \begin{bmatrix} x+y & 0  & 0 \\ 0 & x & \alpha \\ 0 & -\overline{\alpha} &  y \end{bmatrix}\!\!, \ \ \ \
\gamma = \frac{1}{2} \begin{bmatrix} 0 & -\overline{\varphi}_3  & \overline{\varphi}_2 \\ \varphi_3 & 0 & 0 \\ -  \varphi_2 & 0 &  0 \end{bmatrix}\!\!, \ \ \
\tau = \begin{bmatrix} \varphi_2\varphi_3 \\ 0 \\ 0  \end{bmatrix}\!\!,
\end{equation*}
where $\overline{x}=-x$ and $\overline{y}=-y$. From $\theta = \theta^b-2\gamma$ and $d\varphi = -\,^t\!\theta \wedge \varphi +\tau $, it yields that
\begin{equation*}
\theta = \begin{bmatrix}  x+y & \overline{\varphi}_3  & -\overline{\varphi}_2 \\ -\varphi_3 & x & \alpha \\ \varphi_2 & -\overline{\alpha} &  y \end{bmatrix}\!\!, \ \ \ \ d\varphi  = \begin{bmatrix}  -(x+y)\varphi_1 - \varphi_2\varphi_3 \\ \varphi_1 \overline{\varphi}_3 - x \varphi_2 +\overline{\alpha} \varphi_3  \\ - \varphi_1 \overline{\varphi}_2 - \alpha \varphi_2 -y \varphi_3  \end{bmatrix}\!\!.
\end{equation*}
The matrix of the curvature of $\nabla^b$ is $\Theta^b=d \theta^b - \theta^b \wedge \theta^b$, whose entries are
\begin{equation} \label{eq:7.1}
\left\{ \begin{array}{lll}  \Theta^b_{22} \ = \ dx + \alpha \overline{\alpha}, \ \ \ \ \ \Theta^b_{33} \ = \ dy - \alpha \overline{\alpha}, \\
 \Theta^b_{23} \ = \ d\alpha  - x \alpha - \alpha y, \ \ \  \Theta^b_{12} \ = \ \Theta^b_{13} \ = \ 0, \\
 \Theta^b_{11} \ = \ dx+dy \ = \  \Theta^b_{22}+ \Theta^b_{33}.
 \end{array} \right.
\end{equation}
For convenience, we will use $\varphi_{ij}$ and $\varphi_{i\bar{j}}$ as the abbreviation of $\varphi_i \wedge \varphi_j$ and $\varphi_i \wedge \overline{\varphi}_j$, respectively. From the exterior differentiation $d\varphi$ of $\varphi$, the first Bianchi identity of $\nabla^b$ amounts to
\begin{equation} \label{eq:7.2}
\left\{ \begin{array}{lll} 0 =  d^2\varphi_1 \ = \ \{ \varphi_{2\overline{2}} + \varphi_{3\overline{3}} - \Theta^b_{11} \} \wedge \varphi_1, \\
0 = d^2\varphi_2 \ = \ \{  \varphi_{1\overline{1}} -  \varphi_{3\overline{3}} - \Theta^b_{22} \} \wedge \varphi_2 - \Theta^b_{32}\wedge \varphi_3, \\
0 = d^2\varphi_3 \ = \ \{  \varphi_{1\overline{1}} -  \varphi_{2\overline{2}} - \Theta^b_{33} \} \wedge \varphi_3 - \Theta^b_{23}\wedge \varphi_2
\end{array} \right.
\end{equation}
It follows from Theorem \ref{theorem1.1} that $\nabla^bT=0$ implies
$$ R^b_{ijk\overline{\ell}} =0 \ \ \ \mbox{and} \ \ \  R^b_{i\overline{j}k\overline{\ell}} = R^b_{k\overline{\ell}i\overline{j}} $$
for any $i,j,k,\ell$. So by $\Theta^b_{12}=\Theta^b_{13}=0$ and $\Theta^b_{11}= \Theta^b_{22} + \Theta^b_{33}$, we get
$$ R^b_{1\overline{b}i \overline{j}} = R^b_{i \overline{j}1\overline{b}}=0, \ \ \ \ \ R^b_{1 \overline{1}i\overline{j}} = R^b_{i\overline{j}1 \overline{1}} = R^b_{i\overline{j}2 \overline{2}} + R^b_{i\overline{j}3 \overline{3}}
= R^b_{2 \overline{2}i\overline{j}} + R^b_{3 \overline{3}i\overline{j}} $$
for any $i$, $j$ and any $b\in \{ 2,3\}$. Write
\begin{eqnarray*}
\Theta^b_{22} & = & A \varphi_{2\overline{2}} + B \varphi_{3\overline{3}} + E \varphi_{2\overline{3}} + \overline{E} \varphi_{3\overline{2}} + (A+B) \varphi_{1\overline{1}} \\
\Theta^b_{33} & = & B \varphi_{2\overline{2}} + C \varphi_{3\overline{3}} + F \varphi_{2\overline{3}} + \overline{F} \varphi_{3\overline{2}} + (B+C) \varphi_{1\overline{1}} \\
\Theta^b_{23} & = & E \varphi_{2\overline{2}} + F \varphi_{3\overline{3}} + D \varphi_{2\overline{3}} + G \varphi_{3\overline{2}} + (E+F) \varphi_{1\overline{1}} \\
\Theta^b_{11} & = & (A+B) \varphi_{2\overline{2}} + (B+C) \varphi_{3\overline{3}} + (E+F) \varphi_{2\overline{3}} + (\overline{E}+\overline{F}) \varphi_{3\overline{2}} + (A+2B+C) \varphi_{1\overline{1}}
\end{eqnarray*}
where $A,B,C,G$ are local real smooth functions. Then the first Bianchi identity (\ref{eq:7.2}) indicates
$$ E+F=0, \ \ \ \ A+B=B+C=G-B=1. $$
In particular,
\begin{equation}  \label{eq:7.3}
\Theta^b_{11} = 2\varphi_{1\overline{1}} + \varphi_{2\overline{2}} + \varphi_{3\overline{3}}
\end{equation}

\begin{remark} The pattern of the Bismut curvature implies that the holonomy group of $\nabla^b$ in this case would have its Lie algebra contained in the subalgebra
$$ {\mathfrak h} = \left\{ \begin{bmatrix} \mbox{tr}(X) & 0 \\ 0 & X \end{bmatrix} \mid X \in {\mathfrak u}(2) \right\} \subseteq {\mathfrak u}(3). $$
\end{remark}

Denote by $\Theta$ the curvature matrix of the Chern connection $\nabla^c$ under $e$, and the balanced condition $\eta =0$ implies that
$$ \mathrm{tr} \,\Theta = \mathrm{tr}\, \Theta^b = 2\Theta^b_{11} .$$
Let $\omega = \sqrt{-1} \big(  \varphi_1\overline{\varphi}_1 + \varphi_2\overline{\varphi}_2 +\varphi_3\overline{\varphi}_3 \big)$ be the K\"ahler form of the Hermitian metric $g$, and let
\begin{equation} \label{eq:7.4}
\tilde{\omega} = \sqrt{-1} \big( 2 \varphi_1\overline{\varphi}_1 + \varphi_2\overline{\varphi}_2 +\varphi_3\overline{\varphi}_3 \big) .
\end{equation}
be the K\"ahler form of another Hermitian metric $\tilde{g}$.
Clearly $\tilde{\omega}$ is independent of the choice of special frames hence is globally defined on $M$.  The Chern Ricci form of $g$ is $\mathrm{Ric}(\omega ) = \sqrt{-1}\mathrm{tr} \,\Theta = 2 \tilde{\omega}$. As $\mathrm{Ric}(\omega )$ is always closed, we know $\tilde{\omega}$ is K\"ahler. Note that $\tilde{\omega }^3 = 2 \omega^3$ and thus
\begin{equation} \label{eq:7.5}
 \mbox{Ric} (\tilde{\omega}) = \mbox{Ric}(\omega ) = 2 \tilde{\omega}.
 \end{equation}
Therefor $\tilde{\omega}$ is a K\"ahler-Einstein metric with positive Ricci curvature and $M^3$ is a Fano threefold.

Denote by $E$ and $F$ the $C^{\infty}$ complex vector bundle on $M$ with fibers $E_x = {\mathbb C} \{ e_2(x), e_3(x)\}$ and $F_x={\mathbb C}  e_1(x)$, respectively. They are both globally defined since $E$ is the eigenspace of $B$ corresponding to the eigenvalue $0$, and $F$ is the orthogonal complement of $E$ in $T^{1,0}M$.

We claim that $E$ is a holomorphic subbundle of $T^{1,0}M$. To see this, for any $i\in \{ 2, 3\}$ and any $j$, we have
$$ \langle \nabla^c_{\overline{j}}e_i , \overline{e}_1 \rangle = \langle \nabla^b_{\overline{j}}e_i - 2\gamma_{\overline{j}}e_i , \overline{e}_1  \rangle = \theta^b_{i1}(\overline{e}_{j}) - 2 \gamma_{i1}(\overline{e}_{j}) = 0.$$
This means that $\nabla^c_{\overline{X}}E \subseteq E$ for any type $(1,0)$ vector field $X$, so $E$ is holomorphic. Note that the distribution $E$ is not a foliation, while $F$ on the other hand is a foliation, but is not holomorphic.

Equip $E$ with the restriction metric from $(M^3,g)$. By the formula of Chern connection matrix $\theta$ on $M$, the matrices of connection and curvature of the Hermitian bundle $E$ under the local frame $\{ e_2, e_3\}$ are respectively
$$ \theta^E = \begin{bmatrix} x & \alpha \\ -\overline{\alpha} & y \end{bmatrix}\!\!, \ \ \ \  \ \Theta^E = d\theta^E -\theta^E \wedge \theta^E = \begin{bmatrix} \Theta^b_{22} & \Theta^b_{23} \\ \Theta^b_{32} & \Theta^b_{33} \end{bmatrix}\!\!.$$
In particular, $\sqrt{-1}\mathrm{tr}\Theta^E = \sqrt{-1}(\Theta^b_{22} + \Theta^b_{33}) = \frac{\sqrt{-1}}{2}\mathrm{tr}\,\Theta^b = \frac{\sqrt{-1}}{2}\,\mathrm{tr}\Theta = \tilde{\omega}$. This means that
\begin{equation} \label{eq:7.6}
 c_1(E) = \frac{1}{2} c_1(M) = [\tilde{\omega}].
\end{equation}
Denote by $L$ the holomorphic line bundle on $M$ which is the quotient of $E$ in $T^{1,0}M$, namely, the exact sequence follows
\begin{equation} \label{eq:7.7}
0 \rightarrow E \rightarrow T^{1,0}M \rightarrow L \rightarrow 0.
\end{equation}
Let $h$ be the first Chern class $c_1(L)$ of $L$. The above short exact sequence implies
$$ c_1(E)+h =c_1(M), \ \ \ c_2(E) + hc_1(E) = c_2(M), \ \ \ c_2(E)h =c_3(M).$$
Then the equality (\ref{eq:7.6}) yields
\[c_1(E)=h, \quad c_1(M)=2h, \quad c_2(E)=c_2(M)-h^2, \quad c_2(M)h - h^3=c_3(M).\]
In particular, $L$ is an ample line bundle on $M$, the anti-canonical line bundle $K_M^{-1}=2L$ as holomorphic line bundles are uniquely determined by their Chern classes on Fano manifolds, and the Chern numbers of $M^3$ satisfy
\begin{equation}  \label{eq:7.8}
c_1(M)c_2(M) = 2h (c_2(E)+h^2) = 2c_3(M) + \frac{1}{4}c_1^3(M).
\end{equation}

Recall that the {\em index} of a Fano manifold $X^n$ is the largest positive integer $r$ so that $K^{-1}_X = rA$ for an ample line bundle $A$. It is necessarily less than or equal to $n+1$, where $r=n+1$ if and only if $X={\mathbb P}^n$ and $r=n$ if and only if $X={\mathbb Q}^n$, the smooth quadric hypersurface in ${\mathbb P}^{n+1}$. Fano manifolds satisfying $r=n-1$ are called {\em del Pezzo manifolds,} which are classified by Fujita \cite{Fujita} as one of the following seven types, according to their {\em degree} $d$, which is the self intersection number $A^n$:
\begin{enumerate}
\item $d=1$: $X^n_6\subset {\mathbb P}(1^{n-1}, 2, 3)$, a degree $6$ hypersurface in the weighted projective space.
\item $d=2$: $X^n_4\subset {\mathbb P}(1^{n}, 2)$, a degree $4$ hypersurface in the weighted projective space.
\item $d=3$: $X^n_3\subset {\mathbb P}^{n+1}$, a cubic hypersurface.
\item $d=4$: $X^n_{2,2}\subset {\mathbb P}^{n+2}$, a complete intersection of two quadrics.
\item $d=5$: $Y^n$, a linear section of ${\mathbb Gr}(2,5) \subset {\mathbb P}^9$.
\item $d=6$: ${\mathbb P}^1\!\times \! {\mathbb P}^1\!\times \!{\mathbb P}^1$, or ${\mathbb P}^2\!\times \! {\mathbb P}^2$, or the flag threefold ${\mathbb P}(T_{ \!{\mathbb P}^{2}} )$.
\item $d=7$: ${\mathbb P}^3\# \overline{{\mathbb P}^3} $, the blow-up of ${\mathbb P}^3$ at a point.
\end{enumerate}

For $n=3$, del Pezzo threefolds were classified by Iskovskikh \cite{Iskov} earlier, and in Table 12.2 of \cite{IP} we can find the third betti number $b_3$, hence the Euler number $c_3=4-b_3$ of del Pezzo threefolds of degree $1\leq d\leq 5$:
\begin{equation} \label{eq:7.9}
c_3(X^3_6)=-38, \ \ \ c_3(X^3_4)=-16, \ \ \ c_3(X^3_3) = -6, \ \ \ c_3(X^3_{2,2})= 0, \ \ \ c_3(Y^3)= 4.
\end{equation}

Let us return to our manifold $M^3$, where it holds that $K_M^{-1}=2L$ for the ample $L$, so the index of $M^3$ is either $4$ or $2$, which means $M^3$ is biholomorphic to either ${\mathbb P}^3$ or a del Pezzo threefold. It is well-known that $c_1c_2=24$ holds for any Fano threefold, so the equality (\ref{eq:7.8}) implies
\begin{equation} \label{eq:7.10}
c_3(M) = 12 - \frac{1}{8}c_1^3 .
\end{equation}
If $M^3$ is a del Pezzo threefold of degree $d$, then $c_1^3=8d$, hence the equality (\ref{eq:7.10}) yields $c_3=12-d$. This rules out the possibility of $1\leq d\leq 5$ by (\ref{eq:7.9}). The case ${\mathbb P}^3\# \overline{{\mathbb P}^3}$ of degree $d=7$ has the Euler number $c_3=6$, which is not equal to $12-7$. Similarly, for the case ${\mathbb P}^1 \times {\mathbb P}^1 \times {\mathbb P}^1$ of degree $d=6$, its Euler number $c_3=8$ is not equal to $12-6$, which indicates that neither can be our $M^3$. Therefore only two possibilities are left, namely, $M^3$ is either the flag threefold  ${\mathbb P}(T_{{\mathbb P}^2} )$ or ${\mathbb P}^3$. Note that in both of these two cases the short exact sequence (\ref{eq:7.7}) exists, for which we have to dig further.

Consider the short exact sequence (\ref{eq:7.7}) of holomorphic vector bundles on our threefold $M^3$, where $E$ has the fiber ${\mathbb C}\{ e_2,e_3\}$ under a special frame $e$. Denote by $\Omega$ the holomorphic cotangent bundle of $M^3$ and write $\Omega(L)=\Omega \otimes L$. Let $\xi \in H^0(M, \Omega (L))$ be the nowhere zero holomorphic section which gives the map $T^{1,0}M \rightarrow L$ in (\ref{eq:7.7}). The K\"ahler-Einstein metric $\tilde{\omega}$ on $M^3$ naturally induces Hermitian metrics on $\Omega$ and $L=-\frac{1}{2}K_M$, hence on $\Omega (L)$. We have the following claim
\begin{claim}\label{eq:7.12}
The norm $\parallel \!\xi \!\parallel^2$ is a constant under the K\"ahler-Einstein metric $\tilde{\omega}$.
\end{claim}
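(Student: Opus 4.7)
The plan is to show that $\|\xi\|^2_{\tilde\omega}$ is constant by computing it directly in a local special frame, where both $\xi$ and $\tilde\omega$ admit explicit universal expressions. The two ingredients are the canonical identification of $\xi$ coming from the short exact sequence (\ref{eq:7.7}) and the prescribed form (\ref{eq:7.4}) of $\tilde g$ in every special frame.

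First I would identify $\xi$ in a local special frame $e=(e_1,e_2,e_3)$. The canonical surjection $T^{1,0}M\to L$ in (\ref{eq:7.7}), viewed as an element of $H^0(M,\mathrm{Hom}(T^{1,0}M,L))=H^0(M,\Omega(L))$, is a nowhere-zero holomorphic section, so since $H^0(M,\Omega(L))\cong\mathbb{C}$ it equals $\xi$ up to a nonzero global constant $c\in\mathbb{C}^{*}$. Because $E$ is spanned by $e_2,e_3$ in any special frame, this canonical map sends $e_1\mapsto[e_1]$ and $e_2,e_3\mapsto 0$, giving
\[
\xi \;=\; c\,\varphi_1\otimes[e_1]
\]
on every special-frame chart, with the same constant $c$ across charts.

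Next I would compute the pointwise norm with respect to $\tilde\omega$. By (\ref{eq:7.4}) the special frame is $\tilde g$-orthogonal with $\tilde g(e_1,\bar e_1)=2$ and $\tilde g(e_i,\bar e_j)=\delta_{ij}$ for $i,j\in\{2,3\}$. Hence the dual metric on $\Omega$ yields $|\varphi_1|^2_{\tilde\omega}=\tilde g^{\bar{1}\,1}=1/2$, and since $e_1$ is $\tilde g$-orthogonal to $E$, the quotient metric on $L=T^{1,0}M/E$ gives $|[e_1]|^2_{\tilde\omega}=\tilde g(e_1,\bar e_1)=2$. Multiplying,
\[
\|\xi\|^2_{\tilde\omega} \;=\; |c|^2\cdot |\varphi_1|^2_{\tilde\omega}\cdot |[e_1]|^2_{\tilde\omega} \;=\; |c|^2,
\]
independent of the base point $p\in M$; normalizing $\xi$ so that $|c|^2=1/2$ then matches the value quoted in the description of the Wallach threefold preceding Theorem \ref{classification}.

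The main obstacle will be verifying that the local expression $c\,\varphi_1\otimes[e_1]$ uses the same constant $c$ in every special-frame patch, i.e., that the decomposition is globally coherent. Any two special frames at a common point differ by a unitary change preserving the eigenspace decomposition of $B$; since the $2a_1^2$-eigenspace of $B$ is one-dimensional, the $e_1$-direction is determined up to a unit phase $\rho$, accompanied by an independent unitary rotation of $\{e_2,e_3\}$. Under $e_1\mapsto\rho e_1$ we have $\varphi_1\mapsto\bar\rho\,\varphi_1$ and $[e_1]\mapsto\rho[e_1]$, so $\varphi_1\otimes[e_1]$ is invariant; the rotation of $\{e_2,e_3\}$ does not affect $\varphi_1$ or $[e_1]$ at all. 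Therefore $c$ is a genuine global constant and $\|\xi\|^2_{\tilde\omega}$ is constant on $M$.
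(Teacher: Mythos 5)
The computation you give is correct for the metric you chose, but it is not the metric in the Claim, and that substitution hides the entire content of the statement. The paper defines $\parallel\!\xi\!\parallel^2$ using the Hermitian metric on $L$ induced from the identification $L=-\frac{1}{2}K_M$, i.e.\ the metric whose curvature form is exactly $\frac{1}{\sqrt{-1}}\tilde{\omega}$; this is the metric actually used afterwards to exclude ${\mathbb P}^3$, where $\parallel\! Z_0^2\!\parallel^2=(1+|z|^2)^{-2}$. You instead equip $L=T^{1,0}M/E$ with the quotient metric coming from $\tilde{g}$. For that metric your computation $\parallel\!\xi\!\parallel^2=\tilde{g}^{\bar 1 1}\cdot\tilde{g}_{1\bar 1}=1$ is a tautology: the canonical surjection $T^{1,0}M\to T^{1,0}M/E$ has Hilbert--Schmidt norm $1$ for \emph{any} Hermitian metric and any subbundle, so the same argument would ``prove'' the claim on any Hermitian manifold whatsoever --- whereas the claim genuinely fails for ${\mathbb P}^3$ with the anticanonical metric, which is precisely how the paper exploits it. The two metrics on $L$ are cohomologically compatible (both curvatures represent $c_1(L)=[\tilde{\omega}]$), but a priori they differ by a factor $e^u$ with $\partial\overline{\partial}u$ equal to the difference of the curvature forms; your conclusion transfers to the correct metric only if $u$ is constant, i.e.\ only if the quotient metric has curvature equal to $\frac{1}{\sqrt{-1}}\tilde{\omega}$ as a form, not merely in cohomology. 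Establishing that identity is the real work, and it is exactly what the paper's proof does: writing $\xi=f\varphi_1\otimes s$ for a local holomorphic frame $s$ of $L$, holomorphicity of $\xi$ together with the structure equation $d\varphi_1=-(x+y)\varphi_1-\varphi_2\varphi_3$ gives $(x+y)^{0,1}=\overline{\partial}\log f$, hence $\partial\overline{\partial}\log|f|^2=d(x+y)=\Theta^b_{11}=\frac{1}{\sqrt{-1}}\tilde{\omega}$ by (\ref{eq:7.3}); combined with $-\partial\overline{\partial}\log\parallel\!s\!\parallel^2=\frac{1}{\sqrt{-1}}\tilde{\omega}$ this shows $|f|^2\parallel\!s\!\parallel^2$ has pluriharmonic logarithm, hence is constant on the compact $M$. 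None of this appears in your argument.

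A secondary point: you justify $\xi=c\,\varphi_1\otimes[e_1]$ by invoking $H^0(M,\Omega(L))\cong{\mathbb C}$. That is Claim \ref{eq:7.14}, which is proved later and only for the flag threefold; it is false for ${\mathbb P}^3$ (where $h^0=6$), and ${\mathbb P}^3$ has not yet been excluded at this stage --- indeed Claim \ref{eq:7.12} is the tool used to exclude it. Fortunately you do not need it: $\xi$ is by definition the surjection in (\ref{eq:7.7}), so $\xi=\varphi_1\otimes[e_1]$ with $c=1$ in any special frame, and your gluing discussion of the phase ambiguity is then correct but moot.
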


\begin{proof}
To see this, let $e$ be a local special frame of $(M^3,g)$, with dual coframe $\varphi$. Let $s$ be a local holomorphic frame of $L$. Since $\xi$ is a $L$-valued $1$-form, we have $\xi = \psi \otimes s$ where $\psi$ is a nowhere-zero local holomorphic $1$-form. The kernel of the map given by $\xi$ is $E$, so $\psi = f\varphi_1$ for some local smooth function $f$, which is nowhere zero. By the structure equation, it follows that $d\varphi_1=-(x+y)\varphi_1 - \varphi_2\varphi_3$, so the holomorphicity of $\psi$ gives us
$$ 0 = \overline{\partial} \psi =  \overline{\partial}f \wedge \varphi_1 - f (x+y)^{0,1}\wedge \varphi_1. $$
Hence $(x+y)^{0,1}=\overline{\partial}\log f$. By the fact that $\overline{x}=-x$ and $\overline{y}=-y$, we get
$$ x+y = - \partial \log \overline{f} + \overline{\partial}\log f, \ \ \ \  \frac{1}{\sqrt{-1}}\tilde{\omega} = \Theta^b_{11} = d(x+y) = \partial \overline{\partial}\log |f|^2. $$
On the other hand,  $L=-\frac{1}{2}K_M$ is equipped with the induced metric from $\tilde{\omega}$, then it yields
$$ \frac{1}{\sqrt{-1}} \tilde{\omega} = \Theta^L = -\partial \overline{\partial}\log \parallel \!s \!\parallel ^2. $$
Combine the above two equations and we get
$$ \partial \overline{\partial}\log (|f|^2 \parallel \!s\! \parallel^2) = 0. $$
Since $\{ \sqrt{2} \varphi_1, \varphi_2, \varphi_3\}$ is a local unitary coframe for $\tilde{\omega}$, the norm square of $\varphi_1$ under $\tilde{\omega}$ is $\frac{1}{2}$, hence
$$ \parallel \!\xi \!\parallel^2=\frac{1}{2}\,|f|^2\parallel \!s \!\parallel^2.$$
It is a global positive function on $M^3$, and its log is pluriclosed by the above equation, hence it must be a constant. This establishes the claim above.
\end{proof}

Then we will use Claim \ref{eq:7.12} to rule out the possibility of $M^3$ being ${\mathbb P}^3$. Assume that $M^3$ is ${\mathbb P}^3$. In this case $L={\mathcal O}(2)$. Any nowhere-zero holomorphic section $\xi \in V:=H^0({\mathbb P}^3,\Omega (2))$ gives the so-called {\em null-correlation bundle} (see for example \cite{Schneider}), which is $E(-1)$ in our notation. Let $\tilde{\omega}$  be the (scaled) Fubini-Study metric of ${\mathbb P}^3$ with Ricci curvature $2$. It has constant holomorphic sectional curvature $1$. Let $[Z_0\!:\!Z_1\!:\!Z_2\!:\!Z_3]$ be the standard unitary homogeneous coordinate of ${\mathbb P}^3$. In the coordinate neighborhood $U_0=\{ Z_0\neq 0\}$, let $z_i=\frac{Z_i}{Z_0}$, $1\leq i\leq 3$ and it follows that
$$ \sqrt{-1}\, \Theta^L=\tilde{\omega} = \frac{1}{2}\mbox{Ric}(\tilde{\omega}) = 2\sqrt{-1}\,\partial \overline{\partial} \log (1+|z|^2), \ \ \ \ \ \ \parallel \!Z_0^2\! \parallel^2 = \frac{1}{(1+|z|^2)^2} $$
where $Z_0^2$ is a local frame of $L$ in $U_0$ and $|z|^2= |z_1|^2 + |z_2|^2 + |z_3|^2$. Under the coordinate $z$,
$$ \tilde{g}_{i\overline{j}} = \frac{2}{1+|z|^2}\delta_{ij} - \frac{2}{(1+|z|^2)^2}\overline{z}_iz_j, \ \ \ \ \ \ \tilde{g}^{\overline{i}j} = \frac{1}{2}(1+|z|^2) \big( \delta_{ij} + \overline{z}_iz_j\big)  .$$

As is well-known, $V \cong {\mathbb C}^6 $ has a basis $\{ \lambda_{ij} \}_{0\leq i<j\leq 3}$, where
$ \lambda_{ij} = Z_i dZ_j - Z_j dZ_i$. Suppose that $\xi \in V$ is a nowhere zero section.
It follows that $\xi = \sum a_{ij}\lambda_{ij}$ for some constants $a_{ij}$. In $U_0$,
$$  \lambda_{0i}=Z_0^2dz_i, \ \ \ \ \lambda_{ij}=Z_0^2(z_idz_j-z_jdz_i) $$
for any $i,j>0$, hence $ \xi = Z_0^2 (\ell_{1}dz_1 + \ell_2dz_2 + \ell_3dz_3)$, where
\begin{eqnarray*}
\ell_1 & = & a_{01} - a_{12}z_2-a_{13}z_3 \\
\ell_2 & = & a_{02} + a_{12}z_1-a_{23}z_3 \\
\ell_3 & = & a_{03} + a_{13}z_1 + a_{23}z_2
\end{eqnarray*}
It yields that
\begin{eqnarray*}
\parallel \xi \parallel^2 & = & \parallel \!Z_0^2\! \parallel^2 \sum_{i,j=1}^3 \ell_i \overline{\ell_j} \, \tilde{g}^{\overline{i}j} \\
& = & \parallel \!Z_0^2\! \parallel^2  \frac{1}{2}(1+|z|^2) \big( \sum_i |\ell_i|^2 + |\sum_k \ell_k\overline{z}_k |^2 \big) \\
& = & \frac{1}{2(1+|z|^2)}  \big(  |\ell_1|^2 + |\ell_2|^2 + |\ell_3|^2 + |\ell_1 \overline{z}_1 + \ell_2 \overline{z}_2 + \ell_3 \overline{z}_3 |^2 \big)
\end{eqnarray*}
\begin{claim}
The expression $\|\xi\|^2$ above can not be a constant. Hence it rules out the possibility of $M^3={\mathbb P}^3$.
\end{claim}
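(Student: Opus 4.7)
The plan is to exploit the fact that $\|\xi\|^2$ is a rational function in $(z,\bar z)$ whose numerator and denominator have bounded bi-degree, and extract a polynomial identity by comparing bi-degrees. Assume for contradiction that $\|\xi\|^2 \equiv c$ for some constant $c$. Then, clearing the denominator,
\[ |\ell_1|^2 + |\ell_2|^2 + |\ell_3|^2 + |P|^2 \ = \ 2c (1+|z|^2), \qquad P := \ell_1 \overline{z}_1 + \ell_2 \overline{z}_2 + \ell_3 \overline{z}_3, \]
is a polynomial identity in $(z,\bar z)$.

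First, I would organize everything by bi-degree in $(z,\bar z)$. Since each $\ell_i$ is affine in $z$ alone, every $|\ell_i|^2$ is of bi-degree at most $(1,1)$, and the right-hand side is also of bi-degree at most $(1,1)$. Write $P = P_0 + P_1$ with $P_0$ of bi-degree $(0,1)$ (arising from the constant terms $a_{0i}$) and $P_1$ of bi-degree $(1,1)$ (arising from the linear terms in $\ell_i$). Then $|P|^2$ decomposes into pieces of bi-degrees $(1,1)$, $(2,1)$, $(1,2)$, and $(2,2)$, and in particular the $(2,2)$ part of the identity reads $|P_1|^2 \equiv 0$, forcing $P_1 \equiv 0$.

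Next, I would compute $P_1$ directly from the formulas for $\ell_i$, obtaining
\[ P_1 \ = \ a_{12}(z_1 \overline{z}_2 - z_2 \overline{z}_1) + a_{13}(z_1 \overline{z}_3 - z_3 \overline{z}_1) + a_{23}(z_2 \overline{z}_3 - z_3 \overline{z}_2). \]
The monomials $z_i \overline{z}_j$ for $i\neq j$ are linearly independent, so $P_1 \equiv 0$ forces $a_{12} = a_{13} = a_{23} = 0$.

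The final step is to derive a contradiction with the nowhere-vanishing of $\xi$. With those three coefficients set to zero, one has $\xi = Z_0 \, df - f\, dZ_0$, where $f := a_{01} Z_1 + a_{02} Z_2 + a_{03} Z_3$ is a linear form on $\mathbb{C}^4$. (Equivalently, in the identification $H^0(\mathbb{P}^3,\Omega(2)) \cong \wedge^2 (\mathbb{C}^4)^*$, the Pfaffian $a_{01}a_{23} - a_{02}a_{13} + a_{03}a_{12}$ vanishes, so the associated skew form is degenerate.) If $f\equiv 0$ then $\xi \equiv 0$, contradicting $\xi \neq 0$. Otherwise the hyperplane $\{Z_0 = 0\}$ and the hyperplane $\{f = 0\}$ meet in a projective line in $\mathbb{P}^3$, and along this line $\xi$ vanishes identically as a section of $\Omega(2)$, contradicting the hypothesis that $\xi$ is nowhere zero. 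This rules out $M^3 \cong \mathbb{P}^3$, leaving only the flag threefold $\mathbb{P}(T_{\mathbb{P}^2})$, which is the Wallach threefold. The main obstacle is the bi-degree bookkeeping in the first two steps; once $P_1\equiv 0$ is extracted, the geometric contradiction is transparent.
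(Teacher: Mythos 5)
Your proof is correct. The first half --- clearing the denominator, sorting by bi-degree in $(z,\bar z)$, isolating the bi-degree $(2,2)$ piece $|P_1|^2$, and concluding $a_{12}=a_{13}=a_{23}=0$ --- is exactly the paper's argument (the paper calls this the ``degree $4$ part''). Where you diverge is the endgame. The paper stays algebraic: it substitutes $a_{12}=a_{13}=a_{23}=0$ back into the norm identity and reads off that the rank-one Hermitian matrix $(\overline{a_{0i}}a_{0j})$ would have to equal $C\cdot I_3$, which is impossible by rank. You instead observe that the surviving section is $\xi=Z_0\,df-f\,dZ_0$ with $f=a_{01}Z_1+a_{02}Z_2+a_{03}Z_3$, i.e.\ a decomposable element of $\wedge^2(\mathbb{C}^4)^*$ with vanishing Pfaffian, hence a degenerate skew form; such a section of $\Omega_{\mathbb{P}^3}(2)$ vanishes along the line $\{Z_0=f=0\}$ (or is identically zero if $f\equiv 0$), contradicting the nowhere-vanishing of $\xi$ coming from the surjection $T^{1,0}M\to L$. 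Both closings are valid. Yours is more conceptual and connects to the standard fact that nowhere-vanishing sections of $\Omega_{\mathbb{P}^3}(2)$ are precisely the nondegenerate skew forms (the null-correlation construction the paper itself cites), so it explains \emph{why} the constancy fails; the paper's is more elementary and self-contained, needing no facts about $\wedge^2(\mathbb{C}^4)^*$ beyond the explicit basis $\lambda_{ij}$ already written down. One small bookkeeping point in your favor: you correctly note that the cross term $2\,\mathrm{Re}(P_0\overline{P_1})$ lives in bi-degrees $(1,2)$ and $(2,1)$ and so cannot interfere with the $(2,2)$ extraction, a detail the paper passes over silently.
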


\begin{proof}
If $\|\xi\|^2$ above were a constant, there exists a constant $C>0$ such that
\[  |\ell_1|^2 + |\ell_2|^2 + |\ell_3|^2 + |\ell_1 \overline{z}_1 + \ell_2 \overline{z}_2 + \ell_3 \overline{z}_3 |^2 = C(1 + |z_1|^2 + |z_2|^2 + |z_3|^2). \]
Then it is clear that the degree $4$ part of the left hand side of the equality above has to vanish, which is exactly
\[ |\ell_1^{(1)} \overline{z}_1 + \ell_2^{(1)} \overline{z}_2 + \ell_3^{(1)} \overline{z}_3 |^2, \]
if we decompose $\ell_i$ into $\ell_i^{(0)} + \ell_i^{(1)}$ for $1 \leq i \leq 3$, where $\ell_i^{(0)},\ \ell_i^{(1)}$ are the parts of the degree $0$ and the degree $1$ of $\ell_i$ respectively. This indicates
\[ |\ell_1^{(1)} \overline{z}_1 + \ell_2^{(1)} \overline{z}_2 + \ell_3^{(1)} \overline{z}_3 |^2 =0,\]
which implies
\[ a_{12}=a_{13}=a_{23}=0. \]
It follows that
\[ |a_{01}|^2 + |a_{02}|^2 + |a_{03}|^2 + |a_{01} \overline{z}_1 + a_{02} \overline{z}_2 + a_{03} \overline{z}_3 |^2 = C(1 + |z_1|^2 + |z_2|^2 + |z_3|^2), \]
which implies
\[\begin{bmatrix} \overline{a_{01}}a_{01} & \overline{a_{01}}a_{02} & \overline{a_{01}}a_{03} \\
\overline{a_{02}}a_{01} & \overline{a_{02}}a_{02} & \overline{a_{02}}a_{03} \\
\overline{a_{03}}a_{01} & \overline{a_{03}}a_{02} & \overline{a_{03}}a_{03}\end{bmatrix}
= \begin{bmatrix} C & & \\ & C & \\ & & C\end{bmatrix}\!\!.\]
It is a contradiction by comparison of the rank of matrices above.
\end{proof}

Finally let us consider the flag threefold $X^3 = {\mathbb P}(T_{{\mathbb P}^2} )$. It is the hypersurface in $N^4={\mathbb P}^2\times {\mathbb P}^2$ defined by $Z_0W_0+Z_1W_1+Z_2W_2=0$, where $Z$, $W$ are the standard unitary homogeneous coordinate of the two factors of $N$. For $i=1,2$, denote by $\pi_i: X^3 \rightarrow {\mathbb P}^2$ the restriction on $X$ of the projection map from $N$ onto its $i$-th factor. The Picard group $\mbox{Pic}(X)\cong {\mathbb Z}^2$ is generated by $L_1$ and $L_2$, where $L_i=\pi_i^{\ast}{\mathcal O}_{{\mathbb P}^2}(1)$, and the anti-canonical line bundle of $X$ is $-K_X = 2L=2(L_1+L_2)$. The K\"ahler-Einstein metric $\tilde{\omega}$ on $X$ is the restriction of the product of Fubini-Study metric, and we have
\begin{equation} \label{eq:7.13}
 \sqrt{-1}\, \Theta^L = \tilde{\omega} = \frac{1}{2}\mbox{Ric}(\tilde{\omega}) = \omega_0|_X, \ \ \ \omega_0 = \sqrt{-1} \, \partial \overline{\partial} \log |Z|^2 + \sqrt{-1} \, \partial \overline{\partial} \log |W|^2  .
 \end{equation}

Then the following claim yields
\begin{claim} \label{eq:7.14}
\[H^0(X, \Omega \otimes L) = {\mathbb C} \xi, \ \ \ \ \ \ \ \ \xi = \sum_{i=0}^2 W_idZ_i = - \sum_{i=0}^2 Z_idW_i.\]
\end{claim}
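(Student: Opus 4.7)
The plan is to verify that $\xi$ defines a well-defined nonzero element of $H^0(X, \Omega \otimes L)$ and then compute $h^0(X, \Omega \otimes L) = 1$ by a sheaf cohomology argument, which forces $\xi$ to span.

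For well-definedness, the Euler sequence $0 \to \Omega_{\mathbb{P}^2}(1) \to \mathcal{O}^{\oplus 3} \to \mathcal{O}(1) \to 0$ with last map $(a_0, a_1, a_2) \mapsto \sum Z_i a_i$ identifies $\pi_1^*\Omega_{\mathbb{P}^2}(1)$ over $N := \mathbb{P}^2 \times \mathbb{P}^2$ as the kernel of the analogous map $\mathcal{O}_N^{\oplus 3} \to \pi_1^*\mathcal{O}(1)$. The tautological triple $(W_0, W_1, W_2)$ lies in $H^0(N, \pi_2^*\mathcal{O}(1))^{\oplus 3}$, and the condition $\sum Z_i W_i = 0$ defining $X$ says precisely that this triple restricts to $X$ as a section of $\pi_1^*\Omega_{\mathbb{P}^2}(1) \otimes \pi_2^*\mathcal{O}(1)$. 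Composition with the injection $\pi_1^*\Omega_{\mathbb{P}^2}|_X \hookrightarrow \Omega_X$ (dual to the smooth surjection $d\pi_1 : T_X \twoheadrightarrow \pi_1^*T_{\mathbb{P}^2}$ arising from the $\mathbb{P}^1$-bundle structure) then produces $\xi \in H^0(X, \Omega_X \otimes L)$, and the identity $\sum W_i dZ_i = -\sum Z_i dW_i$ on $X$ follows from differentiating the defining equation. Nonvanishing of $\xi$ is immediate because $(W_0, W_1, W_2)$ has no common zero on $\mathbb{P}^2_W$.

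For the dimension count, I apply the conormal sequence $0 \to \mathcal{N}^*_{X/N} \to \Omega_N|_X \to \Omega_X \to 0$ tensored by $L$. Since $X$ is cut out by a section of $\mathcal{O}_N(1,1)$, which restricts to $L$ on $X$, one has $\mathcal{N}^*_{X/N} = L^{-1}|_X$, and the sequence becomes $0 \to \mathcal{O}_X \to (\Omega_N \otimes L)|_X \to \Omega_X \otimes L \to 0$. As $X$ is Fano we have $H^0(X, \mathcal{O}_X) = \mathbb{C}$ and $H^1(X, \mathcal{O}_X) = 0$, so $h^0(X, \Omega_X \otimes L) = h^0(X, (\Omega_N \otimes L)|_X) - 1$. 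To compute the right-hand side, decompose $\Omega_N = \pi_1^*\Omega_{\mathbb{P}^2} \oplus \pi_2^*\Omega_{\mathbb{P}^2}$ and use for each summand the ideal-sheaf sequence $0 \to \pi_i^*\Omega_{\mathbb{P}^2} \to \pi_i^*\Omega_{\mathbb{P}^2} \otimes L \to (\pi_i^*\Omega_{\mathbb{P}^2} \otimes L)|_X \to 0$ on $N$, using $L \otimes \mathcal{O}_N(-X) = \mathcal{O}_N$. The K\"unneth formula reduces everything to the standard values $h^0(\mathbb{P}^2, \Omega) = 0 = h^0(\mathbb{P}^2, \Omega(1))$ (from the Euler sequence), $h^1(\mathbb{P}^2, \Omega) = 1$ (Hodge), and $h^1(\mathbb{P}^2, \Omega(1)) = 0$ (Euler again), giving $h^0(X, (\pi_i^*\Omega_{\mathbb{P}^2} \otimes L)|_X) = 1$ for $i = 1, 2$, and hence $h^0(X, \Omega_X \otimes L) = 2 - 1 = 1$.

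The main technical point that needs care is confirming that $\pi_1^*\Omega_{\mathbb{P}^2}|_X \to \Omega_X$ is genuinely injective and that $\xi$ does not fall into the kernel $\mathcal{O}_X(-X)$ of $\Omega_N|_X \to \Omega_X$; both are consequences of the fact that $\pi_1 : X \to \mathbb{P}^2$ is a smooth $\mathbb{P}^1$-bundle, so the kernel line (locally spanned by $d(\sum Z_i W_i)$) has components in both $\pi_1^*\Omega$ and $\pi_2^*\Omega$ and is therefore transverse to either summand. Granting these, the nonzero section $\xi$ spans the one-dimensional space, yielding $H^0(X, \Omega \otimes L) = \mathbb{C}\xi$.
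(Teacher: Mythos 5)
Your proof is correct, but it follows a genuinely different route from the paper's. The paper works intrinsically on $X$ with the relative cotangent sequence of the $\mathbb{P}^1$-bundle $\pi_1\colon X\to{\mathbb P}^2$, namely $0\to\pi_1^{\ast}\Omega_{{\mathbb P}^2}\otimes L\to\Omega_X\otimes L\to L'\to 0$ with $L'=2L_1-L_2$, combined with the relative Euler sequence $0\to{\mathcal O}_X\to\pi_1^{\ast}\Omega_{{\mathbb P}^2}\otimes L\to T_{X|{\mathbb P}^2}\to 0$; the two quotient line bundles are killed by computing intersection numbers against the fibers of $\pi_1$ (e.g.\ $L_1^2L'=-1$), giving $H^0(X,\Omega_X\otimes L)\cong H^0(X,{\mathcal O}_X)\cong{\mathbb C}$ directly. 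You instead work on the ambient $N={\mathbb P}^2\times{\mathbb P}^2$ via the conormal sequence of the $(1,1)$-hypersurface $X$, split $\Omega_N$ by K\"unneth, and reduce everything to the standard cohomology of $\Omega_{{\mathbb P}^2}$ and $\Omega_{{\mathbb P}^2}(1)$; I checked the count $h^0\big((\pi_i^{\ast}\Omega_{{\mathbb P}^2}\otimes L)|_X\big)=1$ (it comes from the connecting map onto $H^1(N,\pi_i^{\ast}\Omega_{{\mathbb P}^2})\cong{\mathbb C}$, using $H^1\big({\mathbb P}^2,\Omega(1)\big)=0$), so $h^0(\Omega_X\otimes L)=2-1=1$ as you claim. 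Your approach buys two things: it avoids having to identify the line bundle $T_{X|{\mathbb P}^2}=2L_2-L_1$ on the flag threefold, and it makes the construction, holomorphy and nonvanishing of $\xi$ explicit via the twisted Euler sequence, where the paper simply asserts these as clear. The costs are that you need the extra input $H^1(X,{\mathcal O}_X)=0$ (Kodaira vanishing on the Fano $X$), which the paper's argument does not use, and the transversality discussion in your last paragraph is strictly speaking redundant once $\xi$ is built as the image of a nowhere-zero section of $\pi_1^{\ast}\Omega_{{\mathbb P}^2}\otimes L$ under the fiberwise-injective map dual to the submersion $d\pi_1$.
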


\begin{proof}
It is clear that $\xi$ is a global holomorphic section of $\Omega \otimes L$, and is nowhere zero, hence give a surjective bundle map $T_X \rightarrow L$, which  will lead to the Wallach space example as we shall see in the next section. Here we want to show that the vector space $H^0(X, \Omega \otimes L)$ is one-dimensional, hence any section is a constant multiple of $\xi$. To see this, let us denote by $T_{X|{\mathbb P}^2}$ the relative tangent bundle of the map $\pi_1: X \rightarrow {\mathbb P}^2$, given by
$$ 0 \rightarrow T_{X|{\mathbb P}^2} \rightarrow T_X \rightarrow \pi_1^{\ast}T_{{\mathbb P}^2} \rightarrow  0 .$$
Then we have $T_{X|{\mathbb P}^2}=2L-3L_1=2L_2-L_1$. Taking the dual of the above short exact sequence and tensoring it with $L$, we get
\begin{equation} \label{eq:7.15}
0 \rightarrow  \pi_1^{\ast}\Omega_{{\mathbb P}^2} \!\otimes \!L \rightarrow \Omega_X\! \otimes\! L \rightarrow L' \rightarrow  0 , \ \ \ \ \ \  \ \ L' = -T_{X|{\mathbb P}^2} + L = 2L_1-L_2.
\end{equation}
On the other hand, $X={\mathbb P}(T_{{\mathbb P}^2})$, so the relative Euler sequence is
\begin{equation} \label{eq:7.16}
0 \rightarrow  {\mathcal O}_X \rightarrow \pi_1^{\ast}\Omega_{{\mathbb P}^2} \!\otimes \!L \rightarrow  T_{X|{\mathbb P}^2} \rightarrow  0.
\end{equation}
Since $L^3_1=0$, $L_1^2L_2=1$, we have $L_1^2 L'=-1$, so $H^0(X,L')=0$ as $L^2_1$ is represented by the fibers of $\pi_1$ which will have non-negative intersection with any effective divisor in $X$. Similarly, $H^0(X, T_{X|{\mathbb P}^2})=0$. So by (\ref{eq:7.15}) and (\ref{eq:7.16}), we get
$$  H^0(X, \Omega_X\! \otimes \!L ) \cong H^0(X, \pi_1^{\ast}\Omega_{{\mathbb P}^2} \!\otimes \!L) \cong H^0(X,  {\mathcal O}_X) \cong {\mathbb C}. $$
This establishes Claim \ref{eq:7.14}.
\end{proof}

Consider the global $(1,1)$-form on $X$ defined by
\begin{equation} \label{eq:7.17}
\sigma = \sqrt{-1}\,\frac{\,^t\!WdZ \wedge \overline{^t\!WdZ} }{|Z|^2 \,|W|^2}
\end{equation}
where $Z$, $W$ are unitary homogeneous coordinate on the two factors of $N={\mathbb P}^2\times {\mathbb P}^2$, viewed as  column vectors. It is not hard to see that the norm $\parallel \! \sigma \!\parallel =\frac{1}{2}$ with respect to the K\"ahler-Einstein metric $\tilde{\omega}$ of $X$. Consider the Hermitian metric $g$ on $X$ with K\"ahler form $\omega = \tilde{\omega }- \sigma$, which is clearly a homogeneous Hermitian metric on $X$. We will call the Hermitian manifold $(X,g)$ the {\em Wallach threefold} from now on, to honor the influential work \cite{Wallach} in geometry.

We will verify in the next section that $(X,g)$ is indeed balanced and {\em BTP}.  We will also show that its Chern connection has non-negative bisectional curvature and positive holomorphic sectional curvature, and all three Ricci tensors of the Chern connection are positive. We will compute the sectional curvature of the Levi-Civita (Riemannian) connection, and show that it is non-negative, and the Levi-Civita connection has constant Ricci curvature $6$, thus  $g$ lies in the boundary of the set of  metrics with positive sectional curvature discovered by Wallach in \cite{Wallach}.

Note that homogeneous metrics on $X$ with positive sectional curvature, which are all Hermitian as observed by Wallach in \cite{Wallach}, form a moduli which depends on three real parameters. After scaling, these metrics form a peculiar planer region (see for example Figure 1 in \cite{BM}). It is not clear which metric in the set is the `best' amongst its peers. Our metric $g$ corresponds to the one where all three parameters are equal (that is, the metric given by the Killing form). It is Einstein and has non-negative sectional curvature but not strictly positive sectional curvature, but it is the unique (up to scaling) balanced and {\em BTP} metric on $X$. Its Chern connection also has non-negative bisectional curvature, while the (unique) K\"ahler-Einstein metric $\tilde{g}$ of $X$ does not have nonnegative bisectional curvature.

\vspace{0.3cm}

\section{The Wallach threefold}\label{WCH3D}

Let $\omega_0$ be the product of Fubini-Study metric on $N^4={\mathbb P}^2\times {\mathbb P}^2$, given by (\ref{eq:7.13}), where $Z$ and $W$ are unitary homogeneous coordinates, and the flag threefold $X$ given as the smooth ample divisor $\{ \,^t\!ZW=0\}$ in $N$. Here and below we will consider $Z$ and $W$ as column vectors. The restriction $\tilde{\omega }= \omega_0|_X$ is the K\"ahler-Einstein metric on $X$ with $\mbox{Ric}(\tilde{\omega})=2\tilde{\omega}$, and our Hermitian metric $g$, which will be called the {\em Wallach metric} from now on, is defined by $\omega = \tilde{\omega} - \sigma $ where the global $(1,1)$-form  $\sigma$ on $X$ is defined by (\ref{eq:7.17}). We will verify that $g$ is balanced and {\em BTP}, and compute its Chern and Riemannian curvature.

Fix any point $p\in X$. Notice that for any $A\in SU(3)$, the map $([Z], [W]) \mapsto ([AZ],[\overline{A}W])$ is an isometry on $(X,g)$. So without loss of generality, we may assume that $p =([1\!:\!0\!:\!0], [0\!:\!0\!:\!1])$. Let $U_{02}=\{ Z_0\neq 0\} \times \{ W_2\neq 0\}$ be a coordinate neighborhood in $N$, with holomorphic coordinate $(z_1, z_2, w_0, w_1)$ where $z_i=\frac{Z_i}{Z_0}$, $i=1,2$, and $w_j = \frac{W_j}{W_2}$, $j=0,1$. Within $U_{02}$, the hypersurface $X$ is defined by
\begin{equation} \label{eq:8.1}
w_0=-z_2-z_1w_1,
\end{equation}
so $(z_1, z_2, w_1)$ becomes a local holomorphic coordinate in $U=X\cap U_{02}$. Let us write $|z|^2=|z_1|^2+|z_2|^2$ and $|w|^2 = |w_0|^2 + |w_1|^2$ as usual, then in $U_{02}$ we have
\begin{equation} \label{eq:8.2}
\frac{1}{\sqrt{-1}}\omega_0 = \sum_{i,j=1}^2 \frac{(1+|z|^2)\delta_{ij} - \overline{z}_iz_j } {(1+|z|^2)^2} dz_i\wedge d\overline{z}_j + \sum_{i,j=0}^1 \frac{(1+|w|^2)\delta_{ij} - \overline{w}_iw_j } {(1+|w|^2)^2} dw_i\wedge d\overline{w}_j,
\end{equation}
and in $U$, $\tilde{\omega}$ is just the restriction of $\omega_0$ on $X$ using the equation (\ref{eq:8.1}). For convenience, let us write $w_1=z_3$, and define
\begin{equation} \label{eq:8.3}
 \alpha = 1+ |z_1|^2 + |z_2|^2, \ \ \  \beta = 1+|z_3|^2 + |f|^2, \ \ \ f=z_2+z_1z_3.
\end{equation}
In $U$, $(z_1,z_2,z_3)$ gives local holomorphic coordinate for $X$, and $p$ corresponds to the origin $(0,0,0)$. By (\ref{eq:8.2}) the metric $\tilde{g}$ has components
\begin{equation} \label{eq:8.4}
 \tilde{g}_{i\overline{j}} = \frac{\alpha_{i\overline{j}}}{\alpha} - \frac{\alpha_i \alpha_{\overline{j}} }{\alpha^2} + \frac{\beta_{i\overline{j}}}{\beta} - \frac{\beta_i \beta_{\overline{j}} }{\beta^2}, \ \
 \ \ \ \ 1\leq i,j\leq 3,
\end{equation}
where subscripts stand for partial derivatives in $z_i$ or $\overline{z}_j$.  Taking partial derivative in $z_k$, we obtain
\begin{equation} \label{eq:8.5}
 \tilde{g}_{i\overline{j},k} = - \frac{1}{\alpha^2} \big( \alpha_k \alpha_{i\overline{j}}  + \alpha_i \alpha_{k\overline{j}} \big) + \frac{2\alpha_i \alpha_k \alpha_{\overline{j}} }{\alpha^3} + \frac{\beta_{ik\overline{j}}}{\beta} - \frac{1}{\beta^2} \big( \beta_k \beta_{i\overline{j}} + \beta_i \beta_{k\overline{j}} + \beta_{\overline{j}} \beta_{ik} \big)  + \frac{2\beta_i\beta_k \beta_{\overline{j}} }{\beta^3}. \ \ \
\end{equation}
Here we used the fact that $\alpha_{ik}=0$ and $\alpha_{ik\overline{j}} =0$. At the origin, $\alpha (0)=\beta (0)=1$, $\alpha_i(0)=\beta_i(0)=0$, $\beta_{ik}(0)=0$, and $\alpha_{i\overline{j}}(0)= \delta_{i1}\delta_{j1}+ \delta_{i2}\delta_{j2}$, $\beta_{i\overline{j}}(0)=  \delta_{i2}\delta_{j2}+ \delta_{i3}\delta_{j3}$, so we have
\begin{eqnarray}
&&  \tilde{g}_{i\overline{j},k}(0) \ = \beta_{ik\overline{j}}(0) =f_{ik}\overline{f_j}(0)= (\delta_{i1}\delta_{k3}+\delta_{i3}\delta_{k1})\delta_{j2} , \label{eq:8.6} \\
 && \tilde{g}_{i\overline{j},kp}(0) = 0, \label{eq:8.7} \\
 && \tilde{g}_{i\overline{j},k\overline{\ell}}(0) = - \big(\alpha_{i\overline{j}} \alpha_{k\overline{\ell}} + \alpha_{i\overline{\ell}} \alpha_{k\overline{j}} \big)  + f_{ik}\overline{ f_{j\ell} } - \big(\beta_{i\overline{j}} \beta_{k\overline{\ell}} + \beta_{i\overline{\ell}} \beta_{k\overline{j}} \big). \label{eq:8.8}
\end{eqnarray}
From this, we see that
\begin{equation} \label{eq:8.9}
\left\{ \begin{array}{llll}  \tilde{g}_{i\overline{j},k\overline{\ell}}(0)  & = & 0 , \ \ \ \ \mbox{if} \ \{ i,k\} \neq \{ j,\ell\}  \\ \tilde{g}_{i\overline{i},i\overline{i}}(0) &  =  &  -2 (\alpha_{i\overline{i}} + \beta_{i\overline{i}} )  \ = \ - 2\,\big( 1+ \delta_{i2}\big), \\
 \tilde{g}_{i\overline{i},k\overline{k}}(0)  & = &  \tilde{g}_{i\overline{k},k\overline{i}}(0)  \ = \ -(\alpha_{i\overline{i}}\alpha_{k\overline{k}} + \beta_{i\overline{i}}\beta_{k\overline{k}}) + f_{ik} \ = \  \left\{ \begin{array}{ll} -1, \ \mbox{if} \ \{ i,k\} = \{ 1,2\} \ \mbox{or} \ \{ 2,3\} , \\ \ \,1, \ \ \mbox{if} \ \{ i,k\} = \{ 1,3\} .\end{array} \right.
 \end{array}  \right.
\end{equation}
Similarly, since in $U$ the $(1,1)$-form $\sigma$ is given by
$$ \sigma = \sqrt{-1}\sum_{i,j=1}^3 \sigma_{i\overline{j}} dz_i \wedge d\overline{z}_j = \frac{\sqrt{-1} }{\alpha \beta } (z_3dz_1+dz_2) \wedge (\overline{z}_3 d\overline{z}_1 + d\overline{z}_2),
$$
therefore we have
 \begin{equation*}
 \sigma_{i\overline{j}} = \frac{1}{\alpha \beta} ( \delta_{i1}\delta_{j1} |z_3|^2 + \delta_{i1}\delta_{j2} z_3 + \delta_{i2}\delta_{j1}\overline{z}_3 + \delta_{i2}\delta_{j2}).
\end{equation*}
From this we compute
\begin{equation} \label{eq:8.10}
\left\{ \begin{array}{llll} \sigma_{i\overline{j}}(0) & = & \delta_{i2}\delta_{j2}, \\
\sigma_{i\overline{j},k}(0) & = & \delta_{i1}\delta_{j2}\delta_{k3}, \\
 \sigma_{i\overline{j},kp}(0) & = & 0,\\
 \sigma_{i\overline{j},k\overline{\ell}}(0)  & = & \delta_{ij}\delta_{i1} \delta_{k\ell}\delta_{k3} - \delta_{k\ell} (1+\delta_{k2})\delta_{ij} \delta_{i2}.
\end{array}  \right.
\end{equation}
By (\ref{eq:8.4}), we have $\tilde{g}_{i\overline{j}}(0)=\delta_{ij}(1+\delta_{i2})$, so at the origin $g_{i\overline{j}}=\tilde{g}_{i\overline{j}} - \sigma_{i\overline{j}}$ satisfies
\begin{equation} \label{eq:8.11}
 g_{i\overline{j}}(0) = \delta_{ij}, \ \ \
g_{i\overline{j},k}(0) = \delta_{i3}\delta_{j2}\delta_{k1}, \ \ \
 g_{i\overline{j},kp}(0) =0,
\end{equation}
and
\begin{equation} \label{eq:8.12}
g_{i\overline{j},k\overline{\ell}}(0)  \,= \,0  \  \ \mbox{if} \ \{ i,k\} \neq \{ j,\ell\}, \ \ \ \ \ \ \  g_{i\overline{i},i\overline{i}}(0) \,  = \, -2,
\end{equation}
and
\begin{eqnarray}
\label{eq:8.13} && g_{i\overline{k},k\overline{i}}(0)  \ = \  \left\{ \begin{array}{ll} -1, \ \mbox{if} \ \{ i,k\} = \{ 1,2\} \ \mbox{or} \ \{ 2,3\} , \\ \ \,1, \ \ \mbox{if} \ \{ i,k\} = \{ 1,3\} .\end{array} \right. \\
\label{eq:8.14} && g_{i\overline{i},k\overline{k}}(0)  \ = \ \left\{ \begin{array}{lll} -1, \ \mbox{if} \ (ik) = (12) \ \mbox{or} \ (32) , \\ \ \,1, \ \ \mbox{if} \ (ik) = (31), \\ \ \,0, \ \ \mbox{if} \ (ik) = (13), (21) \ \mbox{or} \ (23) .\end{array} \right.
\end{eqnarray}
The curvature components of the Chern connection $\nabla^c$, defined by $\Theta_{ij} =\sum_{k,\ell} R^c_{k\overline{\ell}i\overline{j}} dz_k\wedge d\overline{z}_{\ell}$ where $\Theta = \overline{\partial}  \theta =\overline{\partial} (\partial g g^{-1})$,   is given by
$$ R^c_{k\overline{\ell}i\overline{j}} = - g_{i\overline{j},k\overline{\ell}} + \sum_{p,q} g_{i\overline{p},k} \overline{ g_{j\overline{q},\ell}  } g^{\overline{p}q} .$$
At the origin, $g_{i\overline{j}} (0)=\delta_{ij}$, and all $g_{i\overline{j},k} (0)=0$ except $g_{3\overline{2},1} (0)=1$, so the second term on the right hand side of the above equality is $\delta_{ij}\delta_{i3}\delta_{k\ell}\delta_{k1}$, and by (\ref{eq:8.12}), (\ref{eq:8.13}) and (\ref{eq:8.14}) we get at the origin that
\begin{eqnarray}
\label{eq:8.15} && R^c_{i\overline{j}k\overline{\ell}} = 0 , \ \mbox{if} \ \{i,k\} \neq \{ j,\ell\}, \\
\label{eq:8.16} && R^c_{i\overline{i}i\overline{i}} = 2, \\
\label{eq:8.17} && R^c_{1\overline{2}2\overline{1}}   \ = \ R^c_{3\overline{2}2\overline{3}} =1, \ \ R^c_{1\overline{3}3\overline{1}} =-1,\\
\label{eq:4.18} && R^c_{2\overline{2}1\overline{1}} = R^c_{3\overline{3}1\overline{1}} = R^c_{1\overline{1}3\overline{3}} = R^c_{2\overline{2}3\overline{3}} =0, \ \ R^c_{1\overline{1}2\overline{2}} = R^c_{3\overline{3}2\overline{2}} =1.
\end{eqnarray}
In other words, at the origin, the Chern curvature matrix is
\begin{equation}
\Theta = \left[ \begin{array}{ccc}   2dz_{1\overline{1}}+dz_{2\overline{2}} & dz_{2\overline{1}} &  -dz_{3\overline{1}}  \\ dz_{1\overline{2}}  & 2 dz_{2\overline{2}}  & dz_{3\overline{2}}  \\
- dz_{1\overline{3}}  & dz_{2\overline{3}}  & dz_{2\overline{2}}+ 2dz_{3\overline{3}}  \end{array} \right]
\end{equation}
Here we wrote $dz_{i\overline{j}}$ for $dz_i\wedge d\overline{z}_j$. In particular, $\sqrt{-1} \,\mathrm{tr} \, \Theta = 2\sqrt{-1}(dz_{1\overline{1}}+2dz_{2\overline{2}}+dz_{3\overline{3}}) = 2\tilde{\omega}$ as expected. The bisectional (Griffiths) curvature of $g$ is $R^c_{X\overline{X}Y\overline{Y}} = \,^tY \Theta (X, \overline{X}) \overline{Y}$, which is equal  to
\begin{eqnarray*}
&&  2\sum_{i=1}^3 |X_iY_i|^2 + |X_2Y_1|^2+|X_2Y_3|^2 + 2\,\mbox{Re} (X_1\overline{X}_2\overline{Y}_1Y_2)  + \ 2\,\mbox{Re} (X_2\overline{X}_3\overline{Y}_2Y_3) - 2\,\mbox{Re} (X_1\overline{X}_3\overline{Y}_1Y_3) \\
& = & |X_2Y_1|^2+|X_2Y_3|^2 + |X_1\overline{Y}_1 + X_2\overline{Y}_2|^2 + |X_1\overline{Y}_1 - X_3\overline{Y}_3|^2 + |X_2\overline{Y}_2 + X_3\overline{Y}_3|^2 \ \geq \ 0.
\end{eqnarray*}
This also indicates that $g$ is not-K\"ahler since $X$ is not a Hermitian symmetric space. Note that when $X=Y$, the holomorphic sectional curvature is given by
\begin{equation*}
 R^c_{X\overline{X}X\overline{X}} =  |X_1X_2|^2 + |X_2X_3|^2 + (|X_1|^2  + |X_2|^2)^2 + (|X_1|^2 - |X_3|^2)^2 + (|X_2|^2 + |X_3|^2)^2,
\end{equation*}
which is positive for any $X\neq 0$, so $(X,g)$ does have positive holomorphic sectional curvature. Also, the first, second and third Chern Ricci form of $g$ are respectively
$$ \mbox{Ric}(\omega) = 2\tilde{\omega}, \ \ \  \mbox{Ric}^{(2)}(\omega) = 4\omega - \tilde{\omega}, \ \ \ \mbox{Ric}^{(3)} (\omega) = 2\tilde{\omega },$$
which are all positive definite, with the first and third Ricci equal to each other.

Next we verify that $(X,g)$ is balanced and {\em BTP}. First let us recall the formula under a natural frame. Suppose $(z_1, \ldots , z_n)$ is a local holomorphic coordinate on a Hermitian manifold $(M^n,g)$, and write $\varepsilon_i=\frac{\partial}{\partial z_i}$. Under the frame $\varepsilon$, which we view as a column vector, the Levi-Civita connection $\nabla$, Chern connection $\nabla^c$, and Bismut connection $\nabla^b$ are given by
$$ \nabla^c \varepsilon = \theta \varepsilon, \ \ \ \nabla^b \varepsilon = \theta^b \varepsilon, \ \ \ \nabla \varepsilon = \theta^{(1)} \varepsilon + \overline{\theta^{(2)}} \,\overline{ \varepsilon}. $$
It is well-known that $\theta =\partial g g^{-1}$, where $g=(g_{i\overline{j}})$. Denote by $T$ the torsion tensor of $\nabla^c$, and write
$T(\varepsilon_i , \varepsilon_k) = 2\sum_j T^j_{ik} \varepsilon_j$, then we have
\begin{equation} \label{eq:8.20}
 T^j_{ik} = \frac{1}{2}\sum_{\ell} (g_{k\overline{\ell},i } - g_{i\overline{\ell},k }) g^{\overline{\ell}j} .
 \end{equation}
Since $\nabla$ is torsion free, it holds
$$ 2\langle \nabla_xy,z\rangle = x\langle y,z\rangle + y\langle x,z\rangle -z \langle x,z\rangle +\langle [x,y],z\rangle - \langle [y,z],x\rangle - \langle [x,z],y\rangle $$
for any vector fields $x$, $y$, $z$ on $M$, so under the natural frame we have
\begin{equation} \label{eq:8.21}
 \theta^{(1)}_{ij} = \frac{1}{2}\sum_{k,\ell} \big( g_{k\overline{\ell},i } + g_{i\overline{\ell},k }\big)g^{\overline{\ell}j}  dz_k +  \frac{1}{2}\sum_{k,\ell} \big( g_{i\overline{\ell},\overline{k} } - g_{i\overline{k},\overline{\ell} }\big) g^{\overline{\ell}j} d\overline{z}_k.
 \end{equation}
By the relation $\theta^b=2\theta^{(1)} -\theta$, we get
\begin{equation} \label{eq:8.22}
 \theta^b_{ij} = \sum_{k,\ell} g_{k\overline{\ell},i } g^{\overline{\ell}j}  dz_k +  2\sum_{r,k,\ell} g_{i\overline{r}}\overline{T^r_{k\ell} }  g^{\overline{\ell}j} d\overline{z}_k.
 \end{equation}
The {\em BTP} condition is given by
\begin{equation} \label{eq:8.23}
\nabla^bT=0 \ \Longleftrightarrow \  dT^j_{ik} =\sum_r \big( \theta^b_{ir} T^j_{rk} + \theta^b_{kr} T^j_{ir} - \theta^b_{rj} T^r_{ik} \big), \ \forall \ i,j,k.
 \end{equation}
If $g_{i\overline{j}}=\delta_{ij}$ at the origin $0$, then by (\ref{eq:8.22}) the {\em BTP} condition at $0$  is given by \begin{eqnarray}
\frac{\partial}{\partial z_{\ell}} T^j_{ik} & = & \sum_r \big(   g_{\ell \overline{r},i} T^j_{rk} +   g_{\ell \overline{r},k} T^j_{ir} -  g_{\ell \overline{j},r} T^r_{ik}   \big) \label{eq:8.24} \\
 \frac{\partial}{\partial \overline{z}_{\ell}} T^j_{ik} & = & 2 \sum_r \big(   T^j_{ir} \overline{ T^k_{\ell r}} -   T^j_{kr} \overline{ T^i_{\ell r}} +  T^r_{ik} \overline{ T^r_{j\ell }}   \big)  \label{eq:8.25}
\end{eqnarray}
Now let us check for our Wallach space $(X,g)$. At the origin, we have $g_{i\overline{j}}=\delta_{ij}$, and all $g_{i\overline{j},k} =0$ except $g_{3\overline{2},1} =1$, so by (\ref{eq:8.20}) we know that all components of $T$ vanishes except $T^2_{13}=\frac{1}{2}$. In particular, the torsion $1$-form $\eta=0$ as $\eta_k = \sum_i T^i_{ik}$.

For (\ref{eq:8.24}), the right hand side is zero because for each of these three terms, one of the two factors is zero when $r$ is $2$ or not $2$. Its left hand side at $0$ is equal to $\frac{1}{2}(g_{k\overline{j},i\ell} - g_{i\overline{j},k\ell})$, which is zero by the last equality in (\ref{eq:8.11}). For (\ref{eq:8.25}), twice of its left hand side  at $0$ is given by
$$  (g_{k\overline{j},i\overline{\ell}} - g_{i\overline{j},k\overline{\ell}}) - (g_{k\overline{2},i} - g_{i\overline{2},k} ) \overline{g_{j\overline{2},\ell} } .$$
When $\{i,k\}\neq \{j,\ell\}$, both sides of (\ref{eq:8.25}) are zero. The same is true when $i=j=k=\ell$, so we just need to check the $i\neq k$ and $\{ i,k\} =\{ j,\ell\}$ case. Assume that $i=j\neq k=\ell$. Then the twice of the left hand side of (\ref{eq:8.25}) at $0$ is equal to
$$  g_{k\overline{i},i\overline{k}} - g_{i\overline{i},k\overline{k}} + \delta_{i3}\delta_{k1} =  -\sigma_{k\overline{i},i\overline{k}} + \sigma_{i\overline{i},k\overline{k}} + \delta_{i3}\delta_{k1} = - \delta_{i2}(1+\delta_{k2}) + \delta_{i1}\delta_{k3} + \delta_{i3}\delta_{k1}.$$
Here we used the fact that $\tilde{g}_{k\overline{i},i\overline{k}} = \tilde{g}_{i\overline{i},k\overline{k}}$ and $\sigma_{k\overline{i},i\overline{k}}=0$. In the mean time, twice of the right hand side of (\ref{eq:8.25}) is
$$ 4 \sum_r ( T^i_{ir} \overline{T^k_{kr}} - |T^i_{kr}|^2 + |T^r_{ik}|^2) = -\delta_{i2}(\delta_{k1}+\delta_{k3}) + \delta_{i1}\delta_{k3} +\delta_{i3}\delta_{k3} .$$
Note that $\delta_{k1}+\delta_{k3} =1-\delta_{k2}$. The two sides are differed by $\delta_{i2}\delta_{k2}$, which is zero since $i\neq k$. So (\ref{eq:8.25}) holds in this case. The $i=\ell \neq k=j$ case can be verified similarly. So the Wallach threefold $(X,g)$ is indeed non-K\"ahler, balanced, and {\em BTP}.

In the remaining part of this section, we will verify that $(X,g)$ has constant Ricci curvature and non-negative sectional curvature for its Levi-Civita connection $\nabla$.

First let us recall some general formula from existing literature. Let $e$ be a local unitary frame on a Hermitian manifold $(M^n,g)$. We have
$$ \nabla^c_{\ell}T^j_{ik} - \nabla^b_{\ell}T^j_{ik} = 2\sum_r \big( T^j_{rk}T^r_{i\ell} + T^j_{ir}T^r_{k\ell} - T^r_{ik}T^j_{r\ell} \big). $$
By Proposition \ref{Tderivative}, we know that $\nabla^b_{1,0}T=0$ implies that the right hand side of the above equality is zero. Therefore we always have $\nabla^b_{1,0}T=0 \ \Longrightarrow \ \nabla^c_{1,0}T=0$. So if $g$ is {\em BTP}, then the $(1,0)$-part of Chern covariant differentiation of torsion vanishes: $T^j_{ik;\ell}=0$. The $(0,1)$-part of Chern covariant differentiation of torsion, on the other hand, is given by
$$ T^j_{ik;\overline{\ell}}= \frac{1}{2} \big( R^c_{k\overline{\ell}i\overline{j}} - R^c_{i\overline{\ell}k\overline{j}}\big) .$$
Denote by $R$ the curvature tensor of the Levi-Civita (Riemannian) connection of $g$. By \cite[Lemma 7]{YZ18Cur}, and using the above equality to replace the $(0,1)$-derivatives of $T$, we obtain
\begin{eqnarray}
R_{ijk\overline{\ell}} & = & T^{\ell}_{ij;k} + \sum_r \big( T^{\ell}_{ri}T^r_{jk} - T^{\ell}_{rj}T^r_{ik} \big)    \label{eq:8.26} \\
R_{k\overline{\ell}i\overline{j}} & = & \frac{1}{2}\big( R^c_{i\overline{\ell}k\overline{j}}  +  R^c_{k\overline{j}i\overline{\ell}}  \big) + \sum_r \big( T^r_{ik}\overline{T^r_{j\ell } } - T^j_{kr}\overline{T^i_{\ell r} } - T^{\ell}_{ir}\overline{T^k_{jr } } \big) \label{eq:8.27}
\end{eqnarray}
Now let us specialize to the Wallach threefold $(X,g)$ at the origin $0$. We have all $T^j_{ij}=0$ except $T^2_{13}=\frac{1}{2}$, so the right hand side of (\ref{eq:8.26}) is zero, hence $R_{ijk\overline{\ell}} =0$. By the information on $R^c_{i\overline{j}k\overline{\ell}}$, we get $R_{i\overline{j}k\overline{\ell}}=0$ if $\{ i,k\}\neq \{ j,\ell\}$, $R_{i\overline{i}i\overline{i}}=2$ for each $i$, and
\begin{equation} \label{eq:8.28}
R_{1\overline{1}2\overline{2}} = R_{3\overline{3}2\overline{2}} = - R_{1\overline{1}3\overline{3}}  = \frac{3}{4}, \ \ \ \ \  R_{1\overline{2}2\overline{1}} = R_{3\overline{2}2\overline{3}}   = \frac{1}{2}, \ \ \ \ \ R_{1\overline{3}3\overline{1}} = - \frac{1}{4}.
\end{equation}
 Note that for Riemannian curvature $R$ we always have  $R_{i\overline{j}k\overline{\ell}}=R_{k\overline{\ell}i\overline{j}}$. Now we compute the sectional curvature of $R$. Let $x$, $y$ be any two real tangent vector of $X$ at the origin $0$, with $x\wedge y\neq 0$. Write $x=X+\overline{X}$ and $y=Y+\overline{Y}$ for type $(1,0)$ tangent vectors $X$ and $Y$.  We have
 $$ x\wedge y = X\wedge Y + \overline{X} \wedge \overline{Y} + \big( X\wedge \overline{Y} - Y \wedge \overline{X} \big). $$
 By Gray's theorem, $R_{XYZW}=0$ for any type $(1,0)$ tangent vectors $X$, $Y$, $Z$, $W$. Also, we have shown that $R_{XYZ\overline{W}}=0$ for our manifold. By the first Bianchi identity, we have
 $$ - R_{XY\overline{X}\overline{Y}} = - R_{X\overline{X}Y\overline{Y}} + R_{X\overline{Y}Y\overline{Y}}. $$
 Thus we have
\begin{eqnarray*}
 R_{xyyx} & = &  - R (x\wedge y, x\wedge y) \ \, = \ \,  -2 R(X\wedge Y, \overline{X} \wedge \overline{Y} ) - R( X\wedge \overline{Y} - Y \wedge \overline{X} , X\wedge \overline{Y} - Y \wedge \overline{X}) \\
 & = & -2 R_{X\overline{X}Y\overline{Y}} + 2 R_{X\overline{Y}Y\overline{X}} - R( X\wedge \overline{Y} - Y \wedge \overline{X} , X\wedge \overline{Y} - Y \wedge \overline{X}) \\
 & = & -2 R_{X\overline{X}Y\overline{Y}} + 4 R_{X\overline{Y}Y\overline{X}} - 2\mbox{Re} \{ R_{X\overline{Y}X\overline{Y}} \}
\end{eqnarray*}
For $i\neq k$, let us write $R_{i\overline{i}k\overline{k}}=a_{ik}$ and $R_{i\overline{k}k\overline{i}}=b_{ik}$. We have
\begin{equation} \label{eq:8.29}
2 b_{ik} - a_{ik}= \frac{1}{4}, \ \ \  \ 2a_{ik}- b_{ik} = \left\{  \begin{array}{ll} -\frac{5}{4}, \ \mbox{if} \  \{i,k\} =\{ 1,3\}  \\ \ 1, \ \ \ \mbox{otherwise} \end{array}\right.,\ \ \ \      a_{ik}+ b_{ik} = \left\{  \begin{array}{ll} -1, \ \mbox{if} \  \{i,k\} =\{ 1,3\}  \\ \ \frac{5}{4}, \ \ \ \mbox{otherwise} \end{array} \right.
\end{equation}
Continue with the calculation of $R_{xyyx}$ above, we have
\begin{eqnarray*}
 R_{xyyx} & = &  \sum_{i,j,k,\ell} R_{i\overline{j}k\overline{\ell}} \,\{ -2X_i\overline{X}_j Y_k \overline{Y}_{\ell} + 4 X_i\overline{Y}_j Y_k \overline{X}_{\ell} -  2\mbox{Re} (X_i\overline{Y}_j X_k \overline{Y}_{\ell}) \}  \\
 & = & \sum_i 2 \{ 2|X_iY_i|^2 - 2\mbox{Re} (X_i^2\overline{Y}_i^2)\} + \sum_{i\neq k} a_{ik} \{-2|X_iY_k|^2 + 4 X_i\overline{Y}_iY_k\overline{X}_k  - 2\mbox{Re} (X_i\overline{Y}_iX_k\overline{Y}_k ) \} + \\
 & & + \sum_{i\neq k} b_{ik} \{ -2  X_i\overline{X}_kY_k\overline{Y}_i + 4 |X_iY_k|^2 - 2\mbox{Re} (X_i\overline{Y}_kX_k\overline{Y}_i ) \} \\
 & = & 4\sum_i \{ |X_iY_i|^2 - \mbox{Re} (X_i^2\overline{Y}_i^2)\} +  2\sum_{i<k} \{ (2b_{ik}-a_{ik})(|X_iY_k|^2 + |X_kY_i|^2) \} + \\
&&  + \, 2\sum_{i<k} \{ (2a_{ik}-b_{ik}) 2\mbox{Re} (X_i\overline{X}_k \overline{Y}_i Y_k ) \} -2\sum_{i<k} \{ (a_{ik}+b_{ik}) 2\mbox{Re} (X_iX_k\overline{Y}_i \overline{Y}_k )  \} \\
& = & 2\sum_{i<k} F_{ik},
\end{eqnarray*}
where
\begin{eqnarray*}
F_{ik} & = & |X_iY_i|^2 - \mbox{Re} (X_i^2\overline{Y}_i^2) + |X_kY_k|^2 - \mbox{Re} (X_k^2\overline{Y}_k^2) + \frac{1}{4}(|X_iY_k|^2 + |X_kY_i|^2) + \\
&& + \, (2a_{ik}-b_{ik}) 2\mbox{Re} (X_i\overline{X}_k \overline{Y}_i Y_k ) -  (a_{ik}+b_{ik}) 2\mbox{Re} (X_iX_k\overline{Y}_i \overline{Y}_k ).
\end{eqnarray*}
For $(ik)=(13)$, $(2a-b)=-\frac{5}{4}$ and $(a+b)=-1$, so we have
\begin{eqnarray*}
F_{ik} & = & |X_iY_i|^2 - \mbox{Re} (X_i^2\overline{Y}_i^2) + |X_kY_k|^2 - \mbox{Re} (X_k^2\overline{Y}_k^2) + \frac{1}{4}(|X_iY_k|^2 + |X_kY_i|^2) + \\
&& - \, \frac{5}{4} 2\mbox{Re} (X_i\overline{X}_k \overline{Y}_i Y_k ) + 2\mbox{Re} (X_iX_k\overline{Y}_i \overline{Y}_k )\\
& = & 2\{\mbox{Im}(X_i\overline{Y}_i)\}^2 + 2\{\mbox{Im}(X_k\overline{Y}_k)\}^2 + \frac{1}{4} |X_iY_k - X_kY_i |^2 -4 \mbox{Im}(X_i\overline{Y}_i)\mbox{Im}(X_k\overline{Y}_k) \\
& = & 2 \{ \mbox{Im}(X_i\overline{Y}_i) - \mbox{Im}(X_k\overline{Y}_k) \}^2 + \frac{1}{4} |X_iY_k - X_kY_i |^2 \ \geq \ 0.
\end{eqnarray*}
Similarly, for $(ik)=(12)$ or $(23)$, $2a-b=1$, $a+b=\frac{5}{4}$, so
\begin{eqnarray*}
F_{ik} & = & |X_iY_i|^2 - \mbox{Re} (X_i^2\overline{Y}_i^2) + |X_kY_k|^2 - \mbox{Re} (X_k^2\overline{Y}_k^2) + \frac{1}{4}(|X_iY_k|^2 + |X_kY_i|^2) + \\
&& + \,  2\mbox{Re} (X_i\overline{X}_k \overline{Y}_i Y_k ) - \frac{5}{4} 2\mbox{Re} (X_iX_k\overline{Y}_i \overline{Y}_k )\\
& = & 2\{\mbox{Im}(X_i\overline{Y}_i)\}^2 + 2\{\mbox{Im}(X_k\overline{Y}_k)\}^2 + \frac{1}{4} |X_i\overline{Y}_k - Y_i\overline{X}_k |^2 + 4 \mbox{Im}(X_i\overline{Y}_i)\mbox{Im}(X_k\overline{Y}_k) \\
& = & 2 \{ \mbox{Im}(X_i\overline{Y}_i) + \mbox{Im}(X_k\overline{Y}_k) \}^2 + \frac{1}{4} |X_i\overline{Y}_k - Y_i\overline{X}_k |^2 \ \geq \ 0.
\end{eqnarray*}
That is, $R_{xyyx}$ is equal to
\begin{equation*}
4(I_1+I_2)^2+4(I_2+I_3)^2 + 4(I_1-I_3)^2 +\frac{1}{2}|X_1\overline{Y}_2-Y_1\overline{X}_2|^2 +\frac{1}{2}|X_2\overline{Y}_3-Y_2\overline{X}_3|^2 +\frac{1}{2}|X_1Y_3-Y_1X_3|^2,
\end{equation*}
where $I_i=\mbox{Im}(X_i\overline{Y}_i)$. Therefore the Levi-Civita connection of $g$ has non-negative sectional curvature. We note that the sectional curvature is not strictly positive here: if we take $X_1=X_2=X_3\in {\mathbb R}\setminus \{ 0\}$, and $Y_1=\overline{Y}_2=Y_3=\rho \not\in {\mathbb R}$, then $I_1=I_3=-I_2$ and the above expression vanishes, so we get $R_{xyyx}=0$ yet $x\wedge y\neq 0$.

To see the Ricci curvature of $g$, let $Y=e_i$, then the above formula becomes
$$ R_{xyyx} = 4|X_i|^2 - 4\mbox{Re}(X_i^2) +  |X_j|^2 +  |X_k|^2,
$$
where $\{ i,j,k\} =\{ 1,2,3\}$. Similarly, if we let $Y=\sqrt{-1}e_i$, then we get
$$ R_{xyyx} = 4|X_i|^2 + 4\mbox{Re}(X_i^2) +  |X_j|^2 +  |X_k|^2.
$$
Add up the above two equalities for $i$ from $1$ to $3$, we get $12|X|^2=6|x|^2$. Let $\varepsilon_i = \frac{1}{\sqrt{2}}(e_i+\overline{e}_i)$ and  $\varepsilon_{i^{\ast}} = \frac{\sqrt{-1}}{\sqrt{2}}(e_i-\overline{e}_i)$. Then $\{ \varepsilon_i, \varepsilon_{i^{\ast}}\}$ form an orthonormal tangent frame, so the Ricci curvature of the Riemannian metric $g$ is
\begin{equation}
\mbox{Ric}(x) = \frac{1}{|x|^2} \sum_i \big( R_{x\varepsilon_i\varepsilon_ix} + R_{x\varepsilon_{i^{\ast}}\varepsilon_{i^{\ast}}x} \big)= \frac{1}{|x|^2} 6|x|^2 = 6.
\end{equation}
That is, $g$ is an Einstein metric on $X$ with Ricci curvature $6$.

\vspace{0.3cm}

\section{Balanced {\em BTP} threefolds of middle type}\label{mddtype3D}
\subsection{The structural theorem}\label{str}

In this subsection, let us consider the $\mathrm{rank}B=2$ case. Throughout this subsection we will assume that $(M^3,g)$ is a compact, balanced {\em BTP} manifold of middle type. Denote by $L$ the kernel of the $B$ tensor.

\begin{claim} \label{claim9.1}
$L$ is a holomorphic line bundle on $M^3$ satisfying $L^{\otimes 2}\cong {\mathcal O}_M$.
\end{claim}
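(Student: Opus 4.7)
The plan is to first show $L=\ker B$ is a holomorphic sub-line-bundle of $T^{1,0}M$ for which the third vector $e_3$ of every special frame is a local holomorphic frame, and then to build a nowhere-vanishing global holomorphic section of $L^{-2}$ directly from the Chern torsion.

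Since $(M^3,g)$ is BTP the tensor $B$ is $\nabla^b$-parallel, so its eigenspace decomposition $T^{1,0}M=E\oplus L$ (with $E=L^{\perp}$) is a smooth, $\nabla^b$-parallel splitting. Under a special frame as in Proposition \ref{Btype} for the middle-type case, the only non-zero Chern torsion components are $T^1_{23}=T^2_{31}=a>0$, where $a^2=c/2$ is a global constant (the nonzero eigenvalue of the $\nabla^b$-parallel $B$ is constant). The analysis of the case $a_1=a_2>0=a_3$ in the proof of Proposition \ref{Btype} shows that $\theta^b_{3\alpha}=\theta^b_{\alpha 3}=0$ for $\alpha=1,2$ and $\theta^b_{33}=0$. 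Combining $\theta=\theta^b-2\gamma$ with the formula \eqref{eq:2.6} for $\gamma$ and the torsion values above, a direct calculation yields $\gamma_{31}=-a\varphi_2$, $\gamma_{32}=a\varphi_1$, and $\gamma_{33}=0$, so $\theta_{31}=2a\varphi_2$, $\theta_{32}=-2a\varphi_1$, $\theta_{33}=0$ are all of pure $(1,0)$-type. Equivalently $\nabla^c_{\overline X}e_3=0$ for every $(1,0)$ vector field $X$, so $L\subset T^{1,0}M$ is a holomorphic subbundle and $e_3$ is a local holomorphic frame of $L$.

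To build the section of $L^{-2}$, set $Q=T^{1,0}M/L$. Since $T^3_{12}=0$ and $T^c$ automatically vanishes on $\Lambda^2 L=0$, the Chern torsion descends to a smooth $\mathcal{O}$-linear bundle map $\phi\colon L\otimes Q\to Q$, $\phi(\lambda\otimes[v])=[T^c(\lambda,v)]$. Under the special frame one computes $\phi(e_3\otimes e_1)\equiv 2ae_2$ and $\phi(e_3\otimes e_2)\equiv -2ae_1$ modulo $L$, so $\phi$ is a smooth isomorphism of rank-$2$ bundles, and taking determinants gives a smooth section $\det\phi$ of $\det(L\otimes Q)^{-1}\otimes\det Q=L^{-2}$ which in this frame reads $\det\phi=4a^{2}(e_3^{\ast})^{\otimes 2}$. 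Because $e_3$ is a local holomorphic frame of $L$, the frame $(e_3^{\ast})^{\otimes 2}$ of $L^{-2}$ is holomorphic, and since $4a^2$ is a global constant, this exhibits $\det\phi$ as a nowhere-vanishing holomorphic section of $L^{-2}$. Hence $L^{\otimes 2}\cong\mathcal{O}_M$. The main subtle point is that the smooth complement $E$ of $L$ is generally not a holomorphic subbundle, so naively $\phi$ looks like a map of non-holomorphic bundles; however working with the holomorphic quotient $Q$ instead of $E$, the special-frame computation shows that the potentially non-holomorphic contributions cancel after taking determinants, which is what makes the construction go through.
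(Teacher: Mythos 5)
Your proof is correct. The first half — establishing that $L$ is a holomorphic subbundle by computing $\theta_{31}=2a\varphi_2$, $\theta_{32}=-2a\varphi_1$, $\theta_{33}=0$ from $\theta=\theta^b-2\gamma$ and concluding $\nabla^c_{\overline{X}}e_3=0$ — is essentially the paper's own argument. Where you genuinely diverge is in the triviality of $L^{\otimes 2}$. The paper computes the Levi-Civita derivative $\nabla e_3=\frac{1}{2}\bigl(\varphi_2 e_1-\varphi_1 e_2+\overline{\varphi}_2\overline{e}_1-\overline{\varphi}_1\overline{e}_2\bigr)$, observes it is real and nonzero, and deduces that the (constant, unimodular, holomorphic) transition function between the $e_3$'s of two special frames must equal $\pm 1$, so $e_3^{\otimes 2}$ is a global holomorphic trivialization of $L^{\otimes 2}$. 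You instead trivialize $L^{-2}$ by $\det\phi$ for the torsion-induced map $\phi\colon L\otimes Q\to Q$; this is sound: $\phi$ descends because $T^c$ is alternating and $L$ has rank one, the $\det Q$ factors cancel so $\det\phi$ is a globally defined smooth section of $L^{-2}$, and its pairing with the local holomorphic frame $e_3^{\otimes 2}$ equals the constant $4a^2$ independently of the (non-holomorphic) frame of $Q$ used to compute it, which forces $\det\phi$ to be holomorphic and nowhere vanishing. Your route is more intrinsic — it never touches the Levi-Civita connection and it exhibits the $2$-torsion of $L$ as coming from the rank-two rotation the torsion induces on the quotient. What the paper's route buys is the sharper byproduct that $e_3$ (hence $\varphi_3$, $\varphi_{3\bar{3}}$, $d\varphi_3$) is canonically determined up to sign, a fact used repeatedly later in the section (the degree-at-most-$2$ cover $\hat{M}$, the globality of various forms, the pluriclosed obstruction). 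Your argument actually recovers this too — the transition function $f$ of $L$ between special frames is holomorphic and satisfies $f^2\cdot 4a^2=4a^2$ against your global trivialization, hence $f=\pm 1$ — but you would need to say so explicitly before invoking the later parts of the classification.
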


\begin{proof}
Here and below ${\mathcal O}_M$ denotes the trivial line bundle of $M$. Let us scale the metric $g$ by a suitable constant multiple to make $a_1=a_2=\frac{1}{2}$ from now on. It follows from (\ref{eq:6.5}) and  (\ref{eq:6.6}) that $\theta^b_{13}=\theta_{23}^b=\theta^b_{33}=0$,  $\theta^b_{11}=\theta^b_{22}$ and $\theta^b_{12}+\theta^b_{21}=0$, which implies
\[\gamma = \frac{1}{2}\begin{bmatrix} 0 & -(\varphi_3+\bar{\varphi}_3) & \overline{\varphi}_2 \\
                                      \varphi_3 + \bar{\varphi}_3 & 0 & - \bar{\varphi}_1 \\
                                       -\varphi_2 & \varphi_1 & 0 \end{bmatrix}\!\!, \quad
\theta^b =  \begin{bmatrix} \alpha & \beta_0  & 0 \\ -\beta_0 & \alpha &  0  \\ 0 & 0 & 0 \end{bmatrix}\!\!, \quad
\theta =  \begin{bmatrix} \alpha & \beta  &  - \overline{\varphi}_2 \\ -\beta & \alpha &  \overline{\varphi}_1  \\ \varphi_2 & -\varphi_1 & 0 \end{bmatrix}\!\! ,  \quad
\tau =  \begin{bmatrix} \varphi_2\varphi_3 \\ \varphi_3\varphi_1  \\ 0  \end{bmatrix}\!\!,\]
where $\overline{\alpha}=-\alpha$, $\overline{\beta}_0=\beta_0$, and $\beta = \beta_0 +\varphi_3 + \overline{\varphi}_3$. This means that $\nabla^be_3=0$ for any special frame $e$. Since $\nabla^c_{\overline{i}}e_3=\sum_j \theta_{3j}(\bar{e}_i)e_j =0$, so $e_3$ is a local holomorphic vector field on $M^3$ which is  also a local section of $L$, thus $L$ is a holomorphic line subbundle of the holomorphic tangent bundle $T^{1,0}M$.

To see that $L^{\otimes 2}={\mathcal O}_M$, let us consider the structure equation $d\varphi =-\,^t\!\theta \wedge \varphi + \tau$. In our case it becomes
\begin{equation} \label{eq:9.1}
d\varphi = \begin{bmatrix} -\alpha \varphi_1 + \beta \varphi_2 \\ - \beta\varphi_1 - \alpha \varphi_2 \\ \varphi_2\overline{\varphi}_1 -\varphi_1\overline{\varphi}_2   \end{bmatrix}
\end{equation}
For convenience, $\varphi_i \wedge \overline{\varphi}_j$ is abbreviated as $\varphi_{i\overline{j}}$.
The curvature matrices of $\nabla^b$ and $\nabla^c$ under $e$ are
\begin{equation} \label{eq:9.2}
\Theta^b = \begin{bmatrix} d\alpha & d\beta_0  & 0 \\ -d\beta_0 & d\alpha &  0  \\ 0 & 0 & 0 \end{bmatrix}\!\!, \quad
\Theta = \begin{bmatrix} d\alpha - \varphi_{2\overline{2}} & d\beta + \varphi_{1\overline{2}} & 0 \\ -d\beta +\varphi_{2\overline{1}} & d\alpha -\varphi_{1\overline{1}} &  0  \\ 0 & 0 & \varphi_{1\overline{1}} + \varphi_{2\overline{2}} \end{bmatrix}\!\!.
\end{equation}

\begin{remark}
By the above formula for $\Theta^b$, we know that the holonomy group of any balanced BTP threefold of middle type is always contained in the subgroup of $U(2)\times 1 $ which commutes with $\begin{pmatrix} 0 & 1 \\ -1 & 0 \end{pmatrix} \times 1$, in $U(3)$.
\end{remark}

By taking the exterior differentiation of (\ref{eq:9.1}), we get the first Bianchi identity
\begin{equation} \label{eq:9.3}
d\alpha \wedge \varphi_1 - d\beta \wedge \varphi_2 =0, \ \ \ \ \ d\alpha \wedge \varphi_2 + d\beta \wedge \varphi_1 =0.
\end{equation}
It also holds that, from \eqref{eq:9.1},
\begin{equation} \label{eq:9.4}
d(\varphi_1\overline{\varphi}_1)  = -d(\varphi_2\overline{\varphi}_2)  = \beta (\varphi_2\overline{\varphi}_1 +\varphi_1\overline{\varphi}_2), \ \ \ \ \ \ d\beta -d\beta_0 = 2(\varphi_2\overline{\varphi}_1 -\varphi_1\overline{\varphi}_2).
\end{equation}
It follows that the Riemannian connection $\nabla$ under $e$ is
\[\begin{aligned}
\nabla e &= (\theta^b-\gamma)e + \overline{\theta_2} \overline{e} \\
         &= \begin{bmatrix} \alpha & \beta_0 + \frac{1}{2}(\varphi_3 + \overline{\varphi}_3) & -\frac{1}{2} \overline{\varphi}_2 \\
                            - \beta_0 - \frac{1}{2}(\varphi_3 + \overline{\varphi}_3) & \alpha & \frac{1}{2} \overline{\varphi}_1 \\
                            \frac{1}{2}\varphi_2 & -\frac{1}{2}\varphi_1 & 0 \end{bmatrix}\!\!e
            +\begin{bmatrix} 0 & 0 & -\frac{1}{2} \overline{\varphi}_2 \\
                             0 & 0 & \frac{1}{2} \overline{\varphi}_1 \\
                            \frac{1}{2}\overline{\varphi}_2 & -\frac{1}{2}\overline{\varphi}_1 & 0 \end{bmatrix}\!\! \overline{e},
\end{aligned}\]
since $(\theta_2)_{ij}= \sum_k \overline{T_{ij}^k} \varphi_k$. In particular, $\nabla e_3= \overline{\nabla e_3}$ is real. So we know that $e_3$ can only vary by a sign. In other words, $\varphi_3\otimes \varphi_3$ is globally defined on $M$, hence $L^{\otimes 2}$ is trivial. This proves Claim  \ref{claim9.1}.
\end{proof}

Note that since $T^3_{ik}=0$ for any $i,k$, it yields  $[e_3, \overline{e}_3] = \nabla^c_{e_3}\overline{e}_3 - \nabla^c_{\overline{e}_3}e_3 =0$, so the distribution $L$ generated by $e_3$ is actually a holomorphic foliation, which is also a flat holomorphic bundle using the restriction metric of $g$.

If $L={\mathcal O}_M$, let $\hat{M}=M$. If $L\neq {\mathcal O}_M$, then by the cyclic covering lemma, it gives us an unbranched double cover  $\pi : \hat{M}\rightarrow M$ so that $\pi^{\ast}L$ is trivial. Thus $\hat{M}$ is either $M$ itself, or a double cover of $M$. Lift the metric $g$ to $\hat{M}$, which we will still denote by $g$. Then $(\hat{M},g,J)$ is a balanced {\em BTP} threefold. The kernel of the $B$ tensor for $\hat{M}$ is now the trivial line bundle $L={\mathcal O}_{\hat{M}}$, or equivalently, we may choose special frames $e$ on $\hat{M}$ so that $e_3$ is globally defined. We claim that

\begin{claim} \label{claim9.2}
There exists a complex structure $I$ on $\hat{M}$ which is compatible with $g$, so that $I\circ J = J \circ I$, and  $(\hat{M}, g,I)$ is a Vaisman threefold.
\end{claim}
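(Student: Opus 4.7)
The plan has four steps.

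\textbf{Step 1 (construction of $I$).} On the double cover $\hat M$ the line bundle $L=\ker B$ is trivial, so one can choose a global unit $(1,0)_J$-vector field $e_3$ spanning $L$ (unique up to sign, as $\varphi_3\otimes\varphi_3$ is globally defined). The splitting
\[T\hat M \;=\; D\oplus E,\qquad D=L\oplus\overline L=\mathrm{span}_{\mathbb R}(e_3+\overline e_3,\,i(e_3-\overline e_3)),\]
is then globally defined, with $E$ the $g$-orthogonal, $J$-invariant real $4$-plane distribution underlying the $(1,0)_J$-subspace spanned by $e_1,e_2$ in any special frame. Define $I$ by $I|_D=J|_D$ and $I|_E=-J|_E$. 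Then $I^2=-1$, $I$ is $g$-orthogonal, and $IJ=JI$ (both equal $-1$ on $E$ when composed, and $-1$ on $D$). Equivalently, the $I$-$(1,0)$-distribution in $T^{\mathbb C}\hat M$ is $T^{1,0}_I=\mathrm{span}(\overline e_1,\overline e_2,e_3)$ under any special frame for $(g,J)$.

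\textbf{Step 2 (integrability of $I$).} I would verify that $T^{1,0}_I$ is involutive under the Lie bracket, using the structure equations
\[d\varphi_1=-\alpha\varphi_1+\beta\varphi_2,\quad d\varphi_2=-\beta\varphi_1-\alpha\varphi_2,\quad d\varphi_3=\varphi_2\overline\varphi_1-\varphi_1\overline\varphi_2\]
of the middle-type special frame (see \eqref{eq:9.1}). The first two equations imply $[\overline e_1,\overline e_2]\in\mathrm{span}(\overline e_1,\overline e_2)$: the $e_3$-component would come from a $\varphi_3$-term in $d\overline\varphi_i$, which is absent. For $[\overline e_i,e_3]$ with $i=1,2$, the Chern torsion has $T(\overline e_i,e_3)=0$ and $\nabla^c_{\overline e_i}e_3=0$ (because $\theta_{3k}(\overline e_i)=0$ for all $k$, as read off from the matrix $\theta$), while $\nabla^c_{e_3}\overline e_i$ lies in $T^{0,1}_J$ and has no $\overline e_3$-component (since $\theta_{i3}(e_3)=0$). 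Hence $[\overline e_i,e_3]\in\mathrm{span}(\overline e_1,\overline e_2)\subseteq T^{1,0}_I$. This shows the Nijenhuis tensor of $I$ vanishes, so $I$ is integrable.

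\textbf{Step 3 (Vaisman).} Write $\omega_I=-\omega_J+2\sqrt{-1}\,\varphi_3\wedge\overline\varphi_3$. Using Proposition~\ref{prop3.6} together with $\eta_J=0$ and the explicit torsion $T^1_{23}=T^2_{31}=1/2$, $T^3_{12}=0$, a direct computation of $d\omega_J$ and $d(\varphi_3\wedge\overline\varphi_3)$ yields $d\omega_I = \vartheta\wedge\omega_I$ with Lee form
\[\vartheta=-\bigl(\varphi_3+\overline\varphi_3\bigr),\]
which is globally defined (independent of the sign ambiguity of $e_3$, since it enters quadratically). To verify the Vaisman condition, I would show $\nabla\vartheta=0$ where $\nabla$ is the Levi-Civita connection. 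This follows from the explicit formula
\[\nabla e_3 \;=\; \tfrac12\bigl(\varphi_2\,e_1-\varphi_1\,e_2+\overline\varphi_2\,\overline e_1-\overline\varphi_1\,\overline e_2\bigr),\]
derived in the paragraph after Claim~\ref{claim9.1}, which is manifestly real and purely tangent to $E$; hence the real unit vector field $\xi=\tfrac{1}{\sqrt2}(e_3+\overline e_3)$ dual to $\vartheta$ (up to scale) satisfies $\nabla\xi\in\Gamma(E^*\otimes E)$ and the antisymmetric part vanishes, giving $\nabla(\xi^\flat)=0$. Thus $(\hat M,g,I)$ is LCK with parallel Lee form, i.e.\ Vaisman.

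\textbf{Step 4 (main obstacle).} The only real work is verifying the integrability of $I$ and the parallelness of $\vartheta$; both reduce to the explicit middle-type structure equations collected in \S\ref{str}, so there is no further analytic input. The equality of the two Bismut connections (used elsewhere in Theorem~\ref{classification}\eqref{mddtype3D}) then follows a posteriori because $\nabla^b$ is determined by $g$ together with the torsion $3$-form $-d^c\omega$, and one checks from $\omega_I=-\omega_J+2i\varphi_3\overline\varphi_3$ plus $IJ=JI$ that $d^c_I\omega_I$ and $d^c_J\omega_J$ coincide. The harder qualitative consequence (the $S^1$-fibration with Sasakian $5$-dimensional fiber) is then obtained by applying the Ornea--Verbitsky structure theorem to the Vaisman threefold $(\hat M,g,I)$.
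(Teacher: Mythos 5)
Your Step 1 chooses the wrong complex structure, and this sinks Step 3. You define $I$ by $I|_D=J|_D$ and $I|_E=-J|_E$, i.e.\ $T^{1,0}_I=\mathrm{span}(\overline{e}_1,\overline{e}_2,e_3)$. This $I$ is indeed orthogonal, commutes with $J$, and is integrable (your bracket computation is fine), but the resulting Hermitian structure is not locally conformally K\"ahler, let alone Vaisman. Concretely, its K\"ahler form is $\omega_I=\sqrt{-1}\,(-\varphi_1\wedge\overline{\varphi}_1-\varphi_2\wedge\overline{\varphi}_2+\varphi_3\wedge\overline{\varphi}_3)$, and using $d(\varphi_1\wedge\overline{\varphi}_1+\varphi_2\wedge\overline{\varphi}_2)=0$ together with $d\varphi_3=\varphi_2\wedge\overline{\varphi}_1-\varphi_1\wedge\overline{\varphi}_2$ one gets
\begin{equation*}
d\omega_I \;=\; \sqrt{-1}\,(\overline{\varphi}_3-\varphi_3)\wedge(\varphi_2\wedge\overline{\varphi}_1-\varphi_1\wedge\overline{\varphi}_2).
\end{equation*}
This $3$-form contains the monomial $\overline{\varphi}_3\wedge\varphi_2\wedge\overline{\varphi}_1$, whereas any product $\vartheta\wedge\omega_I$ with your diagonal $\omega_I$ only produces monomials containing a conjugate pair $\varphi_i\wedge\overline{\varphi}_i$; hence $d\omega_I\neq\vartheta\wedge\omega_I$ for every $1$-form $\vartheta$, and your claimed identity with $\vartheta=-(\varphi_3+\overline{\varphi}_3)$ is false. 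A second, independent error in Step 3: the parallel direction is $\xi'=\tfrac{\sqrt{-1}}{\sqrt{2}}(e_3-\overline{e}_3)$ (because $\nabla e_3$ is real, so $\nabla(e_3-\overline{e}_3)=0$), not $\xi=\tfrac{1}{\sqrt{2}}(e_3+\overline{e}_3)$, whose covariant derivative is the nonzero expression you quote; so even the direction of your Lee form is wrong. The Lee form of the correct Vaisman structure is proportional to $\sqrt{-1}(\varphi_3-\overline{\varphi}_3)$.

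The fix is not a sign adjustment but a genuinely different $I$ on $E$: the paper sets $I(X)=\sqrt{2}\,\nabla_X\xi$ for $X$ in the $4$-plane $V=E$ and $I(\xi)=\xi'$, using the Sasakian rotation $\nabla\xi$ (which sends $\varepsilon_3\mapsto\varepsilon_1$, $\varepsilon_4\mapsto\varepsilon_2$). Viewed on the $J$-holomorphic span of $e_1,e_2$, this $I$ acts by $e_1\mapsto -e_2$, $e_2\mapsto e_1$, so it agrees with $J$ on the complex line $\mathbb{C}(e_1+\sqrt{-1}e_2)$ and with $-J$ on $\mathbb{C}(e_1-\sqrt{-1}e_2)$ — a "mixed" member of the commutant of $J|_E$, not $-J|_E$. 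The point is that the $E$-part of the resulting K\"ahler form is then $\varphi_2\wedge\overline{\varphi}_1-\varphi_1\wedge\overline{\varphi}_2=d\varphi_3$ rather than $-\sqrt{-1}(\varphi_1\wedge\overline{\varphi}_1+\varphi_2\wedge\overline{\varphi}_2)$, which is exactly what is needed for $d\omega_I=\vartheta\wedge\omega_I$ to hold with $\vartheta=\sqrt{-1}(\overline{\varphi}_3-\varphi_3)$. Since $d\omega_I$ is the same for both candidates (the two $\omega_I$'s differ by a closed form), this comparison shows there is no way to salvage your choice; the Sasakian datum $\nabla\xi$ is the essential input you are missing.
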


\begin{proof} Let $e$ be a special frame on $\hat{M}$, where $e_3$ is a global holomorphic vector field of unit length on $\hat{M}$. Write  $e_3=\frac{1}{\sqrt{2}}(\xi - \sqrt{-1}\xi')$, or equivalently,
\[\xi = \frac{1}{\sqrt{2}} (e_3 + \overline{e}_3), \quad \quad \xi' = \frac{\sqrt{-1}}{\sqrt{2}} (e_3 - \overline{e}_3), \]
Then $\xi$ and $\xi'$ are global real vector fields of unit length satisfying $J\xi = \xi'$. We have $\nabla \xi'=0$ and
\begin{equation}\label{5mfd} \begin{cases}
\nabla \xi = \frac{1}{\sqrt{2}}(\varphi_2 e_1 - \varphi_1 e_2 + \overline{\varphi}_2 \overline{e}_1 - \overline{\varphi}_1 \overline{e}_2) \\
\nabla e_1 = \alpha e_1 + (\beta_0 + \frac{1}{2}(\varphi_3 + \overline{\varphi}_3)) e_2 - \frac{1}{\sqrt{2}} \overline{\varphi}_2 \xi \\
\nabla e_2 = (-\beta_0-\frac{1}{2}(\varphi_3 + \overline{\varphi}_3)) e_1 + \alpha e_2 + \frac{1}{\sqrt{2}} \overline{\varphi}_1 \xi
\end{cases}\end{equation}
Note that $e_1$, $e_2$ are still local here. Write
$$ e_1=\frac{1}{\sqrt{2}} (\varepsilon_1 - \sqrt{-1} \varepsilon_2), \ \ \ \ \ \  e_2=\frac{1}{\sqrt{2}} (\varepsilon_3 - \sqrt{-1} \varepsilon_4). \ \ \ $$
Then $\{ \varepsilon_1, \varepsilon_2, \varepsilon_3, \varepsilon_4, \xi , \xi'\}$ becomes a local orthonormal frame of $(\hat{M}, g)$, and $J\varepsilon_1 =\varepsilon_2$, $J\varepsilon_3 =\varepsilon_4$, $J\xi =\xi'$. Let us also write
$$ \varphi_1 =\frac{1}{\sqrt{2}} ( \phi_1 + \sqrt{-1} \phi_2 ) , \ \ \ \ \ \ \varphi_2 =\frac{1}{\sqrt{2}} ( \phi_3 + \sqrt{-1} \phi_4 ), \ \ \ \ \varphi_3= \frac{1}{\sqrt{2}} ( \phi_5 + \sqrt{-1} \phi_6 ) . $$
Then $\{ \phi_1, \phi_2, \phi_3, \phi_4, \phi_5 , \phi_6\}$ becomes the dual coframe. For any real vector field $X$ on $\hat{M}$, by (\ref{5mfd}) we have
$$ \sqrt{2} \nabla_X\xi = \phi_3(X)\varepsilon_1 + \phi_4(X)\varepsilon_2 - \phi_1(X)\varepsilon_3 - \phi_2(X)\varepsilon_4.$$
Let us denote by $V$ the distribution in $\hat{M}$ spanned by $\varepsilon_i$, $1\leq i\leq 4$. Denote by $E'$ the distribution spanned by $\xi'$, and by $E=V\oplus {\mathbb R}\xi$ the orthogonal complement of $E'$. Then both $E'$ and $E$ are globally defined, parallel under $\nabla$, and $V$ is also globally defined since it is the orthogonal complement of $\xi$ in $E$. Define an orthogonal almost complex structure $I$ on $(\hat{M},g)$ by
$$ I(\varepsilon_3) = \varepsilon_1, \ \ \ \ \ \  I(\varepsilon_4) = \varepsilon_2, \ \ \ \ \ I(\xi)=\xi'. $$
Then $I$ is globally defined since for any $X\in V$ it is given by $I(X)=\sqrt{2}\nabla_X\xi$, and clearly, $I\circ J = J\circ I$. In order to show that $I$ is integrable and $(\hat{M},g,I)$ is Vaisman, let us carry out the computation in complex terms. Note that a standard orthonormal frame for $I$ becomes $\{ \varepsilon_3, \varepsilon_1, \varepsilon_4 , \varepsilon_2, \xi , \xi'\}$. In particular, the orientation induced by $I$ is opposite to that by $J$. Let us write $\beta'=\beta_0+\frac{1}{2}(\varphi_3+\overline{\varphi}_3)$. Since $\overline{\alpha}=-\alpha$ and $\overline{\beta'}=\beta'$, by (\ref{5mfd}) we obtain
\begin{equation*} \begin{cases}
 \nabla \varepsilon_1 = -\sqrt{-1}\alpha \varepsilon_2 + \beta' \varepsilon_3 - \frac{1}{\sqrt{2}} \phi_3 \xi \\
\nabla \varepsilon_2 = \sqrt{-1}\alpha \varepsilon_1 + \beta' \varepsilon_4 - \frac{1}{\sqrt{2}} \phi_4 \xi \\
\nabla \varepsilon_3 = -\beta' \varepsilon_1 - \sqrt{-1}\alpha  \varepsilon_4 + \frac{1}{\sqrt{2}} \phi_1 \xi \\
\nabla \varepsilon_4 = -\beta' \varepsilon_2 + \sqrt{-1}\alpha  \varepsilon_3 + \frac{1}{\sqrt{2}} \phi_2 \xi
\end{cases}\end{equation*}
Let us denote by
$$ u_1=\frac{1}{\sqrt{2}} ( \varepsilon_3 - \sqrt{-1} \varepsilon_1), \ \ \ \  u_2=\frac{1}{\sqrt{2}} ( \varepsilon_4 - \sqrt{-1} \varepsilon_2), $$
then $\{ u_1, u_2, e_3\}$ forms a local unitary frame of $(\hat{M},g,I)$. Its dual coframe is $\{ \psi_1, \psi_2, \varphi_3 \}$, where
$$ \psi_1 = \frac{1}{\sqrt{2}} (\phi_3+\sqrt{-1} \phi_1 ), \ \ \ \ \ \psi_2 = \frac{1}{\sqrt{2}} (\phi_4+ \sqrt{-1} \phi_2). $$
By a straight forward computation, we get
\begin{equation} \label{eq:nabla-u} \begin{cases}
 \nabla u_1 = -\sqrt{-1}\beta' u_1 -   \sqrt{-1} \alpha u_2+ \frac{\sqrt{-1}}{\sqrt{2}} \overline{\psi}_1 \xi \\
 \nabla u_2 = \sqrt{-1}\alpha  u_1 -   \sqrt{-1} \beta' u_2+ \frac{\sqrt{-1}}{\sqrt{2}} \overline{\psi}_2 \xi \\
\nabla e_3 = \frac{\sqrt{-1}}{2} \big ( \psi_1 u_1 + \psi_2 u_2 - \overline{\psi}_1 \overline{u}_1 -  \overline{\psi}_2 \overline{u}_2 \big)
\end{cases} \end{equation}
To show that $I$ is integrable, we need to verify that $[T^{1,0}_I, T^{1,0}_I] \subseteq T^{1,0}_I$, or equivalently, each $[u_i, u_j]$ is a combination of $u_k$ for $1\leq k\leq 3$, but without  $\overline{u}_k$ terms. Here we have written $u_3=e_3$. First, for $[u_1, u_2]$, by (\ref{eq:nabla-u}) we have
$$ [u_1, u_2] = \nabla_{u_1}u_2 - \nabla_{u_2}u_1= (\ast) \,u_1 + (\ast)\, u_2.$$
Also, by (\ref{eq:nabla-u}), $\nabla_{u_1}e_3= \frac{\sqrt{-1}}{2}u_1$, so
$$ [u_1, e_3] = \nabla_{u_1}e_3 - \nabla_{e_3}u_1 = \frac{\sqrt{-1}}{2}u_1 - (\ast )\, u_1 - (\ast )\, u_2 $$
is a combination of $u_1$ and $u_2$. Similarly, $[u_2, e_3]$ is a combination of $u_1$ and $u_2$. Therefore, the almost complex structure $I$ is integrable.

Next let us show that the Hermitian threefold $(\hat{M}, g,I)$ is Vaisman. Denote by $u$ the column vector of the local unitary frame, where $u_3=e_3$, and by $\psi$ the column vector of its dual coframe, where $\psi_3=\varphi_3$. By (\ref{eq:nabla-u}) the Levi-Civita connection $\nabla$ has connection matrices $\nabla u = \hat{\theta}_1 u + \overline{\hat{\theta}_2} \,\overline{u}$ where
$$ \hat{\theta}_1 = \sqrt{-1} \begin{bmatrix} -\beta' & -\alpha & \frac{1}{2} \overline{\psi}_1 \\  \alpha & -\beta' & \frac{1}{2} \overline{\psi}_2 \\  \frac{1}{2}  \psi_1 &  \frac{1}{2}  \psi_2 & 0 \end{bmatrix} ,
\ \ \ \ \ \ \hat{\theta}_2 = \sqrt{-1} \begin{bmatrix} 0 & 0 & -\frac{1}{2} \psi_1 \\  0 & 0 & -\frac{1}{2} \psi_2 \\  \frac{1}{2}  \psi_1 &  \frac{1}{2}  \psi_2 & 0 \end{bmatrix} .$$
Under the frame $u$, the only non-zero components of the Chern torsion of $(\hat{M}, I, g)$ are $\hat{T}^1_{13}=\hat{T}^2_{23}=\frac{\sqrt{-1}}{2}$, so the Gauduchon's torsion $1$-form is $\hat{\eta} =\sqrt{-1}\varphi_3$. By the structure equation, $\nabla \psi = - \,^t\!\hat{\theta}_1 \psi - \,^t\!\hat{\theta}_2\overline{\psi}$, we get
$$ \nabla \varphi_3 = \sum_j\{ -(\hat{\theta}_1)_{j3}\psi_j - (\hat{\theta}_2)_{j3}\overline{\psi}_j\} =\sqrt{-1} ( \psi_1\overline{\psi}_1 + \psi_2\overline{\psi}_2 ) .$$
Therefore,
$$ \nabla (\hat{\eta} + \overline{\hat{\eta}}) = \sqrt{-1}(\nabla\varphi_3-\nabla \overline{\varphi}_3)=0.$$
That is, the Lee form of $(\hat{M},g,I)$ is parallel under the Levi-Civita conneciton, which means that the Hermitian threefold is Vaisman.
\end{proof}

Next we show that the Bismut connection of $(\hat{M},g,J)$ and $(\hat{M},g,I)$ coincide:

\begin{claim} \label{claim9.3}
The Bismut connection of $(\hat{M},g,J)$ and $(\hat{M},g,I)$ coincide.
\end{claim}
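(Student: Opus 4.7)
The strategy is to invoke the uniqueness characterization of the Bismut connection: on any Hermitian manifold, the Bismut connection is the unique linear connection $\nabla$ satisfying $\nabla g=0$, $\nabla J=0$, and having torsion $T^{\nabla}$ whose lowered form $g(T^{\nabla}(X,Y),Z)$ is totally skew-symmetric in $X,Y,Z$. The crucial point is that the skew-symmetry condition depends only on the Riemannian metric, not on the complex structure. Since $\nabla^b_J$ is already metric and has totally skew-symmetric torsion, the desired identity $\nabla^b_J=\nabla^b_I$ reduces to the single verification $\nabla^b_J I=0$; once this is established, the uniqueness of the Bismut connection of $(\hat{M},g,I)$ forces the coincidence.

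On the real rank-two distribution spanned by $\xi,\xi'$ (the real underlying bundle of $L$), the verification is immediate. From the block form of $\theta^b$ derived in the proof of Claim~\ref{claim9.1} we have $\nabla^b_J e_3=0$, hence $\nabla^b_J\xi=\nabla^b_J\xi'=0$; combined with $I\xi=\xi'$ and $I\xi'=-\xi$, this yields $(\nabla^b_J I)(\xi)=(\nabla^b_J I)(\xi')=0$ without any computation.

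For the complementary real $4$-plane $V=\mathrm{span}_{\mathbb{R}}\{\varepsilon_1,\varepsilon_2,\varepsilon_3,\varepsilon_4\}$, I would write $\alpha=\sqrt{-1}\,\tilde{\alpha}$ with $\tilde{\alpha}$ a real $1$-form (recall $\beta_0$ is already real). Decomposing $\nabla^b_J e_1=\alpha e_1+\beta_0 e_2$ and $\nabla^b_J e_2=-\beta_0 e_1+\alpha e_2$ into real and imaginary parts through the defining relations $e_1=\tfrac{1}{\sqrt{2}}(\varepsilon_1-\sqrt{-1}\,\varepsilon_2)$ and $e_2=\tfrac{1}{\sqrt{2}}(\varepsilon_3-\sqrt{-1}\,\varepsilon_4)$ produces an explicit real $4\times 4$ connection matrix for $\nabla^b_J|_V$ with entries linear in $\tilde{\alpha}$ and $\beta_0$, of the schematic form $\nabla^b_J\varepsilon_1=\tilde{\alpha}\varepsilon_2+\beta_0\varepsilon_3$, $\nabla^b_J\varepsilon_2=-\tilde{\alpha}\varepsilon_1+\beta_0\varepsilon_4$, $\nabla^b_J\varepsilon_3=-\beta_0\varepsilon_1+\tilde{\alpha}\varepsilon_4$, $\nabla^b_J\varepsilon_4=-\beta_0\varepsilon_2-\tilde{\alpha}\varepsilon_3$. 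Substituting these, together with $I\varepsilon_1=-\varepsilon_3$, $I\varepsilon_2=-\varepsilon_4$, $I\varepsilon_3=\varepsilon_1$, $I\varepsilon_4=\varepsilon_2$, into $(\nabla^b_J I)(\varepsilon_i)=\nabla^b_J(I\varepsilon_i)-I(\nabla^b_J\varepsilon_i)$ gives $0$ for each $i$, as the $\tilde{\alpha}$- and $\beta_0$-contributions cancel in matched pairs.

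The whole argument is purely algebraic and the only delicate point is sign-tracking when translating between the complex frame $\{e_1,e_2,e_3\}$ and the real frame $\{\varepsilon_i,\xi,\xi'\}$ of Claim~\ref{claim9.2}. The structural reason the cancellations work is that the nontrivial $2\times 2$ block of $\theta^b$ takes the special form $\sqrt{-1}\,\tilde{\alpha}\,I_2+\beta_0 J_0$ with $J_0=\begin{pmatrix} 0 & 1\\ -1 & 0\end{pmatrix}\!\!$, so that $\nabla^b_J|_V$ commutes with the real endomorphism intertwining $J$ and $I$ on $V$; this is the infinitesimal manifestation of the holonomy reduction noted in the remark following~\eqref{eq:9.2}.
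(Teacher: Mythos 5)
Your proof is correct, and it takes a genuinely different route from the paper's. The paper proceeds by brute force in the unitary frame $u$ of $I$: it computes $\nabla^b u_1,\nabla^b u_2$ by converting from the $e$-frame, then separately computes the Chern torsion of $(\hat{M},g,I)$, forms $\hat{\gamma}$, assembles $\hat{\theta}^b=\hat{\theta}_1+\hat{\gamma}$, and observes that the two connection matrices agree. You instead invoke the defining characterization of the Bismut connection — the unique connection with $Dg=0$, $DI=0$ and totally skew-symmetric torsion $3$-form — and reduce everything to the single verification $\nabla^b_J I=0$, since metricity and skewness of the torsion $3$-form are properties of $(\nabla^b_J,g)$ alone and transfer verbatim when the complex structure is swapped. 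I checked your real connection matrix: from $\nabla^b e_1=\alpha e_1+\beta_0 e_2$, $\nabla^b e_2=-\beta_0 e_1+\alpha e_2$ with $\alpha=\sqrt{-1}\,\tilde{\alpha}$ one indeed gets $\nabla^b\varepsilon_1=\tilde{\alpha}\varepsilon_2+\beta_0\varepsilon_3$, $\nabla^b\varepsilon_2=-\tilde{\alpha}\varepsilon_1+\beta_0\varepsilon_4$, $\nabla^b\varepsilon_3=-\beta_0\varepsilon_1+\tilde{\alpha}\varepsilon_4$, $\nabla^b\varepsilon_4=-\beta_0\varepsilon_2-\tilde{\alpha}\varepsilon_3$, and all four evaluations of $(\nabla^b_J I)(\varepsilon_i)$ cancel, while the $\xi,\xi'$ directions are trivial from $\nabla^b e_3=0$. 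What your approach buys is economy and a conceptual explanation — it never touches the Chern torsion or the $\hat{\gamma}$-tensor of $(\hat{M},g,I)$, and it isolates the structural reason for the coincidence (the nontrivial block of $\theta^b$ lies in $\sqrt{-1}\,\mathbb{R}\,I_2\oplus\mathbb{R}\,J_0$, which commutes with the intertwiner of $J$ and $I$ on $V$). What the paper's computation buys is the explicit formula for $\hat{\theta}^b$ in the $I$-unitary frame, which it reuses immediately afterwards to identify the Vaisman structure and the Sasakian fibration; your argument would still require that computation to be done elsewhere for those later purposes.
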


\begin{proof}
Let us denote by $\nabla^b$, $\hat{\nabla}^b$ the Bismut connection of $(\hat{M},g,J)$ and $(\hat{M},g,I)$, respectively. We want to show that $\nabla^b=\hat{\nabla}^b$. Let $e$ be a special frame with $e_3$ global, and define $u_1$, $u_2$ as in the proof of Claim \ref{claim9.2}, then we have
\begin{equation} \label{eq:e-u} \begin{cases}
u_1=\frac{1}{2}(e_2 + \overline{e}_2 -\sqrt{-1}e_1 -\sqrt{-1}\overline{e}_1) \\
u_2=\frac{1}{2}(e_1 - \overline{e}_1 +\sqrt{-1}e_2 -\sqrt{-1}\overline{e}_2)
\end{cases}
\end{equation}
By the expression of the  matrix $\theta^b$ of $\nabla^b$ under $e$, we know that
$$ \nabla^be_3=0, \ \ \ \ \nabla^be_1 = \alpha e_1+\beta_0e_2, \ \ \ \ \nabla^be_2=-\beta_0e_1+\alpha e_2. $$
Using the fact $\overline{\alpha}=-\alpha$, $\overline{\beta}_0=\beta_0$, and (\ref{eq:e-u}), we get
\begin{eqnarray*}
 2\nabla^b u_1 & = & (-\beta_0e_1+\alpha e_2) + (-\beta_0\overline{e}_1-\alpha \overline{e}_2) -\sqrt{-1}(\alpha e_1+\beta_0e_2) -\sqrt{-1}( -\alpha \overline{e}_1+\beta_0\overline{e}_2) \\
 & = & \alpha(e_2-\overline{e}_2 -\sqrt{-1}e_1 +\sqrt{-1}\overline{e}_1 ) - \beta_0( e_1+\overline{e}_1 +\sqrt{-1}e_2 +\sqrt{-1}\overline{e}_2 ) \\
 & = & -\sqrt{-1}\alpha 2u_2 -\sqrt{-1}\beta_0 2u_1,
 \end{eqnarray*}
and $\nabla^bu_2$ can be computed similarly, so we get
\begin{equation} \label{eq:nablas-u}
\nabla^bu_1= -\sqrt{-1}\beta_0u_1 -\sqrt{-1}\alpha u_2, \ \ \ \ \nabla^bu_2= \sqrt{-1}\alpha u_1 -\sqrt{-1}\beta_0 u_2.
\end{equation}
On the other hand, under the unitary frame $u$ for $(\hat{M},g,I)$ with dual coframe $\psi$, the only non-zero components of the Chern torsion are $\hat{T}^1_{13}=\hat{T}^2_{23}=\frac{\sqrt{-1}}{2}$, hence the $\gamma$-tensor is given by
$$ \hat{\gamma} = \frac{\sqrt{-1}}{2} \begin{bmatrix} \varphi_3 + \overline{\varphi}_3 & 0 & -\overline{\psi}_1 \\  0 & \varphi_3 + \overline{\varphi}_3 & -\overline{\psi}_2 \\  -  \psi_1 &  - \psi_2 & 0 \end{bmatrix}  .$$
So by the expression for $\hat{\theta}_1$, we get the connection matrix $\hat{\theta}^b$ of $\hat{\nabla}^b $
$$ \hat{\theta}^b=\hat{\theta}_1 + \hat{\gamma} = \sqrt{-1} \begin{bmatrix} -\beta_0 & -\alpha & 0 \\  \alpha & -\beta_0 & 0 \\  0 &  0 & 0 \end{bmatrix}. $$
Here we used the fact that $\beta'=\beta_0+\frac{1}{2}(\varphi_3 + \overline{\varphi}_3)$. This means that
\begin{equation*}
\hat{\nabla}^b u_1 = -\sqrt{-1}\beta_0u_1 -\sqrt{-1}\alpha u_2, \ \ \ \ \hat{\nabla}^bu_2= \sqrt{-1}\alpha u_1 -\sqrt{-1}\beta_0 u_2, \ \ \ \ \hat{\nabla}^be_3=0.
\end{equation*}
Compare this with (\ref{eq:nablas-u}), we conclude that $\nabla^b=\hat{\nabla}^b$. This completes the proof of Claim \ref{claim9.3}.
\end{proof}

By the nice structure theorem of Ornea and Verbitsky \cite{OV} for compact Vaisman manifolds, we know that $\hat{M}$ is a smooth fibration over the circle $S^1$ with fiber being a compact Sasakian manifold $N$. On the universal covering level, since $\nabla \xi'=0$, we know that the universal cover $\tilde{M}$ of $M$ is a Riemannian product $\tilde{N}\times {\mathbb R}$, and $\tilde{N}$ is a Sasakian manifold, since $\xi$ is a unit vector field and $\sqrt{2}\nabla\xi$ gives the complex structure $I$ on the orthogonal complement $V={\xi}^{\perp}$ in $T\tilde{N}$. Since the $1$-form dual to $\xi$ is $\phi_5=\frac{1}{\sqrt{2}}(\varphi_3+\overline{\varphi}_3)$, we have
$$ d\phi_5 = \sqrt{2}\,d\varphi_3 = \sqrt{2}(\varphi_{2\bar{1}}- \varphi_{2\bar{1}}).$$
So $\phi_5 \wedge (d\phi_5)^2= 2\sqrt{2} (\varphi_3+\overline{\varphi}_3) \varphi_{1\bar{1}2\bar{2}} $ which is nowhere zero. The vector field $\xi$ is clearly a Killing field as $I$ is orthogonal with respect to $g$. By definition, this means that $\xi$ is the Reeb vector field and $(\tilde{N},g,\xi,\phi_5)$ is a Sasakian $5$-manifold.

Then we will prove the last statement in the middle type case of Theorem \ref{classification}.

\begin{claim}
Let $(M^3,g,J)$ be a compact non-K\"ahler threefold, which is balanced BTP of middle type. Then $(M^3,J)$ does not admit any pluriclosed metric.
\end{claim}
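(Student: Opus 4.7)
The plan is to construct a globally defined real $(1,1)$-form $P_3$ on $M$ whose $\partial\overline{\partial}$ is a negative imaginary multiple of a strongly positive $(2,2)$-form, and then to pair this identity against a hypothetical pluriclosed metric to obtain a sign contradiction.

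First I would check global definedness. Because Claim \ref{claim9.1} gives $L^{\otimes 2}\cong\mathcal{O}_M$, the local dual coframe element $\varphi_3$ is only well defined up to a sign $\varphi_3\mapsto -\varphi_3$, but this ambiguity drops out of $\varphi_3\wedge\overline{\varphi}_3$; hence
\begin{equation*}
P_3:=\sqrt{-1}\,\varphi_3\wedge\overline{\varphi}_3
\end{equation*}
descends to a globally defined real $(1,1)$-form on $M$. Writing $P_k:=\sqrt{-1}\,\varphi_k\wedge\overline{\varphi}_k$ for $k=1,2$ and using $P_k\wedge P_k=0$ together with $\omega=P_1+P_2+P_3$, the $(2,2)$-form $P_1\wedge P_2=\tfrac{1}{2}(\omega-P_3)^2$ is also globally defined on $M$, and it is strongly positive (a wedge of two non-negative $(1,1)$-forms).

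Next, I would exploit the third equation of \eqref{eq:9.1}, which reads $d\varphi_3=\varphi_2\wedge\overline{\varphi}_1-\varphi_1\wedge\overline{\varphi}_2$ and is pure of type $(1,1)$. Hence, in any special frame, $\partial\varphi_3=0$ and $\overline{\partial}\varphi_3=\Omega$, where $\Omega:=\varphi_2\wedge\overline{\varphi}_1-\varphi_1\wedge\overline{\varphi}_2$ is real and satisfies $\partial\Omega=0$ and $\partial\overline{\varphi}_3=\Omega$. A short direct computation gives
\begin{equation*}
\partial\overline{\partial}P_3=\sqrt{-1}\,\Omega\wedge\Omega=-2\sqrt{-1}\,P_1\wedge P_2,
\end{equation*}
the last equality coming from $\Omega\wedge\Omega=-2\,\varphi_1\wedge\varphi_2\wedge\overline{\varphi}_1\wedge\overline{\varphi}_2=-2\,P_1\wedge P_2$. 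Both sides are globally defined on $M$, so this identity is global.

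Now suppose for contradiction that $\omega'$ is a pluriclosed metric on $(M^3,J)$, i.e.\ $\partial\overline{\partial}\omega'=0$. Two applications of Stokes' theorem on the closed manifold $M$ yield
\begin{equation*}
\int_M\omega'\wedge\partial\overline{\partial}P_3=\int_M\partial\overline{\partial}\omega'\wedge P_3=0.
\end{equation*}
On the other hand, writing $\omega'=\sqrt{-1}\sum_{j,k}a_{jk}\,\varphi_j\wedge\overline{\varphi}_k$ in any special frame with $A=(a_{jk})$ Hermitian positive definite, only the $(j,k)=(3,3)$ term contributes to $\omega'\wedge P_1\wedge P_2$, giving the pointwise identity $\omega'\wedge P_1\wedge P_2=a_{33}\,\omega^3/6$. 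The function $a_{33}=g'(e_3,\overline{e}_3)$ is invariant under $\varphi_3\mapsto-\varphi_3$ and under the allowed unitary rotations of $(e_1,e_2)$, hence it descends to a globally defined strictly positive function on $M$. Therefore $\int_M\omega'\wedge P_1\wedge P_2>0$, contradicting the vanishing above. The main subtlety is verifying the global definedness of $P_3$, of the identity $\partial\overline{\partial}P_3=-2\sqrt{-1}\,P_1\wedge P_2$, and of the coefficient $a_{33}$ in the presence of the possibly nontrivial square root of $\mathcal{O}_M$ given by $L$; once that is secured, the argument reduces to a clean positivity-versus-cancellation contradiction.
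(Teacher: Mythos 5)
Your proposal is correct and is essentially the paper's own argument in slightly repackaged form: the identity $\partial\overline{\partial}(\sqrt{-1}\,\varphi_3\wedge\overline{\varphi}_3)=-2\sqrt{-1}\,P_1\wedge P_2$ is exactly the paper's observation that $d\varphi_3\wedge d\overline{\varphi}_3=2\,\varphi_{1\bar{1}2\bar{2}}$ is a global $(2,2)$-form, and your double integration by parts against $\omega'$ is the paper's explicit Stokes computation with the exact forms $d\varphi_3\,\overline{\varphi}_3\,\tilde{\omega}$ and $\varphi_3\,\overline{\varphi}_3\,\partial\tilde{\omega}$. Both proofs then conclude by the same strict positivity of $\int_M a_{3\bar{3}}\,\omega^3$.
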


\begin{proof}
Assume on the contrary that there is a pluriclosed metric $\tilde{g}$ on $(M^3,J)$. Its K\"ahler form $\tilde{\omega}$ satisfies $\partial \overline{\partial}\tilde{\omega}=0$. Locally let $e$ be a special frame and $\varphi$ its dual coframe. As we have seen before,  $e_3$ and $\varphi_3$ are determined up to a sign, so in particular, $\varphi_3\overline{\varphi}_3$ is globally defined on $M$. Since $d\varphi_3=\varphi_{2\bar{1}}-\varphi_{1\bar{2}}=d\overline{\varphi}_3$, we know that
 $\ d\varphi_3 \, d\overline{\varphi}_3 = 2\varphi_{1\bar{1}2\bar{2}}\ $ is a global $(2,2)$-form on $M$. We have
\begin{eqnarray*}
 d(d\varphi_3\,\overline{\varphi}_3\,\tilde{\omega}) & = & d\varphi_3 \, d\overline{\varphi}_3 \,\tilde{\omega} -  d\varphi_3 \, \overline{\varphi}_3 \,\partial \tilde{\omega} \\
d(\varphi_3\,\overline{\varphi}_3\,\partial \tilde{\omega}) & = & d\varphi_3 \, \overline{\varphi}_3 \,\partial \tilde{\omega} -  \varphi_3 \, d\overline{\varphi}_3 \,\partial \tilde{\omega} + \varphi_3\,\overline{\varphi}_3\,\overline{\partial} \partial \tilde{\omega}
\end{eqnarray*}
Note that the middle term on the right hand side of the second equality is zero by type consideration. So if we add up these two equalities and integrate it over $M$, we would get
$$ 0 = \int_M d\varphi_3 \, d\overline{\varphi}_3 \,\tilde{\omega} = \int_M 2 \varphi_{1\bar{1}2\bar{2}} \,\tilde{\omega} = \int_M 2 \sqrt{-1} \,\tilde{g}_{3\bar{3}} \,\varphi_{1\bar{1}2\bar{2}3\bar{3}} = - \frac{1}{3} \int_M  \tilde{g}_{3\bar{3}}\,\omega^3 <0, $$
a contradiction. Here we have written  $\,\tilde{\omega} = \sqrt{-1} \sum_{i,j=1}^3 \tilde{g}_{ij} \varphi_i \wedge \overline{\varphi}_j\,$ under the frame $e$, so the matrix $(\tilde{g}_{i\bar{j}})$ is positive definite. The contradiction means that $M^3$ cannot admit any pluriclosed metric. This completes the proof of the claim.
\end{proof}

On the balanced {\em BTP} threefold $(M^3,g,J)$, we know from Theorem \ref{theorem1.1} the {\em BTP} condition indicates $R^b_{ijk\overline{\ell}}=0$ and $R^b_{i\overline{j}k\overline{\ell}}=R^b_{k\overline{\ell}i\overline{j}}$. Then it follows from (\ref{eq:9.2}) that the $(0,2)$ and $(2,0)$-parts of $d\alpha$ and $d\beta_0$ are zero, and there are real-valued local smooth functions $x$, $y$ and $z$ such that
\begin{eqnarray} \label{eq:9.5}
d\alpha & = & x (\varphi_{1\overline{1}} + \varphi_{2\overline{2}}) + \sqrt{-1}y (\varphi_{2\overline{1}} - \varphi_{1\overline{2}} ), \\
d\beta_0 & = &  -\sqrt{-1}y  (\varphi_{1\overline{1}} + \varphi_{2\overline{2}}) +z (\varphi_{2\overline{1}} - \varphi_{1\overline{2}} ). \label{eq:9.6}
\end{eqnarray}
It yields from (\ref{eq:9.3}) that $x=z+2$, hence
\begin{equation} \label{eq:9.7}
d\beta =  -\sqrt{-1}y  (\varphi_{1\overline{1}} + \varphi_{2\overline{2}}) +x (\varphi_{2\overline{1}} - \varphi_{1\overline{2}} ).
\end{equation}

Note that the Ricci form of the Chern curvature of $\omega_{J}$ is given by $\mbox{Ric}(\omega_{J})=2d\alpha$, which indicates $x=\frac{1}{4}\mbox{Scal}(\omega_{J})$ where $\mbox{Scal}(\omega_{J})$ is the Chern scalar curvature, hence $x$ is a global function. On the other hand, since $\varphi_3$ is determined up to a sign, so $\varphi_{3\bar{3}}$, hence $\sqrt{-1}( \varphi_{1\overline{1}} + \varphi_{2\overline{2}})=\omega_{J} - \sqrt{-1}\varphi_{3\bar{3}}$, are globally defined on $M^3$. The latter can be regarded as the K\"ahler form of $J\big|_{\xi^{\bot}}$. As a consequence, $y\,d\varphi_3 = -\sqrt{-1}(d\alpha -x(\varphi_{1\bar{1}}+ \varphi_{2\bar{2}}))$ is globally defined on $M$. This means that $y$ and $d\varphi_3= \varphi_{2\bar{1}}-\varphi_{1\bar{2}}$ are determined up to a sign. When $\varphi_3$ is not globally defined on $M$, by lifting to $\hat{M}$, we know that both $y$ and $ \varphi_{2\bar{1}}-\varphi_{1\bar{2}}$ are  globally defined. On $\hat{M}^3$, $\varphi_{2\overline{1}} - \varphi_{1\overline{2}}$ is the K\"ahler form of $I \big|_{\xi^{\bot}}$, while $y=\frac{1}{2}\mbox{tr}_{\omega_J}\,\sqrt{-1}d \beta$. Since $\varphi_{1\overline{1}} + \varphi_{2\overline{2}}$ and $\varphi_{2\overline{1}} - \varphi_{1\overline{2}}$ are $d$-closed, by taking the exterior differentiation of (\ref{eq:9.5}) or (\ref{eq:9.7}), we get
\begin{equation}  \label{eq:9.9}
x_3=y_3=0, \ \ \ \ \ x_1=-\sqrt{-1}y_2, \ \ \ \ \ x_2=\sqrt{-1}y_1.
\end{equation}
Here $x_i=e_i(x)$, $y_i=e_i(y)$. We suspect that $x$ and $y$ must be constants, but at this point we do not know how to prove it from the above system of equations, where the usual Bochner technique does not seem to apply, and more geometric information might be needed.

\vspace{0.3cm}

\subsection{Examples of Lie-Hermitian threefolds}\label{LH}
In this subsection, we will deal with the special case when the balanced {\em BTP} threefold $(M^3,g)$ of the middle type is assumed to be a {\em Lie-Hermitian} manifold, meaning that its universal cover is a (connected and simply-connected) Lie group $G$ equipped with a left-invariant complex structure and a compatible left-invariant metric.

\begin{proof}[\bf{Proof of Theorem \ref{LH_BBTP}}]
Let us  start from a global, unitary, left-invariant frame $e$ on $G$. An appropriate constant unitary change of the frame enables us to assume that $e$ is special. Under our assumption that $g$ is balanced {\em BTP} of the middle type, the formulae for connection matrices $\theta^b$ and $\theta$ are all valid.

Since $e$ is left-invariant, the $1$-form $\alpha$ and $\beta$ are linear combinations of $\varphi$ and $\overline{\varphi}$ with constant coefficients:
\begin{equation*}
\alpha = \sum_i \big( a_i\varphi_i - \overline{a}_i \overline{\varphi}_i \big) , \ \ \ \ \ \beta  = \sum_i \big( b_i\varphi_i + \overline{b}_i \overline{\varphi}_i \big).
\end{equation*}
The structure equation (\ref{eq:9.1}) implies
\begin{eqnarray*}
(d\alpha)^{2,0} & = & (a_1b_1+a_2b_2)\varphi_{12} + (a_1a_3+a_2b_3)\varphi_{13} + (a_2a_3-a_1b_3)\varphi_{23} \\
(d\beta)^{2,0} & = & (a_2b_1-a_1b_2 + b_1^2 + b_2^2)\varphi_{12} + (a_3b_1+b_2 b_3)\varphi_{13} + (a_3b_2-b_1 b_3)\varphi_{23} \\
(d\alpha)^{1,1} & = & 2(\mbox{Re}(a_2\overline{b}_1) -|a_1|^2)\varphi_{1\overline{1}} - 2(\mbox{Re}(a_1\overline{b}_2)+|a_2|^2)\varphi_{2\overline{2}} + P\varphi_{2\overline{1}}+ \overline{P}\varphi_{1\overline{2}} +\\
& & + (a_2\overline{b}_3 - a_1\overline{a}_3)\varphi_{1\overline{3}} - (a_2\overline{a}_3 + a_1\overline{b}_3)\varphi_{2\overline{3}} + (\overline{a}_2b_3 - \overline{a}_1a_3)\varphi_{3\overline{1}} - (\overline{a}_2a_3 + \overline{a}_1b_3)\varphi_{3\overline{2}} \\
(d\beta)^{1,1} & = & 2\sqrt{-1} \,\mbox{Im}(a_1\overline{b}_1- b_1\overline{b}_2) \varphi_{1\overline{1}} + 2\sqrt{-1} \,\mbox{Im}(a_2\overline{b}_2- b_1\overline{b}_2)\varphi_{2\overline{2}} + Q\varphi_{2\overline{1}}-\overline{ Q}\varphi_{1\overline{2}} +\\
& & + (b_2\overline{b}_3 - b_1\overline{a}_3)\varphi_{1\overline{3}} - (b_2\overline{a}_3 + b_1\overline{b}_3)\varphi_{2\overline{3}} - (\overline{b}_2b_3 - \overline{b}_1a_3)\varphi_{3\overline{1}} + (\overline{b}_2a_3 + \overline{b}_1b_3)\varphi_{3\overline{2}}
\end{eqnarray*}
where
\begin{eqnarray*}
P & = & a_3 - \overline{a}_3 - a_1\overline{b}_1- 2a_2 \overline{a}_1  +b_2\overline{a}_2  \\
Q & = & b_3 + \overline{b}_3 - |b_1|^2 - |b_2|^2 + a_2\overline{b}_1- b_2 \overline{a}_1.
\end{eqnarray*}
Then equalities (\ref{eq:9.5}) and (\ref{eq:9.7}) indicate
\begin{eqnarray}
\label{eq:9.10} && a_1b_1+a_2b_2\,=\,0, \ \ \ \ \    a_2b_1-a_1b_2 + b_1^2 + b_2^2 \,=\,0,  \\
\label{eq:9.11} &&  a_1 a_3+a_2 b_3 \,=\, -a_1 b_3+a_2 a_3 \,=\, 0 ,\\
\label{eq:9.12} &&  a_3b_1+b_2 b_3 \,=\, -b_1 b_3+b_2 a_3 \,=\, 0 ,\\
\label{eq:9.13} && x \,=\, 2\mbox{Re}(a_2\overline{b}_1) - 2 |a_1|^2 \,=\, -2\mbox{Re}(a_1\overline{b}_2)-2|a_2|^2 \,=\, Q, \\
\label{eq:9.14} && -y \,=\, \sqrt{-1}\,P \,=\, 2\mbox{Im}(a_1\overline{b}_1- b_1\overline{b}_2) \,=\, 2\mbox{Im}(a_2\overline{b}_2- b_1\overline{b}_2).
\end{eqnarray}

\vspace{0.15cm}

\noindent {\bf Case A: } $a_3^2+b_3^2 \neq 0$.

\vspace{0.15cm}

In this case, by (\ref{eq:9.11}) and (\ref{eq:9.12}), we get $a_1=a_2=0 $ and $b_1=b_2=0$. By (\ref{eq:9.13}) and (\ref{eq:9.14}) we get $x=y=a_3-\overline{a}_3=b_3+\overline{b}_3=0$. So in this case we have $\alpha =a_3(\varphi_3 - \overline{\varphi}_3)$, $\beta =b_3(\varphi_3 - \overline{\varphi}_3)$, where $a_3\in {\mathbb R}$ and $b_3 \in \sqrt{-1}{\mathbb R}$, hence it yields that
\begin{equation*}
\left\{ \begin{array}{lll} d\varphi_1 & = & (a_3\varphi_1 -b_3\varphi_2) \wedge (\varphi_3 - \overline{\varphi}_3 ), \\
d\varphi_2 & = & (b_3\varphi_1 +a_3\varphi_2) \wedge (\varphi_3 - \overline{\varphi}_3 ), \\
d\varphi_3 & = & \varphi_{2\overline{1}} - \varphi_{1\overline{2}}.
\end{array} \right.
\end{equation*}
By a unitary change of $(\varphi_1, \varphi_2, \varphi_3)$ into $( \frac{1}{\sqrt{2}} (\varphi_1 +\sqrt{-1}\varphi_2), \frac{1}{\sqrt{2}} (\sqrt{-1}\varphi_1 +\varphi_2),\varphi_3)$, the above is equivalent to the following
\begin{equation}
A_{a,b}: \ \ \
\left\{ \begin{array}{lll} d\varphi_1 & = & a\,\varphi_1  \wedge (\varphi_3 - \overline{\varphi}_3 ), \\
d\varphi_2 & = & b\,\varphi_2 \wedge (\varphi_3 - \overline{\varphi}_3 ), \\
d\varphi_3 & = & \sqrt{-1}\,(\varphi_{2\overline{2}} - \varphi_{1\overline{1}}).
\end{array} \right.
\end{equation}
Here $a, b\in {\mathbb R}$ are given by $a=a_3+\sqrt{-1}b_3$, $b=a_3-\sqrt{-1}b_3$. We will denote this Lie-Hermitian threefold by $A_{a,b}$, where $a,b$ are arbitrary real numbers. We have $x=y=0$ in this case, so the Ricci form $\mbox{Ric}(\omega_{J})=0$. Since $d(\varphi_{123})=-2\alpha \wedge \varphi_{123}$, we know that $A_{a,b}$ will have an invariant nowhere-zero holomorphic $3$-form (hence any quotient will have trivial canonical line bundle) if and only if $a_3=0$, or equivalently, if $a=-b$. When $a=b=0$, $A_{0,0}$ is clearly isometric to the nilmanifold $N^3$ given by (\ref{N_3}).

\vspace{0.15cm}

\noindent {\bf Case B: } $a^2_3+b_3^2 = 0$, where $(a_3,b_3) \neq 0$.

\vspace{0.15cm}
In this cases, it follows that $a_3 =\varepsilon i b_3$, where $i= \sqrt{-1}$, $\varepsilon=\pm 1$ and $b_3 \neq 0$. By equalities (\ref{eq:9.10}) through (\ref{eq:9.14}), we get $a_1 = \varepsilon i a_2$, $b_1 = \varepsilon i b_2$, $b_3 + \overline{b}_3 = 2 (|b_2|^2-|a_2|^2)$, and
\[x= 2\varepsilon \mbox{Im}(a_2\overline{b}_2) - 2|a_2|^2, \quad y= -2 \mbox{Im}(a_2 \overline{b}_2) +2 \varepsilon |b_2|^2. \]
Let $u=a_2-\varepsilon i b_2$, $v =a_2+\varepsilon i b_2$ and $w = 2\varepsilon i b_3$. By a unitary change of $(\varphi_1, \varphi_2, \varphi_3)$ into $( \frac{1}{\sqrt{2}} (\varphi_1 + \varepsilon i\varphi_2), \frac{1}{\sqrt{2}} (\varepsilon i \varphi_1 +\varphi_2),\varphi_3)$, we get the Lie Hermitian manifold
\begin{equation}
B_{u,v,w}^{\varepsilon}: \ \ \
\left\{ \begin{array}{lll} d\varphi_1 & = & \sqrt{2}v \,\varphi_{12} - \sqrt{2}\overline{v}\,\varphi_{1\bar{2}}
+ w\,\varphi_{13} - \overline{w}\,\varphi_{1\bar{3}}, \\
d\varphi_2 & = & - \sqrt{2}\overline{u}\,\varphi_{2\bar{2}}, \\
d\varphi_3 & = & \varepsilon \sqrt{-1}\,(\varphi_{2\bar{2}} - \varphi_{1\bar{1}}).
\end{array} \right.
\end{equation}
It is easy to see that $u,v,w$ satisfy $w-\overline{w}= \varepsilon i (|u-v|^2 - |u+v|^2)$, and
\[ x= \frac{1}{2}(|v|^2-|u|^2 - |u+v|^2), \quad y = \frac{\varepsilon}{2}(|u|^2 + |u-v|^2 - |v|^2). \]
It is clear that $B_{0,0,0}^{\varepsilon}$ is isomorphic to the nilmanifold $N^3$ given by \eqref{N_3}.

\vspace{0.2cm}

Next let us deal with the remaining case $a_3=b_3=0$. If $b_1=b_2=0$, then $Q=0$, hence by (\ref{eq:9.13}) we get $x=0$ and $a_1=a_2=0$. This gives us $\alpha=\beta=0$ and we end up with the nilmanifold $N^3$. Hence we may assume $(b_1,b_2)\neq (0,0)$, which splits into two cases below, depending on whether $b^2_1+b_2^2$ is zero or not.

\vspace{0.15cm}

\noindent {\bf Case C:} $(a_3,b_3)=(0,0)$ and $\ b_1^2+b_2^2 \neq 0$.

\vspace{0.15cm}

In this case, the equality (\ref{eq:9.10}) gives us
$$ a_1=b_2, \ \ \ a_2=-b_1.$$
Hence, if we let $u=-\overline{b}_2$ and $v=-\overline{b}_1$, which are complex numbers satisfying $|u|^2+|v|^2>0$, we can get the Lie-Hermitian manifold
\begin{equation}
C_{u,v}: \ \ \
\left\{ \begin{array}{lll} d\varphi_1 & = &\  \, u (\varphi_{1\overline{1}} + \varphi_{2\overline{2}} ) + v (\varphi_{2\overline{1}} - \varphi_{1\overline{2}}),  \\
d\varphi_2 & = & -v (\varphi_{1\overline{1}} + \varphi_{2\overline{2}}) + u (\varphi_{2\overline{1}} - \varphi_{1\overline{2}}), \\
d\varphi_3 & = & \ \varphi_{2\overline{1}} - \varphi_{1\overline{2}}.
\end{array} \right.
\end{equation}
Then it yields that $x=-2|u|^2-2|v|^2<0$ and $y=4\mbox{Im}(u\overline{v})$. In particular, the Chern scalar curvature $x$ is a negative constant. The complex structure is abelian. Clearly, $C_{0,0}$ is isometric to $N^3$.

\vspace{0.15cm}

\noindent {\bf Case D: } $(a_3,b_3)=(0,0)$ and $b_1^2+b_2^2=0$, where $(b_1,b_2)\neq (0,0)$.

\vspace{0.15cm}

This means that $a_3=b_3=0$ and $b_2=\varepsilon i b_1\neq 0$, where $i=\sqrt{-1}$ and $\varepsilon =\pm 1$. By equalities (\ref{eq:9.10}) through (\ref{eq:9.14}), we get $a_2=\varepsilon i a_1$, $a_1=\rho b_1$ for some $|\rho |=1$, and
$$ x = -2|b_1|^2(1+\varepsilon \mbox{Im}(\rho )), \quad y=\varepsilon x. $$
Let $u=\overline{b}_1$ and we get a Lie-Hermitian manifold
\begin{equation}
D_{u,\rho}^{\varepsilon }: \ \ \
\left\{ \begin{array}{lll} d\varphi_1 & = &\  \, \overline{u} (1+\rho \varepsilon i) \varphi_{12} + u \{  (-\overline{\rho } \varphi_{1\overline{1}} + \varepsilon i \varphi_{2\overline{2}} ) + ( - \varphi_{2\overline{1}} + \overline{\rho } \varepsilon i \varphi_{1\overline{2}}) \} , \\
d\varphi_2 & = &\  \, \overline{u} \varepsilon i(1+\rho \varepsilon i) \varphi_{12} + u \{  ( \varphi_{1\overline{1}} + \overline{\rho }\varepsilon i \varphi_{2\overline{2}} ) + ( -\overline{\rho } \varphi_{2\overline{1}} -  \varepsilon i \varphi_{1\overline{2}}) \}  ,\\
d\varphi_3 & = & \ \varphi_{2\overline{1}} - \varphi_{1\overline{2}},
\end{array} \right.
\end{equation}
where $u, \rho \in {\mathbb C}$, $|\rho |=1$, and $\varepsilon =\pm 1$. The complex structure of $D_{u,\rho }^{\varepsilon}$ is non-abelian except when $\rho =\varepsilon i$, in which case we have $D^{\varepsilon}_{u,\varepsilon i} = C_{u\varepsilon i, -u}$. The Chern scalar curvature $x$ of $D_{u,\rho }^{\varepsilon}$ is always non-positive. When $u=0$, we get the complex nilmanifold $N^3$ again.

From the discussion above, we know any Lie-Hermitian threefold which is balanced {\em BTP} of middle type, it must be a member of one of these four families. On the other hand, it is easy to check that each of the above four families of Lie-Hermitian manifolds are balanced and {\em BTP}. This completes the proof of the theorem.
\end{proof}

\vspace{0.3cm}

\subsection{Generalization to higher dimensions}

We have seen in the rank$B=2$ case of Theorem \ref{classification} that balanced {\em BTP} threefolds $(M^3,g, J)$ admits a covering $\hat{M}$ of degree at most $2$, such that on $(\hat{M},g)$ there exists another orthogonal complex structure $I$, so that $(\hat{M},g, I)$ is Vaisman. The universal cover $\tilde{M}$ splits as a Riemannian product $\tilde{N}^5\times {\mathbb R}$ where $\tilde{N}^5$ is a Sasakian $5$-manifold. The complex structures $I$ and $J$ are also closely related and satisfies the condition $IJ=JI$. Therefore, the classification of compact balanced {\em BTP} threefolds of middle type amounts to understanding this special types Vaisman threefolds, or equivalently, this special types of Sasakian $5$-manifolds with a `bi-Hermitian' structure, namely, it admits two commutative orthogonal complex structures simultaneously.

In this subsection, we want to fully understand this special `bi-Hermitian' structure. It turns out the phenomenon persists in higher dimensions as well, which will give us examples of balanced {\em BTP} manifolds in all dimensions $n\geq 3$. The $5$-dimensional manifold $\tilde{N}^5$ that we encountered before motivates us with the following definition.

\begin{definition}
The manifold $(N^{2m+1},g,\xi,J)$ is said to be an \textit{abelian Sasakian} manifold, if $(N^{2m+1},g,\xi)$ is a Sasakian manifold as defined in Definition \ref{SSK}, which admits an endomorphism $J$ of the tangent bundle $TN$, such that
\begin{enumerate}
\item The Killing vector field $\xi$ is in the kernel of $J$.
\item The endomorphism $J$ defines an integrable orthogonal complex structure on the distribution $H=\xi^{\bot}$ and the K\"ahler form $\omega_J$, defined as $\omega_J(X,Y) = g(JX,Y)$, is $d$-closed.
\item The space $T^{1,0}_{I}$ is $J$-invariant and the eigenspaces of $\sqrt{-1}$ and $-\sqrt{-1}$ of $J\big|_{T^{1,0}_{I}}$ have the same dimension, denoted by $n$,
where the space $T^{1,0}_{I}$ is induced from the Sasakian structure
 \[TN \otimes_{\mathbb{R}} \mathbb{C} = \langle \xi \rangle \oplus T^{1,0}_{I} \oplus T^{0,1}_{I},\quad I = \frac{1}{c}\nabla \xi.\]
\end{enumerate}
Clearly we must have $m=2n$ here, so the dimension of an abelian Sasakian manifold is necessarily $4n+1$, and the two complex structures $I$ and $J$ on the distribution $H=\xi^{\bot}$ satisfy \[I \circ J = J \circ I,\] which justifies the name of abelian Sasakian.
\end{definition}

\begin{proposition}\label{eq_AB-SSK}
Let $(N^{2m+1},g,\xi)$ be a Sasakian manifold and $J$ be an endomorphism of the tangent bundle $TN$, which is an integrable orthogonal complex structure on the distribution $\xi^{\bot}$ and satisfies $J(\xi)=0$. Then $J$ defines an abelian Sasakian structure on $(N^{2m+1},g,\xi)$ if and only if the K\"ahler form $\omega_{I}$ of $I = \frac{1}{c} \nabla \xi$ decomposes into two $d$-closed $I$-semi-positive $2$-forms $\omega_1$ and $\omega_2$, such that the space $T^{1,0}_{I}$ admits a $g$-orthogonal decomposition of two kernel distributions of $\omega_1$ and $\omega_2$ with the same rank $n$
\[T^{1,0}_{I} = \mbox{Ker}(\omega_1) \oplus \mbox{Ker}(\omega_2).\]
Here an $I$-semi-positive $2$-form $\omega$ on $N$ is defined to satisfy $\omega(X,Y)=\omega(IX,IY)$ for any $X,Y \in TN$ and $\omega(Z,\overline{Z}) \geq 0$ for any $Z \in T^{1,0}_{I}$, and the kernel distribution $\mbox{Ker}(\omega)$ of an $I$-semi-positive $2$-form $\omega$ is defined as $\{Z \in T^{1,0}_{I} \big| \omega(Z,\overline{Z}) =0\}$.
\end{proposition}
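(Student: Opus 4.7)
The plan is to exploit the simultaneous $g$-orthogonal diagonalization of $I$ and $J$ on the contact distribution $H=\xi^{\perp}$, which becomes available precisely when they commute. Since $I$ and $J$ are both $g$-orthogonal with $I^{2}=J^{2}=-\mathrm{Id}$ on $H$, condition (3) in the abelian Sasakian definition (i.e.\ $J$-invariance of $T^{1,0}_{I}$) is equivalent to $IJ=JI$ on $H$. Under this commutativity one obtains the joint eigenspace decomposition
\[
H\otimes\mathbb{C}=V_{++}\oplus V_{+-}\oplus V_{-+}\oplus V_{--},
\]
where $V_{\varepsilon_{1}\varepsilon_{2}}$ is the joint $(\varepsilon_{1}\sqrt{-1},\varepsilon_{2}\sqrt{-1})$-eigenspace of $(I,J)$. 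In particular $T^{1,0}_{I}=V_{++}\oplus V_{+-}$, and these summands are $g$-orthogonal: for $Z\in V_{++}$ and $W\in V_{+-}$, the $g$-skewness of $J$ forces $\sqrt{-1}\,g(Z,\bar W)=g(JZ,\bar W)=-g(Z,J\bar W)=-\sqrt{-1}\,g(Z,\bar W)$. Recall also that $d\phi=2c\,\omega_{I}$ for the contact form $\phi$ dual to $\xi$, which yields $d\omega_{I}=0$ automatically on any Sasakian manifold.

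For the forward direction, assume $(N,g,\xi,J)$ is abelian Sasakian and set
\[
\omega_{1}:=\tfrac{1}{2}(\omega_{I}+\omega_{J}),\qquad \omega_{2}:=\tfrac{1}{2}(\omega_{I}-\omega_{J}).
\]
Closedness follows from $d\omega_{I}=d\omega_{J}=0$, and $I$-invariance of $\omega_{J}$ follows from $g(JIX,IY)=g(IJX,IY)=g(JX,Y)$ by using $IJ=JI$ together with the $g$-orthogonality of $I$. A pointwise eigenvalue computation on $V_{++}$ gives $\omega_{J}(Z,\bar Z)=\sqrt{-1}\,g(Z,\bar Z)=\omega_{I}(Z,\bar Z)$, so $\omega_{1}(Z,\bar Z)=\omega_{I}(Z,\bar Z)>0$ and $\omega_{2}(Z,\bar Z)=0$; symmetrically on $V_{+-}$. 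Consequently $\omega_{1},\omega_{2}$ are $I$-semi-positive with $\ker(\omega_{1})=V_{+-}$, $\ker(\omega_{2})=V_{++}$, and the equal rank $n=\dim_{\mathbb{C}}V_{++}=\dim_{\mathbb{C}}V_{+-}$ is exactly condition (3) in the definition of abelian Sasakian.

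For the backward direction, assume the decomposition $\omega_{I}=\omega_{1}+\omega_{2}$ with $V_{1}:=\ker(\omega_{1})$, $V_{2}:=\ker(\omega_{2})$ of equal rank $n$ and $g$-orthogonal. Set $\tilde{\omega}_{J}:=\omega_{1}-\omega_{2}$, which is $d$-closed and $I$-invariant, and let $\tilde J$ be the skew-symmetric $g$-endomorphism of $TN$ with $\tilde J\xi=0$ and $g(\tilde J X,Y)=\tilde{\omega}_{J}(X,Y)$. On $V_{2}$, $\tilde{\omega}_{J}(Z,\bar Z)=\omega_{I}(Z,\bar Z)=\sqrt{-1}\,g(Z,\bar Z)$, while on $V_{1}$, $\tilde{\omega}_{J}(Z,\bar Z)=-\sqrt{-1}\,g(Z,\bar Z)$; combined with the $g$-orthogonality of the splitting, this pins down $\tilde J$ as having eigenvalue $+\sqrt{-1}$ on $V_{2}$ and $-\sqrt{-1}$ on $V_{1}$. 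Thus $\tilde J^{2}=-\mathrm{Id}$ on $H$, $\tilde J$ is $g$-orthogonal, and commutes with $I$ by construction. It remains to identify $\tilde J$ with the given $J$: once this is done, $\omega_{J}=\tilde{\omega}_{J}$ is closed, $T^{1,0}_{I}$ decomposes as $V_{2}\oplus V_{1}$ into the $\pm\sqrt{-1}$-eigenspaces of $J$, and the equal rank is built into the hypothesis, so all three abelian Sasakian conditions hold.

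The main obstacle is this identification $\tilde J=J$: a priori different decompositions could produce different $\tilde J$'s, so the hypothesis that $J$ is already an integrable $g$-orthogonal complex structure on $H$ must be used to pin down the $(1,0)$-distribution of $J$ on $H\otimes\mathbb{C}$ as $\bar V_{1}\oplus V_{2}$. Since an orthogonal almost complex structure on $H$ is determined by its $(1,0)$-distribution, this will force $\tilde J=J$, and the integrability of $J$ will then match the involutivity of $\bar V_{1}\oplus V_{2}$ via Newlander--Nirenberg, which is automatic from the hypothesis.
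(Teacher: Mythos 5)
Your forward direction is correct and is essentially the paper's own argument written in invariant language: the authors also take $\omega_1=\tfrac{1}{2}(\omega_I+\omega_J)$ and $\omega_2=\tfrac{1}{2}(\omega_I-\omega_J)$ (in their adapted unitary frame these appear as $\sqrt{-1}\,^{t}\!\zeta\,\overline{\zeta}$ and $\sqrt{-1}\,^{t}\!\zeta^{+n}\,\overline{\zeta}^{+n}$), and they obtain $d\omega_1=d\omega_2=0$ from $d^2\phi=0$ together with $d\omega_J=0$, exactly as you do. Your pointwise eigenvalue computations for orthogonality of $V_{++}$ and $V_{+-}$, for semi-positivity, and for the identification of the kernels are a clean substitute for their moving-frame manipulations, and the reformulation of condition (3) as $IJ=JI$ on $H$ plus equal eigenspace dimensions is fine.

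The backward direction, however, has a genuine gap, and it sits exactly where you place "the main obstacle." The hypothesis of the converse is a statement about $(N,g,\xi)$ alone: it asserts only that \emph{some} decomposition $\omega_I=\omega_1+\omega_2$ with the listed properties exists, and it never mentions $J$. So nothing can "pin down" the $(1,0)$-distribution of the pre-given $J$ as $\overline{V}_1\oplus V_2$; the kernels are determined by a chosen decomposition and the eigenspaces by $J$, and these are a priori unrelated. Concretely, on any abelian Sasakian manifold the right-hand side of the equivalence holds, yet the endomorphism $J:=-I$ on $H$ (extended by $J\xi=0$) is an integrable orthogonal complex structure on $\xi^{\perp}$ whose restriction to $T^{1,0}_I$ is $-\sqrt{-1}\,\mathrm{id}$, so its two eigenspaces have dimensions $2n$ and $0$ and condition (3) fails; hence the literal implication "decomposition exists $\Rightarrow$ this particular $J$ is abelian Sasakian" is false, and the identification $\tilde{J}=J$ that you defer cannot be carried out by any argument. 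What the paper actually proves in the converse is precisely your construction of $\tilde{J}$ and nothing more: it \emph{defines} the complex structure by declaring $\mathrm{Ker}(\omega_2)$ and $\mathrm{Ker}(\omega_1)$ to be the $\sqrt{-1}$- and $(-\sqrt{-1})$-eigenspaces, i.e., it establishes existence of an abelian Sasakian structure rather than a property of the given $J$. You should drop the final identification step and state the converse as an existence claim; if you want to be more careful than the paper, you then still owe a verification that the constructed $\tilde{J}$ is integrable on $H$ (the closedness of $\omega_{\tilde{J}}=\omega_1-\omega_2$ and the equal-rank condition are immediate, but integrability is asserted without proof in the paper as well).
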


\begin{proof}
Let $(\xi,v,\overline{v})$ be a frame of the Sasakian manifold $(N^{2m+1}, g, \xi)$ such that $v$ is a unitary frame of the distribution
$\xi^{\bot}$, which yields the following structure equation
\[ \nabla \! \begin{bmatrix} \xi \\ v \\ \overline{v} \end{bmatrix}=
\begin{bmatrix} 0 & - ^{t}\!\nu & - ^{t}\!\overline{\nu} \\
\overline{\nu} & K & \overline{L}  \\
\nu  & L & \overline{K}  \\ \end{bmatrix}\!\!
\begin{bmatrix} \xi \\ v \\ \overline{v} \\ \end{bmatrix}\!\!,\]
where $K$ is a skew Hermitian matrix of $1$-forms and $L$ is a skew symmetric matrix of $1$-forms. The dual frame of $(\xi,v,\overline{v})$ is denoted by $(\phi,\zeta,\overline{\zeta})$. As
$I=\frac{1}{c} \nabla \xi$ induces an integrable orthogonal complex structure on the distribution $\xi^{\bot}$, it yields that
$\nu = -c \sqrt{-1} \zeta$. The dual version of the equation above is
\begin{equation}\label{SSK-dual}
\nabla \! \begin{bmatrix} \phi \\ \zeta \\ \overline{\zeta} \end{bmatrix}=
\begin{bmatrix} 0 & - ^{t}\!\overline{\nu} & - ^{t}\!\nu \\
\nu & \overline{K} & L  \\
\overline{\nu}  & \overline{L} & K  \\ \end{bmatrix}\!\!
\begin{bmatrix} \phi \\ \zeta \\ \overline{\zeta} \\ \end{bmatrix}\!\!.
\end{equation}
As $\nabla$ is torsion free, it follows that
\[ d \! \begin{bmatrix} \phi \\ \zeta \\ \overline{\zeta} \end{bmatrix}=
\begin{bmatrix} 0 & - ^{t}\!\overline{\nu} & - ^{t}\!\nu \\
\nu & \overline{K} & L \\
\overline{\nu}  & \overline{L} & K  \\ \end{bmatrix}\!\!
\begin{bmatrix} \phi \\ \zeta \\ \overline{\zeta} \\ \end{bmatrix}\!\!,\]
which implies that $d \phi = 2 c \sqrt{-1}\, ^{t}\!\zeta\,\overline{\zeta} $. Note that $I$ is integrable on $\xi^{\bot}$ and thus $d \zeta_i$ has no component of $\phi \wedge \overline{\zeta}_j$ and $\overline{\zeta}_k \wedge \overline{\zeta}_{\ell}$, which indicates $L$ is a matrix of $(1,0)$-forms of the complex structure $I$. Then it is clear that
\[ 0= d ( ^{t}\!\zeta \, \overline{\zeta}) = -(^{t}\!\overline{\zeta}\, ^{t}\!L\, \overline{\zeta} \,+\,  ^{t}\!\zeta\, \overline{L}\, \zeta),\]
which yields that $^{t}\!\zeta\, \overline{L}\, \zeta=0$ by the type of $L$. Since $L$ is skew symmetric, it follows that $L=0$.

If $J$ defines an abelian Sasakian structure on $(N, g, \xi)$, we may assume that the basis $v$ of $T^{1,0}_{I}$ splits into two column vectors, where the former one is still denoted by $v$ and the latter one by $v_{+n}$, both of which contains $n$ entries, such that
\[J v = \sqrt{-1} v, \quad J v_{+n} = -\sqrt{-1} v_{+n}.\]
This forces that $\zeta$ splits into two parts as $\begin{bmatrix} \zeta \\ \zeta^{+n} \end{bmatrix}$ and the matrix $K$ also splits into four submatrices of the same size, denoted by
\[K = \begin{bmatrix} A & C \\
                      - ^{t}\!\overline{C} & D \end{bmatrix}\!\!.\]
Then a unitary frame of $J$ on the distribution $\xi^{\bot}$ can be defined as
\[e=v,\quad e_{+n} = \overline{v}_{+n}.\]
The dual frame of $(\xi, e, e_{+n}, \overline{e}, \overline{e}_{+n})$ is denoted by $(\phi, \varphi, \varphi^{+n}, \overline{\varphi}, \overline{\varphi}^{+n})$, which satisfies
\[ \varphi = \zeta,\quad \varphi^{+n} = \overline{\zeta}^{+n}.\]
Then the equation \eqref{SSK-dual} can be reformulated, in terms of $(\phi, \varphi, \varphi^{+n}, \overline{\varphi}, \overline{\varphi}^{+n})$, as
\begin{equation}\label{AB-SSK-dual}
\nabla \! \begin{bmatrix} \phi \\ \varphi \\ \varphi^{+n} \\ \overline{\varphi} \\ \overline{\varphi}^{+n} \end{bmatrix}=
\begin{bmatrix} 0 & - ^{t}\!\overline{\mu} & - ^{t}\!\overline{\mu}^{+n} & - ^{t}\!\mu & - ^{t}\!\mu^{+n} \\
\mu & \overline{A} & 0 & 0 & \overline{C}\\
\mu^{+n} & 0 & D & - ^{t}\!\overline{C} & 0 \\
\overline{\mu} & 0 & C & A & 0 \\
\overline{\mu}^{+n}  & - ^{t}\!C & 0 & 0 &\overline{D} \\ \end{bmatrix}\!\!
\begin{bmatrix} \phi \\ \varphi \\ \varphi^{+n} \\ \overline{\varphi} \\ \overline{\varphi}^{+n} \end{bmatrix}\!\!,
\end{equation}
where $\mu = -c \sqrt{-1} \varphi, \mu^{+n}= c\sqrt{-1} \varphi^{+n}$. As $\nabla$ is torsion free, it yields that
\begin{equation}\label{AB-SSK-d}
d \! \begin{bmatrix} \phi \\ \varphi \\ \varphi^{+n} \\ \overline{\varphi} \\ \overline{\varphi}^{+n} \end{bmatrix}=
\begin{bmatrix} 0 & - ^{t}\!\overline{\mu} & - ^{t}\!\overline{\mu}^{+n} & - ^{t}\!\mu & - ^{t}\!\mu^{+n} \\
\mu & \overline{A} & 0 & 0 & \overline{C}\\
\mu^{+n} & 0 & D & - ^{t}\!\overline{C} & 0 \\
\overline{\mu} & 0 & C & A & 0 \\
\overline{\mu}^{+n}  & - ^{t}\!C & 0 & 0 &\overline{D} \\ \end{bmatrix}\!\!
\begin{bmatrix} \phi \\ \varphi \\ \varphi^{+n} \\ \overline{\varphi} \\ \overline{\varphi}^{+n} \end{bmatrix}\!\!,
\end{equation}
which implies that $d \phi = 2c \sqrt{-1} ( ^{t}\!\varphi \overline{\varphi} \,-\, ^{t}\!\varphi^{+n} \overline{\varphi}^{+n})$. As the K\"ahler form $\omega_{J}= \sqrt{-1}( ^{t}\!\varphi \overline{\varphi} \,+\, ^{t}\!\varphi^{+n} \overline{\varphi}^{+n})$ is necessarily $d$-closed by the definition of the abelian Sasakian structure, it follows that
\begin{equation}\label{d-closed}
d ( ^{t}\!\varphi \overline{\varphi})=d( ^{t}\!\varphi^{+n} \overline{\varphi}^{+n})=0.
\end{equation}
Note that $J$ is an orthogonal integrable complex structure on the distribution $\xi^{\bot}$, which indicates $d \varphi_i$ and $d \varphi_{i+n}$ has no component of $\phi \wedge \overline{\varphi}_j$, $\phi \wedge \overline{\varphi}_{j+n}$, $\overline{\varphi}_k \wedge \overline{\varphi}_{\ell}$, $\overline{\varphi}_k \wedge \overline{\varphi}_{\ell+n}$ and $\overline{\varphi}_{k+n} \wedge \overline{\varphi}_{\ell+n}$, for $1 \leq i,j,k,\ell \leq n$. Hence, $C$ is a matrix of $(0,1)$-forms of the complex structure $J$. Then
the equality \eqref{d-closed} implies that
\[ ^{t}\!\varphi\, C\, \varphi^{+n} \,=\, ^{t}\overline{\varphi}^{+n}\, ^{t}\!\overline{C}\, \overline{\varphi} =0. \]
Therefore $C=0$.

Note that the K\"ahler form $\omega_{I} $ of $I$ is $\sqrt{-1}( ^{t}\!\zeta \overline{\zeta} \,+\, ^{t}\!\zeta^{+n}\, \overline{\zeta}^{+n})$ and the K\"ahler form $\omega_{J} $ of $J$ is $\sqrt{-1}( ^{t}\!\varphi \overline{\varphi} \,+\, ^{t}\!\varphi^{+n} \overline{\varphi}^{+n}) = \sqrt{-1}( ^{t}\!\zeta \overline{\zeta} \,-\, ^{t}\!\zeta^{+n} \,\overline{\zeta}^{+n})$, from which we may decompose $\omega_{I}$ into
\[ \omega_{I} = \omega_1 + \omega_2,\quad \text{for}\quad \omega_1 = \sqrt{-1}\, ^{t}\!\zeta \overline{\zeta} ,\quad \omega_2 = \sqrt{-1}\, ^{t}\!\zeta^{+n} \,\overline{\zeta}^{+n}.\]
It is clear that $\omega_1, \omega_2$ are $d$-closed $I$-semi-positive $2$-forms on $N$ and $\mbox{Ker}(\omega_2),\mbox{Ker}(\omega_1)$ are distributions generated by $v, v_{+n}$, respectively, which are the eigenspaces of $J$ with respect to $\sqrt{-1},-\sqrt{-1}$. Then it follows that $T^{1,0}_{I}$ admits a $g$-orthogonal decomposition into $\mbox{Ker}(\omega_1) \oplus \mbox{Ker}(\omega_2)$.

Conversely, when the K\"ahler form $\omega_{I}$ of $I = \frac{1}{c} \nabla \xi$ decomposes into two $d$-closed $I$-semi-positive $2$-forms $\omega_1$ and $\omega_2$, such that the space $T^{1,0}_{I}$ admits a $g$-orthogonal decomposition with the same rank $n$
\[T^{1,0}_{I} = \mbox{Ker}(\omega_1) \oplus \mbox{Ker}(\omega_2).\]
We may define $J$ by designating $\mbox{Ker}(\omega_2)$ and $\mbox{Ker}(\omega_1)$ are the eigenspaces of $J$ with respect to the eigenvalues $\sqrt{-1}$ and $-\sqrt{-1}$ respectively. It is clear that $T^{1,0}_{I}$ is $J$-invariant and $J$ defines an abelian Sasakian structure.
\end{proof}

\begin{definition}
Let $(N_1^{4n_1+1}, g_1, \xi_1,J_1)$ and $(N_2^{4n_2+1}, g_2, \xi_2,J_2)$ be two abelian Sasakian manifolds. Motivated by twisted Sasakian product in Definition \ref{SSKproduct}, we may also define such twisted product for abelian Sasakian manfolds. On the product Riemannian manifold $M=N_1\times N_2$, of even dimension $2n=2(2n_1+2n_2+1)$, for $\kappa \in \{\kappa \in \mathbb{C} \big| \mathrm{Im} (\kappa) \neq 0\}$, we can define the natural almost complex structure $J_{\kappa}$ by
\begin{gather*}
J_{\kappa}\xi_1 = \mathrm{Re}(\kappa) \xi_1 + \mathrm{Im}(\kappa)\xi_2, \\
J_{\kappa}X_i = J_i X_i, \quad \forall \ X_i \in H_i , \ i=1,2.
\end{gather*}
Define the associated metric $g_{\kappa}$ as
\begin{gather*}
g_{\kappa}(\xi_1,\xi_1)= g_{\kappa}(J_{\kappa}\xi_1,J_{\kappa}\xi_1)=1, \quad g_{\kappa}(\xi_1,J_{\kappa}\xi_1)=0,\\
g_{\kappa}(\xi_i,X)=0,\quad \forall \ X \in H_1 \oplus H_2, \ i=1,2.\\
g_{\kappa}(X_i,Y_i)=g_i(X_i,Y_i), \quad g_{\kappa}(X_1,X_2)=0,\quad \forall \ X_i, Y_i \in H_i,\ i=1,2.
\end{gather*}
It is clear that $g_{\kappa}$ is a $J_{\kappa}$-Hermitian metric. As shown in the proof of the following proposition, $J_{\kappa}$ is integrable, and when $\kappa=\sqrt{-1}$, the Hermitian manifold $(N_1\times N_2, g_{\kappa}=g_1\times g_2, J_{\kappa})$ is the standard Hermitian structure on the product of two abelian Sasakian manifolds. We will call the Hermitian manifold $(N_1\times N_2, g_{\kappa} J_{\kappa})$ twisted abelian Sasakian product, denoted by $N_1 \times_{\kappa} N_2$.
\end{definition}

Abelian Sasakian manifolds arise naturally in the study of balanced {\em BTP} manifolds.

\begin{proposition}
Let $(N_1^{4n_1+1}, g_1, \xi_1,J_1)$ and $(N_2^{4n_2+1}, g_2, \xi_2,J_2)$ be two abelian Sasakian manifolds. Then twisted abelian Sasakian product $N_1\times_{\kappa} N_2$ is a non-K\"ahler manifold, which is balanced and BTP, and the rank of the $B$ tensor is $n-1$, for the complex dimension $n \geq 3$.
\end{proposition}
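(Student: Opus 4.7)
The argument is a direct generalization of Lemma~\ref{twisted} from the three-dimensional Sasakian product case to the abelian Sasakian setting in arbitrary complex dimension $n=2n_1+2n_2+1$. First, by Proposition~\ref{eq_AB-SSK} and its proof, on each abelian Sasakian $(N_i^{4n_i+1},g_i,\xi_i,J_i)$ one may choose a local frame $(\xi_i, v^{(i)}, \overline{v_+^{(i)}})$ that is $J_i$-unitary on $H_i=\xi_i^\perp$, where $v^{(i)}$ and $v_+^{(i)}$ are unitary frames of the two rank-$n_i$ summands $\mathrm{Ker}(\omega_2^{(i)})$ and $\mathrm{Ker}(\omega_1^{(i)})$ of $T^{1,0}_{I_i}$. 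In such a frame the structure equations take the form \eqref{AB-SSK-d} with $C=L=0$, so the only $(2,0)$-piece of $d\varphi^{(i)}_j$ is of the form $c_i\sqrt{-1}\,\phi^i\wedge\varphi^{(i)}_j$; the remaining contributions are skew-Hermitian connection $1$-forms on $H_i$ of type $(1,1)$ with respect to $J_\kappa$.

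Next, on $N_1\times_\kappa N_2$ I assemble a $J_\kappa$-unitary frame by concatenating the two frames on $H_1\oplus H_2$ with $e_n=\tfrac{1}{\sqrt{2}}(\xi_1-\sqrt{-1}\,J_\kappa\xi_1)$. Writing $\kappa=x+y\sqrt{-1}$, inversion expresses $\xi_1,\xi_2$, and hence $\phi^1,\phi^2$, as real-linear combinations of $e_n,\overline{e}_n$ (resp.\ $\varphi_n,\overline{\varphi}_n$) with $\kappa$-dependent coefficients. Substituting into $d\varphi$ then gives the Chern torsion: since each $d\phi^i$ is $(1,1)$, one has $T^n_{jk}=0$; and because $\phi^i$ lies in the span of $\varphi_n,\overline{\varphi}_n$, the non-vanishing components take the form $T^i_{jn}$ with both indices $i,j<n$ lying in the same block $H_\alpha$ and $i\neq j$, with coefficients determined by $c_\alpha$ and the inversion constants.

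From this index pattern, balance follows at once since $T^i_{in}=0$ forces $\eta_k=\sum_i T^i_{ik}=0$, and non-K\"ahlerness is immediate from $T\neq 0$. For the $\mathrm{BTP}$ property, balance gives $\chi=0$ so condition~\eqref{eq:1.4} of Theorem~\ref{theorem1.1} is vacuous, and I would verify $\nabla^b T=0$ directly: from $\theta^b=\theta^c+2\gamma$ a short computation shows that, in the adapted frame, $\nabla^b$ preserves the splitting $H_1\oplus H_2\oplus\langle e_n\rangle$ and acts on each block by a Sasakian-type connection inherited from the corresponding factor; since the torsion coefficients are the constants $c_1,c_2$ together with $\kappa$-factors, they are annihilated by $\nabla^b$.

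Finally, from $B_{k\bar\ell}=\sum_{r,s}T^\ell_{rs}\overline{T^k_{rs}}$ and the index pattern, $B_{k\bar\ell}=0$ whenever $k=n$ or $\ell=n$, while the complementary $(n-1)\times(n-1)$ block is block-diagonal with one rank-$2n_\alpha$ piece per factor with $n_\alpha\geq 1$; each such piece is a positive scalar multiple of the identity because on it $T^i_{jn}$ takes a constant non-degenerate anti-symmetric form in $(i,j)$, so the total rank equals $2n_1+2n_2=n-1$. The main technical obstacle is confirming that the connection-type matrices $\overline{A}^{(i)}$ of each Sasakian factor contribute only $(1,1)$-pieces to $d\varphi$ on the twisted product, so that no extra $(2,0)$-torsion components appear beyond the $\phi^i$-contributions; once this is verified, both $\nabla^b T=0$ and the precise rank of $B$ follow by routine if notationally heavy bookkeeping.
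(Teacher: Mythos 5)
Your overall strategy (assemble a $J_\kappa$-unitary frame from the two abelian Sasakian factors, compute $d\Phi$, read off $\theta_1,\theta_2$, then $\gamma$ and $\theta^b$, and verify $\nabla^bT^b=0$ directly) is exactly the paper's, but there is a genuine error in your description of the Chern torsion, and it invalidates your proof of balancedness. From your own (correct) observation that the only $(2,0)$-contribution to $d\varphi^{(i)}_j$ comes from $c_i\sqrt{-1}\,\phi^i\wedge\varphi^{(i)}_j$, and that $\phi^i$ lies in the span of $\varphi_n,\overline{\varphi}_n$, the nonzero torsion components are of the \emph{diagonal} form $T^j_{jn}=\mathrm{const}\neq 0$ (one for each frame direction of $H_1\oplus H_2$), not of the off-diagonal antisymmetric form $T^i_{jn}$ with $i\neq j$ that you assert. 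Indeed the paper computes $T^p_{0\,i}=-\tfrac{c_1\sqrt{-1}}{\sqrt2}\delta_{ip}$, $T^p_{0\,i+n_1}=\tfrac{c_1\sqrt{-1}}{\sqrt2}\delta_{i+n_1\,p}$, and similarly for the second factor. Consequently your claim that ``balance follows at once since $T^i_{in}=0$'' is false: each $T^i_{in}$ is a nonzero constant. What actually makes $\eta=0$ is a cancellation: the frame vectors $e=v$ coming from $\mathrm{Ker}(\omega_2)$ and the vectors $e_{+n_i}=\overline{v}_{+n_i}$ coming from $\mathrm{Ker}(\omega_1)$ contribute to $\sum_j T^j_{jn}$ with opposite signs (conjugation flips the sign of the $\sqrt{-1}$ factor), and the two eigenspaces have \emph{equal} rank $n_i$ by condition $(3)$ in the definition of an abelian Sasakian manifold. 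Your argument never invokes that condition, which is precisely the hypothesis that makes the product balanced; without it the construction would generically fail to be balanced.

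The remaining pieces are affected but repairable. Your rank computation for $B$ reaches the correct answer $n-1$, but via the wrong mechanism: each diagonal block is $2|T^j_{jn}|^2$ times the identity (giving $c_1^2E_{2n_1}\oplus \bigl|\tfrac{c_2(1-x\sqrt{-1})}{y}\bigr|^2E_{2n_2}$ up to constants), not a ``non-degenerate antisymmetric form in $(i,j)$.'' Once the torsion is corrected, the BTP verification does go through along the lines you sketch: $\theta^b=\theta_1+\gamma$ is block-diagonal with vanishing row and column in the $e_n$-direction, each block-piece of the Bismut torsion is built from $\nabla^b$-parallel frames and coframes with constant coefficients, and $\nabla^bT^b=0$ follows. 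So the architecture of your proof matches the paper's, but the stated torsion structure must be fixed and the role of the equal-rank condition must be made explicit before the balancedness claim can be considered proved.
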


In the smallest possible dimension $n=2n_1 + 2n_2 +1=3$, the example above is always a Riemannian product, which is essentially $\tilde{N}^5 \times \mathbb{R}$ above, as $n_1$ or $n_2$ has to be zero. When the dimension increases, it may not be a Riemannian product for $\mathrm{Re}(\kappa) \neq 0$.

\begin{proof}
We consider the Hermitian structure $(g_{\kappa},J_{\kappa})$ on the product $N_1 \times N_2$. As notations applied in the proof of Proposition \ref{eq_AB-SSK} above, we may assume that $(\xi_1, e, e_{+n_1}, \overline{e}, \overline{e}_{+n_1})$ is a frame on $(N_1,g_1,\xi_1,J_1)$, where $e, e_{+n_1}$ is a unitary frame of $\xi_1^{\bot}$ endowed with the complex structure $J_1$, and the dual frame is denoted by $(\phi_1, \varphi, \varphi^{+n_1}, \overline{\varphi}, \overline{\varphi}^{+n_1})$, which satisfies the equation \eqref{AB-SSK-d}. And $(\xi_2, \hat{e}, \hat{e}_{+n_2}, \overline{\hat{e}}, \overline{\hat{e}}_{+n_2})$ is a frame on $(N_2,g_2,\xi_2,J_2)$, where $\hat{e}, \hat{e}_{+n_2}$ is a unitary frame of $\xi_2^{\bot}$ with the complex structure $J_2$, and the dual frame is denoted by $(\phi_2, \hat{\varphi}, \hat{\varphi}^{+n_2}, \overline{\hat{\varphi}}, \overline{\hat{\varphi}}^{+n_2})$.

For simplicity, we will write $\kappa=x + y\sqrt{-1}$, where $x,y \in \mathbb{R},\ y \neq 0$. Let
\[\begin{split}
e_0 &= \frac{1}{\sqrt{2}}(\xi_1 - \sqrt{-1}J_{\kappa} \xi_1)
= \frac{1}{\sqrt{2}}\Big( (1-x\sqrt{-1})\xi_1 - y\sqrt{-1} \xi_2\Big),\\
\varphi_0 &= \frac{1}{\sqrt{2}}\left(\phi_1 - \frac{x}{y}\phi_2 + \sqrt{-1}\frac{1}{y}\phi_2\right)
= \frac{1}{\sqrt{2}} \left( \phi_1 - \frac{x-\sqrt{-1}}{y} \phi_2 \right), \\
\end{split}\]
where $e_0$ is a $(1,0)$-vector and $\varphi_0$ is a $(1,0)$-form of of the almost complex structure $J_{\kappa}$. It is easy to verify
that $F=\,^{t}\!(e_0, e, e_{+n_1}, \hat{e}, \hat{e}_{+n_2})$ is a unitary frame of $J_{\kappa}$ on $N_1 \times N_2$ and $\Phi=\, ^{t}\!(\varphi_0, \varphi, \varphi^{+n_1}, \hat{\varphi}, \hat{\varphi}^{+n_2})$ is the dual frame, where $F$ and $\Phi$ are regarded as column vectors. The equation \eqref{AB-SSK-d} and the one on $(N_2, g_2, \xi_2,J_2)$ imply that
\[ \begin{split}
d \varphi_0 &= \frac{1}{\sqrt{2}}
\left( 2c_1 \sqrt{-1} \big(\, ^{t}\!\varphi \overline{\varphi} \,-\, ^{t}\! \varphi^{+n_1} \overline{\varphi}^{+n_1} \big)
- \frac{2c_2(1+x\sqrt{-1})}{y}
\big(\, ^{t}\!\hat{\varphi} \overline{\hat{\varphi}} \,-\, ^{t}\! \hat{\varphi}^{+n_2} \overline{\hat{\varphi}}^{+n_2} \big) \right),\\
d\! \begin{bmatrix}\varphi \\ \varphi^{+n_1} \end{bmatrix} &=
\frac{c_1}{\sqrt{2}}\big((x+\sqrt{-1})\varphi_0 - (x-\sqrt{-1})\overline{\varphi}_0\big)\! \begin{bmatrix} \varphi \\ -\varphi^{+n_1} \end{bmatrix}
+ \begin{bmatrix} \overline{A} & 0 \\ 0 & D \end{bmatrix} \!\! \begin{bmatrix} \varphi \\ \varphi^{+n_1} \end{bmatrix}, \\
d\! \begin{bmatrix} \hat{\varphi} \\ \hat{\varphi}^{+n_2} \end{bmatrix} &=
\frac{c_2}{\sqrt{2}}\big( y\varphi_0 - y \overline{\varphi}_0 \big)\! \begin{bmatrix} \hat{\varphi} \\ -\hat{\varphi}^{+n_2} \end{bmatrix}
+ \begin{bmatrix} \overline{\hat{A}} & 0 \\ 0 & \hat{D} \end{bmatrix} \!\! \begin{bmatrix} \hat{\varphi} \\ \hat{\varphi}^{+n_2} \end{bmatrix}, \\
\end{split}\]
where the matrices $\hat{A}$ and $\hat{D}$ come from the analogous equation on $(N_2, g_2, \xi_2,J_2)$ to the one \eqref{AB-SSK-d}. It implies that $J_{\kappa}$ is indeed a integrable complex structure, as $d \varphi_0$, $d \varphi$, $d \varphi^{+n_1}$, $d \hat{\varphi}$ and $d \hat{\varphi}^{+n_2}$ has no component of $(0,2)$-forms. The Riemannian connection $\nabla$ under the unitary frame $\Phi$ can be denoted by
\[ \nabla \Phi = \overline{\theta}_1 \Phi + \theta_2 \overline{\Phi},\]
where $\theta_1$ is skew Hermitian and $\theta_2$ is skew symmetric. As $\nabla$ is torsion free, it yields
\[d \Phi =  \overline{\theta}_1 \Phi + \theta_2 \overline{\Phi},\]
which indicates
\[\theta_1 = \begin{bmatrix}
0 & -\,^{t}\!\overline{\mu}_1 & -\,^{t}\!\overline{\mu}_2 & - \, ^{t}\!\overline{\mu}_3 & -\,^{t}\!\overline{\mu}_4 \\
\mu_1 & H_1 & 0 & 0 & 0 \\
\mu_2 & 0 & H_2 & 0 & 0 \\
\mu_3 & 0 & 0 & H_3 & 0 \\
\mu_4 & 0 & 0 & 0 & H_4
\end{bmatrix}\!\!,\quad
\theta_2 = \begin{bmatrix}
0 & -\,^{t}\!\mu_5 & -\,^{t}\!\mu_6 & - \, ^{t}\! \mu_7 & -\,^{t}\!\mu_8 \\
\mu_5 & 0 & 0 & 0 & 0 \\
\mu_6 & 0 & 0 & 0 & 0 \\
\mu_7 & 0 & 0 & 0 & 0 \\
\mu_8 & 0 & 0 & 0 & 0
\end{bmatrix}\!\!,\]
where
\begin{align*}
\mu_1 &= \frac{c_1\sqrt{-1}}{\sqrt{2}} \overline{\varphi},& \mu_2 &= -\frac{c_1\sqrt{-1}}{\sqrt{2}} \overline{\varphi}^{+n_1},&
\mu_3 &= -\frac{c_2(1+x\sqrt{-1})}{y\sqrt{2}} \overline{\hat{\varphi}},&
\mu_4 &= \frac{c_2(1+x\sqrt{-1})}{y\sqrt{2}} \overline{\hat{\varphi}}^{+n_2},\\
\mu_5 &= - \frac{c_1\sqrt{-1}}{\sqrt{2}} \varphi,& \mu_6 &= \frac{c_1\sqrt{-1}}{\sqrt{2}} \varphi^{+n_1},&
\mu_7 &= \frac{c_2(1+x\sqrt{-1})}{y\sqrt{2}} \hat{\varphi},&
\mu_8 &= -\frac{c_2(1+x\sqrt{-1})}{y\sqrt{2}} \hat{\varphi}^{+n_2},
\end{align*}
\begin{gather*}
H_1 = A - \frac{c_1 x}{\sqrt{2}}(\varphi_0 - \overline{\varphi}_0)E_{n_1},\quad
H_2 = \overline{D} + \frac{c_1 x}{\sqrt{2}}(\varphi_0 - \overline{\varphi}_0 )E_{n_1},\\
H_3 = \hat{A} - \frac{c_2}{\sqrt{2}} \Big((y-\frac{1-x\sqrt{-1}}{y})\varphi_0 - (y-\frac{1+x\sqrt{-1}}{y})\overline{\varphi}_0 \Big)E_{n_2}, \\
H_4 = \overline{\hat{D}} + \frac{c_2}{\sqrt{2}} \Big((y-\frac{1-x\sqrt{-1}}{y})\varphi_0 - (y-\frac{1+x\sqrt{-1}}{y})\overline{\varphi}_0 \Big)E_{n_2}.
\end{gather*}
Here $E_{n_1}$ and $E_{n_2}$ are the identity matrices of the sizes $n_1$ and $n_2$. Then the matrix $\theta_2$ determines the components of Chern torsion of the Hermitian metric $(g_{\kappa},J_{\kappa})$, which implies, for $1 \leq i,j\leq n_1$, $2n_1+1 \leq \kappa,\iota \leq 2n_1+n_2$, $0 \leq p,q \leq 2(n_1 +n_2)$,
\begin{align*}
T_{0\,i}^p &= -\frac{c_1 \sqrt{-1}}{\sqrt{2}} \delta_{ip}, &  T^{p}_{0\,i+n_1} &= \frac{c_1 \sqrt{-1}}{\sqrt{2}} \delta_{i+n_1\,p}, \\
T^p_{0\,\kappa}&= -\frac{c_2(1-x\sqrt{-1})}{y\sqrt{2}} \delta_{\kappa\, p},& T^{p}_{0\,\kappa+n_2} &= \frac{c_2(1-x\sqrt{-1})}{y\sqrt{2}} \delta_{\kappa+n_2\,p},
\end{align*}
with other components all vanishing, such as
\begin{align*}
T^p_{ij}&=0, & T^p_{i\,j+n_1}&=0, & T_{i\,\kappa}^p&=0, & T^p_{i\,\kappa+n_2}&=0, & T^p_{i+n_1\,j+n_1}&=0,\\
T^p_{i+n_1\,\kappa}&=0, & T^p_{i_1+n_1\,\kappa+n_2}&=0, & T^p_{\kappa\,\iota}&=0, & T^p_{\kappa\,\iota+n_2}&=0, & T^p_{\kappa+n_2\, \iota+n_2}&=0.
\end{align*}
From the definition of the Gauduchon's torsion $1$-form $\eta$, it yields that
\[\eta = \eta_0\varphi_0 + \sum_i \eta_i \varphi_i + \sum_i \eta_{i+n_1} \varphi_{i+n_1} +
\sum_{\kappa} \eta_{\kappa} \hat{\varphi}_{\kappa} + \sum_{\kappa} \eta_{\kappa+n_2} \hat{\varphi}_{\kappa+n_2}, \]
where
\begin{gather*}
\eta_0 = \sum_p T^p_{p0} = \sum_i (T_{i\,0}^i + T^{i+n_1}_{i+n_1\,0}) + \sum_{\kappa}(T^{\kappa}_{\kappa\,0} + T^{\kappa+n_2}_{\kappa+n_2\,0}) =0 \\
\eta_i=0,\quad \eta_{i+n_1}=0,\quad \eta_{\kappa}=0,\quad \eta_{\kappa+n_2}=0,
\end{gather*}
and thus $g_{\kappa}$ is balanced.

Then the matrix $\gamma$ be calculated by the components of Chern torsion as
\[ \gamma = \begin{bmatrix}
0 & \,^{t}\!\overline{\mu}_1 & \,^{t}\!\overline{\mu}_2 & \, ^{t}\!\overline{\mu}_3 & \,^{t}\!\overline{\mu}_4 \\
-\mu_1 & H_5 & 0 & 0 & 0 \\
-\mu_2 & 0 & H_6 & 0 & 0 \\
-\mu_3 & 0 & 0 & H_7 & 0 \\
-\mu_4 & 0 & 0 & 0 & H_8
\end{bmatrix}\!\!,\]
where $\mu_1, \mu_2, \mu_3, \mu_4$ are the entries in the matrix $\theta_1$ and
\begin{gather*}
H_5 = \frac{c_1 \sqrt{-1}}{\sqrt{2}}(\varphi_0 + \overline{\varphi}_0) E_{n_1} \quad
H_6 = -\frac{c_1 \sqrt{-1}}{\sqrt{2}}(\varphi_0 + \overline{\varphi}_0) E_{n_1} \\
H_7 = \frac{c_2}{\sqrt{2}}\big(\frac{1-x\sqrt{-1}}{y} \varphi_0 - \frac{1+x\sqrt{-1}}{y}\overline{\varphi}_0\big)E_{n_2} \\
H_8 = -\frac{c_2}{\sqrt{2}}\big(\frac{1-x\sqrt{-1}}{y} \varphi_0 - \frac{1+x\sqrt{-1}}{y}\overline{\varphi}_0\big)E_{n_2}
\end{gather*}
Note that the Bismut connection matrix $\theta^b = \theta_1 + \gamma$ and we have
\[ \theta^b = \begin{bmatrix}
0 & 0 & 0 & 0 & 0 \\
0 & H_1+H_5 & 0 & 0 & 0 \\
0 & 0 & H_2+H_6 & 0 & 0 \\
0 & 0 & 0 & H_3+H_7 & 0 \\
0 & 0 & 0 & 0 & H_4+H_8
\end{bmatrix}.\]
By the equalities $d \Phi = - \,^{t}\! \theta^b \Phi + \tau^b$ and $d \Phi = \overline{\theta}_1 \Phi + \theta_2 \overline{\Phi}$,
it yields that \[\tau^b = \,^{t}\!\gamma \Phi + \theta_2 \overline{\Phi} = \begin{bmatrix}
\nu_1 \\
\nu_2 \\
\nu_3 \\
\nu_4 \\
\nu_5
\end{bmatrix}\!\!,\]
where
\begin{gather*}
\nu_1 = \frac{2c_1 \sqrt{-1}}{\sqrt{2}}\big(\, ^{t}\!\varphi \overline{\varphi} \,-\, ^{t}\! \varphi^{+n_1} \overline{\varphi}^{+n_1} \big)
- \frac{2c_2(1+x\sqrt{-1})}{y\sqrt{2}}
\big(\, ^{t}\!\hat{\varphi} \overline{\hat{\varphi}} \,-\, ^{t}\! \hat{\varphi}^{+n_2} \overline{\hat{\varphi}}^{+n_2} \big) \\
\nu_2 =\frac{2c_1\sqrt{-1}}{\sqrt{2}}(\varphi_0 + \overline{\varphi}_0) \varphi,  \quad
\nu_3 = - \frac{2c_1\sqrt{-1}}{\sqrt{2}}(\varphi_0 + \overline{\varphi}_0) \varphi^{+n_1}, \\
\nu_4 = \big( \frac{2c_2(1-x\sqrt{-1})}{y\sqrt{2}}\varphi_0 - \frac{2c_2(1+x\sqrt{-1})}{y\sqrt{2}} \overline{\varphi}_0 \big) \hat{\varphi}, \quad
\nu_5 = -\big( \frac{2c_2(1-x\sqrt{-1})}{y\sqrt{2}}\varphi_0 - \frac{2c_2(1+x\sqrt{-1})}{y\sqrt{2}} \overline{\varphi}_0 \big) \hat{\varphi}^{+n_2}.
\end{gather*}
By the expression of $\theta^b$, it is easy to verify that
\[\nabla^b e_0 =0,\quad \nabla^b \varphi_0=0,\]
\begin{align*}
\nabla^b \,^{t}\!\varphi \overline{\varphi} &=0, & \nabla^b\, ^{t}\!\varphi^{+n_1} \overline{\varphi}^{+n_1}&=0,
& \nabla^b \,^{t}\!\hat{\varphi} \overline{\hat{\varphi}} &=0, & \nabla^b \, ^{t}\!\hat{\varphi}^{+n_2} \overline{\hat{\varphi}}^{+n_2}&=0, \\
\nabla^b \,^{t}\!\varphi \otimes e &=0, & \nabla^b\, ^{t}\!\varphi^{+n_1} \otimes e_{+n_1}&=0,
& \nabla^b \,^{t}\!\hat{\varphi} \otimes \hat{e} &=0, & \nabla^b \, ^{t}\!\hat{\varphi}^{+n_2} \otimes \hat{e}_{+n_2}&=0. \\
\end{align*}
Therefore,
\[\nabla^b T^b = \nabla^b (\, ^{t}\!\tau^b \otimes F \,+ \,^{t}\!\overline{\tau}^b \otimes \overline{F}) =0,\]
which establishes that $g_{\kappa}$ is {\em BTP}. It is easy to verify the tensor $B$ under the frame $F$ is
\[ \begin{bmatrix}
0 & 0 & 0 & 0 & 0 \\
0 & H_9 & 0 & 0 & 0 \\
0 & 0 & H_{10} & 0 & 0 \\
0 & 0 & 0 & H_{11} & 0 \\
0 & 0 & 0 & 0 & H_{12}
\end{bmatrix}\!\!,\]
where
\[H_9=H_{10}=2\left|\frac{c_1\sqrt{-1}}{\sqrt{2}}\right|^2 E_{n_1},
\quad H_{11} = H_{12}= 2\left|\frac{c_2(1-x\sqrt{-1})}{y\sqrt{2}}\right|^2 E_{n_2},\]
and thus $\mathrm{rank}B=2(n_1+n_2)=n-1$.
\end{proof}

\vspace{1cm}

\end{document}